\let\oldll\ll
\let\oldgg\gg
\renewcommand{\ll}{\oldll}
\renewcommand{\gg}{\oldgg}
\newcommand{\beq}{\begin{equation}}
\newcommand{\eeq}{\end{equation}}
\newtheorem{mainthm}{Theorem}
\newtheorem{thm}{Theorem}[section]
\newtheorem{lem}[thm]{Lemma}
\newtheorem{cor}[thm]{Corollary}
\newtheorem{ppn}[thm]{Proposition}
\theoremstyle{definition}
\newtheorem{emp}[thm]{Example}
\newtheorem{dfn}[thm]{Definition}
\newtheorem{rmk}[thm]{Remark}
\DeclareMathOperator{\image}{im}
\DeclareMathOperator{\argmax}{arg\,max}
\DeclareMathOperator{\supp}{supp}
\DeclareMathOperator{\Hess}{Hess}
\DeclareMathOperator{\diam}{diam}
\renewcommand{\P}{\mathbb{P}}
\renewcommand{\emptyset}{\varnothing}
\renewcommand{\log}{\ln}
\renewcommand{\vec}[1]{\underline{\smash{#1}}}
\newcommand{\E}{\mathbb{E}}
\newcommand{\R}{\mathbb{R}}
\newcommand{\Q}{\mathbb{Q}}
\newcommand{\f}{\frac}
\newcommand{\Hb}{\mathbb{H}}
\newcommand{\err}{\textup{\textup{\textsf{err}}}}
\newcommand{\lone}[1]{\|#1\|}
\newcommand{\loneB}[1]{\Big\|#1\Big\|}
\newcommand{\haus}{d_{\ent}}
\newcommand{\ent}{\mathcal{H}}
\newcommand{\dkl}{\mathcal{D}_\textup{\textsc{kl}}}
\newcommand{\Ind}[1]{\mathbf{1}\{#1\}}
\newcommand{\set}[1]{\{#1\}}
\newcommand{\ueta}{\vec{\eta}}
\newcommand{\deta}{\dot{\eta}}
\newcommand{\heta}{\hat{\eta}}
\newcommand{\urho}{\vec{\rho}}
\newcommand{\dsi}{\dot{\sigma}}
\newcommand{\hsi}{\hat{\sigma}}
\newcommand{\usi}{\vec{\sigma}}
\newcommand{\dta}{\dot{\tau}}
\newcommand{\hta}{\hat{\tau}}
\newcommand{\uta}{\vec{\tau}}
\newcommand{\bx}{{\bm{x}}}
\newcommand{\by}{{\bm{y}}}
\newcommand{\ux}{\vec{x}}
\newcommand{\uxx}{\underline{\underline{x}}}
\newcommand{\FE}{\textup{\textsf{f}}}
\newcommand{\annFE}{\textup{\textsf{f}}^\textsc{rs}}
\newcommand{\onersbFE}{\textup{\textsf{f}}^\textsc{1rsb}}
\newcommand{\COLS}{\Omega}
\newcommand{\tcols}{\COLS_T}
\newcommand{\fcl}{\printcol{f}}
\newcommand{\albet}{\mathcal{X}}
\newcommand{\dCOLS}{\dot{\COLS}}
\newcommand{\hCOLS}{\hat{\COLS}}
\newcommand{\dtcols}{\dot{\COLS}_T}
\newcommand{\htcols}{\hat{\COLS}_T}
\newcommand{\aaa}{\text{\footnotesize\textsc{a}}}
\newcommand{\bbb}{\text{\footnotesize\textsc{b}}}
\newcommand{\uu}{\textup{\textsf{update}}}
\newcommand{\pd}{\partial}
\newcommand{\lit}{\textup{\texttt{L}}}
\newcommand{\ulit}{\vec{\lit}}
\newcommand{\ulitp}{\ulit'}
\newcommand{\GG}{\mathscr{G}}
\newcommand{\glit}{\mathscr{G}}
\newcommand{\graph}{\mathcal{G}}
\newcommand{\dir}{\textup{\textsc{\footnotesize e}}}
\newcommand{\rev}{\textup{\textsc{\footnotesize f}}}
\newcommand{\etree}{\mathscr{T}}
\newcommand{\onetree}{\mathcal{D}}
\newcommand{\nlit}{\mathscr{N}}
\newcommand{\ngraph}{\mathcal{N}}
\newcommand{\nbd}{\mathbf{N}}
\newcommand{\dH}{\dot{H}}
\newcommand{\hH}{\hat{H}}
\newcommand{\eH}{\bar{H}}
\newcommand{\Hsym}{H^\textup{sy}}
\newcommand{\hHsym}{\hH^\textup{sy}}
\newcommand{\Hsamp}{H^\textup{sm}}
\newcommand{\dHsamp}{\dH^\textup{sm}}
\newcommand{\hHsamp}{\hH^\textup{sm}}
\newcommand{\eHsamp}{\eH^\textup{sm}}
\newcommand{\dhtree}{\dot{h}^\textup{tr}}
\newcommand{\Htree}{H^\textup{tr}}
\newcommand{\msg}{\textup{\texttt{m}}}
\newcommand{\dmsg}{\dot{\msg}}
\newcommand{\hmsg}{\hat{\msg}}
\newcommand{\dMM}{\dot{\mathscr{M}}}
\newcommand{\hMM}{\hat{\mathscr{M}}}
\newcommand{\dotY}{\dot{Y}}
\newcommand{\hatY}{\hat{Y}}
\newcommand{\dotT}{\dot{T}}
\newcommand{\hatT}{\hat{T}}
\newcommand{\ST}{\textsf{S}}
\newcommand{\join}{\textup{\textsf{join}}}
\newcommand{\printcol}[1]{\textup{\texttt{#1}}}
\newcommand{\invalid}{\texttt{z}}
\newcommand{\red}{\printcol{r}}
	\newcommand{\redz}{\red_\zro}
	\newcommand{\redo}{\red_\one}
\newcommand{\blu}{\printcol{b}}
	\newcommand{\bluz}{\blu_\zro}
	\newcommand{\bluo}{\blu_\one}
\newcommand{\grn}{\printcol{g}}
\newcommand{\grnz}{\grn_\zro}
\newcommand{\grno}{\grn_\one}
\newcommand{\spc}{\printcol{s}}
\newcommand{\ylw}{\printcol{y}}
\newcommand{\ppl}{\printcol{p}}
\newcommand{\hatOm}{\printcol{a}}
\newcommand{\zro}{\textup{\texttt{0}}}
\newcommand{\one}{\textup{\texttt{1}}}
\newcommand{\fre}{\textup{\texttt{f}}}
\newcommand{\RMB}{\textup{\texttt{X}}}
\newcommand{\BLU}{\mathbb{B}}
\newcommand{\BLUeq}{\BLU_\texttt{=}}
\newcommand{\BLUne}{\BLU_{\texttt{\textdoublebarpipe}}}
\newcommand{\RED}{\mathbb{R}}
\newcommand{\REDeq}{\RED_\texttt{=}}
\newcommand{\REDne}{\RED_{\texttt{\textdoublebarpipe}}}
\newcommand{\lgf}{\fcl_{\ge 1}}
\newcommand{\dBP}{\dot{\textup{\texttt{BP}}}}
\newcommand{\hBP}{\hat{\textup{\texttt{BP}}}}
\newcommand{\vBP}{\textup{\texttt{BP}}}
\newcommand{\dBPu}{\dot{\textup{\texttt{NB}}}}
\newcommand{\hBPu}{\hat{\textup{\texttt{NB}}}}
\newcommand{\dz}{\dot{z}}
\newcommand{\hz}{\hat{z}}
\newcommand{\dZ}{\dot{Z}}
\newcommand{\Zcal}{\mathfrak{Z}}
\newcommand{\ZZ}{\bm{Z}}
\newcommand{\barZZ}{\bar{\bm{Z}}}
\newcommand{\ZH}{\mathfrak{z}}
\newcommand{\dbz}{\dot{\bm{z}}}
\newcommand{\hbz}{\hat{\bm{z}}}
\newcommand{\hI}{\hat{I}^\textup{lit}}
\newcommand{\dphi}{\dot{\varphi}}
\newcommand{\hphi}{\hat{\varphi}^\textup{lit}}
\newcommand{\ephi}{\bar{\varphi}}
\newcommand{\dPhi}{\dot{\Phi}}
\newcommand{\hPhi}{\hat{\Phi}}
\newcommand{\hPl}{\hat{\Phi}^\textup{lit}}
\newcommand{\hF}{\hat{F}}
\newcommand{\wt}{\bm{w}^\textup{lit}}
\newcommand{\avwt}{\bm{w}}
\newcommand{\maxwt}{\bm{W}}
\newcommand{\ePhi}{\bar{\Phi}}
\newcommand{\clust}{\bm{\gamma}}
\newcommand{\size}{\bm{s}}
\newcommand{\treesize}{\bm{s}^\textup{tr}}
\newcommand{\logp}{\bm{v}}
\newcommand{\SIGMA}{\bm{\Sigma}}
\newcommand{\XI}{\bm{\Xi}}
\newcommand{\treeSIGMA}{\bm{\Sigma}^\textup{tr}}
\newcommand{\bF}{\bm{F}}
\newcommand{\LAMBDA}{\bm{\Lambda}}
\newcommand{\optLAMBDA}{\bm{\Lambda}^\textup{op}}
\newcommand{\GAMMA}{\bm{\Gamma}}
\newcommand{\SIZE}{\textup{\textsf{size}}}
\newcommand{\SOL}{\textup{\textsf{SOL}}}
\newcommand{\CLUST}{\textup{\textsf{CL}}}
\newcommand{\marg}{\textup{\small\textsf{MARG}}}
\newcommand{\XX}{\ZZ_\textup{ns}}
\newcommand{\XXge}{\ZZ_\succcurlyeq}
\newcommand{\sepZZ}{\bm{S}}
\newcommand{\COUP}{\textup{\textsf{\footnotesize COUP}}}
\newcommand{\filt}{\mathscr{F}}
\newcommand{\mcur}{{\delta'U_{r,\circ}}}
\newcommand{\mcvr}{{\delta'V_{r,\circ}}}
\newcommand{\flip}{\mathfrak{f}}
\newcommand{\dq}{{\dot{q}}}
\newcommand{\hq}{\hat{q}}
\newcommand{\tq}{\tilde{q}}
\newcommand{\dtp}{\dot{p}}
\newcommand{\htp}{\hat{p}}
\newcommand{\utp}{\dot{p}^\textup{\textrm{u}}}
\newcommand{\dotm}{\dot{m}}
\newcommand{\hatm}{\hat{m}}
\newcommand{\drecH}{\hat{\drec}}
\newcommand{\drecD}{\dot{\drec}}
\newcommand{\drec}{\mathscr{R}}
\newcommand{\dmu}{\dot{\mu}}
\newcommand{\hmu}{\hat{\mu}}
\newcommand{\dpi}{\dot{\pi}}
\newcommand{\hpi}{\hat{\pi}}
\newcommand{\dtu}{\dot{u}}
\newcommand{\htu}{\hat{u}}
\title{The number of solutions for random regular NAE-SAT}
\author[A.~Sly]{Allan Sly$^*$}
\author[N.~Sun]{Nike Sun$^\dagger$}
\author[Y.~Zhang]{Yumeng Zhang}
\date{28 April 2016.
Research supported in part by
$^*$NSF DMS-1208338, DMS-1352013, Sloan Fellowship, and $^\dagger$NSF MSPRF}
\newcommand{\Elit}{\E^\textup{lit}}
\newcommand{\simplex}{\bm{\Delta}}
\newcommand{\spxsamp}{\bm{\Delta}^\textup{sm}}\newcommand{\spxtree}{\bm{\Pi}}
\newcommand{\SEP}{\textup{\textsf{sep}}}
\newcommand{\CUT}{\mathcal{W}}
\newcommand{\asat}{\alpha_\textup{sat}}
\newcommand{\aubd}{\alpha_\textup{ubd}}
\newcommand{\albd}{\alpha_\textup{lbd}}
\newcommand{\aclust}{\alpha_\textup{clust}}
\newcommand{\acond}{\alpha_\textup{cond}}
\newcommand{\ars}{\alpha_\textup{\textsc{rs}}}
\newcommand{\bemph}{\textbf}
\begin{document}

\maketitle

\vspace{-25pt}
\begin{center}
\textit{\footnotesize University of California, Berkeley}
\end{center}

\begin{abstract} Recent work has made substantial progress in understanding the transitions of random constraint satisfaction problems. In particular, for several of these models, the exact satisfiability threshold has been rigorously determined, confirming predictions of statistical physics. Here we revisit one of these models, random regular $k$-\textsc{nae-sat}: knowing the satisfiability threshold, it is natural to study, in the satisfiable regime, the number of solutions in a typical instance. We prove here that these solutions have a well-defined free energy (limiting exponential growth rate), with explicit value matching the one-step replica symmetry breaking prediction. The proof develops new techniques for analyzing a certain ``survey propagation model'' associated to this problem. We believe that these methods may be applicable in a wide class of related problems. \end{abstract}

\section{Introduction}\label{s:intro}

In a \bemph{random constraint satisfaction problem} (\textsc{csp}), we have $n$ variables taking values in a (finite) alphabet $\albet$, subject to a random set of constraints. In previous works on models of this kind, it has emerged that the space of solutions --- a random subset of $\albet^n$ --- can have a complicated structure, posing obstacles to mathematical analysis. Major advances in intuition were achieved by statistical physicists, who developed powerful analytic heuristics to shed light on the behavior of random \textsc{csp}s (\cite{MR2317690} and references therein). Their insights and methods are fundamental to the current understanding of random \textsc{csp}s.
	
One prominent application of the physics heuristic is in giving explicit (non-rigorous) predictions for the locations of satisfiability thresholds in a large class of random \textsc{csp}s (\cite{mertens2006threshold} and others). Recent works have given rigorous proofs for some of these thresholds: in the random regular \textsc{nae-sat} model \cite{dss-naesat-stoc,MR3440193}, in the random $k$-\textsc{sat} model \cite{MR3388183}, and in the independent set model on random regular graphs \cite{MR3689942}. However, the satisfiability threshold is only one aspect of the rich picture that physicists have developed. There are deep conjectures for the behavior of these models inside the satisfiable regime, and it remains an outstanding mathematical challenge to prove them. In this paper we address one part of this challenge, concerning the \bemph{total number of solutions} for a typical instance in the satisfiable regime.

\subsection{Main result}

Given a \textsc{cnf} boolean formula, a \bemph{not-all-equal-\textsc{sat}} (\bemph{\textsc{nae-sat}}) solution is an assignment $\vec{\bx}$ of literals to variables such that both $\vec{\bx}$ and its negation $\neg\vec{\bx}$ evaluate to \textsc{true} --- equivalently, such that no clause gives the same evaluation to all its variables. A $k$-\textsc{nae-sat} problem is one in which each clause has exactly $k$ literals; it is termed \bemph{$d$-regular} if each variable appears in exactly $d$ clauses. Sampling such a formula in a uniformly random manner gives rise to the \bemph{random $d$-regular $k$-\textsc{nae-sat} model}. (The formal definition is given in Section~\ref{s:comb}.) See \cite{MR2263010} for important early work on the closely related model of random (Erd\H{o}s--R\'enyi) \textsc{nae-sat}. The appeal of this model is that it has certain symmetries making the analysis particularly tractable, yet it is expected to share most of the interesting qualitative phenomena exhibited by other commonly studied problems, including random $k$-\textsc{sat} and random graph colorings.

Following convention, we fix $k$ and then parametrize the model by its clause-to-variable ratio, $\alpha\equiv d/k$. The \bemph{partition function}, denoted $Z\equiv Z_n$, is the number of valid \textsc{nae-sat} assignments for an instance on $n$ variables. It is conjectured that for each $k\ge3$, the model has an exact satisfiability threshold $\asat(k)$: for $\alpha<\asat$
it is satisfiable ($Z>0$) with high probability, 
but for $\alpha>\asat$
it is unsatisfiable ($Z=0$) with high probability.
This has been proved \cite[Thm.~1]{MR3440193} for all $k$ exceeding an absolute constant $k_0$, together with an exact formula for $\asat$ which matches the physics prediction. It can be approximated as
	\beq\label{e:alpha.sat.asymptotics}
	\asat 
	= \left(2^{k-1} -\f12
	-\f1{4\log 2}\right)\log 2 
	+ \epsilon_k\eeq
where $\epsilon_k$ denotes an error tending to zero as $k\to\infty$.

We say the model has \bemph{free energy} $\FE(\alpha)$ if $Z^{1/n}$ converges to $\FE(\alpha)$ in probability as $n\to\infty$.
\textit{A~priori}, the limit may not be well-defined. If it exists, however, Markov's inequality and Jensen's inequality imply that it must be upper bounded by the \bemph{replica symmetric free energy}
	\beq\label{e:annealed.free.energy}
	\annFE(\alpha) \equiv (\E Z)^{1/n}
	= 2 \bigg(1-\f{2}{2^k}\bigg)^\alpha\,.\eeq
(In this model and in other random regular models,  the replica symmetry free energy is the same as the annealed free energy.) One of the intriguing predictions 
from the physics analysis \cite{zdeborova2007phase,1742-5468-2008-04-P04004}
 is that there is a critical value $\acond$
strictly below $\asat$, such that $\FE(\alpha)$ and $\annFE(\alpha)$ agree up to $\alpha=\acond$ and diverge thereafter. In particular, this implies that 
the function $\FE(\alpha)$ must be non-analytic at $\alpha=\acond$. This is the \bemph{condensation transition} (or \emph{Kauzmann transition}), and will be further described below in \S\ref{ss:intro.statphys}. For all $0\le\alpha<\asat$, the free energy is predicted to be given by a formula
	\[\FE(\alpha)=\onersbFE(\alpha)
	\begin{cases}
	=\annFE(\alpha) & \textup{for $0\le\alpha\le\acond$,}\\
	<\annFE(\alpha) & \textup{for $\alpha>\acond$.}
	\end{cases}
	\]
The function $\onersbFE(\alpha)$ is quite explicit, although not extremely simple to state;
it is formally presented below in Definition~\ref{d:onersbFE}. The formula for $\onersbFE(\alpha)$ is derived via the \bemph{one-step replica symmetry breaking} (1\textsc{rsb}) heuristic, discussed further below. Our main result is to prove this prediction for large $k$:

\begin{mainthm}\label{t:main} In random regular $k$-\textsc{nae-sat} with $k\ge k_0$, for all $\alpha<\asat(k)$ the free energy $\FE(\alpha)$ exists and equals the predicted value $\onersbFE(\alpha)$.
\end{mainthm}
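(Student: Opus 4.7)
The plan is to establish matching upper and lower bounds on $Z^{1/n}$, both equal to $\onersbFE(\alpha)$. The backbone of the argument is the 1\textsc{rsb} decomposition of the solution space into \emph{clusters} --- connected components of solutions under local rearrangements of unfrozen variables --- together with a Parisi-type weighted partition function $Z(s) \equiv \sum_\gamma \SIZE(\gamma)^s$ summed over clusters $\gamma$, where $\SIZE(\gamma)$ is the internal entropy of $\gamma$. The 1\textsc{rsb} prediction amounts to saying that $\frac1n \log \E Z(s)$ converges to an explicit functional $\Psi(s)$ whose Legendre transform at the optimal $s^*(\alpha) \in [0,1]$ equals $\log \onersbFE(\alpha)$, and that $Z(s^*)$ is concentrated near its mean. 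Recovering $Z$ itself from $Z(s^*)$ requires showing that, with high probability, the cluster sizes concentrate around the value dictated by $s^*$.

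For the upper bound, the first step is to encode each cluster by its \emph{frozen skeleton}: the set of variables whose assignment is common to all solutions in the cluster, together with their values. This produces a survey-propagation configuration on the factor graph valued in the enlarged alphabet $\{\zro,\one,\free\}$ subject to local consistency rules derived from the NAE-SAT constraint. A direct first-moment calculation on these frozen configurations, weighted by internal entropy to the power $s$, yields $\frac{1}{n}\log \E Z(s) \to \Psi(s)$. Combining this with a concentration estimate on $\log Z(s)$ (by an edge-switching Azuma argument) and optimizing over $s$ gives $\limsup_n Z^{1/n} \le \onersbFE(\alpha)$ in probability.

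The matching lower bound is the heart of the proof. I would apply a second-moment argument, followed by small-subgraph conditioning, to a truncated version of $Z(s^*)$ that retains only clusters whose empirical edge-type profile lies in a small neighborhood of the 1\textsc{rsb} optimizer. The second moment decomposes as a sum over pairs of clusters on the same random factor graph, and reduces to a \emph{pair} survey propagation model on the product alphabet $\{\zro,\one,\free\}^2$, stratified by the overlap between the two copies. The key analytic inequality to prove is that the Bethe free energy of this pair model, maximized over overlaps, is attained at the trivial product overlap, so that $\E Z(s^*)^2 \le (1+o(1))(\E Z(s^*))^2$; the variance bound is then upgraded by small-subgraph conditioning to $Z(s^*) = \E Z(s^*) \cdot e^{o(n)}$ with high probability, which yields $\liminf_n Z^{1/n} \ge \onersbFE(\alpha)$.

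The main obstacle will be this pair-model analysis. The enlarged alphabet produces many candidate stationary points of the two-copy Bethe functional; ruling out all non-diagonal optima, uniformly in $\alpha$ up to $\alpha_\textup{sat}(k)$, requires quantitative contraction estimates on both the single-copy and the two-copy survey-propagation operators, together with a careful perturbative analysis around the diagonal. This is delicate near $\alpha_\textup{sat}$, where $s^*(\alpha) \downarrow 0$ and the weighted and unweighted free energies coincide, so that the second-moment calculation degenerates and finer control on fluctuations is needed. I expect this to be the step requiring genuinely new techniques, and it is the step the abstract highlights as the paper's principal methodological contribution.
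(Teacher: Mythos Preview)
Your upper bound argument has a genuine gap. You propose to compute $\frac1n\log\E Z(s)\to\Psi(s)$ by a first-moment calculation, concentrate $\log Z(s)$ by Azuma, and then ``optimize over $s$'' to conclude $\limsup Z^{1/n}\le\onersbFE(\alpha)$. But concentration plus Jensen only gives $\frac1n\log Z(s)\le\Psi(s)+o(1)$, and to bound $Z$ itself you must take $s=1$, whereupon $\Psi(1)=\log\annFE(\alpha)$ --- the replica-symmetric value, which is \emph{strictly larger} than $\log\onersbFE(\alpha)$ throughout the condensation regime. For $s<1$ there is no inequality of the form $Z\le f(Z(s))$ that would let you transfer the bound; indeed the relations between $Z(s)$ and $Z(1)$ (power-mean, H\"older) all go the wrong way. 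This is precisely why the first moment fails above $\alpha_\textup{cond}$. The paper obtains the upper bound by a completely different route: a Franz--Leone / Panchenko--Talagrand style interpolation, adapted to the regular ensemble via a combinatorial rematching construction (Section~\ref{s:ubd}), carried out at positive temperature and then sent to $\beta\to\infty$.

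Your lower-bound outline is in the right spirit --- second moment on a tilted, truncated cluster partition function, with a pair model on $\{\zro,\one,\free\}^2$ --- but it understates where the difficulty lies and how it is resolved. You frame the pair-model analysis as ``perturbative analysis around the diagonal'' plus contraction; the paper's key move is instead a \emph{resampling Markov chain} (Section~\ref{s:reduce.to.tree}) that provably reduces the global, non-concave optimization of $\PSI(H)$ over empirical measures on the graph to a concave entropy-maximization problem $\LAMBDA$ on a single depth-one tree. This reduction is what forces the optimizer to have the one-sided BP form \eqref{e:multipliers.rep.first}, after which the contraction estimates identify it uniquely. The paper also handles the near-identical-pair contribution by a separability argument (Definition~\ref{d:sep}) rather than by treating it inside the Bethe functional, and upgrades positive probability to high probability via the method of \cite{dss-naesat-stoc} rather than small-subgraph conditioning.
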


\begin{rmk}\label{r:rmks}
We allow for $k_0$ to be adjusted as long as it remains an absolute constant (so it need not equal the $k_0$ from \cite{MR3440193}). It is assumed throughout the paper that $k\ge k_0$, even when not explicitly stated.  The following considerations restrict the range of $\alpha=d/k$ that we must consider:
\begin{enumerate}[--]
\item A convenient upper bound on the satisfiable regime is given by 
	\[
	\asat \le \ars\equiv \f{\log2}{-\log(1-2/2^k)}
	< 2^{k-1}\log 2 \equiv \aubd\,.
	\]
This bound is certainly implied by the estimate \eqref{e:alpha.sat.asymptotics} from \cite{MR3440193}, but it follows much more easily and directly from the first moment calculation \eqref{e:annealed.free.energy}. Indeed, we see from
\eqref{e:annealed.free.energy} that the function $\annFE(\alpha)$ is decreasing in $\alpha$ and satisfies $\annFE(\ars)=1$, so $(\E Z)^{1/n}<1$ for all $\alpha>\ars$. Thus, by Markov's inequality, we have that $\P(Z>0) \le \E Z$ tends to zero as $n\to\infty$, i.e., the random problem instance is unsatisfiable with high probability.

\item For $\alpha>\asat$ we must have $\FE(\alpha)=0$. On the other hand, we can see by comparing \eqref{e:alpha.sat.asymptotics} and \eqref{e:annealed.free.energy} that $\asat$ is strictly smaller than $\ars$, and $\annFE(\asat)$ is strictly positive. This suggests  that $\acond$ occurs strictly before $\asat$, since $\acond=\asat$ would mean that $\FE(\alpha)=\annFE(\alpha)$ up to $\asat$, and in this case we would expect to have $\asat=\ars$. Formally, it requires further argument to confirm that $\acond<\asat$ in random regular \textsc{nae-sat}, and we obtain this as a consequence of results in the present paper. However, the phenomenon of $\acond<\asat$ 
was previously confirmed by \cite{MR3205212} and \cite{MR3566764} for random hypergraph bicoloring and random regular \textsc{sat}, both of which are very similar to random regular \textsc{nae-sat}. As for the value of $\FE(\alpha)$ at the threshold $\alpha=\asat$, we point out that $\alpha=\asat$ makes sense in the setting of this paper only if $d_\textup{sat}(k)\equiv k\asat(k)$ is integer-valued for some $k$. We have no reason to think that this ever occurs; however, if it does, then the probability for $Z>0$ is bounded away from both zero and one \cite[Thm.~1]{MR3440193}. In this case,  $Z^{1/n}$ does not concentrate around a single value but rather on two values, 
	\[\bigg\{0,
	\lim_{\alpha \uparrow \asat} \onersbFE(\alpha)
	\bigg\}\,.\]

\item In \cite[Propn.~1.1]{MR3440193} it is shown that for $0\le\alpha \le \albd\equiv (2^{k-1}-2)\log 2$ and  $n$ large enough,
	\[
	\f{\E(Z^2)}{(\E Z)^2} \le C\equiv C(k,\alpha)<\infty
	\]
where $Z\equiv Z_n$ and $C(k,\alpha)$ does not depend on $n$. 
Thus, for any fixed $0<\epsilon<1$ and $n$ large enough, 
	\[\P(Z\ge \epsilon \E Z)
	\stackrel{\odot}{\ge}
	\f{\E(Z \Ind{ Z\ge \epsilon \E Z})^2}{\E(Z^2)}
	\ge \f{(1-\epsilon)^2(\E Z)^2}{\E(Z^2)}
	\ge \f{(1-\epsilon)^2}{C} \equiv \delta\,.
	\]
where the step marked $\odot$ is by the Cauchy--Schwarz inequality. The results of \cite[Sec.~6]{MR3440193} imply the stronger statement that
for any $0\le\alpha\le\albd$,
	\[\adjustlimits
	\lim_{\epsilon\downarrow0}
	\liminf_{n\to\infty} \P(Z \ge \epsilon \E Z)=1\,.
	\]
On the other hand we already noted 
in \eqref{e:annealed.free.energy}
that $\E(Z^{1/n}) \le (\E Z)^{1/n}=\annFE(\alpha)$ for all $\alpha\ge0$ and  $n\ge1$. It follows by combining these facts that 
$Z^{1/n}$ converges in probability to $\annFE(\alpha)$ in probability for any $0\le\alpha\le\albd$. That is to say, the result of Theorem~\ref{t:main}
is already proved for $\alpha\le\albd$, with $\FE(\alpha)=\annFE(\alpha)$. This also implies that the condensation transition $\acond$ must occur above $\albd$.
\end{enumerate}
In summary, we have $\albd < \asat < \ars < \aubd$, 
and it remains to prove Theorem~\ref{t:main} for  $\alpha\in(\albd,\asat)$. Thus, we can assume for the remainder of the paper that 
	\beq\label{e:regime.alpha}
	(2^{k-1}-2)\log2 = \albd \le \alpha
	\le \aubd = 2^{k-1}\log2\,.\eeq 
In the course of proving Theorem~\ref{t:main} we will also identify the condensation threshold $\acond\in(\albd,\asat)$ (characterized in Proposition~\ref{p:onersb.fe.analysis} below).
\end{rmk}

The \textsc{1rsb} heuristic, along with its implications for the condensation and satisfiability thresholds, has been studied in numerous recent works, which we briefly survey here. The existence of a condensation transition was first shown in random hypergraph bicoloring \cite{MR3205212}, which as we mentioned above is a model very similar to random \textsc{nae-sat}. We also point out \cite{MR2961553} which is the first work to successfully analyze solution clusters within the condensation regime, leading to a very good lower bound on satisfiability threshold. This was an important precursor to subsequent works \cite{MR3440193,MR3388183,MR3689942}
on exact satisfiability thresholds
in random regular \textsc{nae-sat}, random \textsc{sat}, and independent sets.
Condensation has been demonstrated to occur even at positive temperature in hypergraph bicoloring  (which is very similar to \textsc{nae-sat}) \cite{MR3513593}. However, determining the precise location of $\acond$ is challenging, and was first achieved for the random graph coloring model \cite{MR3440196} by an impressive and technically challenging analysis. A related paper pinpoints $\acond$ for random regular $k$-\textsc{sat} (which again is very similar to \textsc{nae-sat}) \cite{MR3566764}. Subsequent work \cite{MR3818090} characterizes the condensation threshold in a more general family of models, and shows a correspondence with information-theoretic thresholds in statistical inference problems. The main contribution of this paper is to determine for the first time the free energy throughout the condensation regime $(\acond,\asat)$.

\subsection{Statistical physics predictions}
\label{ss:intro.statphys}

According to the heuristic analysis by statistical physics methods, the random regular \textsc{nae-sat} model has a single level of replica symmetry breaking (\textsc{1rsb}). We summarize here some of the key phenomena that are predicted from the \textsc{1rsb} framework \cite{zdeborova2007phase, MR2317690, 1742-5468-2008-04-P04004}, referring the reader to \cite[Ch.~19]{MR2518205} for a full expository account. While much of the following description remains conjectural, the implications at the free energy level are rigorously established by the present paper. Throughout the following we write $\doteq$ to indicate equality up to subexponential factors ($\exp\{o(n)\}$).

Recall that we consider \textsc{nae-sat} with $k$ fixed, parametrized by the clause density $\alpha\equiv d/k$. Abbreviate $\zro\equiv\textsc{true}$, $\one\equiv\textsc{false}$. For small $\alpha$, almost all of the solutions lie in a single well-connected subset of $\set{\zro,\one}^n$. This holds until a \bemph{clustering transition} (or \emph{dynamical transition}) $\aclust$, above which the solution space becomes broken up into many well-separated pieces, or \bemph{clusters} (see \cite{mezard2005clustering, achlioptas2006solution, achlioptas2008algorithmic,MR2663730}). Informally speaking, clusters are subsets of solutions which are characterized by the property that within-cluster distances are very small relative to between-cluster distances. Conjecturally, $\aclust$ also coincides with the \bemph{reconstruction threshold} \cite{gerschenfeld2007reconstruction, MR2317690, montanari2011reconstruction}, and is small relative to $\asat$ when $k$ is large, with $\aclust/\asat\asymp (\log k)/k$.

For $\alpha$ above $\aclust$ it is expected that the number of clusters of size $\exp\{ n s\}$ has mean value $\exp\{ n\Sigma(s;\alpha) \}$, and is \bemph{concentrated} about this mean.
The function $\Sigma(s)\equiv\Sigma(s;\alpha)$
is referred to as the ``cluster complexity.'' The \textsc{1rsb} framework of statistical physics
gives an \bemph{explicit} conjecture for $\Sigma$, discussed below in \S\ref{ss:intro.tiltedmeasure}. Then, summing over cluster sizes $0\le s\le\log 2$ gives that the total number $Z$ of \textsc{nae-sat} solutions has mean
 	\beq\label{e:intro.cluster.complexity}
	\E Z \doteq
	\sum_s \exp\{ n[s+\Sigma(s)] \}
	\doteq \exp\{ n[s_1+\Sigma(s_1)] \},\eeq
where $s_1=\argmax[s+\Sigma(s)]$. It is expected that $\Sigma$ is continuous and strictly concave in $s$, and also that $s+\Sigma(s)$ has a unique maximizer $s_1$ with $\Sigma'(s_1)=-1$. 
For \textsc{nae-sat} and related models, this explicit calculation
reveals a critical value
$\acond
\in(\aclust,\asat)$,
characterized as
	\[\acond
	=\sup\set{\alpha \ge \aclust
		: \Sigma(s_1(\alpha);\alpha)\ge0 }\,.\]
By contrast, the satisfiability threshold can be characterized as 
	\[\asat
	=\sup\set{\alpha\ge\aclust
		: \max_s\Sigma(s;\alpha) \ge0}\,.\]
For all $\alpha\ge\aclust$, the expected partition function $\E Z$ is dominated by clusters of size $\exp\{n s_1\}$. However, for $\alpha>\acond$, we have $\Sigma(s_1)<0$, so the expected number of clusters of this size is very small: $\exp\{n\Sigma(s_1)\}$ tends to zero exponentially fast as $n\to\infty$. This means that clusters of size $\exp\{ns_1\}$  are highly unlikely to appear in a typical realization of the model. Instead, in a typical realization we only expect to see clusters of size $\exp\{ns\}$ with $\Sigma(s)\ge0$. As a result the solution space should be dominated (with high probability) by clusters of size $s_{\max}$ where
	\[s_{\max}
	\equiv s_{\max}(\alpha)
	\equiv \argmax\set{
		s+\Sigma(s)
		: \Sigma(s)\ge0
		}\,.\]
Since $\Sigma$ is continuous, $s_{\max}$ is the largest root of $\Sigma$, and for $\alpha\in(\acond, \asat)$ we should have
	\[Z\doteq \exp\{n s_{\max}\}\ll \E Z
	= \exp\{n[s_1+\Sigma(s_1)]\}
	\]
(where the approximation for $Z$ holds with high probability).  The \bemph{\textsc{1rsb} free energy}, formally given by Definition~\ref{d:onersbFE} below,
should be interpreted as an expression for
the function $\onersbFE(\alpha)=s_{\max}(\alpha)$.

\subsection{The tilted cluster partition function}
\label{ss:intro.tiltedmeasure}

From the discussion of \S\ref{ss:intro.statphys} we see that once the function $\Sigma(s;\alpha)$ is determined, it is possible to derive $\acond$, $\asat$, and $\FE(\alpha)$. However, previous works have not taken the approach of actually computing $\Sigma$. Indeed, $\asat$ was determined \cite{MR3440193} by an analysis involving only $\max_s\Sigma(s;\alpha)$, which contains less information than the full curve $\Sigma$. Related work on the exact determination (in a range of models) of $\acond$ \cite{MR3440196, MR3566764,MR3818090} also avoids computing $\Sigma$, reasoning instead via the so-called ``planted model.''

In order to compute the free energy, however, we cannot avoid computing (some version of) the function $\Sigma$, which we will do by a physics-inspired approach. First consider the $\lambda$-tilted partition function
	\beq\label{e:intro.Z.lambda}
	\barZZ_\lambda \equiv
	\sum_{\clust\in\CLUST(\glit)}
	|\clust|^\lambda\,,\eeq
where $\CLUST(\glit)$ denotes the set of solution clusters of $\glit$, and $|\clust|$ denotes the number of satisfying assignments inside the cluster $\clust$. According to the conjectural picture described above, we should have 
	\[
	\E\barZZ_\lambda
	\doteq
	\sum_s (\exp\{ns\})^\lambda
	\exp\{n\Sigma(s)\}
	\doteq \exp\{n\mathfrak{F}(\lambda)\}
	\]
where $\mathfrak{F}$ is the Legendre dual of $-\Sigma$:
	\beq\label{e:intro.legendre.dual}
	\mathfrak{F}(\lambda)
	\equiv (-\Sigma)^\star(\lambda)
	\equiv \max_s \bigg\{
		\lambda s + \Sigma(s)
		\bigg\}
	= \lambda s_\lambda + \Sigma(s_\lambda)\,,\eeq
where $s_\lambda\equiv\argmax_s[\lambda s + \Sigma(s)]$. Moreover, if $\Sigma(s_\lambda)\ge0$, then $\ZZ_\lambda$ should  concentrate near $\E\ZZ_\lambda$, and in this regime physicists have an exact prediction for  $\mathfrak{F}(\lambda)$, which will be further discussed below in \S\ref{ss:intro.onersb}. In short, the physics approach to computing $\Sigma$ is to first compute $\mathfrak{F}(\lambda)$ (in the regime where $\Sigma(s_\lambda)\ge0$), and then set $\Sigma = -\mathfrak{F}^\star$. Note that by differentiating $\mathfrak{F}(\lambda) = n^{-1}\log \E\barZZ_\lambda$ we find that $\mathfrak{F}$ is convex in $\lambda$, so the resulting $\Sigma$ will indeed be concave.

At first glance the reduction to computing $\mathfrak{F}(\lambda)$ may not seem to improve matters. It is not immediately clear how ``clusters'' should be defined. It turns out that in the regime we are studying, a reasonable definition is that two \textsc{nae-sat} solutions are connected if they differ by a single bit, and the \bemph{clusters} are the connected components of the solution space. A typical \textsc{nae-sat} solution will have a positive density of variables which are \bemph{free}, meaning their value can be changed without violating any clause; any such solution must belong in a cluster of exponential size. Each cluster may be a complicated subset of $\set{\zro,\one}^n$ --- changing the value at one free variable may affect whether its neighbors are free, so a cluster need not be a simple subcube of $\set{\zro,\one}^n$. Nevertheless, we wish to sum over the cluster sizes raised to non-integer powers.  This computation is made tractable by constructing more explicit combinatorial models for the \textsc{nae-sat} solution clusters, as we next describe.

\subsection{Modeling solution clusters}
\label{ss:intro.clustermodel}

In our regime of interest (i.e., $k\ge k_0$ and $\albd\le\alpha\le\aubd$; see Remark~\ref{r:rmks}), the analysis of \textsc{nae-sat} solution clusters is greatly simplified by the fact that in a typical satisfying assignment the vast majority of variables are \bemph{frozen} rather than free. The result of this, roughly speaking, is that a cluster $\clust\in\CLUST(\glit)$ can be encoded by a configuration $\ux\in\set{\zro,\one,\fre}^n$ (representing its circumscribed subcube, so $x_v=\fre$ indicates a free variable) with no essential loss of information. This is formalized by a combinatorial model of ``frozen configurations'' representing the clusters (Definition~\ref{d:frozen}). These frozen configurations can be viewed as the solutions of a certain \textsc{csp} lifted from the original \textsc{nae-sat} problem --- so the physics heuristics can be applied again to (the randomized version of) the lifted model. Variations on this idea appear in several places in the physics literature; in the specific context of random \textsc{csp}s we refer to \cite{parisi2002local, MR2155706, MR2351840}. Analyzing the number of frozen configurations, corresponding to \eqref{e:intro.Z.lambda} with $\lambda=0$, yields the satisfiability threshold for this model \cite{MR3440193}. 

Analyzing \eqref{e:intro.Z.lambda} for general $\lambda$ requires deeper investigation of the arrangement of free variables in a typical frozen configuration $\ux$. For this purpose it is convenient to view an \textsc{nae-sat} instance as a (bipartite) graph $\glit$, where the vertices are given by variables and clauses, and the edges indicate which variables participate in which clauses (the formal description appears in Section~\ref{s:comb}). A key piece of intuition is that if we consider the subgraph of $\GG$ induced by the free variables, together with the clauses through which they interact, then this subgraph is predominantly comprised of disjoint components $\bm{T}$ of bounded size. (In fact, the majority of free variables are simply isolated vertices; a smaller fraction occur in linked pairs; a yet smaller fraction occur in components of size three or more.) Each free component $\bm{T}$ is surrounded by frozen variables, and we let $z(\bm{T})$ be the number of \textsc{nae-sat} assignments on $\bm{T}$ which are consistent with the frozen boundary. Since disjoint components $\bm{T},\bm{T}'$ do not interact, the size of the cluster represented by $\ux$ is simply the product of $z(\bm{T})$
over all $\bm{T}$.

Another key observation is that the random \textsc{nae-sat} graph has few short cycles, so almost all of the free components will be \bemph{trees}. As a result, their weights $z(\bm{T})$ can be evaluated recursively by \bemph{belief propagation} (\textsc{bp}), a well-known dynamic programming method
(see e.g.\ \cite[Ch.~14]{MR2518205}). In the \textsc{rsb} heuristic framework, a cluster is represented by a vector $\vec{\msg}$ of ``messages,'' indexed by the directed edges of the \textsc{nae-sat} graph $\glit$. Informally, for a given cluster, and for any variable $v$ adjacent to any clause $a$,
	{\setlength{\jot}{0pt}\begin{align}\nonumber
	&\textup{$\msg_{v\to a}$ represents the 
	``within-cluster 
	law of $\bx_v$ in absence of $a$'';}\\
	&\textup{$\msg_{a\to v}$ represents the 
	``within-cluster 
	law of $\bx_v$ in absence of
	$\partial v\setminus a$'',}
	\label{e:interpret.messages}
	\end{align}
where $\partial v$ denotes the neighboring clauses of $v$. Each message is a probability measure on $\set{\zro,\one}$, and the messages are related to one another via local consistency equations, which are known as the \textsc{bp} equations. A configuration $\vec{\msg}$ which satisfies all the local consistency equations is a \bemph{\textsc{bp} solution}. Thus a cluster $\clust$ can be encoded either by a frozen configuration $\ux$ or by a \textsc{bp} solution $\vec{\msg}$; the latter has the key advantage that \bemph{the size of $\clust$ can be easily read off from $\vec{\msg}$, as a certain product of local functions.} For the cluster size raised to power $\lambda$, simply raise each local function to power $\lambda$. Thus the configurations $\vec{\msg}$ with $\lambda$-tilted weights form a \bemph{spin system} (Markov random field), whose partition function is the quantity of interest \eqref{e:intro.Z.lambda}. This new spin system is sometimes termed the ``auxiliary model'' (e.g.\ \cite[Ch.~19]{MR2518205}).

\subsection{One-step replica symmetry breaking}\label{ss:intro.onersb}

In \S\ref{ss:intro.clustermodel} we described informally how a solution cluster $\clust$ can be encoded by a frozen configuration $\ux$, or a \textsc{bp} solution $\vec{\msg}$. An important caveat is that the converse need not hold. In the \textsc{nae-sat} model, for any value of $\alpha$, a trivial \textsc{bp} solution is always given by the ``replica symmetric fixed point'' (also called the ``factorized fixed point''), where every $\msg_{v\to a}$ is the uniform measure on $\set{\zro,\one}$. However, above $\alpha_\text{cond}$, this is a spurious solution.  One way to see this is via the heuristic ``cavity calculation'' of $\annFE(\alpha)$, which we now describe to motivate the more complicated expression for $\onersbFE(\alpha)$.

Given a random regular \textsc{nae-sat} instance $\glit$ on $n$ variables, choose $k$ uniformly random variables $v_1,\ldots,v_k$, and assume for simplicity that no two of these share a clause. Remove the $k$ variables along with their $kd$ incident clauses, producing the ``cavity graph'' $\glit''$. Then add $d(k-1)$ new clauses to $\glit''$, producing the graph $\glit'$. Under this operation (cf.\ \cite{ass2003}), $\glit'$ is distributed as a random regular \textsc{nae-sat} instance on $n-k$ variables. If the free energy $\FE(\alpha)=\lim_{n\to\infty} Z^{1/n}$ exists, then we would expect it to agree asymptotically with
	\beq\label{e:rs.cavity}\bigg(\f{Z(\glit)}{Z(\glit')}\bigg)^{1/k}
	=\bigg(\f{Z(\glit)}{Z(\glit'')}\bigg)^{1/k}
	\bigg/
	\bigg(\f{Z(\glit')}{Z(\glit'')}\bigg)^{1/k}\,.\eeq
Let $U$ denote the set of ``cavity neighbors'': the variables in $\glit''$ of degree $d-1$, which neighbored the clauses that were deleted from $\glit$. Then $\glit$ and $\glit'$ differ from $\glit''$ only in the addition of a few small subgraphs which are attached to $U$. Computing $Z(\glit)/Z(\glit'')$ or $Z(\glit')/Z(\glit'')$ reduces to understanding the joint law of the spins $(\bx_u)_{u\in U}$ under the \textsc{nae-sat} model defined by $\glit''$. Since $\glit$ is unlikely to have many cycles, the vertices of $U$ are typically far apart from one another in $\glit''$. Therefore, one plausible scenario is that their spins are approximately independent under the \textsc{nae-sat} model on $\glit''$, with $\bx_u$ marginally distributed according to $\msg_{u\to a}$ where $a$ is the deleted clause that neighbored $u$ in $\glit$. If this is the case, then each $\msg_{u\to a}$ must be uniform over $\set{\zro,\one}$, by the negation symmetry of \textsc{nae-sat}. Under this assumption, we can calculate
	\beq\label{e:rs.cavity.ratios}
	\bigg(\f{Z(\glit)}{Z(\glit'')}\bigg)^{1/k}
	= 2(1-2/2^k)^d,\quad
	\bigg(\f{Z(\glit')}{Z(\glit'')}\bigg)^{1/k}
	= (1-2/2^k)^{\alpha(k-1)},\eeq
Substituting into \eqref{e:rs.cavity} gives the replica symmetric free energy prediction $\FE(\alpha)\doteq \annFE(\alpha)$, which we know to be false for large $\alpha$ (in particular, it is inconsistent with the known satisfiability threshold). Thus the replica symmetric fixed point, $\msg_{u\to a}= \textup{unif}(\set{0,1})$ for all $u\to a$, is a spurious \textsc{bp} solution. In reality the $\bx_u$ are \bemph{not} approximately independent in $\glit''$, even though the $u$'s are far apart. This phenomenon of \bemph{non-negligible long-range dependence} may be taken as a definition of replica symmetry breaking (\textsc{rsb}) in this setting, and occurs precisely for $\alpha$ larger than $\acond$.

Since above $\acond$ the partition function cannot be estimated by 
\eqref{e:rs.cavity.ratios}  due to replica symmetry breaking, a different approach is needed. To this end, 
the \bemph{one-step \textsc{rsb}} (1\textsc{rsb}) heuristic posits that even when the original \textsc{nae-sat} model exhibits \textsc{rsb}, the (seemingly more complicated) ``auxiliary model'' of $\lambda$-weighted \textsc{bp} solutions $\vec{\msg}$ \bemph{is replica symmetric, for $\lambda$ small enough}: conjecturally, as long as $\Sigma(s_\lambda)\ge0$ for $s_\lambda\equiv\argmax_s \set{ \lambda s+\Sigma(s)}$ (cf.\ the discussion below \eqref{e:intro.legendre.dual}).  That is, for such $\lambda$, the auxiliary model is predicted to have correlation decay, in contrast with the long-range correlations of the original model. This would mean that in the auxiliary model of the cavity graph $\glit''$, the spins $(\msg_{u\to a})_{u\in U}$ are approximately independent, with each $\msg_{u\to a}$ marginally distributed according to some law $\dq_{u\to a}$. The model has a replica symmetric fixed point, $\dq_{u\to a}=\dq_\lambda$ for all $u\to a$ (the analogue of $\msg_{u\to a}=\textup{unif}(\set{\zro,\one})$ for all $u\to a$). If we substitute this assumption into the cavity calculation (the analogues of \eqref{e:rs.cavity} and \eqref{e:rs.cavity.ratios}), we obtain the replica symmetric prediction for the auxiliary model free energy $\mathfrak{F}(\lambda)$, expressed as a function of $\dq_\lambda$. As explained above, from $\mathfrak{F}(\lambda)$ we can derive the complexity function $\Sigma(s)$ and the \textsc{1rsb} \textsc{nae-sat} free energy $\onersbFE(\alpha)$.

\subsection{The 1RSB free energy prediction}

Having described the heuristic reasoning, we now proceed to formally state the \textsc{1rsb} free energy prediction. We first describe $\dq_\lambda$ as a certain discrete probability measure over $\msg$. Since $\msg$ is a probability measure over $\set{\zro,\one}$, we encode it by $x\equiv\msg(\one)\in[0,1]$. A measure $q$ on $\msg$ can thus be encoded by an element $\mu\in\mathscr{P}$ where $\mathscr{P}$ denotes the set of discrete
probability measures on $[0,1]$.
For measurable $B\subseteq[0,1]$, define
	\begin{align}\nonumber
	\drecH_\lambda\mu(B)
	&\equiv 
	\f1{\hat{\mathscr{Z}}(\mu)}
		\int \bigg(2-\prod_{i=1}^{k-1}x_{i}-\prod_{i=1}^{k-1}(1-x_{i})\bigg)^{\lambda}
	\mathbf{1}\bigg\{
			\f{1-\prod_{i=1}^{k-1}x_{i}}
			{2- \prod_{i=1}^{k-1}x_{i} - \prod_{i=1}^{k-1}(1-x_{i})}
		\in B\bigg\}
		\,
		\prod_{i=1}^{k-1}{\mu}(dx_{i})\,,\\
	\drecD_\lambda\mu(B)
	&\equiv
	\f1{\dot{\mathscr{Z}}(\mu)}
		\int\bigg(\prod_{i=1}^{d-1}y_{i}+\prod_{i=1}^{d-1}(1-y_{i})\bigg)^{\lambda}
		\mathbf{1}\bigg\{
			\f{\prod_{i=1}^{d-1}y_{i}}
			{\prod_{i=1}^{d-1}y_{i}+\prod_{i=1}^{d-1}(1-y_{i})}
		\in B
		\bigg\}
		\,
		\prod_{i=1}^{d-1}\mu(dy_{i})\,,
	\label{e:dist.recur}
	\end{align}
where $\hat{\mathscr{Z}}(\mu)$ and $\dot{\mathscr{Z}}(\mu)$ are the normalizing constants such that
$\drecH_\lambda\mu$ and $\drecD_\lambda\mu$
are also probability measures on $[0,1]$.
(In the context of $\lambda=0$ we take the convention that $0^0=0$.)
Denote $\drec_{\lambda} \equiv \drecD_{\lambda}\circ \drecH_{\lambda}$.
The map $\drec_{\lambda}:\mathscr{P}\to\mathscr{P}$ represents the \textsc{bp} recursion for the auxiliary model. The following presents a solution for $\alpha$ in the interval
$(\albd,\aubd)$
which we recall (Remark~\ref{r:rmks}) is a superset of $(\acond,\asat)$.

\begin{ppn}[proved in Appendix~\ref{appx:onersb}]
\label{p:drec_fixpoint}
For $\lambda \in [0,1]$, let $\dmu_{\lambda,0}\equiv \f{1}{2} \delta_0 + \f{1}{2} \delta_1\in\mathscr{P}$, and define recursively $\dmu_{\lambda,l+1} = \drec_\lambda \dmu_{\lambda,l}\in\mathscr{P}$ for all $l\ge0$. Define
	$S_l \equiv 
	(\supp\dmu_{\lambda,l})
	\setminus
	(\supp(
	\dmu_{\lambda,0}
	+\ldots+\dmu_{\lambda,l-1}
	))$; this is a finite subset of $[0,1]$.
Regard $\dmu_{\lambda,l}$ as an infinite sequence indexed by the elements of $S_1$ in increasing order, followed by the elements of $S_2$ in increasing order, and so on.
For $k\ge k_0$ and $\albd \le \alpha \le \aubd$, 
in the limit $l\to\infty$,
$\dmu_{\lambda,l}$ converges in the $\ell^1$ sequence space to a limit
$\dmu_\lambda\in\mathscr{P}$ satisfying the fixed point equation $\dmu_\lambda = \drec_\lambda \dmu_\lambda$,
as well as the estimates
	$\dmu_\lambda((0,1))\le 
	7/2^k$ and
$\dmu_\lambda(dx) = \dmu_\lambda(d(1-x))$.
\end{ppn}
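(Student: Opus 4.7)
The plan is to realize $\dmu_\lambda$ as the $\ell^1$-limit of the iterates in three pieces: (a) propagation of the reflection symmetry $x\mapsto 1-x$; (b) a uniform-in-$l$ structural estimate bounding $\dmu_{\lambda,l}((0,1))$ by a constant multiple of $2^{-k}$ after a few initial iterations; and (c) an $\ell^1$-Cauchy bound for the successive differences $\dmu_{\lambda,l+1}-\dmu_{\lambda,l}$. Given (a)--(c), continuity of $\drec_\lambda$ in the $\ell^1$-topology — which itself relies on the uniform lower bounds on the normalizers $\hat{\mathscr{Z}}, \dot{\mathscr{Z}}$ furnished by (b) — delivers the fixed-point equation at the limit, and reflection symmetry plus the $7/2^k$ bound pass to the limit.

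Part (a) is essentially immediate: the change of variable $x_i\mapsto 1-x_i$ in the $\drecH_\lambda$ integrand (resp.\ $y_i\mapsto 1-y_i$ in $\drecD_\lambda$) swaps the two products $\prod x_i\leftrightarrow\prod(1-x_i)$ in both the weight factor and the indicator fraction, so the image point is reflected through $1/2$ while the weight and a reflection-symmetric $\mu$ are unchanged. Hence symmetry propagates from $\dmu_{\lambda,0}$. For (b), write $p_l\equiv\dmu_{\lambda,l}((0,1))$ and analyze one $\drec_\lambda$-step schematically. At the $\drecH_\lambda$ stage, frozen-uniform inputs produce the values $0$ or $1$, mixed frozen inputs produce the single value $1/2$, and any configuration with at least one floppy $x_i\in(0,1)$ contributes unnormalized mass at most $(k-1)p_l$ at other interior points. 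At the next $\drecD_\lambda$ stage, the weight $(\prod y+\prod(1-y))^\lambda$ vanishes on configurations mixing the frozen values $0$ and $1$ (via the $0^0=0$ convention), so the only configurations producing new interior mass are the all-$1/2$ tuple of the $d-1$ children — exponentially rare in $d\cdot\drecH_\lambda\dmu_{\lambda,l}(\{0,1\})$ — and tuples with some $y_i$ outside $\{0,1,1/2\}$. A direct calculation using $d\sim k\cdot 2^{k-1}\log 2$ for $\alpha\in[\alpha_\textup{lbd},\alpha_\textup{ubd}]$ shows that $\dmu_{\lambda,1}((0,1))\lesssim 1/2^k$ already, and the bound is preserved self-consistently under further iteration.

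The main work is part (c). I would couple the $k-1$ (resp.\ $d-1$) i.i.d.\ child samples from $\dmu_{\lambda,l}$ to those from $\dmu_{\lambda,l-1}$ via a maximal coupling, splitting the discrepancy into tuples where every pair matches (producing identical output) and tuples with some disagreement. The crucial observation is that the disagreement-contribution is concentrated on configurations with at least one floppy or non-$\{0,1\}$ child, whose total weight is small by (b); that the $\lambda$-tilted weight kills mixed $\{0,1\}$ configurations at the $\drecD_\lambda$ step, preventing a combinatorially large family of tuples from contributing; and that the normalizers, bounded away from zero by (b), keep the reweighting Lipschitz. Assembling these observations at both stages yields a one-step contraction $\|\dmu_{\lambda,l+1}-\dmu_{\lambda,l}\|_1\le r\,\|\dmu_{\lambda,l}-\dmu_{\lambda,l-1}\|_1$ with $r<1$ for $k\ge k_0$, hence geometric decay of increments and $\ell^1$-Cauchy convergence under the sequence ordering of $\bigcup_l S_l$ prescribed by the proposition. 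The principal obstacle is exactly this contraction: making the argument honest in the regime where $d$ is exponentially large in $k$ requires a joint bootstrap of the $(0,1)$-mass estimate (b) with the one-step decay (c), carefully accounting for the vanishing of the tilted weight on mixed frozen configurations and for the small mass on $(0,1)\setminus\{0,1/2,1\}$ inherited from previous iterations.
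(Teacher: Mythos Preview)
Your outline is structurally different from the paper's argument. The paper does not analyze $\drec_\lambda$ directly on $[0,1]$. Instead it lifts the recursion to a belief-propagation map $\vBP_{\lambda,\infty}$ acting on probability measures over a discrete alphabet $\dot\Omega$ of \emph{colorings} (Section~\ref{s:comb}), proves an $\ell^1$ contraction for $\vBP$ on an explicit region $\bm\Gamma$ (Proposition~\ref{p:contract}, itself reduced to Proposition~\ref{p:BP_contract} and the four Lemmas~\ref{l:BP_qtop}--\ref{l:BP_uptoq}), and then shows via Lemmas~\ref{l:drec_hatmuequiv}--\ref{l:drec_dotmuequiv} that the $[0,1]$ iterates $\dmu_{\lambda,l}$ are exactly the pushforwards of the coloring iterates $\dq^{(l)}$ under $\dot m:\dot\Omega\to[0,1]$. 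The proof of the proposition is then a one-line transfer. Your parts (a) and (b) are correct and match, respectively, the symmetry $\dq=\dq^{\mathrm{avg}}$ and the defining inequalities of $\bm\Gamma$.

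The gap is in (c), and it is not merely a matter of filling in routine estimates. A maximal-coupling bound on $\drecH_\lambda$ and $\drecD_\lambda$ controls the output discrepancy by roughly $(k-1)$ and $(d-1)$ times the input discrepancy, respectively; since $d\asymp k\,2^{k-1}\log 2$, this is catastrophic expansion. Your observation that every discrepant input pair has at least one coordinate in $(0,1)$ is true but does not by itself convert this into a contraction: it bounds the disagreement event by a quantity involving the $(0,1)$-mass $p_l$, not by a multiple of $\|\dmu_{\lambda,l}-\dmu_{\lambda,l-1}\|_1$ smaller than one. The paper's lift is precisely the device that breaks this impasse: the coloring alphabet distinguishes ``red'' (forced) from ``blue'' (unforced) frozen messages, a distinction erased by the pushforward to $[0,1]$. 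The reweighting $\dot p(\dsi)=\dq(\dsi)/(1-\dq(\texttt{r}))$ and the case analysis in Lemma~\ref{l:BP_ptohp} exploit this to show that differences at non-blue outputs pick up a genuine $2^{-(k-1)}$ factor (because such outputs force all $k-1$ inputs into a fixed half $\texttt{g}_x$), after which the variable step costs only an $O(k)$ expansion (Lemma~\ref{l:BP_uptoq}). A direct argument on $[0,1]$ would have to reproduce an equivalent decomposition without access to the red/blue bookkeeping; you have not indicated how.
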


The limit $\dmu_\lambda$ of Proposition~\ref{p:drec_fixpoint} encodes the desired replica symmetric solution $\dq_\lambda$
for the auxiliary model. We can then express 
$\mathfrak{F}(\lambda)$ in terms of $\dmu_\lambda$ as follows. Writing $\hmu_\lambda \equiv  \hat{\mathscr{R}}_\lambda\dmu_\lambda$, let $\dot{w}_\lambda,\hat{w}_\lambda,\bar{w}_\lambda\in \mathscr{P}$ be defined by
	\begin{align}\nonumber
	\dot{w}_{\lambda}(B)	
	&= ({\dot{\Zcal}}_{\lambda})^{-1}
		\int\bigg(\prod_{i=1}^{d}y_{i}+\prod_{i=1}^{d}(1-y_{i})\bigg)^{\lambda}
		 \mathbf{1}\bigg\{
		 \prod_{i=1}^{d}y_{i}
		 +\prod_{i=1}^{d}(1-y_{i})\in B
		 \bigg\}
		 \prod_{i=1}^{d}\hmu_{\lambda}(dy_{i})\,,\\
		 \nonumber
	\hat{w}_{\lambda}(B)	
		&=
		({\hat{\Zcal}}_{\lambda})^{-1}
		\int\bigg(1-\prod_{i=1}^{k}x_{i}-\prod_{i=1}^{k}(1-x_{i})\bigg)^{\lambda}
			\mathbf{1}\bigg\{
			1-\prod_{i=1}^{k}x_{i}-\prod_{i=1}^{k}(1-x_{i})\in B
			\bigg\}
			\prod_{i=1}^{k}\dmu_\lambda(dx_{i})\,,\\
	\bar{w}_{\lambda}(B)	
		&= ({\bar{\Zcal}}_{\lambda})^{-1}
			\iint\bigg(xy+(1-x)(1-y)\bigg)^{\lambda}
			\mathbf{1}\Big\{
			xy+(1-x)(1-y)\in B
			\Big\}
			\dmu_\lambda(dx)\hmu_{\lambda}(dy)
			\,,
		\label{e:drec_marginal}
	\end{align}
with $\dot{\Zcal}_{\lambda},\hat{\Zcal}_{\lambda},\bar{\Zcal}_{\lambda}$ the normalizing constants. The analogue of \eqref{e:rs.cavity.ratios}
for this model is
	\[\bigg(\f{\barZZ_\lambda(\glit)}
	{\barZZ_\lambda(\glit'')}\bigg)^{1/k}
	=\dot{\Zcal}_\lambda
	(\hat{\Zcal}_\lambda
	/\bar{\Zcal}_\lambda)^d,\quad
	\bigg(\f{\barZZ_\lambda(\glit')}
	{\barZZ_\lambda(\glit'')}\bigg)^{1/k}
	= (\hat{\Zcal}_\lambda)^{\alpha(k-1)},\]
and substituting into \eqref{e:rs.cavity} gives
the \textsc{1rsb} prediction
$\barZZ_\lambda \doteq 
\exp\{\mathfrak{F}(\lambda)\}$ where
	\beq\label{e:drec_Sigma}
	\mathfrak{F}(\lambda)\equiv
	\mathfrak{F}(\lambda;\alpha)
	\equiv \log \dot{\Zcal}_{\lambda} 
	+ \alpha \log \hat{\Zcal}_{\lambda} 
	- k\alpha \log \bar{\Zcal}_{\lambda}\,.\eeq
Further, the maximizer of 
$s\mapsto(\lambda s+\Sigma(s))$
 is predicted to be given by
	\beq\label{e:drec_size_lambda}
	s_\lambda
	\equiv s_\lambda(\alpha)\equiv
	\int \log(x) \dot{w}_\lambda(dx)
			+ \alpha \int
			\log(x) \hat{w}_\lambda(dx)
	- k\alpha \int \log(x) \bar{w}_\lambda(dx)\,.\eeq
If $s=s_\lambda$ for $\lambda\in[0,1]$ then we define
	\beq\label{e:formal.definition.Sigma}\Sigma(s)
	\equiv
	\Sigma(s;\alpha) 
	\equiv \mathfrak{F}(\lambda;\alpha)
	-\lambda s_\lambda(\alpha)\,.\eeq
We then use \eqref{e:formal.definition.Sigma} to define the thresholds 
	{\setlength{\jot}{0pt}\begin{align*}
	\acond &\equiv
	\sup\set{\alpha : \Sigma(s_1;\alpha)>0}\,,\\
	\asat &\equiv
	\sup\set{\alpha : \Sigma(s_0;\alpha)>0}\,.
	\end{align*}}%
We can now formally state the
predicted free energy of the original
\textsc{nae-sat} model:
\begin{dfn}\label{d:onersbFE}
For $\alpha\in k^{-1}\mathbb{Z}$,
\textsc{1rsb} free energy prediction $\onersbFE(\alpha)$ is defined as
	\beq\label{e:onersbFE}
	\onersbFE(\alpha) = \begin{cases}
			\annFE(\alpha)
			=2(1-2/2^k)^\alpha & \textup{for $\alpha\le\acond$,}\\
			\exp[\sup \set{ s 
			: \Sigma(s) \ge 0}]
			& \textup{for $\acond \le \alpha < \asat$}\\
			0 &\textup{for $\alpha > \asat$.}
		\end{cases}\eeq
(In regular $k$-\textsc{nae-sat} we must have integer $d=k\alpha$, so we need not consider $\alpha\notin k^{-1}\mathbb{Z}$.)
\end{dfn}

\begin{ppn}[proved in Appendix~\ref{appx:onersb}] \label{p:onersb.fe.analysis} Assume $k\ge k_0$ and write $A\equiv [\albd, \aubd] \cap (k^{-1}\mathbb{Z})$.
\begin{enumerate}[a.]
\item\label{p:strict.decrease} For each $\alpha\in A$, the function $s\mapsto\Sigma(s;\alpha)$ is well-defined, continuous, and strictly decreasing in $s$.
\item\label{p:large.k} For each $0\le\lambda\le1$, the function $\alpha\mapsto \Sigma(s_\lambda;\alpha )= \mathfrak{F}(\lambda) -\lambda s_\lambda$ is strictly decreasing with respect to $\alpha \in A$. There is a unique $\alpha_\lambda\in A$ such that $\Sigma(s_\lambda;\alpha)$ is nonnegative for all $\alpha\le\alpha_\lambda$, and is negative for all $\alpha>\alpha_\lambda$. Taking $\lambda=0$ we recover the estimate \eqref{e:alpha.sat.asymptotics}; and taking $\lambda=1$ we obtain in addition
	\beq\label{e:alpha.cond.asymptotics}
	\acond=\alpha_1
	= (2^{k-1}-1)\log2+ \epsilon_k\,.\eeq
\end{enumerate}
(The main purpose of this proposition is to show that $\Sigma(s_1)<0$ for all $\alpha\in(\acond,\asat)$, i.e., that the ``condensation regime'' is a contiguous range of values of $\alpha$. The expansion of $\alpha_{\textup{cond}}$ matches an earlier result of \cite{MR3205212}, which was obtained for a slightly different but closely related model.)\end{ppn}

\subsection{Proof approach}\label{ss:intro.proof}

Since $\FE=\FE(\alpha)$ is \textit{a~priori} not well-defined, the statement $\FE\le\textsf{g}$ means formally that for all $\epsilon>0$, $\P( Z^{1/n} \ge \textsf{g}+\epsilon)$ tends to zero as $n\to\infty$. With this notation, we will prove separately the upper bound $\FE(\alpha)\le\onersbFE(\alpha)$ and the matching lower bound $\FE(\alpha)\ge\onersbFE(\alpha)$. This implies the main result Theorem~\ref{t:main}: the free energy $\FE(\alpha)$ is indeed well-defined, and equals $\onersbFE(\alpha)$.

The upper bound is proved by an interpolation argument, which we defer to Appendix~\ref{appx:ubd}. This argument builds on similar bounds for spin glasses on Erd\H{o}s--R\'enyi graphs \cite{MR1972121,MR2095932}, together with ideas from \cite{MR3161470,MR3256814} for interpolation in random regular models. Let $Z_n(\beta)$ denote the partition function of \textsc{nae-sat} at inverse temperature $\beta>0$. The interpolation method yields an upper bound on $\E\log Z_n(\beta)$ which is expressed as the infimum of a certain function $\mathcal{P}(\mu;\beta)$, with $\mu$ ranging over probability measures on $[0,1]$. We then choose $\mu$ according to Proposition~\ref{p:drec_fixpoint}, and take $\beta\to\infty$ to obtain the desired bound $\FE(\alpha)\le\onersbFE(\alpha)$.

Most of the paper is devoted to establishing the
matching lower bound. The proof strategy is inspired by the physics picture described above, and at a high level proceeds as follows. Take any $\lambda$ such that $\Sigma(s_\lambda)$ (as defined by \eqref{e:drec_size_lambda} and \eqref{e:formal.definition.Sigma}) is nonnegative, and let $\bm{Y}_\lambda$ be the number of clusters of size roughly $\exp\{ns_\lambda\}$. (As discussed in \S\ref{ss:intro.tiltedmeasure}, we shall think of a cluster as a connected component of the solution space.) The informal statement of what we show is that
	\beq\label{e:intro.lbd}
	\bm{Y}_\lambda \doteq \exp\{ n[ \lambda s_\lambda + \Sigma(s_\lambda) ] \}\,.\eeq
Adjusting $\lambda$ 
as indicated by \eqref{e:onersbFE}
then proves the desired bound
$\FE(\alpha)\ge\onersbFE(\alpha)$.

Proving a formalized version of \eqref{e:intro.lbd} occupies a significant part of the present paper. We introduce a slightly modified version of the messages $\msg$ which record the topologies of the free trees $\bm{T}$. We then restrict to cluster encodings in which every free tree has fewer than $T$ variables, which limits the distance that information can propagate between free variables. We prove a version of \eqref{e:intro.lbd} for every fixed $T$, and show that this yields the sharp lower bound in the limit $T\to\infty$. The proof of \eqref{e:intro.lbd} for fixed $T$ is via the moment method for the auxiliary model, which boils down to a complicated optimization problem over many dimensions. It is known (see e.g.\ \cite[Lem.~3.6]{MR3440193}) that stationary points of the optimization problem correspond to ``generalized'' \textsc{bp} fixed points --- these are measures $Q_{v\to a}(\msg_{v\to a},\msg_{a\to v})$, rather than the simpler ``one-sided'' measures $q_{v\to a}(\msg_{v\to a})$ considered in the \textsc{1rsb} heuristic.

The one-sided property is a crucial simplification in physics calculations (cf.\ \cite[Proposition~19.4]{MR2518205}), but is challenging to prove in general. One contribution of this work that we wish to highlight is a novel resampling argument
which yields a reduction to one-sided messages, and allows us to solve the moment optimization problem.
(We are helped here by the truncation on the sizes of free trees.) Furthermore, the approach allows us to bring in methods from large deviations theory.
With these we can
show that the objective function has negative-definite Hessian at the optimizer,
which is necessary for the second moment method.
This resampling approach is quite general and should apply in a broad range of models.

\subsection{Open problems}

Beyond the free energy, it remains a challenge to establish the full picture predicted by statistical physicists for $\alpha\le \asat$. We refer the reader to several recent works targeted at a broad class of models in the regime $\alpha\le \acond$ \cite{MR3570985, MR3692136, MR3813215},  and to work on the location on $\acond$ in a general family of models \cite{condensation2017stoc}. In the condensation regime $(\acond,\asat)$, an initial step would be to show that most solutions lie within a bounded number of clusters. A much more refined prediction is that the mass distribution among the largest clusters forms a Poisson--Dirichlet process. Another question is to show that on a typical problem instance over $n$ variables, if $\vec{\bx}^1,\vec{\bx}^2$ are sampled independently and uniformly at random from the solutions of that instance, then the normalized overlap $R_{1,2}\equiv n^{-1}\set{v:\bx^1_v=\bx^2_v}$ concentrates on two values (corresponding roughly to the two cases that $\vec{\bx}^1,\vec{\bx}^2$ come from the same cluster, or from different clusters) --- this criterion is sometimes taken as the precise definition of \textsc{1rsb}. During the final revision stage of this paper, some of the above questions were addressed by a new preprint \cite{onersb}.

Beyond the immediate context of random \textsc{csp}s, understanding the condensation transition may deepen our understanding of the stochastic block model, a model for random networks with underlying community structure. Here again ideas from statistical physics have played an important role~\cite{decelle2011asymptotic}. A great deal is now known rigorously for the case of two blocks \cite{MR3238997, MR3383334}, where there is no condensation regime. For models with more than two blocks, however, it is predicted that the condensation can occur, and may define a regime where detection is information-theoretically possible but computationally intractable. A condensation threshold has been established for the anti-ferromagnetic Potts model, corresponding to the disassortative regime of the stochastic block model.
An analogous transition is expected in the ferromagnetic (assortative) case, and this remains open.

\subsection*{Acknowledgements} We are grateful to Amir Dembo, Jian Ding, Andrea Montanari, and Lenka Zdeborov\'a for helpful conversations. We thank the anonymous referee and Youngtak Sohn for pointing out errors and giving many helpful comments on drafts of the paper. We also gratefully acknowledge the hospitality of the Simons Institute at Berkeley, where part of this work was completed during a spring 2016 semester program.

\subsection*{Data availability statement} Data sharing is not applicable to this article as no datasets were generated or analyzed during this study.

\section{Combinatorial model}\label{s:comb}

In this section we formalize a combinatorial model of clusters, which allows us to rigorously lower bound the tilted cluster partition function \eqref{e:intro.Z.lambda}. We begin by reviewing the (standard) graphical depiction of \textsc{nae-sat}. A \bemph{not-all-equal-\textsc{sat}} (\bemph{\textsc{nae-sat}}) problem instance is naturally represented by a \bemph{bipartite factor graph} $\glit$ with signed edges, as follows. The vertex set of $\glit$ is partitioned into a set $V=\set{v_1,\ldots,v_n}$ of variables and a set $F=\set{a_1,\ldots,a_m}$ of clauses; 
we then have a set $E$ of edges joining
 variables to clauses. For each edge $e\in E$ we write $v(e)$ for the incident variable, and $a(e)$ for the incident clause;
 and we assign an edge literal $\lit_e\in\set{\zro,\one}$
 to indicate whether $v(e)$ participates affirmatively ($\lit_e=\zro$) or negatively ($\lit_e=\one$) in $a(e)$.
 We define all edges to have length one-half, so two variables $v\ne v'$ lie at unit distance if and only if they appear in the same clause. Throughout this paper we denote $\graph\equiv(V,F,E)$ for the bipartite graph without edge literals, and $\glit\equiv(V,F,E,\ulit)\equiv(\graph,\ulit)$ for the \textsc{nae-sat} instance.

Formally we regard the edges $E$ as a permutation, as follows. Each variable $v\in V$ has incident half-edges $\delta v$, while each clause $a\in F$ has incident half-edges $\delta a$. Write $\delta V$ for the labelled set of all variable-incident half-edges, and $\delta F$ for the labelled set of all clause-incident half-edges; we require that $|\delta V|=|\delta F|=\ell$. Then any permutation $\mathfrak{m}$ of $[\ell]\equiv\set{1,\ldots,\ell}$ defines $E$ by defining a matching between $\delta V$ and $\delta F$. Note that any permutation of $[\ell]$ is permitted, so multi-edges can occur. In this paper we assume that the graph is $(d,k)$-regular: each variable has $d$ incident edges, and each clause has $k$ incident edges, so $|E|=nd=mk$. A \bemph{random $k$-\textsc{nae-sat} instance} is given by $\glit=(V,F,E,\ulit)$ where $|V|=n$, $|F|=m$, $E$ is given by a uniformly random permutation $\mathfrak{m}$ of $[nd]$, and $\ulit$ is a uniformly random sample from $\set{\zro,\one}^E$. We write $\P$ and $\E$ for probability and expectation over the law of $\glit$.

\begin{dfn}[solutions and clusters]\label{d:nae.cluster} A \bemph{solution} of the \textsc{nae-sat} problem instance $\glit=(V,F,E,\ulit)$ is any assignment $\vec{\bx}\in\set{\zro,\one}^V$ such that for all $a\in F$, $(\lit_e \oplus\bx_{v(e)})_{e\in\delta a}$ is neither identically $\zro$ nor identically $\one$. Let $\SOL(\glit)\subseteq\set{\zro,\one}^V$ denote the set of all solutions of $\glit$, and define a graph on $\SOL(\glit)$ by connecting any pair of solutions at unit Hamming distance. The (maximal) connected components of the $\SOL(\glit)$ graph are the \bemph{solution clusters}, hereafter denoted $\CLUST(\glit)$.\end{dfn}

The aim of this section is to establish that (under a certain restriction) the \textsc{nae-sat} solution clusters can be represented by a combinatorial model of what we will term ``colorings.'' We will describe the correspondence in
a few stages. Informally, the progression is given by
	{\setlength{\jot}{0pt}\begin{align}\nonumber
	&\text{\textsc{nae-sat} solution clusters $\clust$} 
	\leftrightarrow\text{frozen configurations $\ux$}
	\leftrightarrow\\
	&\leftrightarrow\text{warning configurations $\vec{y}$}
	\leftrightarrow\text{message configurations $\uta$}
	\leftrightarrow\text{colorings $\usi$.}
	\label{e:bij}
	\end{align}}%
Each step of \eqref{e:bij} is formalized below. As mentioned previously, the key feature of the last model is that the size of a cluster $\clust$ can be easily read off from its corresponding coloring $\usi$, as a product of local functions. Some steps of the correspondence \eqref{e:bij} appear in existing literature (see~\cite{parisi2002local,MR2155706,MR2351840,MR2518205,MR3440193}) but we present them here in detail for completeness.

\subsection{Frozen and warning configurations}

We introduce a new value $\fre$ (free), and adopt the convention $\zro\oplus\fre\equiv \fre \equiv \one\oplus\fre$. For $l\ge1$ and $\ux\in\set{\zro,\one,\fre}^l$ let $I^\textsc{nae}(\ux)$ be the indicator that $\ux$ is neither identically~$\zro$ nor identically~$\one$. Given an \textsc{nae-sat} instance $\glit=(V,F,E,\ulit)$ and an assignment $\ux\in\set{\zro,\one,\fre}^V$, denote
	\[I^\textsc{nae}(\ux;\glit)
	\equiv
	\prod_{a\in F}
	I^\textsc{nae}(
	(\lit_e\oplus x_{v(e)})_{e\in\delta a}
	)\,.\]
By Definition~\ref{d:nae.cluster}, an \textsc{nae-sat} solution is an assignment $\vec{\bx}\in\set{\zro,\one}^V$ satisfying $I^\textsc{nae}(\vec{\bx};\glit)=1$.

\begin{dfn}[frozen configurations]\label{d:frozen} Given an \textsc{nae-sat} instance $\glit=(V,F,E,\ulit)$, for any $e\in E$ let $\glit\oplus\one_e$ denote the instance obtained by flipping the edge label
$\lit_e$ to $\lit_e\oplus\one$. We say that $\ux\in\set{\zro,\one,\fre}^V$ is a valid \bemph{frozen configuration} on $\glit$ if
(i) no \textsc{nae-sat} constraint is violated, meaning
$I^\textsc{nae}(\ux;\glit)=1$; and
(ii) for all $v\in V$, $x_v$ takes a value in $\set{\zro,\one}$ only when \bemph{forced} to do so,
meaning there is some $e\in\delta v$ such that
	\beq\label{e:forced}
	I^\textsc{nae}(\ux;\glit\oplus\one_e)=0\,.\eeq
If no such $e\in\delta v$ exists then $x_v=\fre$.
\end{dfn}

It is well known that on any given $\glit$, every \textsc{nae-sat} solution $\vec{\bx}$ can be mapped to a frozen configuration $\ux=\ux(\vec{\bx})$ via a ``coarsening'' or ``whitening'' procedure \cite{parisi2002local}, as follows. Initialize $\ux=\vec{\bx}$. Then, whenever $x_v\in\set{\zro,\one}$ but there exists no $e\in\delta v$ such that \eqref{e:forced} holds, update $x_v$ to $\fre$. Iterate until no further updates can be made; the result is then a valid frozen configuration. Two \textsc{nae-sat} solutions $\vec{\bx}$, $\vec{\bx}'$ map to the same frozen configuration $\ux$ if and only if they lie in the same cluster $\clust\in\CLUST(\glit)$. Thus, for any given $\glit$, we have a well-defined mapping from clusters $\clust$ to frozen configurations $\ux$. This map is one-to-one but not necessarily onto: for instance, the all-free assignment $\ux\equiv\fre$ is always trivially a valid frozen configuration, but on many instances $\glit$ there is no solution cluster $\clust\in\CLUST(\glit)$ whose coarsening is $\ux\equiv\fre$. Since the aim is to lower bound the clusters, the lack of surjectivity must be addressed. We will do so momentarily (Definition~\ref{d:free.cycle} below), but first we review an useful alternative representation of frozen configurations:

\begin{dfn}[warning configurations]
\label{d:wp}
For the integers $l\ge1$, define functions $\dotY : \set{\zro,\one,\fre}^l\to\set{\zro,\one,\fre,\invalid}$ and $\hatY : \set{\zro,\one,\fre}^l\to\set{\zro,\one,\fre}$ by
	\[\dotY(\vec{\hat{y}})
	=\begin{cases}
	\zro &\textup{if $\zro \in \set{\hat{y}_i}
	\subseteq \set{\zro,\fre}$,}\\
	\one &\textup{if $\one\in\set{\hat{y}_i}\subseteq \set{\one,\fre}$,}\\
	\fre &\textup{if $\set{\hat{y}_i}=\fre$,}\\
	\invalid &\textup{otherwise;}\end{cases}
	\quad \hatY(\vec{\dot{y}})
	= \begin{cases}
	\zro &\textup{if $\set{\dot{y}_i}=\set{\one}$,}\\
	\one &\textup{if $\set{\dot{y}_i}=\set{\zro}$;}\\
	\fre &\textup{otherwise.}\end{cases}\]
Denote $M\equiv\set{\zro,\one,\fre}^2$. We write $\vec{y}\in M^E$ if $\vec{y}=(y_e)_{e\in E}$ where $y_e\equiv(\dot{y}_e,\hat{y}_e)\in M$. If edge $e$ joins variable $v$ to clause $a$, then $\dot{y}_e$ represents a ``warning'' along $e$ from $v$ to $a$, while $\hat{y}_e$ represents a ``warning'' along $e$ from $a$ to $v$. We say that $\vec{y}\in M^E$ is a valid \bemph{warning configuration} on $\glit$ if it satisfies the local equations
	\beq\label{e:wp.equations}
	y_e=(\dot{y}_e,\hat{y}_e)
	=\Big( \dotY( \vec{\hat{y}}_{\delta v(e)\setminus e}),
	\lit_e \oplus \hatY(
		(\ulit\oplus\vec{\dot{y}})_{\delta a(e)\setminus e}
	) \Big)\eeq
for all $e\in E$
(with no $\dot{y}_e=\invalid$).
\end{dfn}

It is well known that on any given $\glit$ there is a natural bijection
	\beq\label{e:frozen.warning.bij}
	\left\{\hspace{-3pt}\begin{array}{c}
	\text{frozen configurations}\\
	\text{$\ux\in\set{\zro,\one,\fre}^V$}\end{array}
	\hspace{-3pt}\right\}
	\longleftrightarrow
	\left\{\hspace{-3pt}\begin{array}{c}
	\text{warning configurations}\\
	\text{$\vec{y}\in M^E$}\end{array}
	\hspace{-3pt}\right\}\,.\eeq 
In the forward direction,
given a (valid) frozen configuration $\ux$,
for any variable $v$ and any edge $e\in\delta v$
such that \eqref{e:forced} holds, set $\hat{y}_e=x_v \in\set{\zro,\one}$; then in all other cases set $\hat{y}_e=\fre$. Then, having defined all the $\hat{y}_e$, the $\dot{y}_e$ can only be defined by the local equations \eqref{e:wp.equations}. One can check that the resulting assignment $\vec{y}\in M^E$ is a warning configuration. Conversely, given a warning configuration $\vec{y}$, a frozen configuration $\ux$ can be obtained by setting $x_v=\dotY(\hat{y}_{\delta v})$ for all $v$.

\subsection{Message configurations}
\label{s:msg.config}

We return to the question of surjectivity: does a given frozen configuration $\ux$ encode a (nonempty) solution cluster $\clust\in\CLUST(\glit)$? We will now state an easy sufficient condition for this to hold. The condition is not in general necessary, but we will show that it captures enough of the solution space to deliver a sharp lower bound on the free energy.

\begin{dfn}[free cycles] \label{d:free.cycle} Let $\ux\in\set{\zro,\one,\fre}^V$ be a valid frozen configuration on $\glit=(V,F,E,\ulit)$. We say that a clause $a\in F$ is \bemph{separating} (with respect to $\ux$) if there exist $e',e''\in\delta a$ such that $\lit_{e'}\oplus x_{v(e')}=\zro$ while $\lit_{e''}\oplus x_{v(e'')}=\one$. For instance, a forcing clause is also separating. A cycle in $\glit$ is a sequence of edges
	\[e_1 e_2 \ldots e_{2\ell-1} e_{2\ell} e_1,\]
where, taking indices modulo $2\ell$, it holds for each integer $i$ that $e_{2i-1}$ and $e_{2i}$ are distinct but share a clause, while $e_{2i}$ and $e_{2i+1}$ are distinct but share a variable. (In particular, if $v$ is joined to $a$ by two edges $e' \ne e''$, then $e'e''$ forms a cycle.) We say the cycle in $\glit$ is \bemph{free} (with respect to $\ux$) if all its variables are free and all its clauses are non-separating.\end{dfn}

\begin{dfn}[free trees]\label{d:free.trees} 
Let $\ux$ be a frozen configuration on $\glit=(V,F,E,\ulit)$  that has no free cyces. Let $H$ be the subgraph of $\glit$ induced by the free variables and non-separating clauses of $\ux$. Since $\ux$ has no free cycles, $H$ must be a disjoint union of tree components $\bm{t}$, which we term the \bemph{free trees} of $\ux$.  For each $\bm{t}$, let $\bm{T}\equiv\bm{T}(\bm{t})$ be the subgraph of $\glit$ induced by $\bm{t}$  together with its incident variables. The subgraphs $\bm{T}$  (which can contain cycles) will be termed the \bemph{free pieces} of $\ux$. Each free variable is covered by exactly one free piece. In the simplest case, a free piece consists of a single free variable surrounded by $d$ separating clauses.\end{dfn}
 
Let us say that $\vec{\bx}\in\set{\zro,\one}^V$ \bemph{extends} $\ux\in\set{\zro,\one,\fre}^V$ if $\bx_v=x_v$ for all $v$ such that $x_v\in\set{\zro,\one}$. If $\ux$ is a frozen configuration on $\glit$ with no free cycles, it is easy to extend $\ux$ to valid \textsc{nae-sat} solutions $\vec{\bx}\in\set{\zro,\one}^V$ --- we simply extend $\ux$ on each free tree $\bm{t}$, since
\textsc{nae-sat} on a tree is always solvable; the different free trees do not interact. Let $\clust$ denote the set of all valid \textsc{nae-sat} solutions on $\glit$ that extend $\ux$, and denote $\SIZE(\ux)\equiv|\clust|$. Meanwhile, let $\mathfrak{T}(\ux)$ denote the set of all free pieces of $\ux$. For each $\bm{T}\in\mathfrak{T}(\ux)$, let $\SIZE(\ux;\bm{T})$ denote the number of valid \textsc{nae-sat} solutions on $\bm{T}$ that extend $\ux|_{\bm{T}}$. It follows from our discussion that
$\clust\in\CLUST(\glit)$ with
	\beq\label{e:cluster.size.product.free.trees}
	|\clust| = \SIZE(\ux)=\prod_{\bm{T}\in\mathfrak{T}(\ux)}
	\SIZE(\ux;\bm{T})\,.\eeq
That is to say, the absence of free cycles is an easy sufficient condition for a frozen configuration to encode a nonempty cluster; and it further ensures that the cluster has a relatively simple product structure \eqref{e:cluster.size.product.free.trees}. As noted previously, the structure within each free piece $\bm{T}$ can be understood by dynamic programming (\textsc{bp}). This is a well-known calculation (see e.g.\ \cite[Ch.~14]{MR2518205}) but we will review the details for our setting. To this end, we first introduce a combinatorial model of ``message configurations'' which will map directly to the natural \textsc{bp} variables. 

Recall from Definition~\ref{d:wp} that a warning configuration is denoted $\vec{y}\in M^E$
where each $y_e\equiv (\dot{y}_e,\hat{y}_e)\in M$.
We denote a message configuration by
$\vec{\tau}\in\mathscr{M}^E$
where each $\tau_e=(\dta_e,\hta_e)\in\mathscr{M}$
(for $\mathscr{M}$ to be defined below). It will be convenient to let $\dir$ indicate a directed edge, pointing from tail vertex $t(\dir)$ to head vertex $h(\dir)$. If $e$ is the undirected version of $\dir$, then we denote
	\[(y_{\dir},\tau_{\dir})
	=\begin{cases}
	(\dot{y}_e,\dta_e) &\textup{if $t(\dir)$ is a variable,}\\
	(\hat{y}_e,\hta_e) &\textup{if $t(\dir)$ is a clause.}
	\end{cases}\]
We will make a definition such that $\tau_{\dir}$ either takes the value ``$\star$'' or is a bipartite factor tree. The tree is \bemph{unlabelled} except that one vertex is distinguished as the root, and some edges are assigned $\zro$ or $\one$ values as explained below. The root of $\tau_{\dir}$ is required to have degree one, and should be thought of as corresponding to the head vertex $h(\dir)$.

In the context of message configurations $\vec{\tau}$, we use ``$\zro$'' or ``$\one$'' to stand for the tree consisting of a single edge which is labelled $\zro$ or $\one$ and rooted at the endpoint corresponding to the head vertex --- the root is the incident clause in the case of $\dta$, the incident variable in the case of $\hta$. We use $\spc$ to stand for the tree consisting of a single unlabelled edge, rooted at the incident variable; this will be related to the situation of separating clauses from Definition~\ref{d:free.cycle}. Given a collection of rooted trees $t_1,\ldots,t_\ell$ whose roots $o_1,\ldots,o_\ell$ are all of the same type (either all variable or all clauses), we define $t=\join(t_1,\ldots,t_\ell)$ by identifying all the $o_i$ as a single vertex $o$, then adding an edge which joins $o$ to a new vertex $o'$. The vertex $o$ has the same type (variable or clause) as the $o_i$; and the vertex $o'$ is assigned the opposite type and becomes the root of $t$.

\begin{dfn}[message configurations]
\label{d:msg.config} Start with $\dMM_0\equiv\set{\zro,\one,\star}$ and $\hMM_0\equiv\emptyset$, and suppose inductively that $\dMM_t,\hMM_t$ have been defined. For $\vec{\hta}\in (\hMM_t)^{d-1}$ and $\vec{\dta}\in(\dMM_t)^{k-1}$, let us abbreviate $\set{\hta_i}\equiv\set{\hta_1,\ldots,\hta_{k-1}}$
and $\set{\dta_i}\equiv\set{\dta_1,\ldots,\dta_{d-1}}$. Define
	\[\dotT(\vec{\hta}) 
	=\begin{cases}
	\zro&\textup{if $\zro\in\set{\hta_i}\subseteq\hMM_t\setminus\set{\one}$,}\\
	\one&\textup{if $\one\in\set{\hta_i}\subseteq\hMM_t\setminus\set{\zro}$,}\\
	\invalid&\textup{if $\set{\zro,\one}\subseteq\set{\hta_i}$,}\\
	\star&\textup{if $\star\in\set{\hta_i}\subseteq\hMM_t\setminus\set{\zro,\one}$,}\\
	\join\set{\hta_i}
		&\textup{if $\set{\hta_i}\subseteq\hMM_t\setminus\set{\zro,\one,\star}$;}
	\end{cases}\quad
	\hatT(\vec{\dta})
	=\begin{cases}
	\zro&\textup{if $\set{\dta_i}=\set{\one}$,}\\
	\one&\textup{if $\set{\dta_i}=\set{\zro}$,}\\
	\spc&\textup{if $\set{\zro,\one}\subseteq\set{\dta_i}$,}\\
	\star&\textup{if $\set{\zro,\one}\not\subseteq\set{\dta_i}$ and 
		$\star\in\set{\dta_i}$,}\\
	\join\set{\dta_i} &\textup{otherwise.}\end{cases}\]
Then, for $t\ge0$, define recursively the sets
	{\setlength{\jot}{0pt}\begin{align*}
	\hMM_{t+1}
	&\equiv \hMM_t \cup\hatT
	[(\dMM_t)^{k-1}]\,,\\
	\dMM_{t+1}
	&\equiv 
	\dMM_t \cup\dotT
	[(\hMM_{t+1})^{d-1}]\setminus\set{\invalid}
	\end{align*}}%
We then let $\dMM$ be the union of all the $\dMM_t$, let $\hMM$ be the union of all the $\hMM_t$, and let $\mathscr{M}=\dMM\times\hMM$. On $\glit=(V,F,E,\ulit)$, the assignment $\vec{\tau}\in\mathscr{M}^E$ is a valid \bemph{message configuration} if (i) it satisfies the local equations
	\beq\label{e:messageconfig.loceq}
	\tau_e
	=(\dta_e,\hta_e)
	=\Big(
	\dotT(
	\vec{\hta}_{\delta v(e)\setminus e}
	 ),
	\lit_e\oplus
	\hatT(
		(\ulit\oplus\vec{\dta})_{\delta a(e)\setminus e}
	)
	\Big)\eeq
for all $e\in E$ (with no $\dta_e=\invalid$),
and (ii) if one element of $\set{\dta_e,\hta_e}$ equals $\star$ then the other element is in $\set{\zro,\one}$. In \eqref{e:messageconfig.loceq}, we take the convention that $\lit_e\oplus\fre=\fre$ and 
$\lit_e\oplus\star=\star$, and if $\tau$ is a tree with labels then $\lit_e\oplus\tau$ is defined by applying $\lit_e\oplus\cdot$ entrywise to all labels of $\tau$. See Figure~\ref{f:join}.
\end{dfn}

\begin{figure}[h]
\centering
\includegraphics{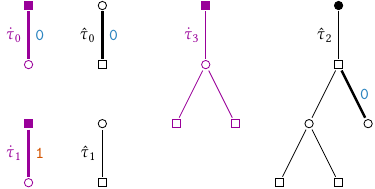}
\caption{Examples of messages (Definition~\ref{d:msg.config}). 
Variables are indicated by circle nodes, clauses by square nodes, and edges by lines.
For simplicity we assume that all edges depicted have literals $\lit_e=\zro$. Each message is shown with its root as a filled node at the top. The variable-to-clause messages $\dta$ are rooted at clauses, while the clause-to-variable messages $\hta$ are rooted at variables. The heavy lines indicates edges inside the message that are labelled $\zro$ or $\one$.
In our notation we have $\dta_0=\zro$ and $\dta_1=\one$.
Next $\hta_0=\hat{T}(\dta_1,\dta_1)=\zro$,
while $\hta_1=\hat{T}(\dta_0,\dta_1)=\spc$.
Finally $\dta_3=\dot{T}(\hta_1,\hta_1)
=\join(\hta_1,\hta_1)$
and $\hta_2=\hat{T}(\dta_3,\dta_0)
=\join(\dta_3,\dta_0)$.}
\label{f:join}
\end{figure}

Suppose $\ux$ is a frozen configuration on $\glit$, and let $\vec{y}$ be its corresponding warning configuration from \eqref{e:frozen.warning.bij}. Given $\vec{y}$, we define $\vec{\tau}$ in four stages:
\begin{enumerate}[1.]
\item If $\dot{y}_e\in\set{\zro,\one}$ then set $\dta_e=\dot{y}_e$; likewise if $\hat{y}_e\in\set{\zro,\one}$ then set $\hta_e=\hat{y}_e$. 
\item If $(\ulit\oplus\vec{\dot{y}})_{\delta a(e)\setminus e}$ has both $\zro$ and $\one$ entries, then set $\hta_e=\spc$. 
\item Apply the local equations \eqref{e:messageconfig.loceq} recursively to define $\dta_e,\hta_e$ wherever possible.
\item Lastly, if any $\dta_e$ or $\hta_e$ remains undefined, then set it to $\star$.
\end{enumerate} An example with $\star$ messages is given in Figure~\ref{f:star.cycle}.

\begin{figure}[h!]\includegraphics{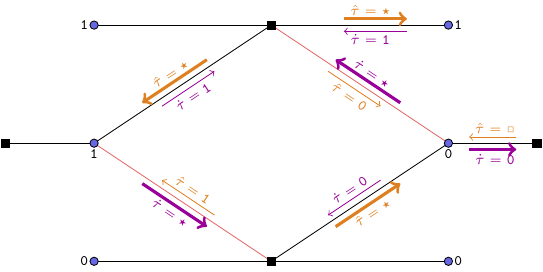}\caption{Example of how $\star$ messages can arise in the mapping from $\vec{y}$ to $\vec{\tau}$ (\S\ref{s:msg.config}). The figure shows a subgraph of $\glit$ with variables indicated by circle nodes, clauses by square nodes, and edges by lines. All edges in the figure are assumed to have label $\lit_e=\zro$. All variables shown are frozen to $\zro$ or $\one$, and all clauses shown are separating. To avoid clutter we did not label the edges with the warnings $y_e$; instead, each variable $v$ is labeled with its frozen configuration spin $x_v$, according to the $\ux\leftrightarrow\vec{y}$ bijection \eqref{e:frozen.warning.bij}. The clauses force the variables along the cycle in the clockwise direction, resulting in $\star$ values in the final $\vec{\tau}$ in the counterclockwise direction of the cycle. (Note also that $\vec{y}$ can be recovered from $\vec{\tau}$ by changing $\star$ to $\fre$; cf.\ Lemma~\ref{l:bij.frozen.message.configs}.)}\label{f:star.cycle}\end{figure}

\begin{lem}\label{l:bij.frozen.message.configs} The mapping described above defines a bijection
	\[\left\{\hspace{-3pt}\begin{array}{c}
	\text{frozen configurations
	$\ux\in\set{\zro,\one,\fre}^V$}\\
	\text{without free cycles}\end{array}
	\hspace{-3pt}\right\}
	\longleftrightarrow
	\left\{\hspace{-3pt}\begin{array}{c}
	\text{message configurations}\\
	\text{$\vec{\tau}\in\mathscr{M}^E$}\end{array}
	\hspace{-3pt}\right\}\,.\]

\begin{proof} Let $\ux\in\set{\zro,\one,\fre}^V$ be a frozen configuration on $\glit=(V,F,E,\ulit)$ without free cycles, and let $\vec{y}\in M^E$ be the warning configuration which corresponds to $\ux$ via \eqref{e:frozen.warning.bij}. We first check that the mapping $\vec{y}\mapsto\vec{\tau}$, as described above, gives a message configuration which is valid, i.e., satisfies conditions~(i)~and~(ii) of Definition~\ref{d:msg.config}. In the first stage, the mapping procedure sets $\dta_e=\dot{y}_e$ whenever $\dot{y}_e\in\set{\zro,\one}$, and $\hta_e=\hat{y}_e$ whenever $\hat{y}_e\in\set{\zro,\one}$. One can argue by induction that the rest of the procedure does not create any additional $\zro$ or $\one$ messages, so that in the final configuration the $\set{\zro,\one}$ values of $\vec{\tau}$ will match those of $\vec{y}$. The second and third stages of the procedure are clearly consistent with the local equations \eqref{e:messageconfig.loceq}. Note in particular that the third stage does not produce any $\dta_e=\invalid$ message, because it would contradict the assumption that $\vec{y}$ is a valid warning configuration; it also does not produce any $\star$ message. All $\star$ messages are created in the fourth stage, and this is clearly consistent with the mapping of $\star$ messages under $\dotT$ and $\hatT$. This concludes the proof that $\vec{\tau}$ satisfies condition~(i) of Definition~\ref{d:msg.config}. To check condition (ii), suppose $\tau_{\dir}=\star$, and let $\rev$ denote the reversal of $\dir$. From the above construction, it must be that $y_{\dir}=\fre$ and $\tau_{\dir'}=\star$ for some $\dir'$ that points to the tail vertex $t(\dir)$ but does not equal $\rev$. Consequently $\dir$ must belong to a directed cycle $\dir_1 \dir_2\ldots \dir_{2k} \dir_1$ with all the $\tau_{\dir_i}$ equal to $\star$. Whenever $\dir$ points from a separating clause $a$ to free variable $v$, we must have $\tau_{\dir}=\spc$. As a result, if all the variables along the cycle are free, then none of the clauses can be separating, contradicting the assumption that $\ux$ has no free cycles. Therefore some variable $v$ on the cycle must take value $x_v\in\set{\zro,\one}$, and by relabelling we may assume $v=t(\dir_1)$. Let $\rev_i$ denote the reversal of $\dir_i$: since $x_v\ne\fre$ but $y_{\dir_1}=\fre$, it must be that $y_{\rev_1}=x_v$. This means that the clause $a=h(\dir_1)=t(\rev_1)$ is forcing to $v$, so in particular $y_{\rev_2}\in\set{\zro,\one}$. Continuing in this way we see that $y_{\rev_i}\in\set{\zro,\one}$ for all $i$, and it follows that $\vec{\tau}$ satisfies condition~(ii), and so is a valid message configuration.

The mapping from $\vec{y}$ to $\vec{\tau}$ is clearly injective. To see that it is surjective, let $\vec{\tau}$ be any valid message configuration. Projecting $\dMM\setminus\set{\zro,\one}\mapsto\fre$ and $\hMM\setminus\set{\zro,\one}\mapsto \fre$ yields a valid warning configuration $\vec{y}$, which in turn maps to a valid frozen configuration $\ux$. It remains then to check that $\ux$ has no free cycles. Indeed, along a free cycle, all the warnings (in either direction) must be $\fre$. This means none of the messages can be in $\set{\zro,\one}$, and as a result none of the messages can be $\star$, by condition~(ii) of Definition~\ref{d:msg.config}. This means all the messages must be in $\dMM\setminus\set{\zro,\one,\star}$ or $\hMM\setminus\set{\zro,\one,\star}$. Suppose in one direction of the cycle we have the directed edges $\dir_1\dir_2\ldots \dir_{2k}\dir_1$. By definition of $\dotT$ and $\hatT$, $\tau_{\dir_i}$ is a proper subtree of $\tau_{\dir_{i+1}}$ for all $i$, with indices modulo $2k$. Going around the cycle we find that $\tau_{\dir_1}$ is a proper subtree of $\tau_{\dir_{2k+1}}=\tau_{\dir_1}$, which gives the contradiction.\end{proof}
\end{lem}

\subsection{Bethe formula} We now describe the dynamic programming (\textsc{bp}) calculation which will ultimately take a message configuration $\vec{\tau}$ and evaluate a product of local functions to compute the size of its associated cluster. The first step is to define the dynamic programming variables; these will formalize the measures $\msg$ which were introduced previously in~\eqref{e:interpret.messages}. Recall that for $l\ge1$ and $\ux\in\set{\zro,\one,\fre}^l$, we write $I^\textsc{nae}(\ux)$ for the indicator that the entries of $\ux$ are not identically $\zro$ or identically $\one$. 

\begin{dfn}\label{d:msg.of.tau} Recall that message configuration spins belong to the space $\mathscr{M}=\dMM\times\hMM$ (Definition~\ref{d:msg.config}). Let $\mathscr{P}(\set{\zro,\one})$ denote the space of probability measures on $\set{\zro,\one}$. Define the mappings $\dmsg:\dMM\to\mathscr{P}(\set{\zro,\one})$ and $\hmsg:\hMM\to\mathscr{P}(\set{\zro,\one})$ as follows. For $\dta\in\set{\zro,\one}$ let $\dmsg(\dta)$ be the unit measure supported on $\dta$. Likewise, for $\hta\in\set{\zro,\one}$ let $\hmsg(\hta)$ be the unit measure supported on $\hta$. For $\dta\in\dMM\setminus\set{\zro,\one,\star}$ or $\hta\in\hMM\setminus\set{\zro,\one,\star}$ we let $\dmsg(\dta)$ and $\hmsg(\hta)$ be recursively defined: if $\dta=\dotT(\hta_1,\ldots,\hta_{d-1})$ where no $\hta_j=\star$, define
	\beq\label{e:def.dm.dta}
	\dz(\dta)
	\equiv
	\sum_{\bx\in\set{\zro,\one}}
	\prod_{i=1}^{d-1} 
	[\hmsg(\hta_i)](\bx)\,,
	\quad [\dmsg(\dta)](\bx)
	\equiv
	\f{1}{\dz(\dta)}
	\prod_{i=1}^{d-1} 
	[\hmsg(\hta_i)](\bx)\,.\eeq
Note that $\hta_1,\ldots,\hta_{d-1}$ can be recovered from $\dta$ modulo permutation of the indices, so these quantities are well-defined. We see inductively that for $\dta\in\dMM\setminus\set{\zro,\one,\star}$, the normalizing factor $\dz(\dta)$ is positive, and $\dmsg(\dta)$ is a nondegenerate probability measure on $\set{\zro,\one}$. Similarly, if $\hta\in\hMM\setminus\set{\zro,\one,\star}$ equals $\hatT(\dta_1,\ldots,\dta_{k-1})$ where none of the $\dta_i$ are $\star$, then set
	\begin{align}\nonumber
	\hz(\hta)
	&\equiv
	\sum_{\bx\in\set{\zro,\one}}
	\sum_{\vec{\dot{\bx}}\in\set{\zro,\one}^{k-1}}
	I^\textsc{nae}(\bx,\vec{\dot{\bx}})
	\prod_{i=1}^{k-1}
	[\dmsg(\dta_i)](\dot{\bx}_i)
	=2
	-\prod_{i=1}^{k-1}[\dmsg(\dta_i)](\zro)
	-\prod_{i=1}^{k-1}[\dmsg(\dta_i)](\one)\,,\\
	[\hmsg(\hta)](\bx)
	&\equiv 
	\f1{\hz(\hta)}
	\sum_{\vec{\dot{\bx}}\in\set{\zro,\one}^{k-1}}
	I^\textsc{nae}(\bx,\vec{\dot{\bx}})
	\prod_{i=1}^{k-1}
	[\dmsg(\dta_i)](\dot{\bx}_i)
	= \f1{\hz(\hta)}
	\bigg(1-\prod_{i=1}^{k-1}[\dmsg(\dta_i)](\bx)\bigg)\,.
	\label{e:def.hm.hta.z}
	\end{align}
Again, we see inductively that for $\hta\in\hMM\setminus\set{\zro,\one,\star}$, the normalizing factor $\hz(\hta)$ is positive, and $\hmsg(\hta)$ is a nondegenerate probability measure on $\set{\zro,\one}$. Finally, we will see below that for our purposes we can take $\dmsg(\star)$ and $\hmsg(\star)$ to be arbitrary nondegenerate probability measures on $\set{\zro,\one}$; we therefore define them both to equal the uniform measure on $\set{\zro,\one}$.\end{dfn}

Given a valid message configuration $\vec{\tau}$ on $\glit$, define $\vec{\msg} = (\msg_e)_{e\in E}$ where $\msg_e\equiv(\dmsg_e,\hmsg_e)$ with $\dmsg_e\equiv\dmsg(\dta_e)$ and $\hmsg_e\equiv\hmsg(\hta_e)$. It follows from Definition~\ref{d:msg.of.tau} that $\vec{\msg}$ satisfies the following local consistency equations, which are inherited from the equations \eqref{e:messageconfig.loceq} satisfied by $\vec{\tau}$, in combination with the above definitions \eqref{e:def.dm.dta} and \eqref{e:def.hm.hta.z}. If $\dta_e\ne\star$, then $\dmsg_e$ is given by the equation
	\beq\label{e:variable.bp}
	\dmsg_e(\bx)
	= \f{1}{\dz(\dta_e)}
	\prod_{e'\in\delta v(e)\setminus e}
	\hmsg_{e'}(\bx)\eeq
for $\bx\in\set{\zro,\one}$. Likewise, if $\hta_e\ne\star$, then $\hmsg_e$ is given by the equation
	\beq\label{e:clause.bp}
	\hmsg_e(\bx)
	= \f{1}{\hz(\hta_e)}
	\sum_{
	\vec{\dot{\bx}}_{\delta a(e)}
	\in\set{\zro,\one}^d
	}
	\Ind{\dot{\bx}_e=\bx } 
	I^\textsc{nae}(
		(\vec{\dot{\bx}}\oplus\ulit
			)_{\delta a(e)}
		)
	\prod_{e'\in\delta a(e)\setminus e}
		\dmsg_{e'}(\bx_{e'})\eeq
for $\bx\in\set{\zro,\one}$. The equations \eqref{e:variable.bp} and \eqref{e:clause.bp} are known as the \bemph{\textsc{bp} equations}. We now proceed to the calculation of the cluster size \eqref{e:cluster.size.product.free.trees}. To this end, we define the local functions
	\begin{align}\nonumber
	\ephi(\dta,\hta)
	&\equiv
	\bigg\{
	\sum_{\bx\in\set{\zro,\one}}
	\dmsg[\dta](\bx) \cdot \hmsg[\hta](\bx)
	\bigg\}^{-1}\,,\\
		\nonumber
	\hphi(\dta_1,\ldots,\dta_k)
	&\equiv 
	\sum_{\vec{\bx}\in\set{\zro,\one}^k}
	I^\textsc{nae}(\vec{\bx})
	 \prod_{i=1}^k \dmsg[\dta_i](\bx_i)
	=1-\sum_{\bx\in\set{\zro,\one}}
		\prod_{i=1}^k \dmsg[\dta_i](\bx)
	= \f{\hz(\hatT( (\dta_j)_{j\ne i} ))) }
		{\ephi(\dta_i,\hatT( (\dta_j)_{j\ne i} ))}
	\,,\\
	\dphi(\hta_1,\ldots,\hta_d)
	&\equiv
	\sum_{\bx\in\set{\zro,\one}}
	\prod_{i=1}^d\hmsg[\hta_i](\bx)
	= \f{\dz(\dotT((\hta_j)_{j\ne i})))}
		{\ephi(\hta_i,\dotT((\hta_j)_{j\ne i}))}\,,
		\label{e:phi.functions}
	\end{align}
where the last identity in the last two lines holds for any choice of $i$. The \textsc{bp} calculation is summarized by the following:

\begin{lem}\label{l:tree.cluster.size} Suppose on $\glit=(V,F,E,\ulit)$ that $\ux$ is a frozen configuration with no free cycles, and let $\vec{\tau}$ be its corresponding message configuration from Lemma~\ref{l:bij.frozen.message.configs}. Let $\bm{T}\in\mathfrak{T}(\ux)$ be a free piece of $\ux$, and let $\bm{t}$ be the free tree inside it. Then the number of \textsc{nae-sat} extensions of $\ux|_{\bm{T}}$ on $\bm{T}$ is given by
	\beq\label{e:free.tree.size}
	\SIZE(\ux;\bm{T})
	=\prod_{v\in V(\bm{t})}
	\bigg\{
	\dphi(\vec{\hat{\tau}}_{\delta v})
	\prod_{e\in\delta v}\ephi(\tau_e)	
	\bigg\}
	\prod_{a\in F(\bm{t})}
	\hphi((\vec{\dta}
	\oplus\ulit)_{\delta a})\eeq
where $V(\bm{t})$ and $F(\bm{t})$ denote respectively the variables and clauses in $\bm{t}$. (An example calculation is worked out in Figure~\ref{f:def.tree.messages}.)

\begin{proof}As we have mentioned before, this calculation is well known (see e.g.\ \cite[Ch.~14]{MR2518205}) but we will review it here, beginning with a minor technical point. As noted in Definition~\ref{d:free.trees}, $\bm{t}$ is a tree but $\bm{T}$ has a cycle wherever a variable $v\in \bm{T}\setminus\bm{t}$ is joined by more than one edge to $\bm{t}$. However, since $\ux|_{\bm{T}\setminus\bm{t}}$ is $\set{\zro,\one}$-valued, these cycles play no role in the question of extending $\ux|_{\bm{T}}$ to a valid \textsc{nae-sat} assignment on $\bm{T}$ --- one can simply duplicate variables in $\bm{T}\setminus\bm{t}$ so that each one joins to $\bm{t}$ by exactly one edge. We may therefore assume for the rest of the proof that all the free pieces $\bm{T}\in\mathfrak{T}(\ux)$ are acyclic.

For any $\bm{T}\in\mathfrak{T}(\ux)$ and any edge $e\in\bm{T}$, delete from $\bm{T}$ the edges $\delta a(e)\setminus e$, and let $\dot{\bm{T}}_e$ denote the component containing $e$ in what remains, rooted at $a(e)$. Likewise, delete from $\bm{T}$ the edges $\delta v(e)\setminus e$, and let $\hat{\bm{T}}_e$ denote the component containing $e$ in what remains, rooted at $v(e)$. For each variable $w\in\dot{\bm{T}}_e\setminus\bm{t}$, let $\acute{x}_w\in\set{\zro,\one}$ be the boolean sum of $x_w$ together with all the edge literals $\lit$ on the path joining $w$ to $a(e)$ in $\dot{\bm{T}}_e$. Note then that $\dta_e$ encodes the isomorphism class of $\dot{\bm{T}}_e$, labelled with boundary data $\acute{x}_w$ (for all the variables $w\in\dot{\bm{T}}_e\setminus\bm{t}$). A similar relation holds between $\hta_e$ and $\hat{\bm{T}}_e$. For each $e\in\bm{T}$, let $\dot{\textsf{s}}_e(\bx;\ux)$ count the number of valid \textsc{nae-sat} assignments that extend $\ux|_{\dot{\bm{T}}_e}$ on $\dot{\bm{T}}_e$ and take value $\bx$ on $v(e)$. Let $\hat{\textsf{s}}_e(\bx;\ux)$ count the number of valid \textsc{nae-sat} assignments that extend $\ux|_{\hat{\bm{T}}_e}$ on $\hat{\bm{T}}_e$ and take value $\bx$ on $v(e)$. Denote
	\[\dot{\textsf{s}}_e(\ux)\equiv
	\sum_{\bx\in\set{\zro,\one}}
	\dot{\textsf{s}}_e(\bx;\ux)\,,\quad
	\hat{\textsf{s}}_e(\ux)
	\equiv
	\sum_{\bx\in\set{\zro,\one}}
	\hat{\textsf{s}}_e(\bx;\ux)\,.\]
There are two boundary cases: if edge $e$ joins a free variable in $\bm{t}$ to a separating clause in $\bm{T}\setminus\bm{t}$, then we have $\hat{\textsf{s}}_e(\zro;\ux) = \hat{\textsf{s}}_e(\one;\ux)=1$. If edge $e$ instead joins a non-separating clause in $\bm{t}$ to a frozen variable in $\bm{T}\setminus\bm{t}$, then we have $\dot{\textsf{s}}_e(\bx;\ux)=\Ind{\bx=x_{v(e)}}$. By induction started from these boundary cases we find that for all $e\in\bm{T}$,
	\[\dmsg_e(\bx)
	=\f{\dot{\textsf{s}}_e(\bx;\ux)}
		{\dot{\textsf{s}}_e(\ux)}\,,\quad
	\hmsg_e(\bx)
	=\f{\hat{\textsf{s}}_e(\bx;\ux)}
		{\hat{\textsf{s}}_e(\ux)}\,.\]
It follows that for any variable $v\in\bm{t}$, any clause $a\in\bm{t}$, and any edge $e\in\bm{T}$, we have the identities
	\[\SIZE(\ux;\bm{T})
	= \dot{\varphi}(\vec{\hta}_{\delta v})
	\prod_{e'\in\delta v}
	\hat{\textsf{s}}_{e'}(\ux)
	= \hphi(\vec{\dta}_{\delta a})
	\prod_{e'\in\delta a}
	\dot{\textsf{s}}_{e'}(\ux)
	= \f{\dot{\textsf{s}}_e(\ux)
	 \hat{\textsf{s}}_e(\ux)}{\ephi(\tau_e)}\,.\]
Combining the identities and rearranging gives
(writing $E(\bm{t})$ for the edges of $\bm{t}$)
	\begin{align*}
	&\prod_{v\in V(\bm{t})}
	\bigg\{
	\dphi(\vec{\hat{\tau}}_{\delta v})
	\prod_{e\in\delta v}\ephi(\tau_e)
	\bigg\}
	\prod_{a\in F(\bm{t})}
	\hphi((\vec{\dta}
	\oplus\ulit)_{\delta a})\\
	&\qquad=\f{\SIZE(\ux;\bm{T})^{
		|V(\bm{t})|+|F(\bm{t})|}}
			{\SIZE(\ux;\bm{T})^{|E(\bm{t})|}}
	\cdot \bigg\{
	\prod_{v\in V(\bm{t})}
	\prod_{e\in\delta v\setminus\bm{t}}
	\f{\dot{\textsf{s}}_e(\ux)}
			{\SIZE(\ux;\bm{T})}
			\bigg\}\bigg/\bigg\{
	\prod_{a\in F(\bm{t})}
	\prod_{e\in\delta a\setminus\bm{t}}
	\dot{\textsf{s}}_e(\ux)
	\bigg\}\,.
	\end{align*}
For $a\in F(\bm{t})$ and $e\in\delta a\setminus\bm{t}$, the variable $v(e)$ is frozen and so we have $\dot{\textsf{s}}_e(\ux)=1$. For any $v\in V(\bm{t})$ and $e\in\delta v\setminus\bm{t}$, we have $\dot{\textsf{s}}_e(\ux)=\SIZE(\ux;\bm{T})$. The tree $\bm{t}$ has Euler characteristic one. The right-hand side of the above equation then simplifies to $\SIZE(\ux;\bm{T})$, thereby proving the claim.\end{proof} \end{lem}

\begin{figure}[h!]\centering\includegraphics{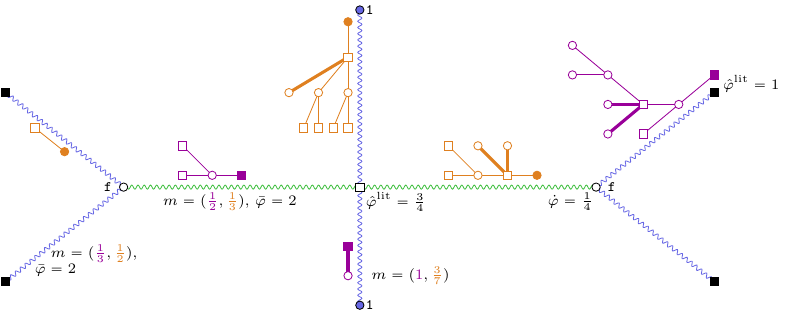}\caption{Example of correspondence (Lemma~\ref{l:bij.frozen.message.configs}) between frozen and message configurations. Variables are indicated by circle nodes, clauses by square nodes, and edges by lines. The graph formed by the wavy lines is a free piece $\bm{T}$, with the free tree $\bm{t}$ in {\color{green!50!gray}green} and $\bm{T}\setminus\bm{t}$ in {\color{blue!70!gray!70!white}blue} (Definition~\ref{d:free.trees}). Each variable $v$ is labelled with its frozen configuration spin value $x_v\in\set{\zro,\one,\fre}$. The four separating clauses are indicated by filled black squares. The message configuration is only partially shown, with the remaining values given by the obvious symmetries. The clause-to-variable messages $\hta$ are shown in black, and the variable-to-clause messages $\dta$ are shown in {\color{violet!80!magenta}purple}. Each message is a tree, with root vertex shown as a filled node. The heavy black and {\color{violet!80!magenta}purple} lines indicate edges inside the messages that are labeled $\one$. For instance, the message coming up out of the bottom variable is a tree consisting of a single edge, labelled $\one$ (indicated in the figure by a heavy {\color{violet!80!magenta}purple} line), rooted at its incident clause. We then calculate on each edge the values $m=( {\color{violet!80!magenta}\dmsg[\dta](\one)}, {\color{black}\hmsg[\hta](\one)})$, and use this to determine the factors $\dphi$, $\hphi$, $\ephi$ from \eqref{e:phi.functions}. In this example, $\bm{t}$ has two free variables each with $\dphi=1/4$, four separating clauses each with $\hphi=1$, and one non-separating clause with $\hphi=3/4$. There are six edges incident to $\bm{t}$, each with $\ephi=2$. Multiplying all these factors together (Lemma~\ref{l:tree.cluster.size}) gives $\SIZE(\ux;\bm{T})=3$. Indeed, in this small example it is easy to see that there are exactly three \textsc{nae-sat} assignments extending the frozen configuration $\ux$ on $\bm{T}$, since the two free variables cannot both take value $\one$, but the remaining three possibilities give valid \textsc{nae-sat} assignments.}\label{f:def.tree.messages}\end{figure}

\begin{cor}\label{c:cancel}Suppose on $\glit=(V,F,E,\ulit)$ that $\ux$ is a frozen configuration with no free cycles, and let $\vec{\tau}$ be its corresponding message configuration from Lemma~\ref{l:bij.frozen.message.configs}. Then the number of valid \textsc{nae-sat} extensions of $\ux$ is given by the product formula
	\[\SIZE(\ux)=\prod_{v\in V}
	\dphi(\vec{\hat{\tau}}_{\delta v})
	\prod_{a \in F}
	\hphi((\vec{\dta}\oplus\ulit)_{\delta a})
	\prod_{e\in E}
	\ephi(\tau_e)\,.\]
This identity holds as long as $\hmsg(\star)$ and $\hmsg(\star)$ are fixed nondegenerate probability measures on $\set{\zro,\one}$.

\begin{proof} Let $V'$ denote the set of free variables, and let $E'$ denote the set of all edges incident to $V'$. Let $F'$ the set of non-separating clauses. From \eqref{e:cluster.size.product.free.trees} and Lemma~\ref{l:tree.cluster.size} we have
	\beq\label{e:product.restr.to.free.trees}
	\SIZE(\ux)
	=\prod_{\bm{T}\in\mathfrak{T}(\ux)}
		\SIZE(\bx;\bm{T})
	=\prod_{v\in V'}
	\dphi(\vec{\hta}_{\delta v})
	\prod_{a\in F'}
	\hphi(
	(\vec{\dta}
	\oplus\ulit)_{\delta a})
	\prod_{e\in E'}\ephi(\tau_e)\,.\eeq
For any edge $e\in E\setminus E'$, the incident variable $v(e)$ must lie in $V\setminus V'$, meaning $x_{v(e)}\in\set{\zro,\one}$. We now partition $E\setminus E'$ into the disjoint union of $E_\red$ and $E_\blu$, as follows. Let $E_\red$ be the set of edges $e\in E\setminus E'$ such that $\hmsg_e$ is fully supported on $x_{v(e)}$. Let $E_\blu$ be the set of edges $e\in E\setminus E'$ such that $\hmsg_e$ is a nondegenerate measure on $\set{\zro,\one}$; note that $\dmsg_e$ must then be fully supported on $x_{v(e)}$. Consider a clause $a\in F\setminus F'$. If $a$ is non-forcing, then $\delta a\cap E_\red=\emptyset$ and $\hphi((\vec{\dta}\oplus\ulit)_{\delta a})=1$. Otherwise, $a$ is forcing in the direction of some edge $e\in\delta a$, in which case $\delta a \cap E_\red= \set{e}$ and $\hphi((\vec{\dta}\oplus\ulit)_{\delta a}) = \dmsg_e(x_{v(e)}) = 1/\ephi(\tau_e)$. We conclude for all $a\in F\setminus F'$ that
	\beq\label{e:separating.clauses.cancellation}
	\hphi((\vec{\dta}\oplus\ulit)_{\delta a})
	\prod_{e\in\delta a \cap E_\red}
	\ephi(\tau_e) = 1\,.\eeq
For $v\in V\setminus V'$, for all $e\in\delta v\cap E_\blu$ we have $\dmsg_e(x_v)=1$ and so $\ephi(\tau_e)=1/\hmsg(x_v)$. Thus, for all $v\in V\setminus V'$,
	\beq\label{e:forced.variables.cancellation}
	\dphi(\vec{\hta}_{\delta v})
	\prod_{e\in\delta v\cap E_\blu}
	\ephi(\tau_e)
	=1\,.\eeq
The identities \eqref{e:separating.clauses.cancellation}~and~\eqref{e:forced.variables.cancellation} remain valid even for vertices incident to $\star$ messages, as long as $\dmsg(\star)$ and $\hmsg(\star)$ are fixed nondegenerate probability measures on $\set{\zro,\one}$. Combining the identities with \eqref{e:product.restr.to.free.trees} proves the claim.\end{proof}\end{cor}

\subsection{Colorings}

We conclude this section by defining the coloring model, building on an encoding introduced by~\cite{MR3436404}. It is a simplification of the message configuration model (Definition~\ref{d:msg.config}) that takes advantage of some of the cancellations (\eqref{e:separating.clauses.cancellation}~and~\eqref{e:forced.variables.cancellation}) seen above. In short, following the notation of Corollary~\ref{c:cancel}, for edges in $E\setminus E'$ it is not necessary to keep all the information of $\tau_e$; instead, it suffices to keep track only of whether $e$ belongs to $E_\red$ or $E_\blu$, along with the value of $x_{v(e)}\in\set{\zro,\one}$. The colorings encode precisely this information. The resulting bijection between colorings and message configurations is the last step of \eqref{e:bij}.

Recall messages take values $\tau\equiv(\dta,\hta)\in\mathscr{M}\equiv\dMM\times\hMM$ (Definition~\ref{d:msg.config}), and let $\set{\fcl}\subseteq\mathscr{M}$ denote the subset of values $\tau\in\mathscr{M}$ where we have both $\dta\in\dMM\setminus\set{\zro,\one,\star}$ and $\hta\in\hMM\setminus\set{\zro,\one,\star}$. Denote $\COLS\equiv \set{\redz,\redo, \bluz,\bluo} \cup \set{\fcl}$. Define a projection $\ST : \mathscr{M} \to \COLS$ by
	\beq\label{e:def.ST.tau}
	\ST(\tau)=\begin{cases}
	\redz & \textup{if $\hta=\zro$,}\\
	\redo & \textup{if $\hta=\one$,}\\
	\bluz & \textup{if $\hta\ne\zro$ and $\dta=\zro$,}\\
	\bluo & \textup{if $\hta\ne\one$ and $\dta=\one$,}\\
	\tau & \textup{otherwise (meaning that $\tau\in\set{\fcl}$).}
	\end{cases}\eeq
(Note that $\ST(\tau)\in\set{\redz,\redo}$ includes the case $\dta=\star$, and $\ST(\tau)\in\set{\bluz,\bluo}$ includes the case $\hta=\star$.) We define a partial inverse to $\ST$ as follows. If $\sigma\in\set{\fcl}$ then define $\tau\equiv\tau(\sigma)\equiv\sigma\equiv(\dsi,\hsi)$. If $\sigma=\red_{\bx}$ for $\bx\in\set{\zro,\one}$ then define $\hta\equiv\hta(\sigma)\equiv\bx$ and leave $\dta(\sigma)$ undefined. If $\sigma=\blu_{\bx}$ for $\bx\in\set{\zro,\one}$ then define $\dta\equiv\dta(\sigma)\equiv\bx$ and leave $\hta(\sigma)$ undefined. For $\sigma\in\set{\redz,\redo,\bluz,\bluo}$ we denote $(\dsi,\hsi)\equiv(\sigma,\sigma)$. The coloring model is the image of the message configuration model under the projection $\ST$, formally given by the following:

\begin{dfn}[colorings] \label{d:colorings}
For $\usi\in\COLS^d$, abbreviate $\set{\sigma_i}\equiv\set{\sigma_1,\ldots,\sigma_d}$, and define
	\[\dot{I}(\usi)
	\equiv\begin{cases}
	1 & \textup{if $\redz\in\set{\sigma_i}\subseteq\set{\redz,\bluz}$,}\\
	1 & \textup{if $\redo\in\set{\sigma_i}\subseteq\set{\redo,\bluo}$,}\\
	1 & \textup{$\set{\sigma_i}\subseteq\set{\fcl}$, and 
		$\dsi_i=\dotT((\hsi_j)_{j\ne i})$ for all $i$,}\\
	0 & \textup{otherwise.}\end{cases}\]
For $\usi\in\COLS^k$, abbreviate $\set{\sigma_i}\equiv\set{\sigma_1,\ldots,\sigma_k}$, and define
	\[\hI(\usi)
	=\begin{cases}
	1 & \textup{if $\exists i : 
		\sigma_i=\redz \text{ and }
		\set{\sigma_j}_{j\ne i}
		=\set{\bluo}$,}\\
	1 & \textup{if $\exists i : 
		\sigma_i=\redo \text{ and }
		\set{\sigma_j}_{j\ne i}
		=\set{\bluz}$,}\\
	1 & \textup{if $\set{\bluz,\bluo}
		\subseteq\set{\sigma_i}
		\subseteq
		\set{\bluz,\bluo}
		\cup\set{\sigma\in\set{\fcl}:
			\hsi=\spc}$,}\\
	1 & \textup{if $\set{\sigma_i}
		\subseteq\set{\bluz}\cup\set{\fcl}$,
		$|\set{\sigma_i}\cap\set{\fcl}|\ge2$,
		and $\hsi_i=\hatT((\dta(\sigma_j))_{j\ne i})$
		for all $i$ where
		$\sigma_i\ne\bluz$;}\\
	1 & \textup{if $\set{\sigma_i}
		\subseteq\set{\bluo}\cup\set{\fcl}$,
		$|\set{\sigma_i}\cap\set{\fcl}|\ge2$,
		and $\hsi_i=\hatT((\dta(\sigma_j))_{j\ne i})$
		for all $i$ where
		$\sigma_i\ne\bluo$;}\\
	0 & \textup{otherwise.}\end{cases}\]
(In the definition of $\hI(\usi)$ we used that if $\set{\sigma_i}\subseteq\set{\bluz,\bluo}\cup\set{\fcl}$, then $\dta(\sigma_i)$ is defined for all $i$.) On an \textsc{nae-sat} instance $\glit=(V,F,E,\ulit)$, a configuration $\usi\in\COLS^E$ is a valid \bemph{coloring} if $\dot{I}(\usi_{\delta v})=1$ for all $v\in V$, and $\hI((\usi\oplus\ulit)_{\delta a})=1$ for all $a\in F$.\end{dfn}

\begin{lem}\label{l:bij.message.cols}
On any given \textsc{nae-sat} instance $\glit=(V,F,E,\ulit)$,
we have a bijection
	\[\left\{\hspace{-3pt}\begin{array}{c}
	\text{message configurations}\\
	\text{$\vec{\tau}\in\mathscr{M}^E$}\end{array}
	\hspace{-3pt}\right\}
	\longleftrightarrow
	\left\{\hspace{-3pt}\begin{array}{c}
	\text{colorings}\\
	\text{$\usi\in\COLS^E$}\end{array}
	\hspace{-3pt}\right\}\,.\]
\begin{proof}Given a valid message configuration $\vec{\tau}$, a valid coloring $\usi$ is obtained by coordinatewise application of the projection map $\ST$ from \eqref{e:def.ST.tau}. In the other direction, given a valid coloring $\usi$, let $x_v=\zro$ if $\usi_{\delta v}$ has any $\redz$ entries, $x_v=\one$ if $\usi_{\delta v}$ has any $\redo$ entries, and $x_v=\fre$ otherwise. The resulting $\ux\in\set{\zro,\one,\fre}^V$ is a valid frozen configuration, and the argument of Lemma~\ref{l:bij.frozen.message.configs} implies that it has no free cycles. It then maps by Lemma~\ref{l:bij.frozen.message.configs} to a valid message configuration $\vec{\tau}$, which completes the correspondence.\end{proof}
\end{lem}

Recall the definitions \eqref{e:phi.functions} of $\ephi$, $\hphi$, and $\dphi$. For $\usi\in\COLS^d$, let
	\[\dPhi(\usi)
	=\begin{cases}
	\dphi(\vec{\hsi})
	&\textup{if $\dot{I}(\usi)=1$
		and 
		$\set{\sigma_i}\subseteq
		\set{\fcl}$};\\
	1 & \textup{if $\dot{I}
		(\usi)=1$
		and $\set{\sigma_i}
		\subseteq
		\set{\redz,\redo,\bluz,\bluo}$;}\\
	0 & \textup{otherwise
		(meaning that $\dot{I}(\usi)=0$).}
	\end{cases}\]
(Note if $\set{\sigma_i}\subseteq\set{\fcl}$
then $\vec{\hsi}=\vec{\hta}$ and $\dphi(\vec{\hsi})$ is well-defined.) For $\usi\in\COLS^k$, let
	\beq\label{e:def.hPhi.literals}
	\hPl(\usi)=\begin{cases}
	\hphi((\dta(\sigma_i))_i) 
		& \textup{if $\hI(\usi)=1$
		and $\set{\sigma_i}
		\subseteq \set{\bluz,\bluo}\cup\set{\fcl}$;}\\
	1 & \textup{if $\hI(\usi)=1$
		and $\set{\sigma_i}
		\cap\set{\redz,\redo}\ne\emptyset$;}\\
	0 &\textup{otherwise
	(meaning that $\hI(\usi)=0$).}\end{cases}\eeq
(Note if $\set{\sigma_i}\subseteq\set{\bluo,\bluo}\cup\set{\fcl}$ then $\dta(\sigma_i)$ is well-defined for all $i$.) Finally, let
	\[\ePhi(\sigma)
	=\begin{cases}
	\ephi(\sigma)
		&\textup{if $\sigma\in\set{\fcl}$,}\\
	1 & \textup{if $\sigma\in
	\set{\redz,\redo,
	\bluz,\bluo}$.}\end{cases}\]
The following is a straightforward consequence of Lemma~\ref{l:tree.cluster.size}:

\begin{lem}\label{l:cluster.size.col} Suppose on $\glit=(V,F,E,\ulit)$ that $\ux$ is a frozen configuration with no free cycles.
Let $\usi$ be the coloring that corresponds to $\ux$ by
 Lemmas~\ref{l:bij.frozen.message.configs}~and~\ref{l:bij.message.cols}. Then the number of valid \textsc{nae-sat} extensions of $\ux$ is given by $\SIZE(\ux)=\wt_{\glit}(\usi)$ where we define
	\beq\label{e:def.col.wt.T}
	\wt_{\glit}(\usi)\equiv
	\prod_{v\in V}
	\dPhi(\usi_{\delta v})
	\prod_{a\in F}
	\hPl((\usi\oplus\ulit)_{\delta a})
	\prod_{e\in E}
	\ePhi(\sigma_e)\,.\eeq
\begin{proof} This is a rewriting of \eqref{e:product.restr.to.free.trees}.\end{proof}
\end{lem}

\begin{dfn}[$T$-colorings]\label{d:T.colorings}
If $\sigma\in\set{\fcl}$, then $\dsi$ is a tree rooted at a clause $a$ incident to a single edge $e(a)$, while $\hsi$ is a tree rooted at a variable $v$ incident to a single edge $e(v)$. Glue $\dsi$ and $\hsi$ together by identifying $e(a)$ with $e(v)$, and let $|\sigma|$ count the number of free variables in the resulting tree. (Note that $|\sigma|$ must be finite because we only consider colorings of finite \textsc{nae-sat} instances $\GG$.) Thus $|\sigma|=|\dsi|+|\hsi|-1$ where $|\dsi|$ is the number of free variables in the tree $\dsi$, and $|\hsi|$ is the number of free variables in the tree $\hsi$. If $\sigma\in\COLS\setminus\set{\fcl}
=\set{\redz,\redo,\bluz,\bluo}$ then define $|\sigma|\equiv0$. If $\usi$ is a valid coloring on $\glit=(V,F,E,\ulit)$, then $|\sigma_e|$ must be finite on every edge $e\in E$, by Definition~\ref{d:colorings}. For $0\le T\le \infty$ define $\tcols\equiv\set{\sigma\in\COLS:|\sigma|\le T}$; we then call $\usi$ a \bemph{$T$-coloring} if $\sigma_e\in\tcols$ for all $e\in E$. Define $\ZZ_{\lambda,T}$ to be the partition function of $\lambda$-tilted $T$-colorings,
	\beq\label{e:Z.lambda}
	\ZZ_{\lambda,T}\equiv \ZZ_{\lambda,T}(\glit)
	\equiv \sum_{\usi\in (\tcols)^E}
	\wt_\glit(\usi)^\lambda\,.\eeq
Denote $\ZZ_\lambda\equiv\ZZ_{\lambda,\infty}$ and note that as $T\uparrow\infty$ we have $\ZZ_{\lambda,T}\uparrow \ZZ_{\lambda,\infty}\equiv\ZZ_\lambda$. \end{dfn}

\begin{ppn}\label{p:cluster.size.col} On an \textsc{nae-sat} instance $\glit=(V,F,E,\ulit)$, recall $\CLUST(\glit)$ denotes the set of solution clusters (connected components of $\SOL(\glit)$), and define
	\beq\label{e:formal.Z.lambda}
	\barZZ_\lambda
	\equiv\barZZ_\lambda(\glit)
	\equiv \sum_{\clust\in\CLUST(\glit)}|\clust|^\lambda\,.\eeq
Then $\barZZ_\lambda$ is lower bounded by $\ZZ_\lambda$, where $\ZZ_\lambda$ is the increasing limit of $\ZZ_{\lambda,T}$ as defined by \eqref{e:Z.lambda}.

\begin{proof} On $\glit$, the colorings $\usi$ are in bijection (Lemma~\ref{l:bij.message.cols}) with the message configurations $\vec{\tau}$, which in turn are in bijection (Lemma~\ref{l:bij.frozen.message.configs}) with the frozen configurations $\ux$ that do not have free cycles. Each such frozen configuration defines a distinct cluster $\clust\in\CLUST(\glit)$, of size $|\clust|=\SIZE(\ux)=\wt_\glit(\usi)$. The claimed inequality directly follows.\end{proof} \end{ppn}

To summarize what we have obtained so far, note that the quantity $\barZZ_\lambda$ of \eqref{e:formal.Z.lambda} is a formal definition of the ``$\lambda$-tilted cluster partition function'' introduced in \eqref{e:intro.Z.lambda}. In a sequence \eqref{e:bij} of combinatorial mappings, we have produced in \eqref{e:Z.lambda} a mathematically well-defined quantity $\ZZ_{\lambda,T}$ which lower bounds $\barZZ_\lambda$ (Proposition~\ref{p:cluster.size.col}), and will be much more tractable thanks to the product formula for cluster sizes (Lemma~\ref{l:cluster.size.col}). The lower bound of Theorem~\ref{t:main} is based on the second moment method applied to $\ZZ_{\lambda,T}$. In preparation for the moment calculation, we conclude the current section by discussing some simplifications obtained by averaging over the literals of the \textsc{nae-sat} instance.

\subsection{Averaging over edge literals}Our eventual purpose is to calculate $\E\ZZ_{\lambda,T}$ and $\E[(\ZZ_{\lambda,T})^2]$, where $\E$ is expectation over the \textsc{nae-sat} instance $\glit$. Recall that $\glit=(\graph,\ulit)$ where $\graph=(V,F,E)$ is the graph without the edge literals $\ulit$. Then $\E\ZZ_{\lambda,T}=\E(\E(\ZZ_{\lambda,T}\,|\,\graph))$ where
	\[\E(\ZZ_{\lambda,T}\,|\,\graph)
	=\sum_{\usi\in(\tcols)^E}
	\E(\wt_\glit(\usi)^\lambda\,|\,\graph)\,.\]
For any $l\ge1$ and any function $g:\set{\zro,\one}^l\to\R$, let $\Elit g$ denote the average value of $g(\ulit)$ over all $\ulit\in\set{\zro,\one}^l$. For any $\usi\in\COLS^E$, we have $\E(\wt_\glit(\usi)^\lambda\,|\,\graph)= \avwt_\graph(\usi)^\lambda$ where (compare~\eqref{e:def.col.wt.T})
	\beq\label{e:first.moment.avg}
	\avwt_\graph(\usi)
	\equiv \prod_{v\in V}
	\dPhi(\usi_{\delta v})
	\prod_{a\in F}\hPhi
	(\usi_{\delta a})
	\prod_{e\in E} \ePhi(\sigma_e)\,,\quad
	\hPhi(\usi)
	\equiv \Big(
	\Elit[\hPl(\usi\oplus\ulit)^\lambda]
	\Big)^{1/\lambda}\eeq
--- that is to say, even after averaging over $\ulit$, the contribution of each $\usi\in\COLS^E$ is still given by a product formula. This means that $\E(\ZZ_{\lambda,T}\,|\,\graph)$ is the partition function of a ``factor model'':

\begin{dfn}[factor model] \label{d:fm} On a bipartite graph $\graph=(V,F,E)$, the \bemph{factor model} specified by $g\equiv(\dot{g},\hat{g},\bar{g})$ is the probability measure $\nu_\graph$ on configurations $\vec{\xi}\in\mathscr{X}^E$ defined by
	\[\nu_\graph(\vec{\xi}) = \f{1}{Z}
	\prod_{v\in V} \dot{g}(\vec{\xi}_{\delta v})
	\prod_{a\in F} \hat{g}(\vec{\xi}_{\delta a})
	\prod_{e\in E}
	\bar{g}(\xi_e),\]
with $Z$ the normalizing constant.\end{dfn}

A further observation is that for $\glit=(\graph,\ulit)$ and $\usi\in\COLS^E$, as we go over all possibilities of $\ulit$ while keeping $\graph$ fixed, the weight $\wt_\glit(\usi)$ (the size of the cluster encoded by $\usi$ on $\glit$, as given by \eqref{e:def.col.wt.T}) does not take more than one positive value. In other words, we can extract the cluster size without referring to the edge literals. The precise statement is as follows:

\begin{lem}\label{l:dont.need.literals} The function $\hPl$ of \eqref{e:def.hPhi.literals} can be factorized as $\hPl(\usi\oplus\ulit)=\hI(\usi\oplus\ulit)\hF(\usi)$ for
	\[\hF(\usi)\equiv\begin{cases}
		1 &\textup{if $\usi\in
		\set{\redz,\redo,\bluz,\bluo}^k$,}\\
		\displaystyle 
		\f{\hz(\hsi_j)}{\ephi(\sigma_j)}
		&\textup{if $\usi\in\COLS^k$ with $\sigma_j\in\set{\fcl}$.}
		\end{cases}\]
As a consequence, the function of \eqref{e:first.moment.avg} satisfies $\hPhi(\usi)^\lambda=\hat{v}(\usi)\hF(\usi)^\lambda$ where $\hat{v}(\usi)\equiv \Elit[\hI(\usi\oplus\ulit)]$.

\begin{proof} For $\usi\in\COLS^k$ abbreviate $\set{\sigma_i}\equiv\set{\sigma_1,\ldots,\sigma_k}$. If $\set{\sigma_i}\subseteq\set{\redz,\redo,\bluz,\bluo}$, then the definition~\eqref{e:def.hPhi.literals} implies $\hPl(\usi\oplus\ulit)=\hI(\usi\oplus\ulit)$ for all $\ulit$, so the factorization holds with $\hF(\usi)\equiv1$. If $\set{\sigma_i}$ nontrivially intersects both $\set{\redz,\redo}$ and $\set{\fcl}$, then $\hPl(\usi\oplus\ulit)=\hI(\usi\oplus\ulit)=0$ for all $\ulit$, so we can set $\hF(\usi)$ arbitrarily. It remains to consider the case where $\set{\sigma_i}$ nontrivially intersects $\set{\fcl}$ but does not intersect $\set{\redz,\redo}$. Recalling the discussion around~\eqref{e:def.ST.tau}, this means that $\dta_i\equiv\dta(\sigma_i)\in\dMM\setminus\set{\star}$ is well-defined for all $i$ --- if $\sigma_i\in\set{\fcl}$ then $\dta_i=\dsi_i\in\dMM\setminus\set{\zro,\one,\star}$, and if $\sigma_i=\blu_{\bx}$ then $\dta_i=\bx\in\set{\zro,\one}$. Following~\eqref{e:messageconfig.loceq}, given any $\ulit\in\set{\zro,\one}^k$, let us define $\hta_{\ulit,i}\equiv \lit_i\oplus\hatT((\dta_j\oplus\lit_j)_{j\ne i} )$. If $\hI(\usi\oplus\ulit)=1$, then it follows from \eqref{e:def.hm.hta.z}, \eqref{e:phi.functions}, and \eqref{e:def.hPhi.literals} that
	\begin{align}\nonumber
	\hPl(\usi\oplus\ulit)
	&=\hphi( \vec{\dta}\oplus\ulit )
	=\sum_{\vec{\bx}\in\set{\zro,\one}^k}
	I^\textsc{nae}(\vec{\bx}\oplus\ulit)
	\prod_{j=1}^k
	[\dmsg(\dta_j)](\bx_j)\\
	&=\hz(\hta_{\ulit,i})
	\sum_{\bx_i} [\dmsg(\dta_i)](\bx_i)
		\cdot
		[\hmsg(\hta_{\ulit,i})](\bx_i)
	=\f{\hz(\hta_{\ulit,i})}
	{\ephi(\dta_i,\hta_{\ulit,i})}\,.
	\label{e:product.factors.z}
	\end{align}
We will have $\hI(\usi\oplus\ulit)=1$ if and only if it holds for all $1\le i\le k$ that $\hta_{\ulit,i}$ is compatible with $\sigma_i$, in the sense that $\ST(\dta_i,\hta_{\ulit,i})=\sigma_i$. In particular, if $\sigma_i\in\set{\fcl}$ (and we assumed $\usi$ has at least one such entry), we must have $(\dta_i,\hta_{\ulit,i})=(\dsi_i,\hsi_i)$. It follows that for any $\usi\in\COLS^k$ having at least one entry in $\set{\fcl}$, we can define $\hF(\usi)\equiv\hz(\hsi_i)/\ephi(\sigma_i)$ for any $i$ where $\sigma_i\in\set{\fcl}$. This completes the proof.\end{proof}
\end{lem}

\begin{cor}\label{c:dont.need.literals} On a bipartite graph $\graph=(V,F,E)$, suppose $\usi\in\COLS^E$ satisfies $\dot{I}(\usi_{\delta v})=1$ for all $v\in V$. Then, for $\glit=(\graph,\ulit)$, it follows from \eqref{e:def.col.wt.T} that 
	\[\wt_\glit(\usi)
	=\bigg\{\prod_{a\in F}
	\hI((\usi\oplus\ulit)_{\delta a})\bigg\}
	\maxwt_\graph(\usi)\,,\quad
	\maxwt_\graph(\usi)\equiv
	\prod_{v\in V}\dPhi(\usi_{\delta v})
	\prod_{a\in F}\hF(\usi_{\delta a})
	\prod_{e\in E}\ePhi(\sigma_e)\,.\]
Combining with \eqref{e:first.moment.avg} gives, with $\hat{v}$ as defined by Lemma~\ref{l:dont.need.literals},
	\[\avwt_\graph(\usi)^\lambda
	=\bm{p}_\graph(\usi)
	\maxwt_\graph(\usi)^\lambda\,,\quad
	\bm{p}_\graph(\usi)
	\equiv\E\bigg[
	\prod_{a\in F}
	\hI((\usi\oplus\ulit)_{\delta a})
	\,\bigg|\,\graph\bigg]
	=\prod_{a\in F}
	\hat{v}(\usi_{\delta a})\,.\]

\begin{proof} Immediate consequence of Lemma~\ref{l:dont.need.literals}.\end{proof}\end{cor}

In the notation of Definition~\ref{d:fm}, the conditional first moment $\E(\ZZ_{\lambda,T}\,|,\graph)$ is the partition function of the factor model with specification $(\dPhi,\hPhi,\ePhi)^\lambda$ restricted to the alphabet $\tcols$. Similarly, the conditional second moment $\E[(\ZZ_{\lambda,T})^2\,|\,\graph]$ is the partition function of the factor model on the alphabet $(\tcols)^2$ with specification $(\dPhi_2,\hPhi_2,\ePhi_2)^\lambda$, where $\dPhi_2\equiv\dPhi\otimes\dPhi$, $\ePhi_2\equiv\ePhi\otimes\ePhi$, and for any $\usi\equiv(\usi^1,\usi^2)\in\COLS^{2k}$ we have
	\[\hPhi_2(\usi)\equiv \bigg(\Elit\Big[
		\hPl(\usi^1\oplus\ulit)^\lambda
		\hPl(\usi^2\oplus\ulit)^\lambda
		\Big]\bigg)^{1/\lambda}
	=\hat{v}_2(\usi)^{1/\lambda}
	(\hF\otimes\hF)(\usi)\,,\]
for $\hat{v}_2(\usi)\equiv\Elit[\hI(\usi^1\oplus\ulit)\hI(\usi^1\oplus\ulit)]$ (by Corollary~\ref{c:dont.need.literals}). We emphasize that $\hPhi,\hPhi_2$ both depend on $\lambda$, although we suppress it from the notation. Moreover, $\hPhi_2\ne\hPhi\otimes\hPhi$ since $\usi^1$ and $\usi^2$ are coupled through their interaction with the same literals $\ulit\in\set{\zro,\one}^k$. Lastly, we have written $\usi$ in the first moment and $\usi\equiv(\usi^1,\usi^2)$ in the second moment --- this is a deliberate abuse of notation, which allows us to treat the two cases in a unified manner. To distinguish the cases we shall refer to the ``first-moment'' or ``single-copy'' model, versus the ``second-moment'' or ``pair'' model. We turn next to the analysis of these models.

\section{Proof outline}\label{s:outline.lbd}

Having formally set up our combinatorial model of \textsc{nae-sat} solution clusters (Section~\ref{s:comb}), we now give a more detailed outline for the (first and second) moment calculation that proves the lower bound of Theorem~\ref{t:main}. (As we mentioned before, the upper bound of Theorem~\ref{t:main} is proved by an interpolation argument which builds on prior results in spin glass theory \cite{MR1972121,MR2095932,MR3161470}. It does not involve the combinatorial model or the moment method, and is deferred to Appendix~\ref{appx:ubd}.) 

\subsection{Empirical measures and moments} We use standard multi-index notations in what follows --- in particular, for any ordered sequence $z=(z_1,\ldots,z_l)$ of nonnegative integers summing to $n$, we denote
	\[\binom{n}{z}
	\equiv n!\bigg/ \prod_{i=1}^l z_i!\,.\]
If $\pi$ is any nonnegative measure on a discrete space, write $\ent(\pi) = -\langle\pi,\log\pi\rangle$ for its Shannon entropy. It follows from Stirling's formula that for any fixed $\pi$, in the limit $n\to\infty$ we have
	\[\binom{n}{n\pi}\asymp 
	\f{\exp\{n\mathcal{H}(\pi)\}}
	{n^{(|\supp\pi|-1)/2}}\,.\]
On a bipartite graph $\graph$, we will summarize colorings $\usi$ according to some ``local statistics,'' as follow: 

\begin{dfn}[empirical measures]\label{d:empirical}Given a bipartite graph $\graph=(V,F,E)$ and $\usi\in\COLS^E$, define
	{\setlength{\jot}{0pt}\begin{alignat*}{2}
	\dH(\vec{\dsi})
	&=|\set{v\in V:\usi_{\delta v}
	=\vec{\dsi}}|/|V|
	&&\quad\text{for }
	\vec{\dsi}\in\COLS^d,\\
	\hH(\vec{\hsi})
	&= |\set{a\in F:
	\usi_{\delta a}
	=\vec{\hsi}}|/|F|
	&&\quad\text{for }
	\vec{\hsi}\in\COLS^k,\\
	\eH(\sigma)&=|\set{e\in E:
		\sigma_e=\sigma}|/|E|
	&&\quad\text{for }
	\sigma\in\COLS;
	\end{alignat*}}%
The triple $H\equiv H(\graph,\usi)\equiv (\dH,\hH,\eH)$ is the \bemph{empirical measure} of $\usi$ on $\graph$.\end{dfn}

Recall from \eqref{e:first.moment.avg} that $\avwt_\graph(\usi)^\lambda$ is the contribution to $\E(\ZZ_\lambda\,|\,\graph)$ from $\usi\in\COLS^E$. We saw in Corollary~\ref{c:dont.need.literals} that $\avwt_\graph(\usi)^\lambda=\bm{p}_\graph(\usi)\maxwt_\graph(\usi)^\lambda$ where $\bm{p}_\graph(\usi)$ is the probability (conditional on $\graph$) that $\usi$ is a valid coloring on $(\graph,\ulit)$; and $\maxwt_\graph(\usi)$ is the size of the cluster encoded if $\usi$ is valid. Now all these quantities can be expressed solely in terms of $H=H(\graph,\usi)$: we have $\bm{p}_\graph(\usi)=\exp(n\logp(H))$ and $\maxwt_\graph(\usi)=\exp(n\size(H))$ where
	\begin{align*}
	\logp(H) &\equiv
	(d/k)
	\langle \log\hat{v},\hH\rangle
	=(d/k)
	\sum_{\usi\in\COLS^k}
	\hH(\usi) \log\hat{v}(\usi) 
	\,,\\
	\size(H)
	&\equiv
	\langle\log\dPhi,\dH\rangle
	+(d/k)\langle
	\log \hF,
	\hH\rangle
	+d\langle\log\ePhi,\eH\rangle\,.\end{align*}
Given $\glit=(\graph,\ulit)$, let $\ZZ_{\lambda,T}(H)$ be the contribution to $\ZZ_{\lambda,T}$ from colorings $\usi\in(\tcols)^E$ such that $H(\graph,\usi)=H$. In what follows we will often suppress the dependence on $\lambda$ and $T$, and write simply $\ZZ\equiv\ZZ_{\lambda,T}$. 

\begin{dfn}[simplex] \label{d:simplex} For $d,k,T$ fixed, the \bemph{simplex of empirical measures} is the space $\simplex\equiv\simplex(T)$ of triples $H\equiv(\dH,\hH,\eH)$ satisfyng the following conditions: $\dH$ is a probability measure supported within the set of $\usi\in(\tcols)^d$ such that $\dot{I}(\usi)=1$; $\hH$ is a probability measure supported within the set of $\usi\in(\tcols)^k$ such that $\hat{v}(\usi)$ is positive; and both $\dH$ and $\hH$ must have marginal $\eH$, that is,
	\beq\label{e:H.edge.marginal}
	\f1d\sum_{\usi\in\COLS^d}
		\dH(\usi)\sum_{i=1}^d \Ind{\sigma_i=\sigma}
	=\eH(\sigma)
	=\f{1}{k}\sum_{\usi\in\COLS^k}
	\hH(\usi)\sum_{j=1}^k
	\Ind{\sigma_j=\sigma}\eeq
for all $\sigma\in\COLS$. It follows that $\eH$ is a probability measure supported on $\tcols$.
\end{dfn}

It follows from Corollary~\ref{c:dont.need.literals} that if $\E$ is expectation over a $(d,k)$-regular \textsc{nae-sat} instance on $n$ variables, then $\E\ZZ(H)$ is positive if and only if $H\in\simplex$ and $(n\dH,m\hH)$ is integer-valued. For such $H$, it follows from the definition of the random regular \textsc{nae-sat} graph that 
	\beq\label{e:graph.expectation}
	\E\ZZ(H)
	=\bigg[\bigg\{\binom{n}{n\dH}
	\binom{m}{m\hH} \bigg/
	\binom{nd}{nd\eH}\bigg\}
	\exp\{n\logp(H)\}\bigg]
	\cdot \exp\{ n\lambda\size(H) \}\,.\eeq
In \eqref{e:graph.expectation}, the first factor (in square brackets) is the expected number $\E\ZZ_{\lambda=0,T}$ of valid colorings with empirical profile $H$. The remaining factor $\exp\{ n\lambda\size(H) \}$ is explained by the fact that any such coloring encodes a cluster of size $\exp\{n\size(H)\}$. By Stirling's formula, in the limit $n\to\infty$ (with $T$ fixed),
	\[\E\ZZ_{\lambda=0,T}
	\asymp \f{\exp\{n
	[\ent(\dH) + (d/k)\ent(\hH)
		-d\ent(\eH)
		+\logp(H)]\}}
		{n^{\wp(H)/2}}
	\equiv \f{\exp\{n\SIGMA(H)\}}
		{n^{\wp(H)/2}}\]
where $\wp(H)\equiv|\supp\dH|+|\supp\hH|-|\supp\eH|-1$, and the exponential rate $\SIGMA(H)$ is a formal analogue of the ``cluster complexity'' function $\Sigma(s)$ appearing in \eqref{e:intro.cluster.complexity}. In analogy with \eqref{e:intro.legendre.dual} we let 
	\beq\label{e:formal.defn.bF}
	\bF\equiv\bF_{\lambda,T}
	\equiv \SIGMA(H)+\lambda\size(H)\,.
	\eeq
Then altogether the first moment can be estimated as 
	\beq\label{e:SIGMA}
	\E\ZZ(H)
	\asymp \bigg(\f{\exp\{n\SIGMA(H)\}}{n^{\wp(H)/2}}\bigg)
	\exp\{n\lambda\size(H)\}
	=\f{\exp\{n\bF(H)\}}{n^{\wp(H)/2}}\,.\eeq
Note that $\asymp$ hides a dependence on $T$, since we keep $T$ fixed throughout our moment analysis. 

\subsection{Outline of first moment}For any subset of empirical measures $\mathbf{H}\subseteq\simplex$, we write $\usi\in\mathbf{H}$ to indicate that $H(\graph,\usi)\in\mathbf{H}$, and write $\ZZ(\mathbf{H})\equiv\ZZ_{\lambda,T}(\mathbf{H})$ for the contribution to $\ZZ_{\lambda,T}$ from colorings $\usi\in \mathbf{H}$. It then follows from \eqref{e:SIGMA} that 
	\[\E\ZZ(\mathbf{H}) 
	=\sum_{H\in\mathbf{H}}\E\ZZ(H) 
	=n^{O(1)}
	\exp\Big\{n\max\set{\bF(H):H\in\mathbf{H}}
	\Big\}\,,\]
for $\bF$ as in \eqref{e:formal.defn.bF}. Thus, calculating the first moment $\E\ZZ$ essentially reduces to the problem of maximizing $\bF$ over $\simplex$. The physics theory suggests that $\bF$ is uniquely maximized at a point $H_\star\in\simplex$ which is given explicitly in terms of a replica symmetric fixed point for the $\lambda$-tilted $T$-coloring model. (Recall from \S\ref{ss:intro.onersb} that in the original \textsc{nae-sat} model, the replica symmetric fixed point was described by the measure $\msg=\textup{unif}(\set{\zro,\one})$. In the coloring model, with spins $\sigma\equiv(\dsi,\hsi)\in\tcols$, the replica symmetric fixed point will be characterized by a measure $\dq$ on the space $\tcols$ of possible values for $\dsi$.)

There are several obstacles to the rigorous moment computation. From a physics perspective, the replica symmetric fixed point of the $\lambda$-tilted coloring model at $T=\infty$ is equivalent to the fixed point described by Proposition~\ref{p:drec_fixpoint}, which was used to define the \textsc{1rsb} prediction \eqref{e:drec_Sigma}.  Mathematically, however, we work with $T$ finite so that $\simplex$ has finite dimension and $\wp(H)$ is defined. Therefore we need to explicitly construct a replica symmetric fixed point at finite $T$, and use it to define $ H_\star=H_{\lambda,T}\in\simplex$ (Definition~\ref{d:Hstar} below). We must then take $T\to\infty$ and show that the limit matches the fixed point of Proposition~\ref{p:drec_fixpoint}, so that  $\bF(H_\star)= \bF_{\lambda,T}(H_{\lambda,T})$ converges as $T\to\infty$ the \textsc{1rsb} prediction \eqref{e:drec_Sigma}. The construction of the fixed point at finite $T$ is stated in Proposition~\ref{p:contract} below, and proved in Appendix~\ref{appx:contract}. The correspondence with \eqref{e:drec_Sigma} in the $T=\infty$ limit is stated in Proposition~\ref{p:drec_equiv.outline} below, and proved in Appendix~\ref{appx:onersb}. 

A more difficult problem is to show that $\bF$ is in fact maximized at $H_\star$. The function $\bF$ is generally not convex, and must be optimized over a space $\simplex$ whose dimension grows with $d,k,T$. Moreover, an analogous but even more difficult optimization must be solved to compute the second moment $\E(\ZZ^2)$. The main part of this analysis is carried out in Sections~\ref{s:reduce.to.tree}~and~\ref{s:tree.opt}. In the remainder of this section we make some preparatory calculations and explain how the pieces will be fit together to prove the main result Theorem~\ref{t:main}. Recall from Remark~\ref{r:rmks} that we can restrict consideration to $\alpha$ satisfying \eqref{e:regime.alpha}. In this regime, we make \textit{a~priori} estimates to show that the optimal $H$ satisfy some basic restrictions. Abbreviate $\set{\red}\equiv\set{\redz,\redo}$, recall $\set{\fcl}\equiv\COLS\setminus\set{\redz,\redo,\bluz,\bluo}$, and let
	\begin{align}\nonumber
	\nbd_\circ&\equiv\bigg\{H\in\simplex:
	\max \set{ \eH(\fcl),
		\eH(\red) } \le \f{7}{2^k}
		\bigg\}\,\,,\\
	\nbd
	&=\bigg\{ H\in\nbd_\circ : 
	\lone{H-H_\star} \le 
	\f1{n^{1/3}}
	\bigg\}
	\subseteq \nbd_\circ\,,
	\label{e:N.zero}
	\end{align}
where $\lone{\cdot}$ denotes the $\ell^1$ norm throughout this paper. We next show that empirical measures $H\notin \nbd_\circ$ give a negligible contribution to the first moment:

\begin{lem}\label{l:restrict.H} For $k\ge k_0$, $\alpha\equiv d/k$ satisfying \eqref{e:regime.alpha}, and $0\le\lambda\le1$, $\E\ZZ(\simplex\setminus\nbd_\circ)$ is exponentially small in $n$.

\begin{proof} Let $Z^\fcl$ count the \textsc{nae-sat} solutions $\vec{\bx}\in\set{\zro,\one}^V$ which map --- via coarsening and the bijection \eqref{e:bij} --- to warning configurations $\vec{y}$ with more than $7/2^k$ fraction of edges $e$ such that $\dot{y}_e=\hat{y}_e=\fre$. Similarly, let $ Z^\red$ count \textsc{nae-sat} solutions $\vec{\bx}$ mapping to warning configurations $\vec{y}$ with more than $7/2^k$ fraction of edges $e$ such that $\hat{y}_e\in\set{\zro,\one}$. It follows from Proposition~\ref{p:cluster.size.col} that for any $0\le \lambda\le1$ we have $\ZZ(\simplex\setminus\nbd_\circ)\le Z^\fcl + Z^\red$. For $\alpha$ satisfying \eqref{e:regime.alpha}, $\E Z^\fcl$ is exponentially small in $n$ by \cite[Propn.~2.2]{MR3440193}. As for $Z^\red$, let us say that an edge $e\in E$ is \bemph{blocked} under $\vec{\bx}\in\set{\zro,\one}^V$ if $\lit_e \oplus x_{v(e)}= \one\oplus \lit_{e'} \oplus x_{v(e')}$ for all $e'\in\delta a(e) \setminus e$. Note that if $\vec{\bx}$ maps to $\vec{y}$, the only possibility for $y_e\in\set{\redz,\redo}$ is that $e$ was blocked under $\vec{\bx}$. (The converse need not hold.) If we condition on $\vec{\bx}$ being a valid \textsc{nae-sat} solution, then each clause contains a blocking edge independently with chance $\theta = 2k/(2^k-2)$; note also that a clause can contain at most one blocking edge. It follows that
	\[\E Z^\red \le (\E Z)
	\P\bigg(\mathrm{Bin}( m, \theta ) \ge 7 nd/2^k\bigg)\,.\]
This is exponentially small in $n$ by a Chernoff bound together with the trivial bound $\E Z \le 2^n$.\end{proof}\end{lem}

We assume throughout what follows that $k\ge k_0$, $\alpha$ satisfies \eqref{e:regime.alpha}, and $0\le \lambda\le1$. In this regime, Lemma~\ref{l:restrict.H} tells us that $\max\set{\bF(H) : H\notin\nbd_\circ}$ is negative. On the other hand, we shall assume that the global maximum of $\bF$ is nonnegative, since otherwise $\E\ZZ$ is exponentially small in $n$ and there is nothing to prove. From this we have that any maximizer $H$ of $\bF$ must lie in $\nbd_\circ$. In Sections~\ref{s:reduce.to.tree}~and~\ref{s:tree.opt} we develop a more refined analysis to solve the optimization problem for $\bF$ restricted to $\nbd_\circ$:

\begin{ppn}[\textit{proved in Section~\ref{s:tree.opt}}] \label{p:first.mmt} Assuming the global maximum of $\bF$ is nonnegative, the unique maximizer of $\bF$ is an explicitly characterized point $H_\star$ in the interior of $\nbd_\circ$. Moreover, there is a positive constant $\epsilon=\epsilon(k,\lambda,T)$ so that for all $\lone{H-H_\star}\le\epsilon$ we have $\bF(H) \le \bF(H_\star)-\epsilon\lone{H-H_\star}^2$.\end{ppn}

A consequence of the above is that we can compute the first moment of $\ZZ$ up to constant factors. In the following, let $\dot{\wp}\equiv\dot{\wp}(T)$ count the number of $d$-tuples $\usi\in(\tcols)^d$ for which $\dot{I}(\usi)>0$. Let $\hat{\wp}\equiv\hat{\wp}(T)$ count the number of $k$-tuples $\usi\in(\tcols)^k$ for which $\hat{v}(\usi)>0$. Let $\bar{\wp}\equiv|\tcols|$, and denote $\wp\equiv\dot{\wp}+\hat{\wp}-\bar{\wp}-1$. Recall from \eqref{e:N.zero} the definition of $\nbd$. 

\begin{cor}\label{c:first.mmt} The coloring partition function $\ZZ\equiv\ZZ_{\lambda,T}$ has first moment $\E\ZZ\asymp \exp\{ n \bF(H_\star) \}$. Moreover the expectation is dominated by $\nbd$ in the sense that $\E\ZZ(\nbd)=(1-o(1)) \E\ZZ$.

\begin{proof} Define the $\bar{\wp}\times\dot{\wp}$ matrix $\dot{M}$ with entries
	\[\dot{M}(\sigma',\usi)
	\equiv
	\sum_{i=1}^d \Ind{\sigma_i=\sigma'}\,,\quad
	\sigma'\in\tcols \textup{ and }
	\usi\in(\tcols)^d\cap(\supp\dot{I})\,.\]
Similarly define the $\bar{\wp}\times\hat{\wp}$ matrix $\hat{M}$ with entries
	\[\hat{M}(\sigma',\usi)
	=\sum_{i=1}^k \Ind{\sigma_i=\sigma'}\,,\quad
	\sigma'\in\tcols\textup{ and }
	\usi\in(\tcols)^k\cap(\supp\hat{v})\,.\]
Lastly define the $\bar{\wp}\times(\dot{\wp}+\hat{\wp})$ matrix $M \equiv \begin{pmatrix}\dot{M} & -\hat{M}\end{pmatrix}$. It follows from the discussion after Definition~\ref{d:simplex} that $\E\ZZ(H)$ is positive if and only if (i) $\dH$ and $\hH$ are nonnegative; (ii) $\langle \mathbf{1},\dH\rangle=1$; (iii) $(k\dH,d\hH)$ lies in the kernel of $M$; and (iv) $(n\dH,m\hH)$ is integer-valued. Conditions (i)--(iii) are equivalent to $H\in\simplex$. One can verify that the matrix $M$ is of full rank, from which it follows that the space of vectors $(\dH,\hH)$ satisfying the conditions (ii) and (iii) has dimension $\wp$. In Lemma~\ref{l:full.rank.M} we will show that $M$ satisfies a stronger condition, which implies that the space of $(\dH,\hH)$ satisfying (ii)--(iv) is an affine transformation of $(n^{-1}\mathbb{Z})^\wp$, where the coefficients of the transformation are uniformly bounded. Then, by substituting the result of Proposition~\ref{p:first.mmt} in to \eqref{e:SIGMA}, we conclude
	\[\E\ZZ\asymp 
	\sum_{z\in (n^{-1}\mathbb{Z})^\wp}
	\f{\exp\{ n[\bF(H_\star)- \Theta(\lone{z}^2)
		]\}}
		{n^{\wp/2}} 
		\asymp \exp\{n\bF(H_\star)\}\]
Empirical measures $H\notin\nbd$ correspond to vectors $z\in (n^{-1}\mathbb{Z})^\wp$ with norm $\lone{z}\ge n^{-1/3}$. These give a negligible contribution to the above sum which proves the second claim $\E\ZZ(\nbd)=(1-o(1))\E\ZZ$.\end{proof}
\end{cor}

\subsection{Outline of second moment}
\label{ss:outline.second.mmt}

By a similar calculation as above, calculating the second moment $\E(\ZZ^2)$ reduces to the problem of maximizing a function $\bF_2\equiv\bF_{2,\lambda,T}$ over a space $\simplex_2$ of \bemph{pair empirical measures}. In fact we will calculate the second moment not of $\ZZ$ itself, but rather of a more restricted random variable $\sepZZ\le\ZZ$, defined below. This leads to a more tractable analysis, as we now explain.

Concretely, $\simplex_2$ is the space of triples $H=(\dH,\hH,\eH)$ satisfying the following conditions: $\dH$ is a probability measure on $(\tcols)^{2d}\cap(\supp\dot{I})^2$, $\hH$ is a probability measure on $(\tcols)^{2d}\cap(\supp\hat{v}_2)$, and $\eH$ is a probability measure on $(\tcols)^2$ which can be obtained from both $\dH$ and $\hH$ as the edge marginal~\eqref{e:H.edge.marginal}. Repeating the derivation of \eqref{e:SIGMA} in the second moment setting, we have the following. For any $H\in\simplex_2$, the expected number of valid coloring pairs $(\usi,\usi')\in(\tcols)^{2E}$ of type $H$ is
	\[\E(\ZZ_{\lambda=0,T})^2(H)
	\asymp \f{\exp\{n[\ent(\dH) + (d/k)\ent(\hH)-d\ent(\eH)
		+\logp_2(H)]\}}{n^{\wp(H)/2}}
	\equiv\f{\exp\{n\SIGMA_2(H)\}}{n^{\wp(H)/2}}\]
Given a pair empirical measure $H\in\simplex_2$, take the marginal on the first element of the pair to define the \bemph{first-copy marginal} $H^1\in\simplex$; define likewise the \bemph{second-copy marginal} $H^2\in\simplex$. The contribution to $\ZZ^2$ from any valid pair is given by $\exp\{n\lambda \size_2(H)\}$ where $\size_2(H)\equiv\size(H^1)+\size(H^2)$. Thus $\bF_2$ is given explicitly by
	\beq\label{e:def.PSI.two}
	\E\ZZ^2(H)
	\asymp\f{\exp\{n\bF_2(H)\}}{n^{\wp(H)/2}}
	\equiv
	\f{\exp\{n
	[\SIGMA_2(H)+\lambda\size_2(H)]
	\}}{n^{\wp(H)/2}}\eeq
(cf.\ \eqref{e:formal.defn.bF} and \eqref{e:SIGMA}). In view of Corollary~\ref{c:first.mmt}, it would suffice for our purposes to calculate the second moment of $\ZZ(\nbd)$ rather than $\ZZ$, which amounts to maximizing $\bF_2$ on the restricted set
	\[\nbd_2
	\equiv\Big\{
	H\in\simplex_2 :
	H^1\in\nbd\textup{ and }
	H^2\in\nbd
	\Big\}\,.\]
Following~\cite{MR3436404} we can simplify the analysis by a further restriction, as follows:

\begin{dfn}[separability] \label{d:sep} If $\usi,\usi'$ are valid colorings on $\glit$, define their \bemph{separation} $\SEP(\usi,\usi')$ to be the fraction of variables where their corresponding frozen configurations (from Lemmas~\ref{l:bij.frozen.message.configs}~and~\ref{l:bij.message.cols}) differ. Write $\usi'\succcurlyeq\usi$ if the frozen configuration of $\usi'$ has more free variables than that of $\usi$. We say that a coloring is \bemph{separable} if $\usi\in\nbd$ (recall this means $H(\graph,\usi)\in\nbd$) and
	\[|\set{\usi'\in\nbd:
	\usi'\succcurlyeq\usi
	\textup{ and }
	\SEP(\usi,\usi')
	\notin I_\textup{se}
	}|
	\le \exp\{ (\log n)^4 \},\]
where $I_\textup{se}\equiv [(1-k^4/2^{k/2})/2, (1+k^4/2^{k/2})/2]$ and it is implicit that both $\usi,\usi'$ must both be valid on $\glit$. Let $\sepZZ\equiv\sepZZ_{\lambda,T}$ be the contribution to $\ZZ(\nbd)$ from separable colorings.
\end{dfn}

\begin{ppn}[\textit{proved in Appendix~\ref{appx:sep}}]
\label{p:sep} The first moment is dominated by separable colorings in the sense that $\E \sepZZ = (1-o(1)) \E\ZZ(\nbd)$.
\end{ppn}

We will apply the second moment method to lower bound $\sepZZ$; the result will follow since $\sepZZ\le\ZZ(\nbd)\le\ZZ$. For $H\in\simplex_2$, all pairs $(\usi,\usi')\in H$ must have the same separation $\SEP(\usi,\usi')$, which we can thus denote as $\SEP(H)$. Partition $\nbd_2$ into $\nbd_\textup{se}\equiv\set{H\in\nbd_2:\SEP(H) \in I_\textup{se}}$ (the near-uncorrelated regime) and $\nbd_\textup{ns}\equiv\nbd_2\setminus\nbd_\textup{se}$ (the correlated regime). Denote the corresponding contributions to $\sepZZ^2$ by $\sepZZ^2(\nbd_\textup{se})$ and $\sepZZ^2(\nbd_\textup{ns})$.

\begin{cor}\label{c:sep}
For separable colorings, the second moment contribution from the correlated regime $\nbd_\textup{ns}$ is bounded by
	$\E[\sepZZ^2(\nbd_\textup{ns})]
	\le\exp\{n \lambda \size(H_\star) + o(n) \}
	\,\E \sepZZ$.

\begin{proof} By the symmetry between the roles of $\usi$ and $\usi'$,
and the definition of separability, we have
	\[\sepZZ^2(\nbd_\textup{ns})
	\le 2\sum_{\substack{(\usi,\usi')\in\nbd_\textup{ns},\\
		\usi\textup{ separable}}}
	\Ind{\usi'\succcurlyeq\usi}
	\wt_{\glit,T}(\usi)^\lambda
	\wt_{\glit,T}(\usi')^\lambda\\
	\le 
	\exp\{ n\size(H_\star)\lambda+o(n) \}
	\sepZZ\,.\]
Taking expectations gives the claim.\end{proof}
\end{cor}

We then conclude the second moment calculation by computing $\E(\ZZ^2(\nbd_\textup{se}))$, which is an upper bound on $\E(\sepZZ^2(\nbd_\textup{se}))$. Therefore we must maximize the function $\bF_2$ on $\nbd_\textup{se}$. As in the first moment, the physics theory suggests that the unique maximizer of $\bF_2$ on $\nbd_\textup{se}$ is given by a specific pair empirical measure $H_\bullet$ which is defined in terms of $H_\star$. In the \textsc{nae-sat} model there is a small complication in this definition: we will have $\dH_\bullet=\dH_\star\otimes\dH_\star$ and $\eH_\bullet=\eH_\star\otimes\eH_\star$, but $\hH_\bullet\ne\hH_\star\otimes\hH_\star$ because the first and second copies interact via the edge literals. It therefore requires a small calculation to argue that $\bF_2(H_\bullet)=2\bF_2(H_\star)$. We address this by giving a simple sufficient condition for $H\in\simplex_2$ to satisfy $\bF_2(H)=\bF(H^1)+\bF(H^2)$ where $H^j\in\simplex$ are its single-copy marginal.

\begin{lem}\label{l:clause.tensorize} Consider $H\in\simplex_2$ with single-copy marginals $H^1,H^2\in\simplex$. Suppose there are functions $g^1,g^2$ which are invariant to literals in the sense that $g^j(\usi^j)=g^j(\usi^j\oplus\ulit)$ for all $\usi^j\in(\tcols)^k$, $\ulit\in\set{\zro,\one}^k$, and
	\[\hH^j(\usi^j)=\hat{v}(\usi^j)g^j(\usi)\,,\quad
	\hH(\usi^1,\usi^2)
	=\hat{v}_2(\usi^1,\usi^2)
		\prod_{j=1,2}g^1(\usi^1)\,.\]
If in addition $\dH=\dH^1\otimes\dH^2$ and $\eH=\eH^1\otimes\eH^2$, then $\bF_2(H)=\bF(H^1)+\bF(H^2)$.

\begin{proof} Let $K^j(\usi^j,\ulit)\equiv\hI(\usi\oplus\ulit)g^j(\usi)/2^k$. This defines a probability measure on $(\tcols)^k\times \set{\zro,\one}^k$ where the marginal on $(\tcols)^k$ is $\hH^j$, and the marginal on $\set{\zro,\one}^k$ is uniform by the assumption on $g^j$. It follows that $K^j(\usi^j\,|\,\ulit) = \hI(\usi\oplus\ulit)g^j(\usi)$ and $\hH^j(\usi^j)=\Elit[K^j(\usi^j\,|\,\ulit)]$. Let $(X^1,X^2,L)$ be a random variable with law 
	\[\P((X^1,X^2,L)=(\usi^1,\usi^2,\ulit))
	=\f1{2^k}\prod_{j=1,2} K^j(\usi^j\,|\,\ulit)\,.\]
The marginal law of $L$ is uniform on $\set{\zro,\one}^k$, and the $X^j$ are conditionally independent given $L$. The marginal law of $(X^1,X^2)$ is $\hH$, and the marginal law of $X^j$ is $\hH^j$. The law of $L$ conditional on $X^j$ is uniform over $2^k\hat{v}(X^j)$ possibilities, whereas the law of $L$ conditional on $(X^1,X^2)$ is uniform over $2^k\hat{v}_2(X^1,X^2)$ possibilities. It follows that
	\begin{align*}
	\ent(\hH)
	&=\ent(X^1,X^2)
	=\ent(L)-\ent(L|X^1,X^2)
		+\sum_{j=1,2}\ent(X^j|L)
	=-\langle\hH,\log\hat{v}_2\rangle
		+\sum_{j=1,2}\ent(X^j|L)\\
	&= -\langle\hH,\log\hat{v}_2\rangle
		+\sum_{j=1,2}
		[\ent(\hH^j)+ \langle\hH^j,\log\hat{v}\rangle]\,.
	\end{align*}
Rearranging gives $\SIGMA_2(H)=\SIGMA(H^1)+\SIGMA(H^2)$, and the result follows.\end{proof}
\end{lem}

It will be clear from the explicit definition that the measure $H_\star$ of Proposition~\ref{p:first.mmt} can be expressed as $H_\star(\usi)=\hat{v}(\usi)g_\star(\usi)$ where $g_\star$ is invariant to literals. Let $H_\bullet=(\dH_\bullet,\hH_\bullet,\eH_\bullet)$ where
	\[\hH_\bullet(\usi^1,\usi^2)
	\equiv \hat{v}_2(\usi^1,\usi^2)
		\prod_{j=1,2} g_\star(\usi^j)\,,\]
$\dH_\bullet=\dH_\star\otimes\dH_\star$, and $\eH_\bullet=\eH_\star\otimes\eH_\star$. The following is the second moment analogue of Proposition~\ref{p:first.mmt}.

\begin{ppn}[\textit{proved in Section~\ref{s:tree.opt}}]\label{p:second.mmt} The unique maximizer of $\bF_2$ in $\nbd_\textup{se}$ is $H_\bullet$. Moreover, there is a positive constant $\epsilon=\epsilon(k,\lambda,T)$ so that for $\lone{H-H_\bullet}\le\epsilon$ we have $\bF_2(H) \le \bF_2(H_\bullet)-\epsilon\lone{H-H_\bullet}^2$. \end{ppn}

\begin{cor}\label{c:second.mmt.sep} For the coloring model, the second moment contribution from the near-uncorrelated regime $\nbd_\textup{se}$ is given by the estimate $\E[\ZZ^2(\nbd_\textup{se})]\asymp\exp\{2n\bF(H_\star)\}$.

\begin{proof} Recall from Corollary~\ref{c:first.mmt} the definition of $(\dot{\wp},\hat{\wp},\bar{\wp})$ for the single-copy model, and define $(\dot{\wp}_2,\hat{\wp}_2,\bar{\wp}_2)$ analogously for the pair model. Let $\wp_2 \equiv \dot{\wp}_2+\hat{\wp}_2-\bar{\wp}_2-1$. For any $H^1,H^2\in\nbd$, let $\nbd_{\textup{se},H^1,H^2}$ denote the set of $H\in \nbd_\textup{se}$ with single-copy marginals $H^1,H^2$. This is a space of dimension $\wp_2-2\wp$, and it follows from Proposition~\ref{p:second.mmt} and Lemma~\ref{l:full.rank.M} that 
	\[\E[\ZZ^2(\nbd_{\textup{se},H^1,H^2})]
	\asymp \f{\exp\{ n[
	\bF_2(H_\bullet)-\Theta(
	\|(H^1,H^2)-(H_\star,H_\star)\|^2)]\}}{n^\wp}\,.\]
Summing over $H^1,H^2\in\nbd$ then gives $\E[\ZZ^2(\nbd_\textup{se})]\asymp\exp\{ n\bF_2(H_\bullet) \}$, which in turn equals $\exp\{2n\bF(H_\star)\}$ by applying Lemma~\ref{l:clause.tensorize}.\end{proof}\end{cor}

\subsection{Conclusion of main result}

We now explain that the main theorem follows. We continue to assume, as we have done throughout the section, that $k\ge k_0$, $\alpha$ satisfies \eqref{e:regime.alpha}, and $0\le\lambda\le1$. The measure $H_\star$ of Proposition~\ref{p:first.mmt} depends on $\lambda$ and $T$, and we now make this explicit by writing $H_\star\equiv H_{\lambda,T}$.

\begin{cor}\label{c:whp} For any $0\le\lambda\le1$ and $T$ finite such that $\SIGMA(H_{\lambda,T})$ is positive, the separable contribution to $\ZZ_{\lambda,T}$ is well-concentrated about its mean:
	\[\lim_{\epsilon\downarrow0}
	\liminf_{n\to\infty}
	\P\bigg(\epsilon (\E\sepZZ) \le\sepZZ
	\le \f{\E\sepZZ}\epsilon\bigg)=1\,.\]

\begin{proof} The upper bound follows trivially from Markov's inequality, so the task is to show the lower bound. In the first moment, we have $\E\sepZZ\asymp\exp\{n\bF(H_\star)\}$ by Corollary~\ref{c:first.mmt} and Proposition~\ref{p:sep}. In the second moment, since $\sepZZ\le\ZZ$, we have $\E(\sepZZ^2) \le\E[\sepZZ^2(\nbd_\textup{ns})] +\E[\ZZ^2(\nbd_\textup{se})]$. Combining with Corollaries~\ref{c:sep}~and~\ref{c:second.mmt.sep} gives
	\[\f{\E(\sepZZ^2)}{(\E\sepZZ)^2}
	\le \f{\exp\{n \lambda \size(H_\star) + o(n) \}}{\E\sepZZ}
		+ O(1)
	\asymp \f{\exp\{o(n)\}}
		{\exp\{n \SIGMA(H_\star)\}}
		+O(1)\,,\]
which immediately implies $\P(\sepZZ\ge\delta(\E\sepZZ))\ge\delta$ for some positive constant $\delta$. This can be strengthened to the asserted concentration result by an easy adaptation of the method described in \cite[Sec.~6]{MR3440193}.\end{proof}
\end{cor}

\begin{ppn}[proved in Appendix~\ref{appx:onersb}]\label{p:drec_equiv.outline} For $0\le\lambda\le1$ and $H_{\lambda,T}=H_\star\in\simplex$ as given by Proposition~\ref{p:first.mmt}, the triple
$(\size(H_{\lambda,T}),\SIGMA(H_{\lambda,T}),\bF(H_{\lambda,T}))$ converges as $T\to\infty$ to $(s_\lambda,\Sigma(s_\lambda),\mathfrak{F}(\lambda))$ from Definition~\ref{d:onersbFE}.
\end{ppn}

\begin{proof}[Proof of Theorem~\ref{t:main}] In Appendix~\ref{appx:ubd} we prove the upper bound, $\FE(\alpha)\le\onersbFE(\alpha)$ for all $0\le \alpha<\asat$. For any $\lambda,T$ such that $\SIGMA(H_{\lambda,T})$ is positive, Corollary~\ref{c:whp} gives
	\[\liminf_{n\to\infty}(\ZZ_{\lambda,T}(\nbd))^{1/n}
	\ge\lim_{n\to\infty}(\sepZZ_{\lambda,T})^{1/n}
	=\exp\{\bF(H_{\lambda,T})\}
	=\exp\{\SIGMA(H_{\lambda,T})
	+\lambda\size(H_{\lambda,T})\}\,.\]
On the other hand, $\ZZ_{\lambda,T}(\nbd)$ consists entirely of clusters of size $\exp\{n\size(H_{\lambda,T}) + o(n)\}$. Therefore, if $\SIGMA(H_{\lambda,T})$ is positive, it must be that $\FE(\alpha)\ge\size(H_{\lambda,T})$. The lower bound $\FE(\alpha)\ge\onersbFE(\alpha)$ then follows
by appealing to Proposition~\ref{p:drec_equiv.outline}, so the theorem is proved.
\end{proof}

The next two sections are devoted to
the optimization of $\bF$ in $\nbd_\circ$,
and of $\bF_2$ in $\nbd_\textup{se}$.
In Section~\ref{s:reduce.to.tree} we show that the optimization of $\bF$ and $\bF_2$ over small regions can be reduced to an optimization problem on trees. In Section~\ref{s:tree.opt} we solve the tree optimization problem by connecting it to the analysis of the \textsc{bp} recursion for the coloring model.
This allows us to prove Propositions~\ref{p:first.mmt}~and~\ref{c:second.mmt.sep},
thereby completing the proof of the main result
Theorem~\ref{t:main}.

\section{Reduction to tree optimization by local updates}\label{s:reduce.to.tree}

In this section we prove the key reduction that ultimately allows us to compute the (first and second) moments of $\ZZ\equiv\ZZ_{\lambda,T}$ (Propositions~\ref{p:first.mmt}~and~\ref{p:second.mmt}). As we have already seen, the calculation reduces to the optimization of functions $\bF$ and $\bF_2$ from \eqref{e:formal.defn.bF} and \eqref{e:def.PSI.two}. These functions are generally not convex over the entirety of their domains $\simplex$ and $\simplex_2$, but we expect them to be convex in neighborhoods around their maximizers $H_\star$ and $H_\bullet$ (as given in Definition~\ref{d:Hstar} below).  With this in mind, we rely on other means (\textit{a~priori} estimates and separability) to restrict the domains --- from $\simplex$ to $\nbd_\circ$ in the first moment (Lemma~\ref{l:restrict.H}), and from $\simplex_2$ to $\nbd_\textup{se}$ in the second moment (Corollary~\ref{c:sep}). Within these restricted regions, we will show that $\bF$ and $\bF_2$ can be optimized by a \bemph{local} update procedure that reduces the (nonconvex) graph optimization to a (convex) tree optimization. 

\begin{figure}[h!]
\centering
\begin{subfigure}[h!]{.8\textwidth}\centering
\includegraphics{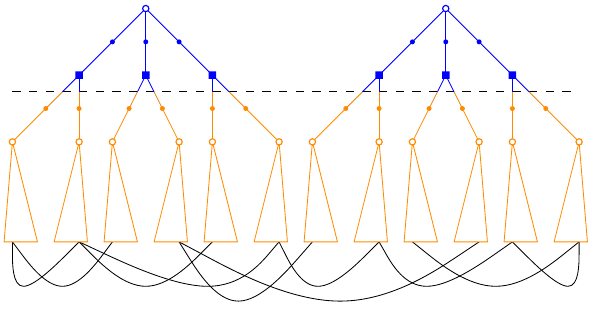}
\caption{$(\glit,Y,\usi)$}
\end{subfigure}\\
\begin{subfigure}[h!]{.8\textwidth}\centering
\includegraphics{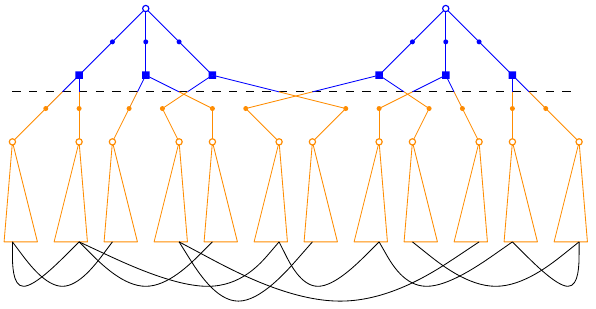}
\caption{$(\glit',Y,\ueta)$}
\end{subfigure}
\caption{One step of the local update procedure. In this figure, open circles indicate variable factors $\dPhi$, solid squares indicate clause factors $\hPl$, and each variable-clause edge is bisected by a small dot indicating the edge factor $\ePhi$. The initial state (top panel) is a triple $(\glit,Y,\usi)$ where $\glit$ is an \textsc{nae-sat} instance, $Y$ is a subset of variables (blue circles), and $\usi$ is a coloring on $\glit$ (not shown). Let $\nlit\equiv\nlit(Y)$ be the subgraph of $\glit$ induced by the variables in $Y$, together with the clausees neighboring $Y$ and their incident half-edges (shown in blue, above the dashed line). The local update procedure resamples the edge literals on $\nlit$, as well as the matching between $\nlit$ and $\glit\setminus\nlit$, to produce a modified instance $\glit'$. The coloring is updated accordingly so that we arrive at the new state $(\glit',Y,\ueta)$ (bottom panel). In both $\glit$ and $\glit'$, the edges cut by the dashed lines will be referred to as \bemph{cut edges}. The half-edges just above the dashed lines will be referred to as the \bemph{leaf edges of $\nlit$}.}\label{f:mc}\end{figure}

\subsection{Local update}\label{s:overview.locupdate}
We begin with an overview. Throughout this section, we assume $1\le T<\infty$. Suppose $\usi$ is a $T$-coloring on $\glit$. Sample from $\glit$ a subset of variables $Y$, and let $\nlit\equiv\nlit(Y)$ be the subgraph of $\glit$ induced by $Y$, together with the clauses neighboring $Y$ and their incident half-edges. The half-edges at the boundary of $\nlit$ will be referred to as the \bemph{leaf edges of $\nlit$.} 

Form a modified instance $\glit'$ (see Figure~\ref{f:mc}) by resampling the edge literals on $\nlit$ as well as the matching between $\nlit$ and $\glit\setminus\nlit$. In both $\glit$ and $\glit'$, we will say \bemph{cut edges} to refer to the edges $e=(av)$ where $a$ is a clause in $\nlit$ and $v$ is a variable in the complement of $\nlit$; these are the edges cut by the dashed lines in Figure~\ref{f:mc}. According to our terminology, the leaf edges of $\nlit$ are the half-edges that lie just above the dashed lines, so each leaf edge of $\nlit$ is half of a cut edge. The coloring is updated accordingly to produce $\ueta$, a $T$-coloring on $\glit'$ which agrees as much as possible with $\usi$ on $\glit\setminus\nlit$: in particular, $\usi$ and $\ueta$ will agree in the variable-to-clause colors on the cut edges. We will define the procedure so that it gives a Markov chain $\pi$ on triples $(\glit,Y,\usi)$ with reversing measure given by $\mu(\glit,Y,\usi)=\P(\glit)\P(Y\,|\,\glit)\wt_\glit(\usi)^\lambda$. (Note that $\mu$ is not normalized to be a probability measure.) 

Reversibility implies that for any subset $A$ of the state space, if $B$ is the set of states reachable in one step from $A$, then
	\begin{align}\nonumber
	\mu(A) 
	&= \sum_{\aaa\in A}
		\sum_{\bbb\in B}\mu(\aaa)\pi(\aaa,\bbb)
	=\sum_{\aaa\in A}\sum_{\bbb\in B}
		\mu(\bbb)\pi(\bbb,\aaa)
	=\sum_{\bbb\in B}
		\mu(\bbb) \sum_{\aaa\in A} \pi(\bbb,\aaa)\\
	&=\sum_{\bbb\in B}
		\mu(\bbb) \pi(\bbb,A)
	\le
	\bigg\{\sum_{\bbb\in B}
		\mu(\bbb)\bigg\}
	\bigg\{ \max_{\bbb\in B}\pi(\bbb,A)
		\bigg\}
	= \mu(B)\max_{\bbb\in B}\pi(\bbb,A)
	\label{e:revers}
	\,.\end{align}
We will design the sampling procedure to ensure that (i) the vertices in $Y$ are far from one another and from any short cycles, and (ii) the empirical measure $\Hsamp$ of $\usi$ on $\nlit$ is close to $\Hsym$, a certain symmetrization of the overall empirical measure $H(\graph,\usi)$. Then $\E\ZZ(H)\approx\mu(A)$ where $A$ is the set of states $(\glit,Y,\usi)$ with $\Hsamp\approx \Hsym$. The update produces a state $(\glit',Y,\ueta)\in B$ with possibly different $\Hsamp$, but with the \bemph{same} empirical measure $\dhtree(\Hsamp)$ of variable-to-clause colors $\dsi$ on the leaf edges of $\nlit$. Bounding $\pi(\bbb,A)$ reduces to calculating the weight of configurations on $\nlit$ with empirical measure $\Hsamp\approx \Hsym$, relative to the weight of all configurations on $\nlit$ with empirical measure $\dhtree(\Hsamp)\approx \dhtree(\Hsym)$ on the leaf edges of $\nlit$. Because $\nlit$ is a disjoint union of \bemph{trees}, this reduces to a convex optimization problem which lends itself much more readily to analysis. The purpose of the current section is to formalize this graphs-to-trees reduction. We begin with the precise definitions of $\Hsym$, $\Hsamp$ and $\dhtree(\Hsamp)$. Recall our notation $\sigma\equiv(\dsi,\hsi)\in\COLS$ from the discussion following \eqref{e:def.ST.tau}.
As $\sigma$ goes over all of $\tcols$,
write $\dCOLS_T$ for the possible values of $\dsi$,
and $\hCOLS_T$ for the possible values of $\hsi$.
Let $\dCOLS\equiv\dCOLS_\infty$ and $\hCOLS\equiv\hCOLS_\infty$, so
	{\setlength{\jot}{0pt}\begin{align*}
	\dCOLS&\equiv\set{\redz,\redo,\bluz,\bluo}
	\cup(\dMM\setminus\set{\zro,\one,\star})\,,\\
	\hCOLS&\equiv\set{\redz,\redo,\bluz,\bluo}
	\cup(\hMM\setminus\set{\zro,\one,\star})
	\end{align*}}%

\begin{dfn}[sample empirical measures] \label{d:hsamp} Given an \textsc{nae-sat} instance $\glit\equiv(\graph,\ulit)$, a $T$-coloring $\usi$ on $\glit$, and a nonempty subset of variables $Y\subseteq V$, we record the local statistics of ``$\usi$ around $Y$'' as follows. Let $\dHsamp$ be the empirical measure of variable-incident colorings in $Y$: for $\ueta\in(\tcols)^d$,
	\[\dHsamp(\ueta)
	\equiv \f{1}{|Y|}\sum_{v\in Y}
		\Ind{\usi_{\delta v}=\ueta}\,.\]
Let $\eHsamp$ be the empirical measure
of colors on the edges incident to $Y$: for $\eta\in\tcols$,
	\[\eHsamp(\eta)
	\equiv\f{1}{|Y|d}
	\sum_{v\in Y}\sum_{e\in\delta v}
	\Ind{\sigma_e=\eta}\,.\]
For $\ueta\in(\tcols)^k$ and $1\le j\le k$ define the rotation $\ueta^{(j)}\equiv (\eta_j,\ldots,\eta_k,\eta_1,\ldots,\eta_{j-1})$. For any $v\in Y$ and $e\in\delta v$, let $j(e)$ be the index of $e$ in $\delta a(e)$.
For $\ueta\in(\tcols)^k$ let
	\[\hHsamp(\ueta)\equiv
	\f1{|Y|d}\sum_{v\in Y}\sum_{e\in\delta v}
	\Ind{(\usi_{\delta a(e)})^{j(e)}=\ueta}\,.\]
Then $\Hsamp\equiv(\dHsamp,\hHsamp,\eHsamp)$ is the \bemph{sample empirical measure} for the state $(\glit,Y,\usi)$; we shall write this hereafter as $\Hsamp=\Hsamp(\graph,Y,\usi)$. Note that $\Hsamp$ lies in the space $\spxsamp$ which is defined similarly to $\simplex$ but with condition~\eqref{e:H.edge.marginal} replaced by
	\beq\label{e:def.treesimplex}
	\f1d\sum_{\usi\in\COLS^d}
		\dHsamp(\usi)\sum_{i=1}^d
		\Ind{\sigma_i=\tau}
	=\eHsamp(\tau)
	=\sum_{\usi\in\COLS^k}
	\hHsamp(\usi)\Ind{\sigma_1=\tau}\,.\eeq
We emphasize that
\eqref{e:H.edge.marginal} and \eqref{e:def.treesimplex}
differ on the right-hand side. However, if we have $H=(\dH,\hH,\eH)\in\simplex$ such that $\hH$ is invariant under rotation of the indices $1\le j\le k$, then $H\in\spxsamp$ as well. With this in mind, for $H\in\simplex$ we define $\Hsym\equiv(\dH,\hHsym,\eH)\in\spxsamp$ where $\hHsym$ is the average over all $k$ rotations of $\hH$. Later we will sample $Y$ such that $\Hsamp$ falls very close to $\Hsym$ with high probability. Lastly, for any $\Hsamp\in\spxsamp$ we let $\dhtree(\Hsamp)$ be the measure on $\dtcols$ given by
	\[
	[\dhtree(\Hsamp)](\deta)
	\equiv
	\f{1}{k-1}
	\sum_{\usi\in(\tcols)^k}
	\sum_{j=2}^k \Ind{\dsi_j=\deta}\hHsamp(\usi)\,.
	\]
Thus $\dhtree(\Hsamp)$ represents the empirical measure of spins $\dsi$ on the leaf edges of $\nlit$, i.e., the edges cut by the dashed lines in Figure~\ref{f:mc}.
\end{dfn}

For $H\in\simplex$, recall from \eqref{e:SIGMA} that $\bF(H)=\SIGMA(H)+\lambda\size(H)$ where $\SIGMA(H)$ is the cluster complexity and $\size(H)$ is (the exponential rate of) the cluster size. The tree analogue of $\SIGMA(H)$ is
$\treeSIGMA(\Hsamp)$ where 
	\beq\label{e:treeSIGMA}
	\treeSIGMA(H)
	\equiv
	\ent(\dH)
	+ d\ent(\hH)
	-d\ent(\eH)
	+\logp(H)\eeq
--- the only difference being that $\SIGMA$
has coefficient $\alpha=d/k$ on the clause entropy term
$\ent(\hH)$, while $\treeSIGMA$ has coefficient $d$. As we see below, this occurs because the ratio of variables to clauses to edges is $1:d:d$ for the disjoint union of trees $\nlit$, versus $1:\alpha:d$ for the full graph $\glit$. We will also see that $\treeSIGMA$ is always concave, though $\SIGMA$ need not be. Likewise, the tree analogue of $\size(H)$ is
$\treesize(\Hsamp)$ where
	\[
	\treesize(H)
	\equiv 
	\langle\log\dPhi,\dH\rangle
	+d\langle
	\log \hF,
	\hH\rangle
	+d\langle\log\ePhi,\eH\rangle\,.\]
The tree analogue of $\bF(H)$ is $\LAMBDA(\Hsamp)$ where
	\beq\label{e:treeLAMBDA}
	\LAMBDA(H) \equiv \treeSIGMA(H)+ \lambda\treesize(H)\,.\eeq Recall Definition~\ref{d:hsamp}: given $(\glit,Y,\usi)$ with sample empirical measure $\Hsamp$, the empirical measure of spins $\dsi$ on the leaf edges of $\nlit(Y)$ 
is given by $\dhtree=\dhtree(\Hsamp)$.
Then, for any probability measure $\dot{h}$ on $\dtcols$, we let
	\[\optLAMBDA(\dot{h})
	\equiv \sup\set{\LAMBDA(H) 
	: H\in\spxsamp\textup{ with }
	\dhtree(H) =\dot{h} }\,,\]
where we emphasize that the supremum is taken over $\spxsamp$ rather than $\simplex$. For $H\in\spxsamp$ we define 
	\beq\label{e:tree.opt.XI}
	\XI(H)
	\equiv \XI_{\lambda,T}(H) 
	\equiv
	\optLAMBDA(\dhtree(H))- \LAMBDA(H)\,.\eeq
The interpretation of $\XI$, formalized below, is that for any $H\in\simplex$, if $A$ is the set of states with $\Hsamp\approx \Hsym$ and $B$ is the set of states reachable in one step of the chain from $A$, then $\max\set{\pi(\bbb,A):\bbb\in B}$ is approximately $\exp\{-|Y|\,\XI(\Hsym)\}$, where we note that $\XI(\Hsym)\ge0$ since $\XI$ is nonnegative on all of $\spxsamp$, and $\Hsym\in\simplex\cap\spxsamp$. Formally, we have the following bound:

\begin{thm}\label{t:mc} For $\epsilon$ small enough (depending only on $d,k,T$), it holds for all $H\in\simplex$ that 
	\[\bF(H)\le \max\Big\{\bF(H'): \lone{H'-H} \le 
		\epsilon(dk)^{2T}\Big\}- \epsilon\,\XI(\Hsym)\,.\]
The analogous statement holds in the second moment with $\bF_2=\bF_{2,\lambda,T}$ and $\XI_2=\XI_{2,\lambda,T}$.
\end{thm}

For the sake of exposition, we will give the proof of Theorem~\ref{t:mc} for $\bF$ only; the assertion for $\bF_2$ follows from the same argument with essentially no modifications. The first task is to define the Markov chain that was informally discussed above. There are a few issues to be addressed: how to sample $Y$ ensuring certain desirable properties; how to resample the matching between $\nlit$ and $\glit\setminus\nlit$; and how to produce a valid coloring $\ueta$ on $\glit'$ without changing the spins $\dsi$ on the cut edges. We address the last issue next. 

\subsection{Tree updates} 

Recall that in the bipartite factor graph $\glit=(V,F,E,\ulit)$, each edge joins a variable to a clause and is defined to have length one-half. For the discussion that follows, it is useful to bisect each edge $e\in E$ with an artificial vertex indicating the edge factor $\ePhi$; these are shown as small dots in Figure~\ref{f:mc}. Thus an edge $e$ joining $a\in F$ to $v\in V$ becomes two quarter-length edges, $(ae)$ and $(ev)$, where $e$ now refers to the artificial vertex. Given a coloring $\usi$ on the original graph, we obtain a coloring on the new graph by simply duplicating the color on each edge, setting $\sigma_{ae}=\sigma_e=\sigma_{ev}$. We then define $\nlit(v)$ as the $(5/8)$-neighborhood of variable $v$, and define $\nlit=\nlit(Y)$ as the union of $\nlit(v)$ for all $v\in Y$: in the top panel of Figure~\ref{f:mc}, $\nlit$ is the subgraph shown in blue, above the dashed line. 

Directly below the same dashed line, the small solid orange dots correspond to the boundary edges, hereafter denoted $\CUT$, of
the cavity graph $\glit_\pd\equiv\glit\setminus\nlit$. For $e\in\CUT$, let $\etree$ be its neighborhood in $\glit\setminus\nlit$ of some radius $\ell >T$ where $2\ell$ is a positive integer. Assuming $e$ is not close to a short cycle, $\etree$ is what we will call a \bemph{directed tree} rooted at $e$. In this case we also call $\etree$ a \bemph{variable-to-clause tree} since the root edge has no incident clause; a \bemph{clause-to-variable tree} is similarly defined. We always visualize a directed tree $\etree$ as in Figure~\ref{f:directed.tree}, with the root edge $e$ at the top, so that paths leaving the root travel downwards. On an edge $e=(av)$, the \bemph{upward color}
is $\dsi_{av}$ if $a$ lies above $v$,
and $\hsi_{av}$ if $v$ lies above $a$.
We let $\delta\etree$ denote the boundary edges of $\etree$, not including the root edge.

\begin{figure}[h!]\centering
\begin{subfigure}[h!]{.3\textwidth}\centering
\includegraphics{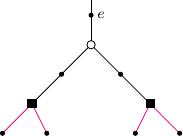}
\caption{A variable-to-clause tree.}
\end{subfigure}
\begin{subfigure}[h!]{.3\textwidth}\centering
\includegraphics{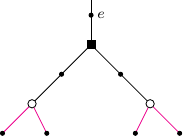}
\caption{A clause-to-variable tree.}
\end{subfigure}
\caption{Two types of directed tree $\etree$.
In each case the root edge $e$ is shown at the top,
and the boundary $\delta\etree$ 
is highlighted in purple. For $e\in\CUT$, the unit-radius neighborhood of $e$ in $\glit\setminus\nlit$ typically looks like the left panel.}
\label{f:directed.tree}
\end{figure}

Suppose $\usi$ is a valid $T$-coloring of a directed tree $\etree$ with root spin $\sigma_e=\sigma$, and consider a new root spin $\eta\in\tcols$. If $\sigma$ and $\eta$ agree on the upward color of the root edge, then there is a \bemph{unique valid coloring}
	\[\ueta=\uu(\usi,\eta;\etree)
	\in(\tcols)^{E(\etree)}\]
which has root spin $\eta$, and agrees with $\usi$ in all the upward colors. Indeed, the only possibility for $\sigma\ne\eta$ is that both $\sigma,\eta\in\set{\fcl}$. Then, recalling \eqref{e:messageconfig.loceq}, the coloring $\uu(\usi,\eta;\etree)$ is uniquely defined by recursively applying the mappings $\dotT$ and $\hatT$, starting from the root and continuing downwards. Since we assumed that $\usi$ was a valid $T$-coloring and $\eta\in\tcols$, it is easy to verify that the resulting $\ueta$ is also a valid $T$-coloring, so the $\uu$ procedure respects the restriction to $\tcols$. From now on we assume all edge colors belong to $\tcols$.

\begin{lem}\label{l:weight.preserving} Suppose $\usi$ is a valid $T$-coloring of the directed tree $\etree$ with root color $\sigma$, and $\eta\in\tcols$ agrees with $\sigma$ on the upward color of the root edge. If $\ueta=\uu(\usi,\eta;\etree)$ agrees with $\usi$ on the boundary $\delta\etree$, then $\wt_{\etree}(\usi)=\wt_{\etree}(\ueta)$.

\begin{proof} It follows from the construction that on any edge $e$ in the tree,
$\sigma_e$ and $\eta_e$ agree on the upward color;
moreover, if $\sigma_e\ne\eta_e$ then we must have
$\sigma_e,\eta_e\in\set{\fcl}$.
For each vertex $x\in\etree$, let $e(x)$ denote the parent edge of $x$, that is, the unique edge of $\etree$ which lies above $x$. We then have
	\[\wt_{\etree}(\usi)
	= \prod_{e\in\delta\etree}
		\ePhi(\sigma_e)
	\prod_{v\in V(\etree)}\bigg\{
	\dPhi(\usi_{\delta v})
	\ePhi(\sigma_{e(v)})\bigg\}
	\prod_{a\in F(\etree)}
	\bigg\{
	\hPl((\usi\oplus\ulit)_{\delta a})
	\ePhi(\sigma_{e(a)})
	\bigg\}\,.\]
For a clause $a$ in $\etree$ with $e(a)=e$, if both $\sigma_e,\eta_e\in\set{\fcl}$ then it follows directly from \eqref{e:product.factors.z} that
	\[\hPl((\usi\oplus\ulit)_{\delta a})\ePhi(\sigma_e)
	= \hphi((\vec{\dta}\oplus\ulit)_{\delta a})\ephi(\sigma_e)
	= \hz(\hsi_e) = \hz(\heta_e)
	= \hPl((\ueta\oplus\ulit)_{\delta a})\ePhi(\eta_e)\,.\]
For a variable $v$ in $\etree$ with $e(v)=e$, if $\sigma_e,\eta_e\in\set{\fcl}$, then a similar calculation as
\eqref{e:product.factors.z} gives
	\[\dPhi(\usi_{\delta v})
	\ePhi(\sigma_e)
	=\dphi(\vec{\hsi}_{\delta v})
	\ephi(\sigma_e)
	=\dz(\dsi_e)
	= \dz(\deta_e)
	=\dPhi(\vec{\eta}_{\delta v})
	\ePhi(\eta_e)\,,\]
where we used the fact that necessarily we have $\usi_{\delta v}\in\set{\fcl}^d$. To conclude, we recall that $\usi$ and $\vec{\eta}$ agree on $\delta\etree$ by assumption, so we have $\wt_{\etree}(\usi)=\wt_{\etree}(\vec{\eta})$ as claimed. \end{proof} \end{lem}

We also use the directed tree as a device to prove the following lemma, which was used in the proofs of Corollaries~\ref{c:first.mmt}~and~\ref{c:second.mmt.sep}.

\begin{lem}\label{l:full.rank.M} Let $\dot{M},\hat{M}$ be as defined in Corollary~\ref{c:first.mmt}, and let $\dot{M}_2,\hat{M}_2$ be their analogues in the pair model. For any $\sigma,\eta\in\COLS$ there exists an integer-valued vector $(\dH,\hH)$ so that
	\[\langle\mathbf{1},\dH\rangle
	=0=\langle\mathbf{1},\hH\rangle
	\quad\text{and}\quad\dot{M}\dH-\hat{M}\hH
	=\mathbf{1}_\sigma-\mathbf{1}_\eta\,,\]
where $\mathbf{1}$ denotes the all-ones vector, and $\mathbf{1}_\sigma$ denotes the vector which is one in the $\sigma$ coordinate and zero elsewhere. The analogous statement holds for $(\dot{M}_2,\hat{M}_2)$.

\begin{proof} We define a graph on $\tcols$ by putting an edge between $\sigma$ and $\eta$ if there exist valid colorings $\usi,\ueta$ on some directed tree $\etree$ which take values $\sigma,\eta$ on the root edge, but agree on the boundary edges $\delta\etree$. If $\sigma,\eta$ are connected in this way, then taking
	\begin{align*}
	\dH(\urho)
	&=\sum_{v\in V(\etree)}
	\Ind{\usi_{\delta v}=\urho}
	-\sum_{v\in V(\etree)}\Ind{\ueta_{\delta v}=\urho}\,,
	\quad\urho\in(\tcols)^d\\
	\hH(\urho)
	&=\sum_{a\in F(\etree)}
	\Ind{\usi_{\delta a}=\urho}
	-\sum_{a\in F(\etree)}
	\Ind{\ueta_{\delta a}=\urho}\,,
	\quad\urho\in(\tcols)^k.
	\end{align*}
gives $\dot{M}\dH-\hat{M}\hH =\mathbf{1}_\sigma-\mathbf{1}_{\eta}$ as required. It therefore suffices to show that the graph we have defined on $\tcols$ is connected (hence complete). First, if $\dsi=\deta$, it is clear that $\sigma$ and $\eta$ can be connected by colorings $\usi,\ueta$ of some variable-to-clause tree $\etree$, with $\ueta=\uu(\usi,\eta;\etree)$. Similarly, if $\hsi=\heta$, then $\sigma$ and $\eta$ can be connected by a clause-to-variable tree. This implies that $\set{\fcl}$ is connected. Next, if $\sigma=\red_{\bx}$ and $\eta=\blu_{\bx}$, then they can be connected by a variable-to-clause tree rooted at edge $e$, containing a single variable factor $v=v(e)$, with $\usi_{\delta v\setminus e}$ identically equal to $\red_{\bx}$.  If $\sigma=\blu_{\bx}$ and $\eta=(\dta,\spc)$ for any $\dta\in\dCOLS\setminus\set{\red,\blu}$, then they can be connected by a clause-to-variable tree rooted at edge $e$, containing a single clause factor $a=a(e)$, with any $\usi_{\delta a\setminus e}$ such that $(\usi\oplus\ulit)_{\delta a\setminus e}$ contains both $\set{\bluz,\bluo}$ entries. It follows that $\tcols$ is indeed connected, which proves the assertion concerning $(\dot{M},\hat{M})$. The proof for $(\dot{M}_2,\hat{M}_2)$ is very similar and we omit the details.\end{proof}\end{lem}
 
\subsection{Markov chain}
We now define a Markov chain on tuples $(\glit,Y,\usi)$ where $\glit$ is an \textsc{nae-sat} instance, $\usi$ is a valid $T$-coloring on $\glit$, and $Y\subseteq V$ is a subset of variables such that
	\beq\label{e:Y.tree.conds}
	\textup{the subgraphs $B_{2T}(v)$,
	for $v\in Y$, are mutually disjoint trees,}\eeq
where $B_{2T}(v)$ is to the $2T$-neighborhood of $v$ in $\glit$. Recall that $\nlit=\nlit(Y)$ is the $\tfrac58$-neighborhood of $Y$;  we write $\nlit = (\ngraph,\ulit_\ngraph)$ where $\ngraph$ is the graph without edge literals. Write $\delta\ngraph$ for the boundary of $\ngraph$, consisting of clause-incident edges that are not incident to $Y$ (just above the dashed line in Figure~\ref{f:mc}). Write $\usi_\ngraph$ for a $T$-coloring on $\ngraph$ (including $\delta\ngraph$), and let
	\beq\label{e:wt.ngraph}
	\wt_\ngraph(\usi_\ngraph|\ulit_\ngraph)
	\equiv \wt_\nlit(\usi_\ngraph)
	\equiv \prod_{v\in Y}\bigg\{
	\dPhi(\usi_{\delta v})
	\prod_{e\in\delta v}
	\Big\{ \hPl(
	(\usi\oplus\ulit)_{\delta a(e)})
	\ePhi(\sigma_e)
	\Big\}\bigg\}\,.\eeq
On the other hand we have $\glit\setminus\nlit \equiv \glit_\pd\equiv(\graph_\pd,\ulit_\pd)$ where $\graph_\pd\equiv(V_\pd,F_\pd,E_\pd)$, and
$\CUT$ denotes the boundary of $\glit_\pd$
(just below the dashed line in Figure~\ref{f:mc}). Write $\usi_\pd$ for a coloring on $\graph_\pd$ (including $\CUT$), and let
	\[\wt_\pd(\usi_\pd)
	\equiv 
	\prod_{v\in V_\pd}
	\dPhi(\usi_{\delta v})
	\prod_{a\in F_\pd}
	\hPl
	((\usi\oplus\ulit)_{\delta a})
	\prod_{e\in E_\pd}
	\ePhi(\sigma_e)\,.\]
By matching $\delta\ngraph$ to $\CUT$
(along the dashed line in Figure~\ref{f:mc}), the graphs $\glit_\pd$ and $\nlit$ combine to form the original instance $\glit$. If $\usi$ is a valid coloring on $\glit$, then $\usi_{\delta\ngraph}$ and $\usi_\CUT$ must agree, and we have
	\beq\label{e:factorization}
	\wt_\glit(\usi)
	=\wt_\pd(\usi_\pd)
	\wt_\ngraph(\usi_\ngraph | \ulit_\ngraph)\,.\eeq
Let $\dhtree(\usi_{\delta \ngraph})=\dhtree$ be the empirical measure of the spins $(\dsi_e)_{e\in\delta \ngraph}$. Given initial state $(\glit,Y,\usi)$, we take one step of the Markov chain as follows: 
\begin{enumerate}[1.]
\item Detach $\nlit$ from $\glit$. On $\nlit$, sample a new assignment $(\ulitp_\ngraph,\ueta_\ngraph)$ from the probability measure
	\beq\label{e:defn.mc}
	p( (\ulitp_\ngraph,\ueta_\ngraph)\,|\,
	(\ulit_\ngraph,\usi_\ngraph))
	=\f{\Ind{
		\dhtree(\ueta_{\delta \ngraph})=\dhtree}
	\wt_\ngraph(\ueta_\ngraph|\ulitp_\ngraph)^\lambda}
	{z(|Y|,\dhtree)}\eeq
where the denominator is the normalizing constant obtained by summing over all possible $(\ulitp_\ngraph,\ueta_\ngraph)$.

\item Form the new graph $\glit'$ by sampling a uniformly random matching of $\delta\ngraph$ with $\CUT$, subject to the constraint that $e\in\CUT$ must be matched to $e'\in\delta\ngraph$ with $\dsi_e=\deta_{e'}$. 
The number of such matchings depends only on $|Y|$ and $\dhtree$, so we denote it as $\mathcal{M}(|Y|,\dhtree)$.
For each matched pair $(e,e')$ where $\hsi_e\ne\heta_{e'}$, let $\etree=\etree(e)$ be the radius-$2T$ neighborhood of $e$ in the graph $\glit_\pd$. Let \[\ueta_{\etree}\equiv\uu(\usi_{\etree},\eta_e;\etree)\] and note that, since $\usi$ is a valid $T$-coloring, $\ueta_{\etree}$ and $\vec{\usi}_{\etree}$ must agree at the boundary of $\etree$. Finally, on the rest of $\graph_\pd$ outside the 
radius-$2T$ neighborhood of $\CUT$, we simply take $\ueta$ and $\usi$ to be the same.
\end{enumerate}
The state of the Markov chain after one step is $(\glit',Y,\ueta)$ where $\ueta$ is a valid $T$-coloring on $\glit'$.

\begin{lem}\label{l:revers} 
Suppose we have a sampling mechanism for a random subset of variables $Y$ in $\glit$ such that,
whenever $(\glit,Y,\usi)$ and $(\glit',Y,\ueta)$
appear in the same orbit of the Markov chain, we have
	\beq\label{e:orbit.eq}
	\P(Y\,|\,\glit)=\P(Y\,|\,\glit')\,.\eeq
A reversing measure for the Markov chain is then given by
	$\mu(\glit,Y,\usi)
	= \P(\glit)
	\P(Y\,|\,\glit)
	\wt_{\glit}(\usi)^\lambda$.

\begin{proof} Given $\aaa=(\glit,Y,\usi)$, let $\bbb=(\glit',Y,\ueta)$ be any state reachable from $\aaa$ in a single step of the chain. By the factorization \eqref{e:factorization},
together with assumption~\eqref{e:orbit.eq}
and the fact that $\P(\glit)=\P(\glit')$,
	\[\f{\mu(\aaa)}{\mu(\bbb)}
	=\f{\P(\glit)\P(Y\,|\,\glit)
		\wt_\pd(\usi_\pd)^\lambda
		\wt_\ngraph(\usi_\ngraph|\ulit_\ngraph)^\lambda}
	{\P(\glit')\P(Y\,|\,\glit')
		\wt_\pd(\ueta_\pd)^\lambda
	 \wt_\ngraph(\ueta_\ngraph|\ulitp_\ngraph)^\lambda}
	=\f{\wt_\pd(\usi_\pd)^\lambda
		\wt_\ngraph(\usi_\ngraph|\ulit_\ngraph)^\lambda}
	 {\wt_\pd(\ueta_\pd)^\lambda
	 \wt_\ngraph(\ueta_\ngraph|\ulitp_\ngraph)^\lambda}
	=\f{\wt_\ngraph(\usi_\ngraph|\ulit_\ngraph)^\lambda}
		{\wt_\ngraph(\ueta_\ngraph|\ulitp_\ngraph)^\lambda}\,,\]
where the last identity is by Lemma~\ref{l:weight.preserving}.
On the other hand, with $\pi$ denoting the transition probabilities for the Markov chain, \eqref{e:defn.mc} implies 
	\[\f{\pi(\aaa,\bbb)}{\pi(\bbb,\aaa)}
	=\f{ \wt_\ngraph(\ueta_\ngraph|\ulitp_\ngraph)^\lambda }
	{\mathcal{M}(|Y|,\dhtree)
		z(|Y|,\dhtree)}
	\f{\mathcal{M}(|Y|,\dhtree)
		z(|Y|,\dhtree)}{\wt_\ngraph(\usi_\ngraph|\ulit_\ngraph)^\lambda}
		=\f{\mu(\bbb)}{\mu(\aaa)}\,.\]
Rearranging proves reversibility,
$\mu(\aaa)\pi(\aaa,\bbb)
=\mu(\bbb)\pi(\bbb,\aaa)$. (We remark that since the Markov chain breaks up into many disjoint orbits, the reversing measure $\mu$ is not unique.)\end{proof}
\end{lem}

\subsection{From graph to tree optimizations}

If $Y$ satisfies condition~\eqref{e:Y.tree.conds} and we define $\nlit=\nlit(Y)=(\ngraph,\ulit)$ as before, then $\ngraph$ consists of $|Y|\equiv s$ disjoint copies of the tree $\onetree$ shown in Figure~\ref{f:onetree}. Recall from Definition~\ref{d:hsamp} the definition of $\Hsamp=\Hsamp(\graph,Y,\usi)$, and note $\Hsamp$ depends only on $\usi_\ngraph$. For any $\Hsamp\in\spxsamp$ we let $\ZZ(\Hsamp;\nlit)$
be the partition function of all colorings on $\nlit$
with empirical measure $\Hsamp$ --- the only randomness comes from the literals $\ulit_\ngraph$. The expected number of valid colorings is
	\[\E\ZZ_{\lambda=0,T}(\Hsamp;\nlit)
	=\exp\{sd\langle\hHsamp,\hat{v}\rangle\}
	\binom{s}{s\dHsamp}
	\binom{ds}{ds\hHsamp}
	\bigg/
	\binom{ds}{ds\eHsamp}\,\]
which by Stirling's formula is
$s^{O(1)} \exp\{s\,\treeSIGMA(\Hsamp)\}$
(see \eqref{e:treeSIGMA}).
Any valid coloring $\usi_\ngraph$
with empirical measure $\Hsamp$
 contributes weight $\wt_\ngraph(\usi_\ngraph|\ulit_\ngraph)^\lambda=\exp\{s\lambda\treesize(\Hsamp)\}$, so altogether
	\beq\label{e:treeLAMBDA.derivation}
	\E\ZZ(\Hsamp;\nlit)
	= s^{O(1)}\exp\{s[\treeSIGMA(\Hsamp)+\lambda\treesize(\Hsamp)]\}
	=s^{O(1)}\exp\{s\,\LAMBDA(\Hsamp)\}\,,\eeq
with $\LAMBDA$ as in \eqref{e:treeLAMBDA}. (This calculation clarifies why we refer to $\treeSIGMA,\LAMBDA$
as the ``tree analogues'' of $\SIGMA,\bF$.)

\begin{figure}[h!]
\includegraphics{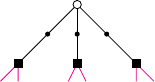}
\caption{If $s\equiv |Y|$
and $\nlit=\nlit(Y)=(\ngraph,\ulit)$,
the graph $\ngraph$ consists of $s$ disjoint copies of the tree $\onetree$ shown here. The boundary $\delta\onetree$ is highlighted in purple.}
\label{f:onetree}
\end{figure}

Fix $\Hsamp\in\spxsamp$ and let $A(\Hsamp)$ be the set of all $(\glit,Y,\usi)$ with $\Hsamp(\graph,Y,\usi)=\Hsamp$. Let $B(\Hsamp)$ be the set of states reachable in one step from $A(\Hsamp)$. Then, for all $\bbb\in B(\Hsamp)$,
	\beq\label{e:reverse.transition.bound}
	\pi(\bbb,A(\Hsamp))
	=\f{\E\ZZ(\Hsamp;\nlit)}
	{\sum_{H'\in\spxsamp}
	\Ind{\dhtree(H')=\dhtree(\Hsamp)}
	\E\ZZ(H';\nlit)}
	=s^{O(1)}\exp\{-s\,\XI(\Hsamp)\}\,.\eeq
This is the key calculation for Theorem~\ref{t:mc}. To complete the proof, what remains is to produce a sampling mechanism $\P(Y\,|\,\glit)$ which satisfies our earlier conditions \eqref{e:Y.tree.conds} and \eqref{e:orbit.eq}, together with some concentration bound to ensure that in most cases $Y$ is large and $\Hsamp\approx\Hsym$. We formalize this as follows:

\begin{dfn}[sampling mechanism]\label{d:samplingmech} Let $\P(Y\,|\,\glit)$ be the probability of sampling the subset of variables $Y$ from the \textsc{nae-sat} instance $\glit$. We call this a \bemph{good sampling mechanism} if the following holds: first, whenever $(\glit,Y,\usi)$ and $(\glit',Y,\ueta)$ appear in the same orbit of the Markov chain, we must have $\P(Y\,|\,\glit)=\P(Y\,|\,\glit')$ (used in Lemma~\ref{l:revers} to show reversibility). Next we require that for every $\glit$, and for every $Y$ with $\P(Y\,|\,\glit)$ positive, the neighborhoods $B_{2T}(v)$ for $v\in Y$ are mutually disjoint trees (condition~\eqref{e:Y.tree.conds}, required for defining the Markov chain). Lastly we require that for all but an exceptional set $\mathscr{B}$ of ``bad'' \textsc{nae-sat} instances, with $\P(\glit\in\mathscr{B}) \le \exp\{-n(\log n)^{1/2}\}$, we have
	\beq\label{e:good.Y}
	\sum_{Y:n\epsilon\le |Y|\le 4n\epsilon}
	\P(Y\,|\,\glit)
	\mathbf{1}\bigg\{ 
	\Big\|\Hsamp(\graph,Y,\usi)
		-\Hsym(\graph,\usi)\Big\|
	\le \f1{(\log\log n)^{1/2}}\bigg\}
	\ge\f12\,.\eeq
for all colorings $\usi$ on $\glit$.
\end{dfn}

\begin{ppn}\label{p:reduce.to.tree}
Assume the existence of a good sampling mechanism  in the sense of Definition~\ref{d:samplingmech}. Then, for $\epsilon$ small enough (depending only on $d,k,T$), it holds for all $H\in\simplex$ that
	\[\f{\E\ZZ(H)}{\E\ZZ(\nbd_{H,\epsilon})}
	\le\f{\exp\{o_n(1)\}}
		{\exp\{n\epsilon\,\XI(\Hsym)\}}\]
where $\nbd_{H,\epsilon}\equiv\set{H'\in\simplex
:\|H-H'\|\le \epsilon (dk)^{2T}}$
and $\Hsym$ is the symmetrization of $H$ from Definition~\ref{d:hsamp}.

\begin{proof}
Abbreviate $\delta\equiv1/(\log\log n)^{1/2}$.
Given $H\in\simplex$ and its symmetrization 
$\Hsym\in\simplex\cap\spxsamp$, let
	\[A=\bigg\{
	(\glit,Y,\usi): |Y|\ge n\epsilon \textup{ and }
		\Big\|\Hsamp(\graph,Y,\usi)-\Hsym\Big\|\le 
		2\delta
	\bigg\}\,.\]
With $\mu$ the reversing measure from Lemma~\ref{l:revers}, we have
	\[\mu(A)
	\ge\sum_{\glit\notin\mathscr{B}}\P(\glit)
	\sum_{\usi} \wt_\glit(\usi)^\lambda
		\sum_{Y:|Y|\ge n\epsilon}
	\P(Y\,|\,\glit)
	\mathbf{1}\bigg\{\hspace{-3pt}\begin{array}{c}
		\|\Hsamp(\graph,Y,\usi)
			-\Hsym(\graph,\usi)\|\le 
		\delta\\ \textup{and }
		\|H(\graph,\usi)-H\|\le \delta
		\end{array}\hspace{-3pt}
	\bigg\}\,,\]
using $\|H(\graph,\usi)-H\|\le
\|\Hsym(\graph,\usi)-\Hsym\|$ together with the triangle inequality. Applying \eqref{e:good.Y} gives
	\[\mu(A)
	\ge \f12
	\sum_{\glit\notin\mathscr{B}}\P(\glit)
	\sum_{\usi} \wt_\glit(\usi)^\lambda
	\mathbf{1}\bigg\{
		\|H(\graph,\usi)-H\|\le \delta\bigg\}
	\ge \f{\E[\ZZ(H);\glit\notin\mathscr{B}]}{2}
	\ge \f{\E\ZZ(H)}{4}\,,\]
where the last step follows from the bound on $\P(\glit\in\mathscr{B})$.
If $B$ is the set of states reachable from $A$ in one step of the Markov chain, then crudely
$\mu(B)\le \E\ZZ(\nbd_{H,\epsilon})$, and
	\[\max_{\bbb\in B}
	\pi(\bbb,A)
	\le \f{\exp\{o_n(1)\}}{\exp\{n\epsilon\,\XI(\Hsym)\}}\]
by our earlier calculation \eqref{e:reverse.transition.bound}.
The result follows by substituting into
\eqref{e:revers}.
\end{proof}
\end{ppn}

\subsection{Sampling mechanism}

To complete the proof of Theorem~\ref{t:mc}, it remains for us to define a sampling mechanism satisfying the conditions of Definition~\ref{d:samplingmech}. To this end, given a $(d,k)$-regular graph $\graph$, let $V_t\subseteq V$ be the subset of variables $v\in V$ such that the $t$-neighborhood $B_t(v)$ around $v$ is a tree. Recall the following form of the Chernoff bound: if $X$ is a binomial random variable with mean $\mu$, then for all $t\ge1$ we have $\P(X\ge t\mu)\le \exp\{ - t\mu \log(t/e) \}$.

\begin{lem}\label{l:loc.tree} Suppose $\graph$ is sampled from the $(d,k)$-regular configuration model on $n$ vertices. For any fixed $t$ we have $\P( |V \setminus V_t| \ge n/(\log\log n) )\le \exp\{ -n (\log n)^{1/2} \}$ for $n$ large enough (depending on $d,k,t$).

\begin{proof} Let $\gamma$ count the total number of cycles in $\graph$ of length at most $2t$. If $v\notin V_t$ then $v$ must certainly lie within distance $t$ of one of these cycles, so crudely we have
	\beq\label{e:near.cycles}
	|V\setminus V_t|
	\le 2t (dk)^t\gamma\,.\eeq
Consider breadth-first search exploration in $\graph$ started from an arbitrary variable, say $v=1$. At each step of the exploration we reveal one edge, so the exploration takes $nd$ steps total. Conditioned on everything revealed in the first $t$ steps, the chance that the edge revealed at step $t+1$ will form a new cycle of length $\le 2t$ is upper bounded by$(dk)^{2t}/(nd-t)$. It follows that the total number of cycles revealed up to time $nd(1-\delta)$ is stochastically dominated by a binomial random variable
	\[\gamma'\sim\mathrm{Bin}\bigg( nd(1-\delta),
	\f{(dk)^{2t}}{nd\delta}\bigg)\,.\]
The final $nd\delta$ exploration steps form at most $nd\delta$ cycles, so $\gamma \le \gamma' + nd\delta$.  Applying the Chernoff bound (as stated above) with $\delta=1/(\log\log n)^2$, we obtain
	\[\P(\gamma \ge 2nd\delta)
	\le\P(\gamma' \ge nd\delta)
	\le\exp\bigg\{ -nd\delta
		\log\bigg(\f{nd\delta^2}{e (dk)^{2t}}\bigg)
		\bigg\}
	\le\exp\{-n(\log n)^{1/2}\}\]
for large enough $n$. Recalling \eqref{e:near.cycles} gives the claimed bound.\end{proof} \end{lem}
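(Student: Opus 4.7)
The plan is to reduce the event $\{|V\setminus V_t|\ge n/\log\log n\}$ to a tail bound on the number $\gamma$ of short cycles in $\glit$. First I would observe that if $B_t(v)$ fails to be a tree, then it contains a cycle, so $v$ lies within graph distance $t$ of some cycle of length at most $2t$. The $t$-ball around any such cycle has at most $2t(dk)^t$ variables, giving the deterministic inequality $|V\setminus V_t|\le 2t(dk)^t\,\gamma$. Consequently it suffices to show that $\P(\gamma\ge n\delta)\le\exp\{-n(\log n)^{1/2}\}$ for $\delta$ of order $[2t(dk)^t\log\log n]^{-1}$.

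Next I would bound $\gamma$ by sequential exposure of the configuration-model matching $\mathfrak{m}$. Running a breadth-first exploration rooted at an arbitrary variable and revealing one pairing at a time, at the $i$-th step, conditional on the first $i-1$ pairings, the probability that the freshly exposed edge closes a new cycle of length at most $2t$ is at most $(dk)^{2t}/(nd-i)$: the new edge closes such a cycle only if its uniformly random partner is one of the at most $(dk)^{2t}$ half-edges within distance $2t$ of the new endpoint among those already explored, while $nd-i$ half-edges remain unpaired. Taking $\delta'=(\log\log n)^{-2}$, the number of short cycles formed during the first $nd(1-\delta')$ steps is stochastically dominated by $\gamma'\sim\mathrm{Bin}(nd(1-\delta'),(dk)^{2t}/(nd\delta'))$, and the final $nd\delta'$ steps contribute trivially at most $nd\delta'$ cycles, so $\gamma\le\gamma'+nd\delta'$.

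Finally I would plug these into the Chernoff tail \eqref{e:chernoff}. The mean $\mu\equiv\E\gamma'=O((dk)^{2t}/\delta')=O_t((\log\log n)^2)$ is far below the target $n\delta=\Theta_t(n/\log\log n)$, so writing $n\delta = s\mu$ with $s=\Theta_t(n/(\log\log n)^3)$, the bound $\P(\gamma'\ge s\mu)\le\exp\{-s\mu\log(s/e)\}$ yields $\P(\gamma'\ge n\delta)\le\exp\{-\Theta_t(n\log n/\log\log n)\}$, which comfortably beats $\exp\{-n(\log n)^{1/2}\}$; meanwhile $nd\delta'=O(n/(\log\log n)^2)$ is absorbed into $n\delta$ once $n$ is large. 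There is no real obstacle here since $t$ is fixed and the required decay is extremely mild compared to what a binomial of bounded mean gives; the only point requiring care is justifying the sequential conditional bound on closing a short cycle, which is where the configuration-model independence of the remaining pairings is used.
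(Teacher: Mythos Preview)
Your proposal is correct and follows essentially the same approach as the paper: the same deterministic bound $|V\setminus V_t|\le 2t(dk)^t\gamma$, the same BFS exposure of the configuration-model matching with the same conditional probability $(dk)^{2t}/(nd-i)$ of closing a short cycle, the same split at $nd(1-\delta')$ with $\delta'=(\log\log n)^{-2}$ and binomial domination, and the same Chernoff tail. The only cosmetic difference is that the paper bounds $\P(\gamma\ge 2nd\delta')$ directly while you introduce a separate threshold $\delta$ tied to the target $n/\log\log n$; both lead to the same conclusion.
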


Given an instance $\glit=(\graph,\ulit)$, let $V_t$ be as defined above and take $V'\equiv V_{4T}$. We then take i.i.d.\ random variables $I_v\sim\mathrm{Ber}(\epsilon')$ indexed by $v\in V'$ (for $\epsilon'$ a constant to be determined) and let
	\beq\label{e:eps.sample}
	Y_v\equiv\Ind{
	I_v=1, \text{ and }
	I_u=0 \text{ for all }
	u\in B_{4T}(v)\setminus\set{v}}\,.\eeq
We then define $\P(Y\,|\,\glit)$ to be the law of the set $Y=\set{v\in V' : Y_v=1}$. Note that the random variables $Y_v$, for $v\in V'$, all have the same expected value, so we can define $\epsilon\equiv(\E Y_v)/2$.

\begin{lem}\label{l:sample} Define $\mathscr{B}$ to be the set of all $\glit=(\graph,\ulit)$ with $|V\setminus V_{4T}|\le n/(\log\log n)$. For the sampling mechanism described above, condition~\eqref{e:good.Y} holds for any $\glit\notin\mathscr{B}$ and any coloring $\usi$ on $\glit$.

\begin{proof} Fix an instance $\glit\notin\mathscr{B}$ and a coloring $\usi$ on $\glit$. Recalling Definition~\ref{d:hsamp}, for each $v\in V$ denote
	\[X_v\equiv (\dot{X}_v,\hat{X}_v,\bar{X}_v)
	\equiv\Hsamp(\graph,\set{v},\usi)\,.\]
Assume without loss that $V'\equiv V_{4T} = [n']\equiv \set{v_1,\ldots,v_{n'}}$, and for $0\le \ell\le n'$ let $\filt_\ell$ denote the sigma-field generated by $Y_1,\ldots,Y_\ell$. Consider
	\[S \equiv\sum_{v\le n'}A_v Y_v\]
where we can take different choices of $A_v$ to prove various different bounds:
\begin{enumerate}[--]
\item taking $A_v=1$ gives $S=|Y|$ and $\E S = 2n'\epsilon$;
\item taking $A_v=\dot{X}_v(\ueta)$
gives $S=|Y|\dHsamp(\ueta)$
and $|\E S - 2n'\epsilon\dH(\ueta)|\le n-n'$;
\item taking $A_v=\hat{X}_v(\ueta)$
gives $S=|Y|\hHsamp(\ueta)$
and $|\E S - 2n'\epsilon\hH(\ueta)| 
	\le n-n'$;
\item taking $A_v=\bar{X}_v(\eta)$ gives
$S=|Y|\eHsamp(\eta)$ and 
$|\E S-2n'\epsilon\eH(\eta)|\le n-n'$,
\end{enumerate}
where we recall that $n-n'=|V\setminus V_{4T}\le n/(\log\log n)$. Consider the Doob martingale
	\[M_\ell \equiv \E(S\,|\,\filt_\ell)
	\equiv\sum_{v\le n'}
	A_v \, \E(Y_v\,|\,\filt_\ell)\,.\] 
For $\ell\le n'$, if $v$ lies at distance greater than $8T$
from any variable in $[\ell]\equiv\set{v_1,\ldots,v_\ell}$, then
	\[\E(Y_v\,|\,\filt_\ell) = \E Y_v = 2\epsilon\,.\]
Thus, the only possibility for  $\E(Y_v\,|\,\filt_{\ell+1})\ne\E(Y_v\,|\,\filt_\ell)$ is that $v$ lies within distance $8T$ of vertex $\ell+1$. The number of such $v$ is at most $(dk)^{8T}$, so we conclude
	\[|M_{\ell+1}-M_\ell|
	\le (dk)^{8T} \|A\|_\infty
	\le (dk)^{8T}\,.\]
It follows by the Azuma--Hoeffding martingale inequality that
	\[\P(| S - \E S | \ge x)
	\le \exp\bigg\{ -\f{x^2}
		{2 n' (dk)^{16T}}
		\bigg\}\,.\]
The result follows by summing over the choices of $A$ listed above, combined with our above estimates on $\E S$ for each choice of $A$. \end{proof} \end{lem}

\begin{proof}[Proof of Theorem~\ref{t:mc}] It follows from Lemmas~\ref{l:loc.tree}~and~\ref{l:sample} that the sampling mechanism described by \eqref{e:eps.sample} satisfies the conditions of Definition~\ref{d:samplingmech}. The result then follows by taking $n\to\infty$ in Proposition~\ref{p:reduce.to.tree}. \end{proof} 

\section{Solution of tree optimization}\label{s:tree.opt}

From Theorem~\ref{t:mc} we see that if $H\in\simplex$ is a local maximizer for the first moment exponent $\bF=\bF_{\lambda,T}$, then its symmetrization $\Hsym$ must be a zero of the function $\XI=\XI_{\lambda,T}$ defined by \eqref{e:tree.opt.XI}. The analogous statement holds in the second moment with $\bF_2$ and $\XI_2$. The functions $\XI,\XI_2$ correspond to \bemph{tree} optimization problems, which we solve in this section by relating them to the \textsc{bp} recursions for the coloring model. 

\begin{ppn}\label{p:minimizer} For $0\le\lambda\le1$ and $1\le T<\infty$, let $H_\star\in\simplex$ and $H_\bullet\in\simplex_2$ be as in Definition~\ref{d:Hstar} below.
\begin{enumerate}[a.]
\item On $\set{H\in\nbd_\circ:H = \Hsym}$, $\XI$ is uniquely minimized at $H=H_\star$, with $\XI(H_\star)=0$.
\item On $\set{H\in\nbd_\textup{se}:H = \Hsym}$, $\XI_2$ is uniquely minimized at $H=H_\bullet$, with $\XI_2(H_\bullet)=0$. \end{enumerate}
Moreover there is a positive constant $\epsilon=\epsilon(d,k,T)$ such that
\begin{enumerate}[1.]
\item $\XI(H) \ge \epsilon \lone{H-H_\star}^2$ for all $H\in\simplex$ with $H=\Hsym$ and $\lone{H-H_\star}\le\epsilon$, and
\item $\XI_2(H) \ge \epsilon \lone{H-H_\bullet}^2$ for all $H\in\simplex_2$ with $H=\Hsym$ and $\|H-H_\bullet\|\le\epsilon$.
\end{enumerate}\end{ppn}

\subsection{Tree optimization problem}

Recall from the previous section that in the local update procedure, we sample a subset of variables $Y$ and consider its neighborhood $\nlit=(\ngraph,\ulit_\ngraph)$. Writing $s=|Y|$, the graph $\ngraph$ is the disjoint union of $\onetree_1,\ldots,\onetree_s$ where each $\onetree_i$ is a copy of the tree $\onetree$ of Figure~\ref{f:onetree}. Let $\spxtree$ be the space of probability measures on colorings of $\onetree$. Any coloring $\usi_\ngraph$ can be summarized by $\nu\in\spxtree$ where $\nu(\usi_\onetree)$ is the fraction of copies $\onetree_i$ with $\usi_{\onetree_i}=\usi_\onetree$. The sample empirical measure $\Hsamp=\Hsamp(\graph,Y,\usi)$ can be obtained as a linear projection of $\nu$, and we hereafter denote this relation by $\Hsamp=\Htree(\nu)$.  Recalling \eqref{e:wt.ngraph}, we have $\Elit[(\wt_\nlit(\usi_\ngraph))^\lambda] = \avwt_{\onetree}(\usi_{\onetree_1})^\lambda \cdots \avwt_{\onetree}(\usi_{\onetree_s})^\lambda$ where
	\[\avwt_{\onetree}(\usi_\onetree)
	= \dPhi(\usi_{\delta v})
	\prod_{e\in\delta v}\bigg\{
	\ePhi(\sigma_e)
	\hPhi(\usi_{\delta a(e)})
	\bigg\}\,.\]

\begin{lem}\label{l:tree.partition.fn} The function $\LAMBDA$ of \eqref{e:treeLAMBDA} is concave on $\spxsamp$, and can be expressed as
	\beq\label{e:lambda.as.opt}
	\LAMBDA(H)=\sup\Big\{
		\ent(\nu)+\lambda\langle
			\log\avwt_\onetree,\nu \rangle
		:\nu\in\spxtree\textup{ with }
			\Htree(\nu)=H
	\Big\}\,.\eeq

\begin{proof}  The function $\LAMBDA(H)$ is the sum of $\treeSIGMA(H)$ and the linear function $\lambda\treesize(H)$, so it suffices to show that $\treeSIGMA$ is concave on $\spxsamp$.  For $H=(\dH,\hH,\eH)\in\spxsamp$, if $X\in\COLS^k$ is a random variable with law $\hH$, then  the first coordinate $X_1$ has marginal law $\eH$ by \eqref{e:def.treesimplex}. It follows that for any $H\in\spxsamp$ we can express
	\[\treeSIGMA(H)
	=\ent(\dH)+d\ent(X)-d\ent(X_1)+\logp(H)
	=\ent(\dH)+d\ent(X\,|\,X_1)+\logp(H)\,.\]
The entropy function is concave and $\logp$ is linear, so this proves that $\treeSIGMA$ (hence $\LAMBDA$) is indeed concave on $\spxsamp$. In fact this can be argued alternatively, as follows. Recalling \eqref{e:treeLAMBDA.derivation}, note that for $H\in\spxtree$ we have
	\[s^{O(1)}\exp\{s\LAMBDA(H)\}
	=\E\ZZ(H;\nlit)
	=\sum_{\nu\in\spxtree}
	\Ind{\Htree(\nu)=H}
	\binom{s}{s\nu} (\avwt_\onetree)^{\lambda\nu}\,.\]
Expanding with Stirling's formula gives the representation \eqref{e:lambda.as.opt}, which also implies concavity of $\LAMBDA$.\end{proof}\end{lem}

Thus, for $H\in\spxsamp$, we have $\XI(H) = \optLAMBDA(\dhtree(H))- \LAMBDA(H)$ where $\LAMBDA$ is given by \eqref{e:lambda.as.opt}, and
	\beq\label{e:lambda.as.opt.dh}
	\optLAMBDA(\dot{h})
	= \sup\Big\{\ent(\nu)
		+ \lambda
		\langle\log\avwt_{\onetree},\nu
		\rangle: 
		\nu\in\spxtree\textup{ with }
		\dhtree(\Htree(\nu))=\dot{h}
		\Big\}\,.\eeq
Both \eqref{e:lambda.as.opt} and \eqref{e:lambda.as.opt.dh} fall in the general category of entropy maximization problems subject to linear constraints. In Appendix~\ref{appx:entropy.max} we review basic calculations for problems of this type. The discussion there, in particular Remark~\ref{r:match.notation}, implies that for any $\dot{h}$, there is  a unique measure  $\nu=\nu^\textup{op}(\dot{h})$ achieving the maximum in \eqref{e:lambda.as.opt.dh}. Moreover, there exists a probability measure $\dq$ on $\dtcols$ --- serving the role of Lagrange multipliers for the constrained maximization --- such that $\nu^\textup{op}(\dot{h})$ can be expressed as 
	\beq\label{e:multipliers.rep.first}
	\nu(\usi_\onetree)
	=\nu_\dq(\usi_\onetree)
	\equiv
	\f{\avwt_\onetree(\usi)^\lambda}{Z}	
	\prod_{e\in\delta\onetree}
	\dq(\dsi_e)\,,\eeq
where $Z$ is the normalizing constant. The analogous statement holds for the second moment.

\subsection{BP recursions}\label{ss:tree.opt.bp} We now state the \textsc{bp} recursions for the $\lambda$-tilted $T$-coloring model. In the standard formulation (e.g.\ \cite[Ch.~14]{MR2518205}), this is a pair of relations for probability measures $\dot{\bm{q}}$, $\hat{\bm{q}}$ on $\tcols$:
	\begin{align*}
	\dot{\bm{q}}(\sigma)
	&=[\dot{\bm{B}}_{\lambda,T}(\hat{\bm{q}})](\sigma)
	\cong\Ind{\sigma\in\tcols}
	\ePhi(\sigma)^\lambda
	\sum_{\usi\in(\tcols)^d}
	\Ind{\sigma_1=\sigma} \dPhi(\usi)^\lambda
	\prod_{i=2}^d\hat{\bm{q}}(\sigma_i)\\
	\hat{\bm{q}}(\sigma)
	&=[\hat{\bm{B}}_{\lambda,T}(\dot{\bm{q}})](\sigma)
	\cong\Ind{\sigma\in\tcols}
	\ePhi(\sigma)^\lambda
	\sum_{\usi\in(\tcols)^k}
	\Ind{\sigma_1=\sigma} \hPhi(\usi)^\lambda
	\prod_{i=2}^k\dot{\bm{q}}(\sigma_i)
	\end{align*}
where $\cong$ denotes equality up to normalization, so that the mapping always outputs a probability measure. Recall from Definition~\ref{d:hsamp} that for $\dsi\equiv(\dsi,\hsi)\in\tcols$  we have $\dsi\in\dtcols$ and $\hsi\in\htcols$.  For our purposes we can assume a \bemph{one-sided} dependence, meaning there are probability measures $\dq$ on $\dtcols$ and $\hq$ on $\htcols$ such that  $\dot{\bm{q}}(\sigma) \cong \dq(\dsi)\Ind{\sigma\in\tcols}$ and $\hat{\bm{q}}(\sigma)\cong\hq(\hsi)\Ind{\sigma\in\tcols}$. One can then check (e.g.\ \cite[Ch.~19]{MR2518205}) that the \textsc{bp} recursions preserve the one-sided property, so that $\dot{\bm{B}}_{\lambda,T}$ and $\hat{\bm{B}}_{\lambda,T}$ restrict to mappings
	\beq\label{e:defn.vBP}
	\dBP\equiv\dBP_{\lambda,T}
	: \mathscr{P}(\htcols)
	\to\mathscr{P}(\dtcols)\,,\quad
	\hBP\equiv\hBP_{\lambda,T}
	: \mathscr{P}(\dtcols)
	\to\mathscr{P}(\htcols)\,.\eeq
We also denote $\vBP \equiv \vBP_{\lambda,T} \equiv \dBP\circ\hBP$. Given any $\dq\in\mathscr{P}(\dtcols)$, write $\hq\equiv\vBP\dq$, and let $H\equiv H_\dq$ be defined by
	\beq\label{e:def.Hq}
	\dH_\dq(\usi)
	=\f{\dPhi(\usi)^\lambda}
	{\dot{\ZH}}
	\prod_{i=1}^d \hq(\hsi_i), 
	\quad
	\hH_\dq(\usi)
	=\f{\hPhi(\usi)^\lambda}
	{\hat{\ZH}}
	\prod_{i=1}^d \dq(\dsi_i), 
	\quad
	\eH_\dq(\sigma)
	=\f{\ePhi(\sigma)^{-\lambda}}
		{\bar{\ZH}}
	\dq(\dsi)
	\hq(\hsi)\eeq
where $\dot{\ZH}$, $\hat{\ZH}$, and $\bar{\ZH}$ are normalizing constants, all dependent on $\dq$. Clearly, $H_\dq=(H_\dq)^\textup{sy}$. If $\dq$ is a fixed point of $\vBP$, then $H_\dq\in\simplex$.  An entirely similar discussion applies to the pair (second moment) model, where the \textsc{bp} recursion reduces to a pair of mappings between $\mathscr{P}((\dtcols)^2)$ and $\mathscr{P}((\htcols)^2)$. If $\dq\in\mathscr{P}((\dtcols)^2)$ is a fixed point of $\vBP$, then \eqref{e:def.Hq} defines an element $H_\dq\in\simplex_2$.

\begin{lem}\label{l:min.is.zero} For $1\le T<\infty$, if $\dq\in\mathscr{P}(\dtcols)$ is any fixed point of $\vBP_{\lambda,T}$ which has full support on $\dtcols$, then $\XI(H_\dq)=0$. The analogous statement holds for the second moment.

\begin{proof} Consider the optimization problem~\eqref{e:lambda.as.opt.dh} for $\optLAMBDA(\dot{h})$ with $\dot{h}=\dhtree(H_\dq)$. As noted above, $\nu^\textup{op}(\dot{h})$ can be written \eqref{e:multipliers.rep.first} as $\nu_{\tq}$ for some measure $\tq\in\mathscr{P}(\dtcols)$, which may not be unique if the constraint $\dhtree(\Htree(\nu))=\dot{h}$ is rank-deficient. However, if $\dq$ has full support on $\dtcols$, then $\dot{h}=\dhtree(H_\dq)$ does also. In this case it is straightforward to check that the constraints are indeed of full rank, so $\tq$ is unique. Because $\dq$ is a fixed point of $\vBP$, the measure $\nu_\dq$ satisfies $\Htree(\nu_\dq)=H_\dq$, so it also satisfies the weaker constraint $\dhtree(\Htree(\nu_\dq))=\dot{h}$. It follows by the above uniqueness argument that $\dq=\tq$. Therefore, $\nu_\dq$ solves the optimization problem~\eqref{e:lambda.as.opt.dh} for $\optLAMBDA(\dhtree(H_\dq))$, as well as the optimization problem \eqref{e:lambda.as.opt} for $\LAMBDA(H_\dq)$, so we conclude $\XI(H_\dq)=0$ as claimed.\end{proof} \end{lem}

\begin{lem}\label{l:bp.fixed.pt} For $0\le\lambda\le1$ and $1\le T<\infty$, let $\XI=\XI_{\lambda,T}$, $\XI_2=\XI_{2,\lambda,T}$, and $\vBP=\vBP_{\lambda,T}$.
\begin{enumerate}[a.]
\item If $H\in\nbd_\circ$ with $H=\Hsym$ and $\XI(H)=0$, then $H=H_\dq$ where $\dq\in\mathscr{P}(\dtcols)$ is a fixed point of $\vBP$.
\item If $H\in\nbd_\textup{se}$ with $H=\Hsym$ and $\XI_2(H)=0$, then $H=H_\dq$ where $\dq\in\mathscr{P}((\dtcols)^2)$ is a fixed point of $\vBP$.
\end{enumerate}

\begin{proof} Let $\mu=\nu^\textup{op}(H)$ denote the solution of the optimization problem \eqref{e:lambda.as.opt} for $\LAMBDA(H)$, and let $\nu=\nu^\textup{op}(\dhtree(H))$ denote the solution of the optimization problem \eqref{e:lambda.as.opt.dh} for $\optLAMBDA(\dhtree(H))$. Since \eqref{e:lambda.as.opt.dh} has a unique optimizer, we have $\XI(H)=0$ if and only if $\mu=\nu$. This means $\Htree(\nu)=H$, but also $\nu=\nu_\dq$ from \eqref{e:multipliers.rep.first}, which gives
	\beq\label{e:hH.one.BP}\hH(\usi)
	\cong \hPhi(\usi)^\lambda
	((\vBP\dq)(\dsi_1))
	\prod_{i=2}^k \dq(\dsi_i)\,.\eeq
We now claim that in order for $\hH=\hHsym$, we must have $\vBP\dq=\dq$. Note that if $\hPhi$ were fully supported on $(\tcols)^k$, and both $\dq$ and $\vBP\dq$ were fully supported on $\dtcols$, the claim would be obvious. Since $\hPhi$ is certainly not fully supported, and we also do not know \textit{a~priori} whether $\dq$ and $\vBP\dq$ are fully supported, the claim requires some argument, which differs slightly between the first- and second-moment cases:

\begin{enumerate}[a.]
\item In the first moment, Lemma~\ref{l:restrict.H} implies that  $\dq(\dsi)$ is positive for at least one $\dsi\in\set{\bluz,\bluo}$. Assume without loss that  $\dq(\bluz)$ is positive; it follows that $(\vBP\dq)(\dsi)$ is positive for both $\dsi=\bluz,\bluo$. For any $\dsi\in\dCOLS$, there exists $\hsi$ such that
	\beq\label{e:conn}
	\hPhi( (\dsi,\hsi),
	\bluz,\ldots,\bluz)>0\,.\eeq
The symmetry of $\hH$ then gives the relation
	\[\f{(\vBP\dq)(\dsi)}
		{(\vBP\dq)(\bluz)}
	= \f{\dq(\dsi)}
	{\dq(\bluz)},\]
so it follows that $\vBP\dq=\dq$ in the first moment.

\item In the second moment, since we restrict to $H\in\nbd_\textup{se}$, $\dq(\dsi)$ is positive for at least one $\dsi\in\set{\bluz,\bluo}^2$. Assume without loss that  $\dq(\bluz\bluz)$ is positive. For any $\dsi\notin\set{\redz\redo,\redo\redz}$, there exists $\hsi$ such that the second-moment analogue of \eqref{e:conn} holds. The preceding argument gives
	\[\f{(\vBP\dq)(\dsi)}
		{(\vBP\dq)
		(\bluz\bluz)}
	= \f{\dq(\dsi)}
	{\dq(\bluz\bluz)}
	\quad\text{for all }
	\dsi\notin
	\set{\redz\redo,\redo\redz}\,.\]
Since $(\vBP\dq)(\dsi)$ is positive for all  $\dsi\in\set{\bluz,\bluo}^2$, it follows that the same holds for $\dq$, so
	\[\f{(\vBP\dq)(\dsi)}
		{(\vBP\dq)
		(\bluz\bluo)}
	= \f{\dq(\dsi)}
	{\dq(\bluz\bluo)}
	\quad\text{for all }
	\dsi\notin\set{
	\redz\redz,
	\redo\redo}\,.\]
Combining these, we have for $\dsi\in\set{\red_{\zro}\red_{\one},\red_{\one}\red_{\zro}}$ that
	\[\f{(\vBP\dq)(\dsi)}
		{(\vBP\dq)
		(\bluz\bluz)} =\f{(\vBP\dq)(\dsi)}
		{(\vBP\dq)
		(\bluz\bluo)}
	\f{(\vBP\dq)
	(\bluz\bluo)}
		{(\vBP\dq)
		(\bluz\bluz)}
	=\f{\dq(\dsi)}
		{\dq
		(\bluz\bluz)},\]
and this proves $\vBP\dq=\dq$ in the second moment. \end{enumerate}
Altogether, the above proves  in both the first- and second-moment settings that $\dq$ is a \textsc{bp} fixed point.\end{proof} \end{lem}

\subsection{BP contraction and conclusion} \label{ss:bp.contract.conclusion} The next step is to (explicitly) define a subset $\GAMMA$ of measures $\dq$ on which we have a contraction estimate of the form $\lone{\vBP\dq-\dq_\star} \le c\lone{\dq-\dq_\star}$ for a constant $c<1$. A useful feature of \textsc{nae-sat} is that its \textsc{bp} recursions are self-averaging: if $\dq$ is a measure on $\dtcols$, let
	\[\dq^\textup{av}(\dsi)
	\equiv\f{\dq(\dsi)+\dq(\dsi\oplus\one)}{2}\,.\]
Then $\hBP\dq=\hBP\dq^\textup{av}$, and consequently $\vBP\dq=\vBP\dq^\textup{av}$. The analogous statement holds in the second moment. It then suffices to prove contraction on the measures $\dq=\dq^\textup{av}$, since for general $\dq$ it implies
	\[\lone{\vBP\dq-\dq_\star}
	=\lone{\vBP\dq^\textup{av}-\dq_\star}
	\le c\lone{\dq^\textup{av}-\dq_\star}
	\le c\lone{\dq-\dq_\star}\,.\]
Abbreviate $\set{\red}\equiv\set{\redz,\redo}$ and $\set{\blu}\equiv\set{\bluz,\bluo}$. In a mild abuse of notation we now write $\set{\fcl}$ for $(\dCOLS\cup\hCOLS)\setminus\set{\red,\blu}$; so for instance $\dq(\fcl) =\dq(\dCOLS\setminus\set{\red,\blu}) =\dq(\dMM\setminus\set{\zro,\one,\star})$. For the first moment analysis, let $\GAMMA$ be the set of measures $\dq\in\mathscr{P}(\dtcols)$ satisfying $\dq=\dq^\textup{av}$, such that
	\beq\label{e:contract.first}
	\f{\dq(\red) + 2^k\dq(\fcl) }{C}
	\le \dq(\blu)
	\le \f{\dq(\red)}{1-C/2^k}\eeq
for $C$ a large constant (to be determined). For the second moment analysis, let $\GAMMA(c,\kappa)$ be the set of measures $\dq\in\mathscr{P}((\dtcols)^2)$ satisfying $\dq=\dq^\textup{av}$, such that
	\begin{align}
	\tag{1$\GAMMA$}\label{e:contract.second.a}
	&\textup{$|\dq(\bluz\bluz)
		-\dq(\bluz\bluo)| 
		\le(k^9/2^{ck})
		\dq(\blu\blu)$, and
	$\dq(\fcl\fcl)+\dq(\set{\fcl\red,\red\fcl})/2^k
		+\dq(\red\red)/4^k \le (C/2^k) \dq(\blu\blu)$;}\\
	\tag{2$\GAMMA$}\label{e:contract.second.b}
	&\textup{$\dq(\set{\red\fcl,\fcl\red} )
	\le (C/2^{k\kappa})\dq(\blu\blu)$ and
	$\dq(\red\red) \le C 2^{k(1-\kappa) }
		\dq(\blu\blu)$;}\\
	\tag{3$\GAMMA$}\label{e:contract.second.c}
	&\textup{$\dq(\red_\bx\dsi) \ge [1-{C/2^k}]
			\dq(\blu_\bx\dsi)$ and
		$\dq(\dsi\red_\bx) \ge [1-{C/2^k}]
			\dq(\dsi\blu_\bx)$ for all
		$\bx\in\set{\zro,\one}$,
		$\dsi\in\dCOLS$.}
	\end{align}
(To clarify the notation:
since $\blu\equiv\set{\bluz,\bluo}$, in \eqref{e:contract.second.a} the expression
$\dq(\blu\blu)$ refers to $\dq(\set{\bluz,\bluo}^2)$.
Similarly, $\dq(\fcl\red)$ refers to
$\dq(\set{\fcl}\times\set{\redz,\redo})$
where in this context $\set{\fcl}=(\dCOLS\cup\hCOLS)\setminus\set{\red,\blu}$.)

\begin{ppn}[proved in Appendix~\ref{appx:contract}] \label{p:contract} Assume $0\le\lambda\le1$. In the first moment, we have:
\begin{enumerate}[a.]
\item\label{p:contract.first} For any $1\le T\le \infty$, the map $\vBP\equiv\vBP_{\lambda,T}$ has a unique fixed point $\dq_\star\equiv\dq_{\lambda,T}\in\GAMMA$. For any $\dq\in\GAMMA$, we have $\vBP\dq\in\GAMMA$ also, with $\lone{\vBP\dq-\dq_\star}= O(k^2/2^k) \lone{\dq-\dq_\star}$.
\item\label{c:BP_limT} In the limit $T\to\infty$, $\lone{\dq_{\lambda,T} - \dq_{\lambda,\infty}}\to0$.
\end{enumerate}
In the second moment, for any $1\le T\le\infty$, we have the following:
\begin{enumerate}[A.]
\item \label{p:contract.second} The map $\vBP\equiv\vBP_{\lambda,T}$ has a unique fixed point in $\GAMMA(1,1)$, given by $\dq_\star\otimes\dq_\star$ with $\dq_\star$ as in part~\ref{p:contract.first}. Moreover, for $c\in(0,1]$ and $k$ sufficiently large, there is no other fixed point of $\vBP$ in $\GAMMA(c,1)$: if $\dq\in\GAMMA(c,1)$ then $\vBP\dq\in\GAMMA(1,1)$, with $\lone{\vBP\dq-\dq_\star}= O(k^4/2^k) \lone{\dq-\dq_\star}$. \item \label{l:improve.kappa} If for some $c\in(0,1]$ we have $\dq\in\GAMMA(c,0)$ and $\dq = \vBP\dq$, then $\dq\in\GAMMA(c,1)$.
\end{enumerate}\end{ppn}

\begin{dfn}[optimal empirical measures]\label{d:Hstar} For $0\le\lambda\le1$ and $1\le T<\infty$, take the fixed point $\dq_\star=\dq_{\lambda,T}$ as given by Proposition~\ref{p:contract}\ref{p:contract.first}, and use \eqref{e:def.Hq} to define $H_\star=H_{\dq_\star}\in\simplex$ and $H_\bullet=H_{\dq_\star\otimes\dq_\star}\in\simplex_2$. Note that this agrees with our earlier definition of $H_\bullet$, in the discussion below Lemma~\ref{l:clause.tensorize}.\end{dfn}

\begin{lem}\label{l:in.contraction} Let $\dq$ be any fixed point of $\vBP$ that arises from Lemma~\ref{l:bp.fixed.pt}. \begin{enumerate}[a.] \item If $H=H_\dq\in\nbd_\circ$, then $\dq=\dq_\star$ and so $H=H_\star$. \item If $H=H_\dq\in\nbd_\textup{se}$, then $\dq=\dq_\star\otimes\dq_\star$ and so $H=H_\bullet$. \end{enumerate}

\begin{proof} Since $\dq=\vBP\dq$, we must have $\dq=\dq^\textup{av}$. Below we argue separately for the first and second moment. In each case we repeatedly take advantage of the fact that $H=H_\dq$ is symmetric.
\begin{enumerate}[a.]
\item For the first moment, by Proposition~\ref{p:contract}\ref{p:contract.first} it suffices to show that $\dq$ must lie in the set $\GAMMA$ defined by \eqref{e:contract.first}. It follows directly from the relation $\dq=\vBP\dq$ that $\dq(\red)\ge\dq(\blu)$. By definition of $\nbd_\circ$ we must have $\eH(\red)\le 7/2^k$ and $\eH(\fcl)\le 7/2^k$, so the vast majority of clauses must have all incident colors in $\set{\blu}=\set{\bluz,\bluo}$:
	\[1-\f{14k}{2^k} \le \hH(\blu^k) 
	= \f1{\hat{\ZH}}\sum_{\usi\in\blu^k} \hPhi(\usi)^\lambda
		\prod_{i=1}^k
		\dq(\dsi_i)
	\le \f{\dq(\blu)^k}{\hat{\ZH}}\,.\]
Next, if $\usi\in\red\blu^{k-1}$, we have $\hPhi(\usi)^\lambda =\Elit[\hPl(\usi\oplus\ulit)^\lambda] \ge \Elit\hPl(\usi\oplus\ulit)= 2/2^k$, so
	\[\f{7}{2^k}\ge \eH(\red)= \hH(\red\blu^{k-1})
	\ge \f{\dq(\red)\dq(\blu)^{k-1}}{2^{k-1} \hat{\ZH}}
	\ge \f{\dq(\red)}{\dq(\blu)}
	\f{2}{2^k}
	\bigg(1-\f{14k}{2^k} \bigg)\,,\]
which gives $\dq(\red)/\dq(\blu)\le 4$ for large $k$. Similarly, if $\usi\in\fcl\blu^{k-1}$ with $\hsi_1=\spc$ (indicating a separating clause), then $\hPhi(\usi)^\lambda \ge \Elit\hPl(\usi\oplus\ulit) = 1-4/2^k$, so
	\[\f{7}{2^k} \ge \eH(\fcl) 
	\ge \hH(\fcl\blu^{k-1})
	\ge \bigg(1-\f{4}{2^k}\bigg)
	\f{\dq(\fcl)\dq(\blu)^{k-1}}{\hat{\ZH}}
	\ge \f{\dq(\fcl)}{\dq(\blu)}
	\bigg(1-\f{4}{2^k}\bigg)
	\bigg(1-\f{14k}{2^k}\bigg)\,,\]
which gives $\dq(\fcl)/\dq(\blu) \le 8/2^k$ for large $k$. Combining these estimates proves $\dq\in\GAMMA$. 

\item For the second moment, by Proposition~\ref{p:contract}\ref{p:contract.second} it suffices to verify $\dq\in\GAMMA(1,1)$, as defined by \eqref{e:contract.second.a}--\eqref{e:contract.second.c}. Condition~\eqref{e:contract.second.c} is immediate from the relation $\dq=\vBP\dq$. Moreover, by Proposition~\ref{p:contract}\ref{l:improve.kappa} it suffices to show $\dq\in\GAMMA(1,0)$, in which case condition~\eqref{e:contract.second.b} follows from~\eqref{e:contract.second.a}. It remains to verify \eqref{e:contract.second.a}. For this end, we denote $\BLU\equiv\set{\bluz,\bluo}^2$, and partition this into $\BLUeq\equiv\set{\bluz\bluz,\bluo\bluo}$ and $\BLUne\equiv\set{\bluz\bluo,\bluo\bluz}$. By definition, for any $H\in\nbd_\textup{se}$ the single-copy marginals $H^1,H^2$ lie in $\nbd\subseteq\nbd_\circ$, so the total density of $\set{\red,\fcl}$ edges in either copy is very small. As a result the vast majority of clauses have all incident colors in $\BLU$:
	\[1-\f{28k}{2^k}
	\le \hH(\BLU^k)
	\le \f{\dq(\BLU)^k}{\hat{\ZH}}\,.\]
For $H\in\nbd_\textup{se}$, we have $|\eH(\BLUeq)-\eH(\BLUne)|\le k^4/2^{k/2} + O(2^{-k})$. The fraction of edges in not-all-$\BLU$ clauses is $O(k/2^k)$, and for $\usi\in\BLU^k$ we have $1\ge \hPhi(\usi)^\lambda \ge \Elit\hPl(\usi\oplus\ulit) = 1 - O(k/2^k)$, so
	\[\eH(\BLUeq)-\eH(\BLUne) - O(k/2^k)
	= \bigg\{\f{\dq(\BLUeq)-\dq(\BLUne)}{\dq(\BLU)}+ O(k/2^k)\bigg\}
	\f{\dq(\BLU)^k}{\hat{\ZH}}\,.\]
Rearranging gives $|\dq(\BLUeq)-\dq(\BLUne)|/ \dq(\BLU) \le 2k^4/2^{k/2}$, which proves the first part of \eqref{e:contract.second.a} (with $c=1$). It remains to show the second part of \eqref{e:contract.second.a}. If we denote $\REDeq\equiv\set{\redz\redz,\redo\redo}$ and consider $\usi\in\REDeq(\BLUeq)^{k-1}$, then (similarly as above) we have $\hPhi(\usi)^\lambda\ge \Elit\hPl(\usi\oplus\ulit)=2/2^k$, so
	\[\f{7}{2^k}\ge \eH(\REDeq)
	=\hH(\REDeq(\BLUeq)^{k-1})
	\ge \f{2}{2^k}
	\f{\dq(\REDeq)\dq(\BLUeq)^{k-1}}{\hat{\ZH}}
	\ge\f{2}{4^k}\f{\dq(\REDeq)}{\dq(\BLU)}
	\bigg( 1-\f{O(k^5)}{2^{k/2}}\bigg)\,,\]
where the last inequality is by the preceding estimates on $\dq(\BLU)$ and $\dq(\BLUeq)$. The same calculation bounds $\dq(\REDne)$ for $\REDne\equiv\set{\redz\redo,\redo\redz}$. Next consider $\sigma=((\dsi,\spc),\red)\in \set{\fcl\red}$: if $\usi\in\COLS^k$ with $\sigma_1=\sigma$ and the other $k-1$ entries in $\BLU$, then $\hPhi(\usi)^\lambda \ge 4/2^k$ as long as the other entries are not all $\BLUeq$ or all $\BLUne$. Therefore
	\[\f{7}{2^k}\ge \eH(\fcl\red)
	\ge\f{4}{2^k}
	 \f{\dq(\fcl\red)\dq(\BLU)^{k-1}}{\hat{\ZH}} 
	\bigg(1-\f{\dq(\BLUeq)^{k-1}}{\dq(\BLU)^{k-1}}
	-\f{\dq(\BLUne)^{k-1}}{\dq(\BLU)^{k-1}}\bigg)
	\ge\f{\dq(\fcl\red)}{\dq(\BLU)}
	\bigg(1-\f{O(k)}{2^k}\bigg)\,,\]
and the same calculation bounds $\dq(\red\fcl)$. Finally, for $\sigma=((\dsi^1,\spc),(\dsi^2,\spc))\in\set{\fcl\fcl}$, we can consider $\usi\in\COLS^k$ with $\sigma_1=\sigma$ and the other $k-1$ entries in $\BLU$; therefore
	\[\f{7}{2^k}\ge \eH(\fcl\fcl)
	\ge \f{\dq(\fcl\fcl)\dq(\BLU)^{k-1}}{\hat{\ZH}}
		\bigg(1-\f{4}{2^k}\bigg)
	\ge \f{\dq(\fcl\fcl)}{\dq(\BLU)}\bigg(1-\f{O(k)}{2^k}\bigg)\,.\]
Combining these estimate verifies the second part of \eqref{e:contract.second.a}. \end{enumerate} Altogether we have shown that if $H=H_\dq$ lies in $\nbd_\circ$, then $\dq\in\GAMMA$ and so $\dq=\dq_\star$; and if $H=H_\dq$ lies in $\nbd_\textup{se}$ then $\dq\in\GAMMA(1,1)$ and so $\dq=\dq_\star\otimes\dq_\star$. This concludes the proof.
\end{proof}
\end{lem} 

\begin{proof}[Proof of Proposition~\ref{p:minimizer}] We will prove the claim in the first moment; the result for the second moment follows by the same argument. It follows from Lemmas~\ref{l:min.is.zero}, \ref{l:bp.fixed.pt} and \ref{l:in.contraction} that the unique minimizer of $\XI$ on the set $\set{H\in\nbd_\circ:H=\Hsym}$ is $H_\star$ (as given by Definition~\ref{d:Hstar}), with $\XI(H_\star)=0$. It remains to establish that, for $H\in\simplex$ with $H=\Hsym$ and $\lone{H-H_\star}\le\epsilon$, we have $\XI(H)\ge\epsilon\lone{H-H_\star}^2$. As in the proof of Lemma~\ref{l:bp.fixed.pt}, let $\mu=\nu^\textup{op}(H)$ be the solution of the optimization problem \eqref{e:lambda.as.opt} for $\LAMBDA(H)$, and let $\nu=\nu^\textup{op}(\dhtree(H))$ be the solution of the optimization problem \eqref{e:lambda.as.opt.dh} for $\optLAMBDA(\dhtree(H))$. We have from \eqref{e:multipliers.rep.first} that for some $\dq\in\mathscr{P}(\dtcols)$,
	\[\nu(\usi_\onetree)
	=\nu_\dq(\usi_\onetree)
	= \f{\avwt_\onetree(\usi)^\lambda}{Z}	
	\prod_{e\in\delta\onetree}\dq(\dsi_e)\,.\]
For $e\in\delta\onetree$, abbreviate $g_e(\usi_\onetree) \equiv\log\dq(\sigma_e)$. Then, for any probability measure $\varpi$ on colorings $\usi_\onetree$, the quantity $\Lambda(\omega) \equiv \ent(\varpi)+\lambda\langle\log\avwt_\onetree,\varpi\rangle$ can be expressed as
	\[\Lambda(\omega)=\ent(\varpi)
	+\bigg\langle\log\nu+\log Z
		-\sum_{e\in\delta\onetree}g_e,\varpi\bigg\rangle\\
	=-\dkl(\omega|\nu)+\log Z
		-|\delta\onetree|\langle\log\dq,
			\dhtree(\Htree(\varpi))\rangle\,.\]
We have $\dhtree(\Htree(\varpi))=\dhtree(H)$ for both $\varpi=\nu$ and $\varpi=\mu$, so $\XI(H) =\Lambda(\mu)-\Lambda(\nu)=\dkl(\mu|\nu)$. (For further discussion, see Proposition~\ref{p:entropy.max}.) It is well known that $\dkl(\mu|\nu)\gtrsim \lone{\mu-\nu}^2$, so to conclude it remains for us to show that $\lone{\mu-\nu}\gtrsim\lone{H-H_\star}$. To this end, let $\nu_\star\equiv\nu_{\dq_\star}$, and note that $H=\Htree(\mu)$ while $H_\star=\Htree(\nu_\star)$. Recall from the discussion preceding Lemma~\ref{l:tree.partition.fn} that $\varpi\mapsto\Htree(\varpi)$ is a linear projection, so
	\[\lone{H-H_\star}
	\lesssim\lone{\mu-\nu_\star}
	\le\lone{\mu-\nu}+\lone{\nu-\nu_\star}
	\lesssim\lone{\mu-\nu}+\lone{\dq-\dq_\star}\,,\]
where the last bound holds since $\nu=\nu_\dq$ and $\nu_\star=\nu_{\dq_\star}$. Recall from Proposition~\ref{p:contract}\ref{p:contract.first} (or Proposition~\ref{p:contract}\ref{p:contract.second} for the second moment) that we have the contraction estimate $\lone{\vBP\dq-\dq_\star}\le c\lone{\dq-\dq_\star}$ for $c\in(0,1)$, so
	\[(1-c)\lone{\dq-\dq_\star}
	\le\lone{\dq-\dq_\star}-\lone{\vBP\dq-\dq_\star}
	\le\lone{\dq-\vBP\dq}\,.\]
Let $K\equiv(\dot{K},\hat{K},\bar{K})\equiv\Htree(\nu)$, and note that $\hat{K}$ need not be symmetric: if we let $\hat{K}'(\usi)\equiv\hat{K}(\sigma_2,\ldots,\sigma_k,\sigma_1)$ for $\usi\in(\tcols)^k$, then $\hat{K}$ and $\hat{K}'$ need not agree. On the other hand $H=\Htree(\mu)=\Hsym$, so 
	\[\lone{\hat{K}-\hat{K}'}
	\le\lone{\hH-\hat{K}}+\lone{\hH-\hat{K}'}
	=2\lone{\hH-\hat{K}}\le2\lone{H-K}
	\lesssim\lone{\mu-\nu}\,.\]
For any $k$-tuple $\vec{\dot{h}}\equiv(\dot{h}_1,\ldots,\dot{h}_k)$ of probability measures on $\dtcols$, consider 
	\[\hH^\textup{op}(\vec{\dot{h}})
	\equiv\argmax_{\hat{\nu}}
	\bigg\{\ent(\hat{\nu}) + 
	\lambda\langle\log\hPhi,\hat{\nu}\rangle
	: \hat{\nu}(\dsi_1=\cdot)=\dot{h}_i
	\textup{ for all }i
	\bigg\}\]
where $\hat{\nu}$ denotes any probability measure on $\supp\hat{v}\subseteq(\tcols)^k$. The unique optimizer $\hH^\textup{op}(\vec{\dot{h}})$ can be described in terms of another $k$-tuple of probability measures on $\dtcols$, denoted $\vec{\dq}\equiv(\dq_1,\ldots,\dq_k)$, which serve as Lagrange multipliers: $\hH^\textup{op}(\vec{\dot{h}})=\hH(\vec{\dq})$ where
	\[[\hH(\vec{\dq})](\usi)\cong
	\hPhi(\usi)^\lambda\prod_{i=1}^k\dq_i(\dsi_i)\,.\]
In particular, $\hH^\textup{op}(\vec{\dot{h}}_\star)=\hH(\vec{\dq}_\star)$ for $\vec{\dot{h}}_\star\equiv (\dhtree(H_\star),\ldots,\dhtree(H_\star))$ and $\vec{\dq}_\star\equiv(\dq_\star,\ldots,\dq_\star)$. For $\vec{\dot{h}}$ near $\vec{\dot{h}}_\star$, there is a unique $\vec{\dot{q}}$ satisfying $\hH^\textup{op}(\vec{\dot{h}})=\hH(\vec{\dq})$, and we can determine this $\vec{\dot{q}}$ as a smooth function of $\vec{\dot{h}}$. Thus
	\[\lone{\hat{K}-\hat{K}'}
	=\lone{\hat{H}(\vBP\dq,\dq,\ldots,\dq)
	-\hat{H}(\dq,\vBP\dq,\dq,\ldots,\dq)
	}\gtrsim \lone{\dq-\vBP\dq}\,.\]
Combining the above inequalities gives $\lone{H-H_\star}\lesssim\lone{\mu-\nu}$ as desired.\end{proof}

\begin{proof}[Proof of Propositions~\ref{p:first.mmt}~and~\ref{p:second.mmt}] Note that for $\eH$ fixed, $\bF(H)=\bF(\dH,\hH,\eH)$ is a strictly concave function of $\dH,\hH$. It follows that for all $H\in\simplex$ we have $\bF(H)\le \bF(L_H)-\epsilon\lone{H-L_H}^2$ for
	\[L_H\equiv\argmax_L\Big\{\bF(L):
		L\in\simplex\textup{ with }
		\bar{L}=\eH\Big\}\,.\]
Clearly $L_H=(L_H)^\textup{sy}$, so it follows from Theorem~\ref{t:mc} and Proposition~\ref{p:minimizer} that
	\[\bF(L_H)\le \bF(H_\star) - \epsilon\lone{L_H-H_\star}^2.\]
Combining the inequalities (and adjusting $\epsilon$ as needed) gives $\bF(H)\le\bF(H_\star)-\epsilon\lone{H-H_\star}^2$. This concludes the proof of Proposition~\ref{p:first.mmt}, and Proposition~\ref{p:second.mmt} follows by exactly the same argument. \end{proof}

{\bibliographystyle{alphaabbr}
\raggedright
\bibliography{refs}

\newcommand{\etalchar}[1]{$^{#1}$}
\begin{thebibliography}{KMR{\etalchar{+}}07}

\bibitem[AC08]{achlioptas2008algorithmic}
D.~Achlioptas and A.~{Coja-Oghlan}.
\newblock Algorithmic barriers from phase transitions.
\newblock In {\em Proc. 49th FOCS}, pages 793--802. IEEE, 2008.

\bibitem[ACR11]{MR2663730}
D.~Achlioptas, A.~{Coja-Oghlan}, and F.~{Ricci-Tersenghi}.
\newblock On the solution-space geometry of random constraint satisfaction
  problems.
\newblock {\em Random Struct. Algor.}, 38(3):251--268, 2011.

\bibitem[AM06]{MR2263010}
D.~Achlioptas and C.~Moore.
\newblock Random {$k$}-{SAT}: two moments suffice to cross a sharp threshold.
\newblock {\em SIAM J. Comput.}, 36(3):740--762 (electronic), 2006.

\bibitem[AM10]{abbe2010concentration}
E.~Abbe and A.~Montanari.
\newblock On the concentration of the number of solutions of random
  satisfiability formulas.
\newblock arXiv:1006.3786, 2010.

\bibitem[AM14]{MR3252921}
E.~Abbe and A.~Montanari.
\newblock On the concentration of the number of solutions of random
  satisfiability formulas.
\newblock {\em Random Struct. Algor.}, 45(3):362--382, 2014.

\bibitem[AR06]{achlioptas2006solution}
D.~Achlioptas and F.~{Ricci-Tersenghi}.
\newblock On the solution-space geometry of random constraint satisfaction
  problems.
\newblock In {\em Proc. 38th STOC}, pages 130--139. ACM, New York, 2006.

\bibitem[ASS03]{ass2003}
M.~Aizenman, R.~Sims, and S.~L. Starr.
\newblock Extended variational principle for the {S}herrington--{K}irkpatrick
  spin-glass model.
\newblock {\em Phys. Rev. B}, 68(21):214403, 2003.

\bibitem[BC16a]{MR3566764}
V.~Bapst and A.~{Coja-Oghlan}.
\newblock The condensation phase transition in the regular {$k$}-{SAT} model.
\newblock In {\em Approximation, randomization, and combinatorial optimization.
  {A}lgorithms and techniques}, volume~60 of {\em LIPIcs. Leibniz Int. Proc.
  Inform.}, pages Art. No. 22, 18. Schloss Dagstuhl. Leibniz-Zent. Inform.,
  Wadern, 2016.

\bibitem[BC16b]{MR3570985}
V.~Bapst and A.~{Coja-Oghlan}.
\newblock Harnessing the {B}ethe free energy.
\newblock {\em Random Struct. Algor.}, 49(4):694--741, 2016.

\bibitem[BCH{\etalchar{+}}16]{MR3440196}
V.~Bapst, A.~{Coja-Oghlan}, S.~Hetterich, F.~Ra{\ss}mann, and D.~Vilenchik.
\newblock The condensation phase transition in random graph coloring.
\newblock {\em Comm. Math. Phys.}, 341(2):543--606, 2016.

\bibitem[BCR16]{MR3513593}
V.~Bapst, A.~{Coja-Oghlan}, and F.~Ra{\ss}mann.
\newblock A positive temperature phase transition in random hypergraph
  2-coloring.
\newblock {\em Ann. Appl. Probab.}, 26(3):1362--1406, 2016.

\bibitem[BGT13]{MR3161470}
M.~Bayati, D.~Gamarnik, and P.~Tetali.
\newblock Combinatorial approach to the interpolation method and scaling limits
  in sparse random graphs.
\newblock {\em Ann. Probab.}, 41(6):4080--4115, 2013.

\bibitem[BMZ05]{MR2155706}
A.~Braunstein, M.~M{\'e}zard, and R.~Zecchina.
\newblock Survey propagation: an algorithm for satisfiability.
\newblock {\em Random Struct. Algor.}, 27(2):201--226, 2005.

\bibitem[CKPZ17]{condensation2017stoc}
A.~{Coja-Oghlan}, F.~Krz{\c{a}}ka{\l}a, W.~Perkins, and L.~Zdeborov\'a.
\newblock Information-theoretic thresholds from the cavity method.
\newblock In {\em Proc. 49th STOC}, pages 146--157. ACM, New York, 2017.

\bibitem[CKPZ18]{MR3818090}
A.~{Coja-Oghlan}, F.~Krzakala, W.~Perkins, and L.~Zdeborov\'{a}.
\newblock Information-theoretic thresholds from the cavity method.
\newblock {\em Adv. Math.}, 333:694--795, 2018.

\bibitem[COP18]{MR3813215}
A.~Coja-Oghlan and W.~Perkins.
\newblock Belief propagation on replica symmetric random factor graph models.
\newblock {\em Ann. Inst. Henri Poincar\'{e} D}, 5(2):211--249, 2018.

\bibitem[CP12]{MR2961553}
A.~{Coja-Oghlan} and K.~Panagiotou.
\newblock Catching the {$k$}-{NAE-SAT} threshold.
\newblock In {\em Proc. 44th STOC}, pages 899--907. ACM, New York, 2012.

\bibitem[CP16]{MR3436404}
A.~{Coja-Oghlan} and K.~Panagiotou.
\newblock The asymptotic {$k$}-{SAT} threshold.
\newblock {\em Adv. Math.}, 288:985--1068, 2016.

\bibitem[CPS17]{MR3692136}
A.~{Coja-Oghlan}, W.~Perkins, and K.~Skubch.
\newblock Limits of discrete distributions and {G}ibbs measures on random
  graphs.
\newblock {\em European J. Combin.}, 66:37--59, 2017.

\bibitem[CZ12]{MR3205212}
A.~{Coja-Oghlan} and L.~Zdeborov{\'a}.
\newblock The condensation transition in random hypergraph 2-coloring.
\newblock In {\em Proc. 23rd SODA}, pages 241--250. ACM, New York, 2012.

\bibitem[DKMZ11]{decelle2011asymptotic}
A.~Decelle, F.~Krz{\c{a}}ka{\l}a, C.~Moore, and L.~Zdeborov{\'a}.
\newblock Asymptotic analysis of the stochastic block model for modular
  networks and its algorithmic applications.
\newblock {\em Phys. Rev. E}, 84(6):066106, 2011.

\bibitem[DSS14]{dss-naesat-stoc}
J.~Ding, A.~Sly, and N.~Sun.
\newblock Satisfiability threshold for random regular {NAE}-{SAT}.
\newblock In {\em Proc. 46th STOC}. ACM, New York, 2014.

\bibitem[DSS15]{MR3388183}
J.~Ding, A.~Sly, and N.~Sun.
\newblock Proof of the satisfiability conjecture for large {$k$}.
\newblock In {\em Proc. 47th STOC}, pages 59--68. ACM, New York, 2015.

\bibitem[DSS16a]{MR3689942}
J.~Ding, A.~Sly, and N.~Sun.
\newblock Maximum independent sets on random regular graphs.
\newblock {\em Acta Math.}, 217(2):263--340, 2016.

\bibitem[DSS16b]{MR3440193}
J.~Ding, A.~Sly, and N.~Sun.
\newblock Satisfiability threshold for random regular {NAE-SAT}.
\newblock {\em Comm. Math. Phys.}, 341(2):435--489, 2016.

\bibitem[FL03]{MR1972121}
S.~Franz and M.~Leone.
\newblock Replica bounds for optimization problems and diluted spin systems.
\newblock {\em J. Statist. Phys.}, 111(3-4):535--564, 2003.

\bibitem[Gam14]{MR3256814}
D.~Gamarnik.
\newblock Right-convergence of sparse random graphs.
\newblock {\em Probab. Theory Related Fields}, 160(1-2):253--278, 2014.

\bibitem[GM07]{gerschenfeld2007reconstruction}
A.~Gerschenfeld and A.~Montanari.
\newblock Reconstruction for models on random graphs.
\newblock In {\em Proc. 48th FOCS}, pages 194--204. IEEE, 2007.

\bibitem[GT03]{MR2037569}
F.~Guerra and F.~L. Toninelli.
\newblock Infinite volume limit and spontaneous replica symmetry breaking in
  mean field spin glass models.
\newblock {\em Ann. Henri Poincar\'e}, 4(suppl. 1):S441--S444, 2003.

\bibitem[Gue03]{MR1957729}
F.~Guerra.
\newblock Broken replica symmetry bounds in the mean field spin glass model.
\newblock {\em Comm. Math. Phys.}, 233(1):1--12, 2003.

\bibitem[KMR{\etalchar{+}}07]{MR2317690}
F.~Krz{\c{a}}ka{\l}a, A.~Montanari, F.~{Ricci-Tersenghi}, G.~Semerjian, and
  L.~Zdeborov{\'a}.
\newblock Gibbs states and the set of solutions of random constraint
  satisfaction problems.
\newblock {\em Proc. Natl. Acad. Sci. USA}, 104(25):10318--10323 (electronic),
  2007.

\bibitem[Mas14]{MR3238997}
L.~Massouli{\'e}.
\newblock Community detection thresholds and the weak {R}amanujan property.
\newblock In {\em Proc. 46th S{TOC}}, pages 694--703. ACM, New York, 2014.

\bibitem[MM09]{MR2518205}
M.~M{\'e}zard and A.~Montanari.
\newblock {\em Information, physics, and computation}.
\newblock Oxford Graduate Texts. Oxford University Press, Oxford, 2009.

\bibitem[MMW07]{MR2351840}
E.~Maneva, E.~Mossel, and M.~J. Wainwright.
\newblock A new look at survey propagation and its generalizations.
\newblock {\em J. ACM}, 54(4):Art. 17, 41, 2007.

\bibitem[MMZ05]{mezard2005clustering}
M.~M{\'e}zard, T.~Mora, and R.~Zecchina.
\newblock Clustering of solutions in the random satisfiability problem.
\newblock {\em Phys. Rev. Lett.}, 94(19):197205, 2005.

\bibitem[MMZ06]{mertens2006threshold}
S.~Mertens, M.~M{\'e}zard, and R.~Zecchina.
\newblock Threshold values of random $k$-{SAT} from the cavity method.
\newblock {\em Random Struct. Algor.}, 28(3):340--373, 2006.

\bibitem[MNS15]{MR3383334}
E.~Mossel, J.~Neeman, and A.~Sly.
\newblock Reconstruction and estimation in the planted partition model.
\newblock {\em Probab. Theory Related Fields}, 162(3-4):431--461, 2015.

\bibitem[MRS08]{1742-5468-2008-04-P04004}
A.~Montanari, F.~{Ricci-Tersenghi}, and G.~Semerjian.
\newblock Clusters of solutions and replica symmetry breaking in random
  $k$-satisfiability.
\newblock {\em J. Stat. Mech.}, 2008(04):P04004, 2008.

\bibitem[MRT11]{montanari2011reconstruction}
A.~Montanari, R.~Restrepo, and P.~Tetali.
\newblock Reconstruction and clustering in random constraint satisfaction
  problems.
\newblock {\em SIAM J. Discrete Math.}, 25(2):771--808, 2011.

\bibitem[NSS20]{onersb}
D.~Nam, A.~Sly, and Y.~Sohn.
\newblock One-step replica symmetry breaking of random regular nae-sat.
\newblock arXiv:2011.14270, 2020.

\bibitem[Pan13]{MR3052333}
D.~Panchenko.
\newblock {\em The {S}herrington-{K}irkpatrick model}.
\newblock Springer Monographs in Mathematics. Springer, New York, 2013.

\bibitem[Par02]{parisi2002local}
G.~Parisi.
\newblock On local equilibrium equations for clustering states.
\newblock arXiv:cs/0212047, 2002.

\bibitem[PT04]{MR2095932}
D.~Panchenko and M.~Talagrand.
\newblock Bounds for diluted mean-fields spin glass models.
\newblock {\em Probab. Theory Related Fields}, 130(3):319--336, 2004.

\bibitem[ZK07]{zdeborova2007phase}
L.~Zdeborova and F.~Krz{\c{a}}ka{\l}a.
\newblock Phase transitions in the coloring of random graphs.
\newblock {\em Phys. Rev. E}, 76(3):031131, 2007.

\end{thebibliography}
}

\pagebreak

\appendix

\section{Contraction estimates}\label{appx:contract}

We now prove Proposition~\ref{p:contract}, on the contraction of the \textsc{bp} recursion for the coloring model. In Section~\ref{appx:contract.single} we analyze the recursions for the first moment (single-copy) model and prove Proposition~\ref{p:contract}\ref{p:contract.first}. In Section~\ref{appx:contract.pair} we analyze the the recursions for the first moment (pair) model and prove the remainder of Proposition~\ref{p:contract}. We assume throughout the section that $0\le\lambda\le1$ and $1\le T\le\infty$.

\subsection{Single-copy coloring recursions}
\label{appx:contract.single}

Recall from \S\ref{ss:tree.opt.bp} that the \textsc{bp} recursion is a pair \eqref{e:defn.vBP} of mappings $\dBP: \mathscr{P}(\htcols)\to\mathscr{P}(\dtcols)$ and $\hBP: \mathscr{P}(\dtcols)\to\mathscr{P}(\htcols)$. Recall that for our purposes we can restrict attention to measures satisfying $\dq=\dq^\textup{av}$ and $\hq=\hq^\textup{av}$. Under this restriction, the \textsc{bp} recursion is quite explicit, as we now describe. Recall from Definition~\ref{d:msg.of.tau}, equations \eqref{e:def.dm.dta} and \eqref{e:def.hm.hta.z}, that for $\dta\in\dMM$ and $\hta\in\hMM$ we defined $\dmsg(\dta)$ and $\hmsg(\hta)$ as probability measures on $\set{\zro,\one}$. For convenience, we also define 
	\beq\label{e:dot.m.convention}
	\dmsg(\redo)=\hmsg(\bluo)=\delta_\one,\quad
	\dmsg(\redz)=\hmsg(\bluz)=\delta_\zro\,.\eeq
In what follows we often represent a probability measure on $\set{\zro,\one}$ by the probability assigned to $\one$, writing $\dotm(\dta)\equiv \dmsg[\dta](\one)$ and $\hatm(\hta)\equiv\hmsg[\hta](\one)$. Thus, equations \eqref{e:def.dm.dta}, \eqref{e:def.hm.hta.z}, and \eqref{e:dot.m.convention} together define mappings $\dotm:\dCOLS\to[0,1]$ and $\hatm:\hCOLS\to[0,1]$. Recall that we denote $\set{\red}\equiv\set{\redo,\redz}$, $\set{\blu}\equiv\set{\bluo,\bluz}$, and $\set{\fcl}\equiv\COLS\setminus\set{\red,\blu}$. We also write $\set{\fcl}\equiv(\dCOLS\cup\hCOLS)\setminus\set{\red,\blu}$; the precise meaning of $\set{\fcl}$ will be unambiguous from context. Then, for $\bx\in\set{\zro,\one}$, let us abbreviate 
	\[\grn \equiv \blu\cup\fcl, \
	\grn_{\bx}\equiv\blu_{\bx}\cup\fcl, \
	\ylw \equiv \red \cup \fcl, \
	\ppl_{\bx} \equiv \blu_{\bx} \cup \red_{\bx}\,.\]
The variable recursion $\dBP\equiv\dBP_{\lambda,T}$ is given by
	\[(\dBP\hq)(\dsi)
	\cong
	\begin{cases}
		\hq(\ppl_\one)^{d-1}
		& \textup{if $\dsi \in \set{\redz,\redo}$,}\\
		\hq(\ppl_\one)^{d-1}
		- (\hq(\bluo))^{d-1}
		& \textup{if $\dsi \in\set{\bluz,\bluo}$,}\\
		\displaystyle
		\dz(\dsi)^\lambda
			\sum_{\hsi_2,\ldots,\hsi_d}
			\mathbf{1}\bigg\{ 
			\dsi = \dotT\Big((\hsi_i)_{i\ge2}\Big)
			\bigg\}
			\prod_{i=2}^d \hq(\hsi_i)
		& \textup{if $\dsi \in \dtcols
			\setminus\set{\red,\blu}$,}
	\end{cases}\]
where $\cong$ indicates the normalization which makes $\dBP\hq$ a probability measure on $\dtcols$. For the clause recursion, let us write $\vec{\dsi}\sim\hsi$ if $\vec{\dsi}\equiv(\dsi_2,\ldots,\dsi_k)\in(\dtcols)^{k-1}$ is compatible with $\hsi$, in the sense that
	\beq\label{e:complete.to.clause}
	\Big\{\usi =( (\dsi,\hsi),
	(\dsi_2,\hsi_2),
	\ldots 
	(\dsi_k,\hsi_k))
	\in (\tcols)^k:
	\hI(\usi)=1
	\Big\}
	\ne\emptyset.\eeq
The clause recursion $\hBP\equiv\hBP_{\lambda,T}$ is given by 
	\[(\hBP\dq)(\hsi)
	\cong
	\begin{cases}
	\displaystyle \dq(\blu_{\zro})^{k-1}
		& \textup{if $\hsi \in\set{\redz,\redo}$,}\\
	 \displaystyle \hz(\hsi)^\lambda
			\sum_{\dsi_2,\ldots,\dsi_k}
			\mathbf{1}\bigg\{
			\hsi = \hatT\Big(
			(\dsi_i)_{i\ge2}\Big)
			\bigg\}			
			\prod_{i=2}^k\dq(\dsi_i)
		& \textup{if $\hsi \in \htcols
			\setminus\set{\red,\blu}$,}\\
		\displaystyle
		\sum_{\vec{\dsi}
			\sim\bluo}
	\Big(
	1-\prod_{i=2}^k \dotm(\dsi_i)
	\Big)^\lambda
	\prod_{i=2}^k \dq(\dsi_i)
		& \textup{if $\hsi 
		\in\set{\bluz, \bluo}$,}
	\end{cases}\]
where the last line uses the convention \eqref{e:dot.m.convention}. Recall that $\vBP\equiv\dBP\circ\hBP \equiv\vBP_{\lambda,T}$. We will show the following contraction result (assuming, as always, $0\le\lambda\le1$ and $1\le T\le\infty$).

\begin{ppn}\label{p:BP_contract} If $\dq_1,\dq_2\in\GAMMA$, then $\vBP\dq_1, \vBP\dq_2 \in \GAMMA$ and  $\lone{\vBP\dq_1-\vBP\dq_2} = O(k^2/2^k)\lone{\dq_1-\dq_2}$.
\end{ppn}

Before the proof of Proposition~\ref{p:BP_contract} we deduce the following consequences:

\begin{proof}[Proof of Proposition~\ref{p:contract}\ref{p:contract.first}] Let $\dq^{(0)}$ be the uniform measure on $\set{\bluz,\bluo,\redo,\redz}$, and let $\dq^{(l)} \equiv \vBP\dq^{(l-1)}$. It is clear that $\dq^{(0)}\in\GAMMA$, so Proposition~\ref{p:BP_contract} implies $\dq^{(l)}\in\GAMMA$ for all $l\ge1$, and furthermore that $(\dq^{(l)})_{l\ge1}$ forms an $\ell^1$ Cauchy sequence. By completeness of $\ell^1$ we conclude that there exists $\dq^{(\infty)}=\dq_\star\in\GAMMA$ such that $\vBP\dq_\star = \dq_\star$ and $\lone{\dq^{(l)} - \dq_\star} \to0$ as $l\to\infty$. Applying Proposition~\ref{p:BP_contract} again gives $\lone{\vBP\dq-\dq_\star} = O(k^2/2^k)\lone{\dq-\dq_\star}$ for any $\dq\in\GAMMA$, from which it follows that $\dq_\star$ is the unique fixed point of $\vBP$ in $\GAMMA$.\end{proof}

\begin{proof}[Proof of Proposition~\ref{p:contract}\ref{c:BP_limT}] For each $1\le T\le\infty$, let $(\dq_{\lambda,T})^{(l)}$ ($l\ge0$) be defined in the same way as $\dq^{(l)}$ in the proof of Proposition~\ref{p:BP_contract}. It follows from the definition that $(\dq_{\lambda,T})^{(l)} = (\dq_{\lambda,\infty})^{(l)}$ for all $l \le l_T$, where $l_T\equiv \log T/ \log(dk)$. By the triangle inequality and Proposition~\ref{p:contract}\ref{p:contract.first},
	\[\lone{\dq_{\lambda,T} - \dq_{\lambda,\infty}}
	\le\lone{\dq_{\lambda,T} 
		- (\dq_{\lambda,\infty})^{(l_T)}}
	+\lone{(\dq_{\lambda,\infty})^{(l_T)}
		- \dq_{\lambda,\infty}}\le (C/2^k)^{l_T}\]
for some absolute constant $k$. The result follows assuming $k\ge k_0$.\end{proof}

We now turn to the proof of Proposition~\ref{p:BP_contract}. We work with the non-normalized \textsc{bp} recursions $\dBPu\equiv\dBPu_{\lambda,T}$ and $\hBPu\equiv\hBPu_{\lambda,T}$, defined by substituting ``$\cong$" with ``$=$" in the definitions of $\dBP$ and $\hBP$ respectively. One can then recover $\dBP,\hBP$ from $\dBPu,\hBPu$ via
	\[(\dBP\htp)(\dsi) = \f{(\dBPu\htp)(\dsi)}{\sum_{\dsi' \in \dCOLS}(\dBPu\htp)(\dsi')}\,,\quad
	(\hBP\dtp)(\hsi) = \f{(\hBPu\dtp)(\hsi)}{\sum_{\hsi' \in \hCOLS}(\hBPu\dtp)(\hsi')}\,.\]
Let $\dtp$ be the reweighted measure defined by
	\beq\label{e:reweight}
	\dtp(\dsi) 
	\equiv [\dtp(\dq)](\dsi)
	\equiv \f{\dq(\dsi)}{1 - \dq(\red)}\,.\eeq
In the above we have assumed that the inputs to $\dBP,\hBP,\dBPu,\hBPu$ are probability measures; we now extend them in the obvious manner to nonnegative measures with strictly positive total mass. 
	
Given two measures $r_1,r_2$ defined on any space $\albet$, we denote
	$\Delta r(x) \equiv
	|r_1(x)-r_2(x)|$. We regard $\Delta r$ as a nonnegative measure on $\albet$: for any subset $S\subseteq\albet$,
	\[\Delta r(S)
	=\sum_{x\in S}|r_1(x)-r_2(x)|
	\ge|r_1(S)-r_2(S)|,\]
where the inequality may be strict.
For any nonnegative measure $\hat{r}$ on $\hCOLS$, we abbreviate
	{\setlength{\jot}{0pt}\begin{align*}
	\hatm^\lambda\hat{r}(\hsi) &\equiv
	\hatm(\hsi)^\lambda\hat{r}(\hsi),\\
	(1-\hatm)^\lambda\hat{r}(\hsi) &\equiv(1-\hatm(\hsi))^\lambda
	\hat{r}(\hsi).\end{align*}}%
In what follows we will begin with two measures in $\GAMMA$, and show that they contract under one step of the \textsc{bp} recursion. Let $\hBPu$ and $\dBPu$ be the non-normalized  single-copy \textsc{bp} recursions at parameters $\lambda,T$.  Starting from $\dq_i\in\GAMMA$ ($i=1,2$), denote
	{\setlength{\jot}{0pt}\begin{align*}
	\dtp_i &\equiv\dtp(\dq_i)
	\text{ (as defined by 
	\eqref{e:reweight}),}\\
	\htp_i&\equiv\hBPu(\dtp_i)
	\text{ and }
	\htp_{i,\infty}\equiv\hBPu_{\lambda,\infty}(\dtp_i),\\
	\utp_i &\equiv\dBPu(\htp_i)
	\text{ and }
	\tq_i
	\equiv\dBP\htp_i
		=\vBP\dq_i.\end{align*}}%
With this notation in mind, the proof of Proposition~\ref{p:BP_contract} is divided into four lemmas.

\begin{lem}[effect of reweighting]
\label{l:BP_qtop}
Assuming $\dq_1,\dq_2\in \GAMMA$,
$\lone{\Delta \dtp}
	= O(1) \lone{\dq_1 - \dq_2}$,
where $O(1)$ indicates a constant depending on the constant appearing in~\eqref{e:contract.first}.
\end{lem}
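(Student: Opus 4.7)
The plan is to exploit the fact that $\dtp_i$ is obtained from $\dq_i$ by a single scalar rescaling $1/Z_i$ with $Z_i\equiv 1-\dq_i(\red)$, and that the $\GAMMA$ constraints prevent the rescaling constant from degenerating. Everything reduces to a two-line algebraic manipulation once this is established, so the only real content is a bound of the form $Z_i\ge c_0>0$ uniform over $\dq_i\in\GAMMA$.

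First I would show that $1-\dq_i(\red)$ is bounded below by a positive absolute constant on $\GAMMA$. Combining the two bullets of \eqref{e:contract.first}, one has $\dq(\red)\le C\dq(\blu)$ for some absolute $C$, and therefore
\[
1=\dq(\red)+\dq(\blu)+\dq(\free)\ge\dq(\red)+\dq(\red)/C,
\]
giving $\dq(\red)\le C/(C+1)$ and hence $Z_i\ge 1/(C+1)$ for both $i=1,2$.

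Second, for each $\dsi\in\dOm$ I would use the identity
\[
\dtp_1(\dsi)-\dtp_2(\dsi)
=\frac{\dq_1(\dsi)-\dq_2(\dsi)}{Z_1}
+\dq_2(\dsi)\cdot\frac{\dq_1(\red)-\dq_2(\red)}{Z_1 Z_2},
\]
take absolute values, and sum in $\dsi$. Using the lower bound $Z_i\ge 1/(C+1)$ from the previous step, together with $\sum_{\dsi}\dq_2(\dsi)=1$ and the crude estimate $|\dq_1(\red)-\dq_2(\red)|\le\lone{\dq_1-\dq_2}$, this gives
\[
\lone{\Delta\dtp}\le (C+1)\lone{\dq_1-\dq_2}+(C+1)^2\lone{\dq_1-\dq_2}=O(1)\lone{\dq_1-\dq_2},
\]
as required.

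The main (and only) subtle point is the uniform lower bound on $Z_i$; once one verifies that the $\GAMMA$ hypothesis forces $\dq(\red)$ to stay away from $1$, the rest is routine scalar manipulation, and in particular the constant hidden in $O(1)$ depends only on the constant $C$ appearing in \eqref{e:contract.first}, matching the statement of the lemma.
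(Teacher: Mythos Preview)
Your proof is correct and follows essentially the same approach as the paper: the paper's proof simply cites the elementary identity $\tfrac{a_1}{b_1}-\tfrac{a_2}{b_2}=\tfrac{1}{b_1}(a_1-a_2)+\tfrac{b_2-b_1}{b_1b_2}a_2$ together with \eqref{e:contract.first}, which is exactly your decomposition with $a_i=\dq_i(\dsi)$ and $b_i=Z_i=1-\dq_i(\red)$. You have spelled out the lower bound on $Z_i$ and the $\ell^1$ summation that the paper leaves implicit.
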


\begin{lem}[clause \textsc{bp}]
\label{l:BP_ptohp} 
Assuming $\dq_1,\dq_2\in \GAMMA$,
	{\setlength{\jot}{0pt}\begin{align}\nonumber
	\hatm^\lambda\htp_i(\spc)
	&= 1 -4/2^k + O(k/4^k),\\ \nonumber
	\hatm^\lambda\htp_i(\fcl)
	&=\hatm^\lambda\htp_i(\spc)
	+ O(k/4^k),\\ \nonumber
	\hatm^\lambda\htp_i(\bluo) 
	&= 1 + O(k/2^k),\\
	\hatm^\lambda\htp_i(\redo)
	&= 
	(2/2^k)[1 + O(k/2^k)].
	\label{e:BP_hatvalue}\end{align}}%
Further, writing $\Delta\hatm^\lambda\htp (\cdot) \equiv \hatm^\lambda(\cdot)|\htp_1 (\cdot) - \htp_2 (\cdot)|$,
	{\setlength{\jot}{0pt}\begin{align}\nonumber
	\Delta\hatm^\lambda
	\htp(\fcl) + \Delta\hatm^\lambda\htp(\red) 
	&= O(k/2^k)\Delta \dtp(\fcl),\\
	\lone{\Delta\hatm^\lambda\htp} 
	&= O(k^2/2^k)
	\lone{\Delta\dtp}.
	\label{e:BP_hatdif}\end{align}}%
(Recall that $\htp(\hsi\oplus\one)=\htp(\hsi)$
and $\hatm(\hsi\oplus\one)=1-\hatm(\hsi)$, so $(1-\hatm)^\lambda\htp(\hsi) = \hatm^\lambda\htp(\hsi\oplus\one)$. As a result, the bounds for $\Delta\hatm^\lambda\htp$ imply analogous
bounds for 
$\Delta(1-\hatm)^\lambda\htp$.)
\end{lem}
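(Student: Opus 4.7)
The proof evaluates the clause recursion $\htp_i = \hBPu(\dtp_i)$ case by case on $\hsi$. The starting point is to extract the consequences of $\dq_i\in\GAMMA$ for the reweighted measure $\dtp_i$. Since $\dtp_i$ vanishes on $\red$, we have $\dtp_i(\blu)+\dtp_i(\free)=1$, and combining \eqref{e:contract.first} with the averaging symmetry $\dq_i=\dq_i^{\textup{avg}}$ yields $\dtp_i(\free)=O(2^{-k})$ together with $\dtp_i(\blu_\zro)=\dtp_i(\blu_\one)=\tfrac12(1-\dtp_i(\free))$. The key consequence is
\[
\Delta\dtp(\blu_x)=\tfrac12\Delta\dtp(\free),\qquad
\Delta\dtp(\blu)=-\Delta\dtp(\free),
\]
which reduces every blue difference to the single scalar $\Delta\dtp(\free)$ and explains why the first inequality in \eqref{e:BP_hatdif} involves only $\Delta\dtp(\free)$ on the right-hand side.

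For the leading-order values in \eqref{e:BP_hatvalue}, I substitute directly into the recursion. For $\hsi=\red_\one$, the recursion gives $\htp_i(\red_\one)=\dtp_i(\blu_\zro)^{k-1}$ and $\hm(\red_\one)=1$, so the claim reduces to $(1/2)^{k-1}(1-O(2^{-k}))^{k-1}=(2/2^k)[1+O(k/2^k)]$. For $\hsi=\blu_\one$, the all-$\blu_\one$ tuple carries weight $(1-\prod\dm)^\lambda=0$, the all-$\blu_\zro$ tuple forces output $\red_\one$, and the remaining blue tuples sum to $\dtp_i(\blu)^{k-1}-\dtp_i(\blu_\zro)^{k-1}-\dtp_i(\blu_\one)^{k-1}=(1-\dtp_i(\free))^{k-1}(1-4/2^k)$; the free-input correction $O(k\dtp_i(\free))$ gives the stated $1+O(k/2^k)$. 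For $\hsi=\cl$, the identity $\hm(\cl)^\lambda\hz(\cl)^\lambda=1$ (since $\hm(\cl)=1/2$, $\hz(\cl)=2$) cancels the normalization, and $\hatT(\vec\dsi)=\cl$ permits only blue inputs containing both parities (free inputs yield $\join$), so the value coincides with $(1-\dtp_i(\free))^{k-1}(1-4/2^k)=1-4/2^k+O(k/4^k)$. For $\hsi\in\freehOm\setminus\set{\cl}$ (a $\join$ output), each output requires at least one free input, and combining the bound on $\hm(\hsi)^\lambda$ for such trees with $\dtp_i(\free)=O(2^{-k})$ yields $\hm^\lambda\htp_i(\free\setminus\set{\cl})=O(k/4^k)$.

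For the contraction bounds \eqref{e:BP_hatdif}, I expand $\htp_1-\htp_2$ using the telescoping identity
\[
\prod_j \dtp_1(\dsi_j) - \prod_j \dtp_2(\dsi_j)
=\sum_j [\dtp_1-\dtp_2](\dsi_j)\prod_{j'<j}\dtp_2(\dsi_{j'})\prod_{j'>j}\dtp_1(\dsi_{j'}).
\]
For $\hsi=\red_\one$: $\Delta\htp(\red_\one) \le (k-1)(1/2)^{k-2}\cdot\tfrac12\Delta\dtp(\free)=O(k/2^k)\Delta\dtp(\free)$. For $\hsi=\cl$: the derivative of $(1-u)^{k-1}(1-2^{2-k})$ at $u=\dtp(\free)$ has magnitude $(k-1)(1-2^{2-k})=O(k/2^k)$, where the cancellation $1-2^{2-k}=O(1/2^k)$ is what replaces the naive bound $O(k)$ by $O(k/2^k)$; hence $\Delta\hm^\lambda\htp(\cl)=O(k/2^k)\Delta\dtp(\free)$. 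For $\hsi\in\freehOm\setminus\set{\cl}$: magnitudes are $O(k/4^k)$ so differences are $O(k^2/4^k)\Delta\dtp(\free)$. Summing over $\hsi\in\free\cup\red$ yields the first inequality of \eqref{e:BP_hatdif}. The coarser $\ell^1$ bound additionally incorporates $\hsi\in\blu$, where the magnitudes are $\Theta(1)$ and telescoping introduces an extra factor of $k$, producing the $O(k^2/2^k)$ bound on $\lone{\Delta\hm^\lambda\htp}$.

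The main technical obstacle is the $\cl$ cancellation $1-2^{2-k}=O(1/2^k)$, which relies on the exact identity $\dtp(\blu_\zro)=\dtp(\blu_\one)=\tfrac12\dtp(\blu)$ from the averaging symmetry together with the identity $\dtp(\blu)+\dtp(\free)=1$ arising from the definition of the reweighting. Without this cancellation, $\Delta\hm^\lambda\htp(\cl)$ would be only $O(k)\Delta\dtp(\free)$, which is insufficient for the claimed contraction. Tracking the cancellation uniformly across $\lambda\in[0,1]$ and $T\in[1,\infty]$ (the latter affects only which free outputs are summed, not the leading behavior) is what occupies the bulk of the formal argument.
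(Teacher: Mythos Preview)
Your argument has two genuine errors that break the contraction bound.

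\textbf{The reweighted measure does not vanish on }$\red$. The definition is $\dtp(\dsi)=\dq(\dsi)/(1-\dq(\red))$, a pure rescaling; in particular $\dtp_i(\red_x)=\dq_i(\red_x)/\dq_i(\grn)\asymp 1$. What is true is only $\dtp_i(\grn)=1$. This matters in the $\hsi=\blu_\one$ case, where tuples with one $\red_\zro$ and the rest $\blu_\one$ are compatible; they contribute a term $(k-1)\dtp_i(\red_\zro)\dtp_i(\blu_\one)^{k-2}$ of order $k/2^k$ to $\htp_i(\blu_\one)$, and it is precisely this term (telescoped) that produces the $O(k^2/2^k)\lone{\Delta\dtp}$ in the second line of \eqref{e:BP_hatdif}.

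\textbf{The $\cl$ formula and the claimed cancellation are wrong.} From the definition of $\hatT$, the output is $\cl$ whenever $\{\zro,\one\}\subseteq\{\dta_i\}$, regardless of whether additional free inputs are present; free inputs do \emph{not} force a $\join$ output once both $\blu_\zro$ and $\blu_\one$ already appear. By inclusion--exclusion over $\grn^{k-1}$ (recall $\grn_x=\blu_x\cup\free$),
\[
\hm^\lambda\htp_i(\cl)=1-\dtp_i(\grn_\zro)^{k-1}-\dtp_i(\grn_\one)^{k-1}+\dtp_i(\free)^{k-1},
\]
not $(1-u)^{k-1}(1-4/2^k)$. Moreover your asserted cancellation ``$1-2^{2-k}=O(1/2^k)$'' is simply false: $1-4/2^k\approx 1$. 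With your formula the $u$-derivative is $\Theta(k)$, giving only $\Delta\hm^\lambda\htp(\cl)=O(k)\,\Delta\dtp(\free)$, which is off by a factor of $2^k$ and would destroy the contraction in the subsequent variable step.

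The mechanism the paper actually uses is different and uniform across $\hsi\in\hOm\setminus\{\blu\}$: every such $\hsi$ either equals $\cl$ (handled by the formula above) or lies in $\hatOm_\zro\cup\hatOm_\one$, in either case forcing each factor $\dsi_j$ into a set of $\dtp$-mass $\dtp_i(\grn_x)=\tfrac12(1+\dtp_i(\free))\le\tfrac12+O(2^{-k})$. Telescoping over $k-1$ such factors then yields the extra $(1/2)^{k-2}$, hence $\Delta\hm^\lambda\htp(\free\cup\red)=O(k/2^k)\,\Delta\dtp(\free)$. The same half-mass mechanism (with $\dtp_i(\blu_\one)\approx 1/2$) controls the red-containing $\blu_\one$ term. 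Your proposal needs to be rewritten around this observation rather than the nonexistent $1-2^{2-k}$ cancellation.
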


\begin{lem}[variable \textsc{bp}, non-normalized] \label{l:BP_hptoup} Assuming $\dq_1,\dq_2\in \GAMMA$, we have
	\beq\label{e:BP_utp}
	\begin{bmatrix}
		\utp_i(\fcl) \\ \utp_i(\red)
	\end{bmatrix}
	= \begin{bmatrix}
	O(2^{-k}) \\ 1+O(2^{-k})
	\end{bmatrix}\utp_i(\blu)
	,\quad
	\begin{bmatrix}
		\Delta\utp(\fcl)\\\Delta\utp(\blu)\\\Delta\utp(\red)
	\end{bmatrix}
	= 
	\begin{bmatrix}
		O(k) \\ O(k2^k) \\ O(k2^k)
	\end{bmatrix} 
	\lone{\Delta\hatm^\lambda\htp}
	\max_{i=1,2}\Big\{
	\utp_i(\blu)\Big\}.
\eeq
\end{lem}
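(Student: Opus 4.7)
The proof is a direct calculation from the explicit formulas for $\dBPu$ given earlier in the section, combined with the sharp asymptotics on $\htp_i$ supplied by Lemma~\ref{l:BP_ptohp}.

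For the size estimates in \eqref{e:BP_utp}, the plan is to substitute the values $\htp_i(\red_\one)$ and $\htp_i(\blu_\one)$ from \eqref{e:BP_hatvalue} into the formulas $\utp_i(\red_x) = \htp_i(\ppl_\one)^{d-1}$ and $\utp_i(\blu_x) = \htp_i(\ppl_\one)^{d-1} - \htp_i(\blu_\one)^{d-1}$, and to evaluate the resulting ratio $\utp_i(\red)/\utp_i(\blu)$ using the regime $d = k\alpha \in [\alpha_\textup{lbd} k, \alpha_\textup{ubd} k]$. For $\utp_i(\free)$, I would sum over $\dsi \in \freedOm \cap \dOm_T$ the expression $\utp_i(\dsi) = \dz(\dsi)^\lambda \sum_{\vec{\hsi}} \Ind{\dsi = \dotT(\vec{\hsi})} \prod_j \htp_i(\hsi_j)$. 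Substituting $\dz(\dotT(\vec{\hsi})) = \prod_j \hm(\hsi_j) + \prod_j (1-\hm(\hsi_j))$ and applying the subadditivity $(a+b)^\lambda \le a^\lambda + b^\lambda$ valid for $\lambda \in [0,1]$ and $a,b \ge 0$, together with the symmetries $\htp_i(\hsi \oplus \one) = \htp_i(\hsi)$ and $\hm(\hsi \oplus \one) = 1-\hm(\hsi)$, collapses both summands into $2(\hm^\lambda \htp_i(\freehOm))^{d-1}$. Combined with \eqref{e:BP_hatvalue}, this yields the claimed $O(2^{-k}) \utp_i(\blu)$ bound.

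For the difference bounds in \eqref{e:BP_utp}, I plan to apply the telescoping identity $x^{d-1} - y^{d-1} = (x-y) \sum_{l=0}^{d-2} x^l y^{d-2-l}$ to $\Delta\utp(\red)$ and $\Delta\utp(\blu)$. This produces the factor $d-1 = O(k 2^k)$ and reduces to $|\Delta\htp(\ppl_\one)|$ and $|\Delta\htp(\blu_\one)|$, each of which is bounded by $\lone{\Delta\hm^\lambda\htp}$ (since $\hm$ equals $0$ or $1$ on these spins under the convention in this section). The prefactor $\max_i \htp_i(\ppl_\one)^{d-2}$ is absorbed into $\max_i \utp_i(\blu)$ via the size bound just established. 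For $\Delta\utp(\freedOm)$, the same subadditivity argument reduces the difference to a telescope over the $d-1$ positions of $\vec{\hsi}$, with each factor controlled by $\lone{\Delta\hm^\lambda\htp}$; the improved $O(k)$ factor (versus $O(k 2^k)$) stems from the already-proved smallness of $\utp_i(\free)$.

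The main obstacle is the $\dz(\dsi)^\lambda$ factor in the free case, which does not naively factor across the $d-1$ incoming messages. The key tool is the subadditivity $(a+b)^\lambda \le a^\lambda + b^\lambda$ combined with the literal symmetry, which rewrites the sum over free tree structures as a product of the reweighted measures $\hm^\lambda \htp_i$ precisely in the form estimated by \eqref{e:BP_hatvalue} and \eqref{e:BP_hatdif}. The smallness of $\utp_i(\free)$ relative to $\utp_i(\blu)$ then emerges from comparing $(\hm^\lambda \htp_i(\freehOm))^{d-1}$ (an exponentially decaying power of something close to $1 - 4/2^k$) against $\utp_i(\blu)$, for which cancellation in the difference $\htp_i(\ppl_\one)^{d-1} - \htp_i(\blu_\one)^{d-1}$ has to be tracked carefully throughout the $\alpha$-regime.
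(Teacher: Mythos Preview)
Your overall strategy matches the paper's: split into $\dsi\in\set{\red,\blu}$ versus $\dsi\in\freedOm$, handle the $\dz(\dsi)^\lambda$ factor via the subadditivity $(a+b)^\lambda\le a^\lambda+b^\lambda$ and the symmetry $\hm(\hsi\oplus\one)=1-\hm(\hsi)$, and telescope the differences. The bounds on $\Delta\utp(\red)$, $\Delta\utp(\blu)$, and $\Delta\utp(\free)$ go through as you describe, and your observation that the $O(k)$ (rather than $O(k2^k)$) constant for $\Delta\utp(\free)$ comes from the smallness of $\utp_i(\free)$ is exactly the paper's reasoning.

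There is, however, a real gap in your plan for the size bound $\utp_i(\free)=O(2^{-k})\utp_i(\blu)$. You propose to read this off from \eqref{e:BP_hatvalue} alone, but those estimates only pin down $\hm^\lambda\htp_i(\free)$ and $\htp_i(\blu_\one)$ each to precision $O(k/2^k)$. When you form the ratio $(\hm^\lambda\htp_i(\free))^{d-1}/\htp_i(\blu_\one)^{d-1}$ and recall $d-1\asymp k2^{k-1}\log2$, an error of size $O(k/2^k)$ in the base produces a multiplicative factor $\exp(\pm O(k^2))$, which is uncontrolled. The ``careful tracking of cancellation'' you allude to cannot be done at that level of precision.

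What the paper does instead is establish the sharper comparison
\[
\hm^\lambda\htp_i(\free)\le\big[1+O(k/4^k)\big]\,\htp_i(\blu_\one),
\]
displayed as \eqref{e:BP_Qhfb_dif}. This is obtained by combining the identity \eqref{e:cancel.red.bf} from the proof of Lemma~\ref{l:BP_ptohp} with the second defining condition of $\GAMMA$, namely $\dq(\blu)[1-O(2^{-k})]\le\dq(\red)$, which forces $\dtp_i(\grn_\zro)-\dtp_i(\red_\zro)=O(2^{-k})$. The $O(k/4^k)$ error here \emph{does} survive exponentiation by $d-1$, since $(1+O(k/4^k))^{d-1}=\exp(O(k^2/2^k))=O(1)$. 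One then writes
\[
\f{\utp_i(\free)}{\utp_i(\blu_\one)}
\lesssim
\bigg(\f{\hm^\lambda\htp_i(\free)}{\htp_i(\ppl_\one)}\bigg)^{d-1}
\le O(1)\cdot
\bigg(\f{\htp_i(\blu_\one)}{\htp_i(\ppl_\one)}\bigg)^{d-1}
=O(1)\cdot\bigg(1-\f{\htp_i(\red_\one)}{\htp_i(\ppl_\one)}\bigg)^{d-1}
=O(2^{-k}),
\]
where the last ratio is stable because the $O(k/2^k)$ errors in $\htp_i(\blu_\one)$ and $\htp_i(\ppl_\one)$ cancel. So the missing ingredient in your sketch is precisely \eqref{e:BP_Qhfb_dif}; once you insert it, the rest of your argument is fine.
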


\begin{lem}[variable \textsc{bp}, normalized]
\label{l:BP_uptoq}
Assuming $\dq_1,\dq_2\in \GAMMA$,
we have
$\tq_1,\tq_2\in \GAMMA$ as well, with
\[\lone{\tq_1-\tq_2} 
\lesssim k \lone{\Delta \hatm^\lambda\htp}\,.\]
\end{lem}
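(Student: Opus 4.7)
The plan is to deduce Lemma~\ref{l:BP_uptoq} from the estimates of Lemma~\ref{l:BP_hptoup} by a normalization argument. Let $Z_i\equiv\sum_{\dsi}\utp_i(\dsi)$ so that $\tq_i=\utp_i/Z_i$. The estimates \eqref{e:BP_utp} immediately yield $Z_i = 2\utp_i(\blu)[1+O(2^{-k})]$, hence $\tq_i(\blu)=\tq_i(\red)=\tfrac12+O(2^{-k})$ and $\tq_i(\free)=O(2^{-k})$. These asymptotics directly verify the defining constraints \eqref{e:contract.first} of $\GAMMA$; the required averaging property $\tq_i=\tq_i^\textup{avg}$ is inherited from $\htp_i=\htp_i^\textup{avg}$ via the variable recursion $\dBPu$.

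For the $\ell^1$ bound, use the elementary decomposition
\[
\tq_1(\dsi)-\tq_2(\dsi) = \f{\utp_1(\dsi)-\utp_2(\dsi)}{Z_1} + \utp_2(\dsi)\,\f{Z_2-Z_1}{Z_1 Z_2}.
\]
Summing $|\cdot|$ over $\dsi\in\freedOm$ and combining $\utp(\free)/Z=O(2^{-k})$ with the bounds $\Delta\utp(\free)=O(k)\lone{\Delta\hm^\lambda\htp}\utp(\blu)$ and $|Z_1-Z_2|\le\lone{\Delta\utp}=O(k2^k)\lone{\Delta\hm^\lambda\htp}\utp(\blu)$ from \eqref{e:BP_utp}, we obtain $\Delta\tq(\freedOm)\lesssim k\lone{\Delta\hm^\lambda\htp}$, as desired.

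The main obstacle lies in controlling $\Delta\tq(\blu)$ and $\Delta\tq(\red)$, where the naive triangle inequality produces $O(k2^k)\lone{\Delta\hm^\lambda\htp}$---too weak by a factor of $2^k$. To recover this factor, rewrite
\[
\tq_1(\blu)-\tq_2(\blu) = \f{\utp_1(\blu) Z_2 - \utp_2(\blu) Z_1}{Z_1 Z_2}
\]
and expand the $Z_i$'s: the diagonal $\utp_1(\blu)\utp_2(\blu)$ terms cancel, leaving only the off-diagonal cross terms $\utp_1(\blu)\utp_2(\red)-\utp_2(\blu)\utp_1(\red)$ and $\utp_1(\blu)\utp_2(\free)-\utp_2(\blu)\utp_1(\free)$. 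The key identity $\utp_i(\red_x)-\utp_i(\blu_x)=\htp_i(\blu_\one)^{d-1}$ (immediate from the variable recursion) lets us rewrite the $\blu/\red$ off-diagonal as $2[\utp_1(\blu)\htp_2(\blu_\one)^{d-1}-\utp_2(\blu)\htp_1(\blu_\one)^{d-1}]$, whose absolute value is at most $2\utp(\blu)\cdot d\max_i\htp_i(\blu_\one)^{d-2}\lone{\Delta\hm^\lambda\htp} + 2\htp(\blu_\one)^{d-1}\Delta\utp(\blu)$ by a mean-value estimate (using $\hm(\blu_\one)=1$ to replace $|\htp_1(\blu_\one)-\htp_2(\blu_\one)|$ by $\lone{\Delta\hm^\lambda\htp}$). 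Dividing by $Z_1Z_2\asymp\utp(\blu)^2$ and invoking the scale relations $\htp(\blu_\one)\approx 1$, $\utp(\blu)\asymp\htp(\ppl_\one)^{d-1}-\htp(\blu_\one)^{d-1}\asymp 2^k$ from \eqref{e:BP_hatvalue}, one finds $d\htp(\blu_\one)^{d-2}/\utp(\blu)=O(k/2^k)$ and $\htp(\blu_\one)^{d-1}/\utp(\blu)^2=O(2^{-k})$, so each contribution is $O(k)\lone{\Delta\hm^\lambda\htp}$.

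The $\blu/\free$ off-diagonal piece is easier: bounding $|\utp_1(\blu)\utp_2(\free)-\utp_2(\blu)\utp_1(\free)|\le\utp(\blu)\Delta\utp(\free)+\utp(\free)\Delta\utp(\blu)$ and dividing by $Z_1 Z_2\asymp\utp(\blu)^2$, the first term is $O(k)\lone{\Delta\hm^\lambda\htp}$ while the second uses $\utp(\free)/\utp(\blu)=O(2^{-k})$ to absorb the spurious $2^k$ in $\Delta\utp(\blu)$. A symmetric argument handles $\Delta\tq(\red)$, completing the proof. The crux throughout is the observation that the $O(k2^k)$ bounds of \eqref{e:BP_utp} on $\Delta\utp(\blu),\Delta\utp(\red)$ are driven by a \emph{common} fluctuation of $\htp(\ppl_\one)^{d-1}$ that cancels upon normalization; only the genuinely \emph{small} $\htp(\blu_\one)^{d-1}$-level fluctuation survives, which is where the factor $k$ rather than $k2^k$ ultimately comes from.
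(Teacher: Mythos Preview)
Your strategy is correct and rests on the same key identity as the paper---$\utp_i(\red)-\utp_i(\blu)=2\htp_i(\blu_\one)^{d-1}$ being $O(2^{-k})$ relative to $\utp_i(\blu)$---but your scale bookkeeping has a gap. The absolute claims $\utp(\blu)\asymp 2^k$ and (implicitly) $\htp(\blu_\one)^{d-2}\asymp 1$ are not justified: since $\htp(\blu_\one)=1+O(k/2^k)$ while $d\asymp k2^{k-1}$, the power $\htp(\blu_\one)^{d-1}$ is $e^{\pm O(k^2)}$, not $\asymp 1$. What is actually available (via \eqref{e:BP_Qublu_lb_init}) is the \emph{ratio} $\htp_i(\blu_\one)^{d-1}/\utp_i(\blu)=O(2^{-k})$. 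With this, $d\htp(\blu_\one)^{d-2}/\utp(\blu)=O(d\cdot 2^{-k})=O(k)$ (not $O(k/2^{k})$ as you wrote), and your second term becomes $\htp(\blu_\one)^{d-1}\Delta\utp(\blu)/(Z_1Z_2)\lesssim[\htp(\blu_\one)^{d-1}/\utp(\blu)]\cdot O(k2^k)\lone{\Delta\hm^\lambda\htp}=O(k)\lone{\Delta\hm^\lambda\htp}$. So each contribution is indeed $O(k)\lone{\Delta\hm^\lambda\htp}$, but via the ratio estimate rather than your stated scales.

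The paper organizes this same cancellation more compactly by introducing $\tq_i(\RMB)\equiv\tq_i(\red)-\tq_i(\blu)$. The normalization $\lone{\tq_i}=1$ gives $2\tq_i(\blu)=[1-\tq_i(\free)]-\tq_i(\RMB)$ and $2\tq_i(\red)=[1-\tq_i(\free)]+\tq_i(\RMB)$, whence $\lone{\Delta\tq}\lesssim\Delta\tq(\free)+\Delta\tq(\RMB)$. Both pieces are then handled by a single application of \eqref{e:BP_id}, with $\Delta\utp(\RMB)=2|\htp_1(\blu_\one)^{d-1}-\htp_2(\blu_\one)^{d-1}|\lesssim k\lone{\Delta\hm^\lambda\htp}\max_i\utp_i(\blu)$ and the cross term controlled by $[\utp_b(\free)+\utp_b(\RMB)]/\dot{Z}_b=O(2^{-k})$. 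This avoids the explicit cross-product expansion and never invokes an absolute scale---only the ratio---which is why it sidesteps the issue in your write-up.
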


\begin{proof}[Proof of Proposition~\ref{p:BP_contract}]
Follows by combining the four preceding lemmas~\ref{l:BP_qtop}--\ref{l:BP_uptoq}.\end{proof}

We now prove the four lemmas.

\begin{proof}[Proof of Lemma~\ref{l:BP_qtop}]
This follows from the elementary identity
	\beq\label{e:BP_id}
		\f{a_1}{b_1} - \f{a_2}{b_2}
		 = \f{1}{b_1} (a_1 - a_2) + \f{b_2 - b_1}{b_1b_2} {a_2}\,.\eeq
 together with
 \eqref{e:contract.first}.\end{proof}

In the proof of the next two lemmas, the following elementary fact will be used repeatedly: suppose for $1\le l\le m$ that we have nonnegative measures  $a^l,b^l$ over a finite set $\albet^l$.  Then, denoting $\vec{\albet}=\albet^1\times\cdots\times\albet^m$, we have
	\begin{align}\nonumber
	\sum_{\ux\in\vec{\albet}}
	\bigg|
	\prod_{l=1}^m a^l(x^l)
	-\prod_{l=1}^m b^l(x^l)
	\bigg|
	&\le\sum_{l=1}^m
	\sum_{\ux\in\vec{\albet}}
	\bigg\{\prod_{1\le j<l}
	b^j(x^j)\bigg\}
	\bigg\{\prod_{l<j\le m}
	a^j(x^j)\bigg\}
	\Big|
	a^l(x^l)-b^l(x^l)
	\Big|\\
	&\le \sum_{l=1}^m \lone{a^l-b^l}
	\prod_{j\ne l}
		\Big( \lone{a^j}
		+\lone{a^j-b^j}\Big).
	\label{e:BP_ineq_generalized}
	\end{align}
If all the $(\albet^l,a^l,b^l)$
are the same $(\albet,a,b)$, 
this reduces to the bound
	\beq\label{e:BP_ineq}
	\sum_{x_1,\dots,x_m\in\albet}
		\bigg| 
		\prod_{i=1}^{m} a(x_i)
		- \prod_{i=1}^{m} b(x_i)
		\bigg|
	\le m\lone{a-b}
	\Big( \lone{a} + \lone{a-b}
	\Big)^{m-1}\,.\eeq
In what follows we will abbreviate
(for $\bx\in\set{\zro,\one}$)
	\beq\label{e:hat.Omega.zero.or.one}
	\hatOm_\bx
	\equiv
	\Big\{\hsi\in\htcols:
	\vec{\dsi}\in
	(\grn_\bx)^{k-1}
	\text{ for all }
	\vec{\dsi}\sim\hsi
	\Big\}\,.\eeq

\begin{proof}[Proof of Lemma~\ref{l:BP_ptohp}]
From the definition, if $\dtp=\dtp(\dq)$ then
	\[\dtp(\blu)
	=\f{\dq(\blu)}{1-\dq(\red)}
	=\f{\dq(\blu)}{\dq(\grn)}
	=1-\dtp(\fcl)\,.\]
It follows that for any $\dq_1,\dq_2 \in \GAMMA$ we have 
	$\Delta\dtp(\blu)
	\le\Delta\dtp(\fcl)
	\le \dtp_1(\fcl)+\dtp_2(\fcl)
	= O(2^{-k})$.
Another consequence of the definition of $\GAMMA$ is that 
$\lone{\Delta\dtp}=O(1)$.
We now control $\Delta\hatm^\lambda\htp(\hsi)$, distinguishing a few cases:
\begin{enumerate}[1.]
\item We first consider $\hsi\in
\hCOLS\setminus\set{\blu,\spc}$. 
For such $\hsi$ we have
	\[\Delta\hatm^\lambda\htp(\hsi)
	=\bigg|
	[\hatm(\hsi)
	\hz(\hsi)]^\lambda
	\sum_{\vec{\dsi}\sim\hsi}
	\bigg(\prod_{j=2}^k\dtp_1(\dsi_j)
	-\prod_{j=2}^k\dtp_2(\dsi_j)\bigg)
	\bigg|,\]
and it is easy to check that
	\[\hatm(\hsi)
	\hz(\hsi)
	=1-\prod_{j=2}^k \dotm(\dsi_j)\in[0,1]\,.\]
Moreover, any such $\hsi$ must belong to $\hatOm_\zro$ or $\hatOm_\one$. By summing over $\hsi\in\hatOm_\zro$ and applying \eqref{e:BP_ineq} we have 
	\[\Delta\hatm^\lambda\htp(\hatOm_\zro)
	\le(k-1)
	\| (\dtp_1-\dtp_2) \|_{\ell^1
		(\grnz)}
	\Big(
	\dtp_1(\grnz)
	+ \Delta\dtp(\fcl)
	\Big)^{k-2}\,.\]
Recalling that $\dtp_1$ and $\dtp_2$ both lie in $\GAMMA$, in the above we have
$\dtp_1(\grnz)
+ \Delta\dtp(\fcl) \le [1 + O(2^{-k})]/2$,
as well as
$\| (\dtp_1-\dtp_2) \|_{\ell^1
(\bluz,\fcl)}
=O(1) \Delta\dtp(\fcl)$. Combining these gives
	\[\Delta\hatm^\lambda\htp(\hatOm_\zro)
	=O(k/2^k) \Delta\dtp(\fcl),\]
and the same bound holds for
$\Delta\hatm^\lambda\htp(\hatOm_\one)$.

\item Next consider $\hsi=\spc$, for which we have $\hatm(\hsi)=1/2$ and $\hz(\hsi)=2$. Thus
	\beq\label{e:sep.clause.unnormalized}
	\hatm^\lambda\htp(\spc)
	= 1 - (\dtp(\grnz))^{k-1} 
	- (\dtp(\grno))^{k-1}
	+ \dtp(\fcl)^{k-1}\,.\eeq
Arguing as above gives
$\Delta\hatm^\lambda\htp(\spc)
= O(k/2^k) \Delta\dtp(\fcl)$, proving the first half of \eqref{e:BP_hatdif}.

\item Lastly consider $\hsi\in\set{\bluz,\bluo}$. Recalling \eqref{e:dot.m.convention} we have $\Delta\hatm^\lambda\htp (\bluz)=0$, so let us take $\hsi=\bluo$,
and consider 
$\vec{\dsi}\sim\bluo$.
Note that if $\vec{\dsi}$
has no entry in $\set{\red}$, then
we also have $\vec{\dsi}\sim\hsi'$ 
for some $\hsi' \in\set{\red,\fcl}$.
Again making use of \eqref{e:dot.m.convention}, this $\vec{\dsi}$ gives the same contribution to
$\hatm^\lambda\htp_\infty(\hsi')$ as to $\hatm^\lambda\htp(\bluo)$. It follows that
	\[\Delta\hatm^\lambda\htp(\bluo)
	\le\Delta\hatm^\lambda\htp_\infty(\ylw)
	+ k\Big| 
	\dtp_1(\red_{\zro})\dtp_1(\bluo)^{k-2}
	 - \dtp_2(\red_{\zro})
	 \dtp_2(\bluo)^{k-2}
	 \Big|\,.\]
The first term on the right-hand side captures the contribution from those $\vec{\dsi}$ with no entry in $\set{\red}$,
 and by the preceding arguments it is $O(k/2^k)\Delta\dtp(\fcl)$. It is easy to check that
the second term is $O(k^2/2^k)\lone{\Delta\dtp}$, which finishes the second part of \eqref{e:BP_hatdif}.
\end{enumerate}
Combining the above estimates proves \eqref{e:BP_hatdif}. We next prove \eqref{e:BP_hatvalue}. For this purpose
we introduce the notation
$\lgf$ to refer to elements of $\dCOLS$ or $\hCOLS$
that contain at least one free variable.
In particular, $\lgf\cap\hCOLS$ is given by
	$\set{\fcl}\setminus\set{\spc}
	\subseteq
	\hatOm_\zro\cup\hatOm_\one
	\subseteq
	\hCOLS$.
Since $\dq_i\in\GAMMA$, we must have
from \eqref{e:contract.first} that
	\beq\label{e:first.bound.lgf}
	\hatm^\lambda\htp_i(\lgf)
	\le	2\sum_{l=1}^{k-1} \binom{k-1}{l} 
		\dtp_i(\fcl)^l 
		\dtp_i(\bluz)^{k-1-l}
	\le 2
		\dtp_i(\bluz)^{k-1}
		\sum_{l=1}^{k-1}
		\bigg( \f{k \dtp_i(\fcl)}
			{\dtp_i(\bluz)}
		 \bigg)^l
		= O(k/4^k)\,.\eeq
On the other hand, we see from \eqref{e:sep.clause.unnormalized} that
	\[\hatm^\lambda\htp_i(\spc)
	= 1-4/2^k+O(k/4^k)\,.\]
If $\vec{\dsi}\sim\bluo$ has no entry in $\set{\red}$, then there must exist some $\hsi\in\set{\fcl}$ such that $\vec{\dsi}\sim\hsi$ as well. Conversely, if $\hsi\in\htcols\setminus\set{\red,\blu}$ and $\vec{\dsi}\sim\hsi$, then $\vec{\dsi}\sim\bluo$, unless $\vec{\dsi}$ has exactly one spin $\dsi_i\in\set{\bluz,\fcl}$ with the remaining $k-2$ spins equal to $\bluo$.\footnote{The converse is not needed for the final bound, but we mention it for the sake of concreteness.} It follows that
	\begin{align}
	\label{e:cancel.red.bf}
	\hatm^\lambda\htp_i(\bluo) 
	&=\htp_i(\bluo)=\hatm^\lambda\htp_{i,\infty} (\fcl)
	+ (k-1)
	\Big[
	\dtp_i(\redz)
	-\dtp_i(\grnz)
	\Big]
	\dtp_i(\bluo)^{k-2} \\
	&\le \hatm^\lambda\htp_{i,\infty} (\fcl)
	+ (k-1)
	\dtp_i(\redz)
	\dtp_i(\bluo)^{k-2}
	= 1 + O(k/2^k).\nonumber
	\end{align}
For a lower bound it suffices to consider the contribution from clauses with all $k$ incident colors in $\set{\blu}$:
	\beq\label{e:lbd.all.blue.clauses}
	\hatm^\lambda\htp_i(\bluo) 
	=\htp_i(\bluo)
	\ge \dtp_i(\blu)^{k-1}
	[1-O(k/2^k)]= 1 - O(k/2^k)\,.\eeq
Lastly, note by symmetry that
	\[\hatm^\lambda\htp_i(\redo) 
	=\htp_i(\redo)
	=\htp_i(\bluz)^{k-1}
	=(2/2^k) \htp_i(\blu)^{k-1}\,.\]
Combining these estimates proves
\eqref{e:BP_hatvalue}.\end{proof}

\begin{proof}[Proof of Lemma~\ref{l:BP_hptoup}] We control $\utp$ and $\Delta \utp$ in two cases.
\begin{enumerate}[1.] \item First consider $\dsi\in\dCOLS\setminus\set{\red,\blu}$. Up to permutation there is a unique $\vec{\hsi}\in \set{\fcl}^{d-1}$ such that $\dsi=\hatT(\vec{\hsi})$. Let $\textup{\textsf{comb}}(\dsi)$ denote the number of distinct tuples $\vec{\hsi}'$ that can be obtained by permuting the coordinates of $\vec{\hsi}$. For this $\vec{\hsi}$ we have
	\beq\label{e:sandwich.dot.z}
	\prod_{j=2}^d\hatm(\hsi_j)^\lambda
	\le\dz(\dsi)^\lambda
	\le\prod_{j=2}^d\hatm(\hsi_j)^\lambda
	+\prod_{j=2}^d(1-\hatm(\hsi_j))^\lambda,\eeq
where the rightmost inequality uses that $(a+b)^\lambda \le a^\lambda + b^\lambda$ for $a,b\ge0$ and $\lambda\in[0,1]$. It follows that for $i=1,2$ we have
	\[ 
	\textup{\textsf{comb}}(\dsi)
	\prod_{j=2}^d
		\hatm\htp_i(\hsi_j)
	\le \utp_i(\dsi) \le\textup{\textsf{comb}}(\dsi) \bigg\{
		\prod_{j=2}^d 
		\hatm^\lambda\htp_i(\hsi_j)
		+ \prod_{j=2}^d
		(1-\hatm)^\lambda\htp_i(\hsi_j)\bigg\}\,.\]
It follows by symmetry that
$\hatm^\lambda\htp_i(\fcl) = (1-\hatm)^\lambda\htp_i(\fcl)$, so
	\beq\label{e:twostep.free.sandwich}
	[\hatm^\lambda\htp_i(\spc)]^{d-1}
	\le \utp_i(\fcl) 
	\le [\hatm^\lambda\htp_i(\fcl)]^{d-1}
	+ [(1-\hatm)^\lambda\htp_i(\fcl)]^{d-1}
	= 2[\hatm^\lambda\htp_i(\fcl)]^{d-1}\,.\eeq
Making use of the symmetry together with \eqref{e:sandwich.dot.z} gives
	\[\Delta\utp(\fcl)
	\le 2\sum_{\vec{\hsi}
		\in(\hCOLS_\fcl)^{d-1}}
	\bigg| \prod_{j=2}^{d-1}
		\hatm^\lambda\htp_1(\hsi_j)
		-\prod_{j=2}^{d-1}
		\hatm^\lambda\htp_2(\hsi_j) \bigg|,\]
and applying \eqref{e:BP_ineq} gives
	\[\Delta\utp(\fcl)
	\lesssim d\lone{\Delta\hatm^\lambda\htp}
	\Big(
	\hatm^\lambda\htp_1(\fcl)
	+\Delta\hatm^\lambda\htp_1(\fcl)
	\Big)^{d-2}\,.\]
Combining \eqref{e:BP_hatvalue} with the lower bound from \eqref{e:sandwich.dot.z} then gives
	\[\Delta\utp(\fcl)
	\lesssim d\lone{\Delta\hatm^\lambda\htp}
	\max_{i=1,2}\Big\{\utp_i(\fcl)\Big\}\,.\]

\item Next consider
$\dsi\in\set{\red,\blu}$: for $\bx\in\set{\zro,\one}$, note that
$\utp_i(\red_\bx)
= \htp_i(\ppl_\bx)^{d-1}$, and
	\beq\label{e:BP_Qublu_lb_init}
	\f{\utp_i(\red_\bx) - \utp_i(\blu_\bx)}
	{\utp_i(\red_\bx)}
	=\f{\htp_i(\blu_\bx)^{d-1}}
	{\htp_i(\ppl_\bx)^{d-1}}
	=\bigg(1 - 
		\f{\htp_i(\red_\bx)}
		{\htp_i(\ppl_\bx)}
		\bigg)^{d-1}=O(2^{-k}),\eeq
where the last estimate uses
\eqref{e:BP_hatvalue}
and $d/k=2^{k-1}\log2+O(1)$.
Applying \eqref{e:BP_ineq} gives
	\[\Delta\utp(\ppl_\one)
	\lesssim d \lone{\hatm^\lambda\htp}
	\Big(\min_{i=1,2}\Big\{
	\hatm^\lambda\htp_i(\ppl_\one)\Big\}
	+\Delta\hatm^\lambda\htp(\ppl_\one)
	\Big)^{d-2}\,.\]
Suppose without loss that
$\hatm^\lambda\htp_1(\blu_1)
\le\hatm^\lambda\htp_2(\blu_1)$: then
	\begin{align*}
	\hatm^\lambda\htp_1(\ppl_\one)
	+\Delta\hatm^\lambda\htp(\ppl_\one)
	&=\hatm^\lambda\htp_2(\bluo)
	+\hatm^\lambda\htp_1(\redo)
	+\Delta\hatm^\lambda\htp(\redo) \\
	&\le\hatm^\lambda\htp_2(\ppl_\one)
	+2\Delta\hatm^\lambda\htp(\redo),
	\end{align*}
and substituting into the above gives
	\[\Delta\utp(\ppl_\one)
	\lesssim d\lone{\hatm^\lambda\htp}
	\Big(\max_{i=1,2}\Big\{
	\hatm^\lambda\htp_i(\ppl_\one)\Big\}
	+\Delta\hatm^\lambda\htp(\redo)
	\Big)^{d-2}\,.\]
From \eqref{e:BP_hatdif} and the definition
\eqref{e:contract.first} of $\GAMMA$ we have $\Delta \hatm^\lambda\htp(\redo) = O(k/2^k)\Delta \dtp(\fcl) = O(k/4^k)$. It follows from \eqref{e:BP_Qublu_lb_init} that
	\beq\label{e:BP_Qublu_lb}
	\Delta\utp(\ppl_\one)
	\lesssim d \lone{\Delta\hatm^\lambda\htp} 
	\max_{i=1,2}\Big\{
	\utp_i(\bluo)\Big\}\,.\eeq
\end{enumerate}
It remains to show
$\utp(\fcl) / \utp(\blu) = O(2^{-k})$.
From \eqref{e:cancel.red.bf},
	\[\hatm^\lambda\htp_i(\fcl)
	-\hatm^\lambda\htp_i(\bluo)
	\le\hatm^\lambda\htp_{i,\infty}(\fcl)
	-\hatm^\lambda\htp_i(\bluo)
	\le (k-1)
	\Big[\dtp_i(\grnz)
	-\dtp_i(\redz)\Big]
	\dtp_i(\bluo)^{k-2},\]
and by definition of $\GAMMA$ the right-hand side is $O(k/4^k) \dtp_i(\blu)^{k-1}$. Now recall from \eqref{e:lbd.all.blue.clauses}
that $\hatm^\lambda\htp_i(\bluo)
\gtrsim \dtp_i(\blu)^{k-1}$.
Combining these gives
	\beq\label{e:BP_Qhfb_dif}
	\hatm^\lambda\htp_i(\fcl)
	\le [1+O(k/4^k)]
	\hatm^\lambda\htp_i(\bluo)\,.\eeq
Recalling \eqref{e:sandwich.dot.z}, it follows that
	\[\f{\utp_i(\fcl)}{\utp_i(\bluo)}
	\lesssim
	\bigg(\f{\hatm^\lambda\htp_i(\fcl)}
	{\hatm^\lambda\htp_i(\ppl_\one)}\bigg)^{d-1}
	\lesssim
	\bigg(\f{\hatm^\lambda\htp_i(\bluo)}
	{\hatm^\lambda\htp_i(\ppl_\one)}\bigg)^{d-1}
	\lesssim 2^{-k},\]
where the last step uses \eqref{e:BP_hatvalue}. This concludes the proof.\end{proof}

\begin{proof}[Proof of Lemma~\ref{l:BP_uptoq}]
Denote $\tq_i \equiv\vBP\dq_i$
and $\Delta\tq\equiv|\tq_1-\tq_2|$. We first check that $\tq_i$ lies in $\GAMMA$: the first condition of \eqref{e:contract.first} follows from \eqref{e:BP_utp}, and the second is automatically satisfied from the definition of $\dBP$. Next we bound $\Delta\tq$. With some abuse of notation, we shall write $\tq_i(\RMB)\equiv
\tq_i(\red)-\tq_i(\blu)$ and 
	\[\Delta \tq (\RMB)
	\equiv
		|(\tq_1(\red)-\tq_1(\blu)) - 
		(\tq_2(\red)-\tq_2(\blu))|\,.\]
Let
$\utp_i(\RMB)$ and
 $\Delta \utp (\RMB)$ be similarly defined.
Arguing similarly as in the derivation of
\eqref{e:BP_Qublu_lb},
	\beq\label{e:BP_tqredblu}
	\Delta \utp (\RMB)
	= 2
	|\htp_1(\bluo)^{d-1}
	-\htp_2(\bluo)^{d-1}|
	\lesssim
	k\lone{\Delta\hatm^\lambda\htp}
	\max_{i=1,2} \Big\{\utp_i(\blu)\Big\}\eeq
Recalling $\lone{\tq_i} =1$, we have 
	{\setlength{\jot}{0pt}\begin{align*}
	2\tq_i(\red) &=[1-\tq_i(\fcl)]
		+[\tq_i(\red)-\tq_i(\blu)]
		\text{ and}\\
	2\tq_i(\blu) &=[1-\tq_i(\fcl)]-[\tq_i(\red)-\tq_i(\blu)],
	\text{ so}\\
	\lone{\Delta \tq} &\lesssim
	\Delta \tq(\fcl) + \Delta\tq(\RMB).\end{align*}}%
If we take $a\in\set{1,2}$ and $b=2-a$, and write $\dot{Z}_i\equiv\lone{\utp_i}$, then
	\[\Delta \tq(\fcl) 
	+ \Delta \tq(\RMB)
	\le\f{\Delta\utp(\fcl)
		+\Delta\utp(\RMB)
		}{\dot{Z}_a}
	+
	\f{|\dot{Z}_a-\dot{Z}_b|}
		{\dot{Z}_a}
	\f{[\utp_b(\fcl)
	+\utp_b(\red)-\utp_b(\blu)]}
	{\dot{Z}_b}\,.\]
If we take $a\in\argmax_i\utp_i(\blu)$,
then, by \eqref{e:BP_utp} and \eqref{e:BP_tqredblu}, 
the first term on the right-hand side is
	\[\lesssim
	\f{
	k\lone{\Delta\hatm^\lambda\htp}
	\utp_a(\blu)
	}
	{\dot{Z}_a}
	\lesssim 
	k\lone{\Delta\hatm^\lambda\htp},\]
where the rightmost inequality uses
$\dot{Z}_i\ge\utp_i(\blu)$.
As for the second term,
 \eqref{e:BP_utp} gives
	\[\f{|\dot{Z}_a-\dot{Z}_b|}{\dot{Z}_a}
	\lesssim
	d\lone{\Delta\hatm^\lambda\htp}
	\quad\text{and}\quad
	\f{[\utp_b(\fcl)
	+\utp_b(\red)-\utp_b(\blu)]}
	{ \dot{Z}_b}
	\lesssim 2^{-k}\,.\]
Combining these estimates yields the claimed bound.\end{proof}

\subsection{Pair coloring recursions} \label{appx:contract.pair}

In this section we analyze the \textsc{bp} recursions for the pair coloring model and prove the remaining assertions of Proposition~\ref{p:contract}. Recall that we assume $\dq=\dq^\textup{av}$ and $\hq=\hq^\textup{av}$, where these are now probability measures on $(\dtcols)^2$ and $(\htcols)^2$ respectively. For any measure $p(x)$ defined on $x\equiv (x^1,x^2)$ in $(\dtcols)^2$ or $(\htcols)^2$, define
	\[(\flip p)(x)
	\equiv p(\flip x)\quad\text{where }
	\flip x
	\equiv x\oplus(\zro,\one)
	\equiv (x^1,x^2\oplus\one)\,.\]
Recall from \S\ref{ss:bp.contract.conclusion} the definition of $\GAMMA(c,\kappa)$. We will prove that

\begin{ppn} \label{p:BPt_contract}
For any $c\in (0,1]$ and any $ \dq_1,\dq_2 \allowbreak\in\GAMMA(c,1)$, we have $\vBP\dq_1, \vBP\dq_2 \in \GAMMA(1,1)$ and 
\beq\label{e:BPt_contract}
	\lone{\vBP\dq_1-\vBP\dq_2} 
	= O(k^4/2^k)\lone{\dq_1-\dq_2} 
	+ O(k^4/2^k)
	\sum_{i=1,2}
	\lone{\dq_i -\flip\dq_i}.
\eeq
\end{ppn}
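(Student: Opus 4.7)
The plan is to mirror the four-lemma decomposition used for Proposition~\ref{p:BP_contract}, replacing single-copy messages by pair messages and tracking the additional correlation structure introduced by the shared clause literals. Concretely, I would first pass from $\dq_i$ to a reweighted measure $\dtp_i$ obtained by removing the mass that comes from \texttt{red}$\cdot$\texttt{red} and \texttt{red}$\cdot$\texttt{free} configurations (analogous to \eqref{e:reweight}). Using condition~\textsc{(b)} of \eqref{e:contract.second} and the elementary identity \eqref{e:BP_id}, the cost of this reweighting is $\lone{\Delta\dtp} = O(1)\lone{\dq_1-\dq_2} + O(1)\sum_i\lone{\dq_i-\flip\dq_i}$, with the second term appearing because the reweighting normalization uses the marginal on each copy and the two copies become asymmetric whenever $\dq_i \ne \flip\dq_i$.

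Next I would analyze $\htp_i \equiv \hBPu(\dtp_i)$ by exhausting the cases $(\hsi^1,\hsi^2) \in (\hOm)^2$ according to how many coordinates lie in \texttt{red}, \texttt{blue}, \texttt{cl}, or $\hOm_\free\setminus\{\cl\}$. The key observation (as in \eqref{e:hat.Omega.zero.or.one} and the estimate \eqref{e:sep.clause.unnormalized}) is that the bulk mass on both copies comes from clauses whose incoming spins lie in $\BLU\cup\{\free,\cl\}$; the averaging over the shared literal $\ulit$ ties the two copies together and produces the four-way split on $\BLU = \{\blu_\zro,\blu_\one\}^2$. The analogue of \eqref{e:BP_hatvalue} will show $\hm^\lambda\htp_i(\BLU)$-type values are concentrated, and the analogue of \eqref{e:BP_hatdif} will yield
\[
\lone{\Delta\hm^\lambda\htp} = O(k^2/2^k)\lone{\Delta\dtp}
+ O(k^2/2^k)\sum_i \lone{\dtp_i-\flip\dtp_i},
\]
using \eqref{e:BP_ineq_generalized} separately on each of the four $\BLU$-blocks. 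The extra $\flip$ term arises because, in each block, the pair recursion couples a factor of $\dtp_i$ to $\flip\dtp_i$ through the shared literal: an imbalance between these produces a residual discrepancy that the single-copy analysis did not see.

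The variable step is then essentially the pair version of Lemmas~\ref{l:BP_hptoup} and \ref{l:BP_uptoq}. I would control $\utp_i(\dsi^1,\dsi^2)$ for $\dsi \in \BLU$ using the $d{-}1$-fold product structure, pushing through \eqref{e:sandwich.dot.z}, \eqref{e:twostep.free.sandwich}, and \eqref{e:BP_Qublu_lb_init}; the extra $\kappa=1$ in $\GAMMA(1,1)$ follows because a single \texttt{red} coordinate in the clause input already forces the dependence through $\dtp_i(\red_\cdot)$, which by \eqref{e:contract.second}\textsc{(b)} is $O(2^{-k})\dtp_i(\BLU)$ after one clause step. Verifying that $\vBP\dq_i\in\GAMMA(1,1)$ then reduces to checking each of \textsc{(a)},\textsc{(b)},\textsc{(c)} directly from the explicit expressions for $\utp_i$, using that $\hm^\lambda\htp_i(\dsi)$ has the required decay off $\BLU$ by the estimates of the previous paragraph; Lemma~\ref{l:improve.kappa} is what allows bootstrapping $\kappa=0$ to $\kappa=1$ after one iteration. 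Normalization gives the claimed bound $\lone{\vBP\dq_1-\vBP\dq_2} = O(k^4/2^k)(\lone{\dq_1-\dq_2} + \sum_i \lone{\dq_i-\flip\dq_i})$, where the extra factor of $k^2$ compared to the single-copy case is absorbed into accounting for the 16 relevant blocks in $\BLU^2 \times \BLU^2$ and the combinatorics of counting $\ulit$-compatible pair configurations.

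The main obstacle I expect is the careful accounting of the $\flip$ term throughout the four steps. Unlike in the single-copy case, where the symmetry $\dq = \dq^\textup{avg}$ automatically handles the literal-averaging, the pair version only has $\dq^\textup{avg}$ invariance under simultaneous flip of both coordinates, not under $\flip$ (which flips one coordinate only). Consequently, when the clause recursion couples $\dtp_i$ with $\flip\dtp_i$ through the shared $\ulit$, any asymmetry of this form must be tracked block-by-block in the $(\BLU_=,\BLU_\ne,\red\cdot\blu,\blu\cdot\red,\text{etc.})$ decomposition; it is this tracking that produces the second summand on the right-hand side of \eqref{e:BPt_contract} and prevents the naive reduction to the first-moment estimate. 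A secondary technical point will be verifying the tight symmetry estimate $|p(\blu_\zro\blu_\zro)-p(\blu_\zro\blu_\one)|\le(k^9/2^{ck})p(\blu\blu)$ in \textsc{(a)} for $\vBP\dq_i$, which I expect to follow from the cross-block estimate together with a Chebyshev-type argument exploiting that the clause weight $\hPhi_2$ on an all-$\BLU$ clause is $1-O(k/2^k)$ (cf.\ the proof of Lemma~\ref{l:in.contraction}).
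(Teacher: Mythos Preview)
Your high-level plan (mirror the four-lemma single-copy argument, track the $\flip$-asymmetry) matches the paper, but the quantitative mechanism you describe is wrong in a way that would make the proof fail.

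First, the $\flip$ term does not enter where you say. The reweighting normalization the paper uses is $\dtp(\dsi)=\dq(\dsi)/(1-\dq(\red[\dsi]>0))$, and the set $\{\red[\dsi]>0\}$ is $\flip$-invariant; so Lemma~\ref{l:BPt_qtop} gives $\lone{\Delta\dtp}=O(1)\lone{\Delta\dq}$ with \emph{no} additive $\flip$ term. Likewise, the clause bound in Lemma~\ref{l:BPt_ptohp_PART_A} is simply $\lone{\Delta\hm^\lambda\htp}=O(k^3/2^k)\lone{\Delta\dtp}$, again with no $\flip$ term.

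The real issue is the variable step, which is \emph{not} ``essentially the pair version'' of Lemma~\ref{l:BP_uptoq}. In the single-copy case, after using $\tq=\tq^{\mathrm{avg}}$ and the normalization $\lone{\tq}=1$, the $\blu$-mass is determined by the small quantities $\tq(\free)$ and $\tq(\RMB)$, whose $\htp$-derivatives are $O(k)$ because they are already $O(2^{-k})$ relative to $\utp(\blu)$. In the pair case there is an \emph{additional} degree of freedom in the $\blu\blu$ block, namely the overlap $\tq_i(\blu_\zro\blu_\zro)-\tq_i(\blu_\zro\blu_\one)$, which normalization cannot eliminate. Bounding $\Delta\tq(\blu_\zro\blu_\zro)$ directly via $\Delta\utp(\red_\zro\red_\zro)=|\htp_1(\ppl_\zro\ppl_\zro)^{d-1}-\htp_2(\ppl_\zro\ppl_\zro)^{d-1}|$ costs a factor $d\asymp k2^k$, which combined with $\lone{\Delta\hm^\lambda\htp}=O(k^3/2^k)\lone{\Delta\dtp}$ gives only $O(k^4)\lone{\Delta\dtp}$ --- not a contraction.

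The paper's fix is to track $\lone{\hm^\lambda\htp_i-\hm^\lambda\flip\htp_i}$ \emph{separately} and show it contracts by an extra $2^{-k}$: with $\kappa=1$, Lemma~\ref{l:BPt_ptohp_PART_A} gives $\lone{\hm^\lambda\htp_i-\hm^\lambda\flip\htp_i}=O(k^3/4^k)\lone{\dtp_i-\flip\dtp_i}$. The reason is that for $\hsi\notin\{\blu\blu\}$ the dominant contributions to $\htp_i(\hsi)$ come through configurations where one coordinate of $\hsi$ equals $\cl$, and these are $\flip$-invariant; only subleading terms (with an extra $2^{-k}$) break the $\flip$-symmetry. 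The variable step (Lemma~\ref{l:BPt_hptoq}) then reads
\[
\lone{\vBP\dq_1-\vBP\dq_2}=O(k)\lone{\Delta\hm^\lambda\htp+\Delta\hm^\lambda\flip\htp}+O(k2^k)\sum_i\lone{\hm^\lambda\htp_i-\hm^\lambda\flip\htp_i},
\]
where the $O(k2^k)$ coefficient comes precisely from the overlap term (see \eqref{e:BPt_Quoverlap}). The extra $2^{-k}$ from the clause step and the extra $2^k$ from the variable step cancel, yielding \eqref{e:BPt_contract}. Without isolating this two-scale cancellation your argument cannot close.
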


Assuming this result, it is straightforward to deduce Proposition~\ref{p:contract}\ref{p:contract.second}:

\begin{proof}[Proof of Proposition~\ref{p:contract}\ref{p:contract.second}]
Let $\dq^{(0)}$ be the uniform probability measure on $\set{\bluz,\bluo,\redo,\redz}^2$, and define recursively $\dq^{(l)} = \vBP\dq^{(l-1)}$ for $l\ge 1$. It is clear that $\dq^{(0)}\in\GAMMA(1,1)$ and $\dq^{(0)} =\flip\dq^{(0)}$. Since $\dq^{(l)}=\flip\dq^{(l)}$ for all $l\ge1$, it follows from \eqref{e:BPt_contract}
that $(\dq^{(l)})_{l\ge1}$ forms an $\ell^1$ Cauchy sequence. It follows by completeness of $\ell^1$ that $\dq^{(l)}$ converges to a limit $\dq^{(\infty)}=\dq_\star\in\GAMMA(1,1)$, satisfying
$\dq_\star=\flip\dq_\star
=\vBP\dq_\star$.
This implies
that for any probability measure $\dq$,
	\[\lone{\dq-\flip\dq}
	\le\lone{\dq-\dq_\star}
	+\lone{\dq_\star-\flip\dq}
	=2\lone{\dq-\dq_\star}\,.\]
Applying \eqref{e:BPt_contract} again gives
	\[	\lone{\vBP\dq-\dq_\star}
	= O(k^4/2^k)\lone{\dq-\dq_\star}
	+ O(k^4/2^k)\lone{\dq-\flip\dq}
	= O(k^4/2^k)\lone{\dq-\dq_\star},\]
proving the claimed contraction estimate.
Uniqueness of $\dq_\star$ can be deduced from
this contraction.\end{proof}

We now turn to the proof of Proposition~\ref{p:BPt_contract}; the proof of Proposition~\ref{p:contract}\ref{l:improve.kappa} is given after. Let $\dBPu,\hBPu$ now denote the non-normalized \textsc{bp} recursions for the pair model. Let $\red[\dsi]\in\set{0,1,2}$ count the number of $\red$ spins in $\dsi$, and let $\dtp\equiv\dtp(\dq)$ be the reweighted measure
	\beq\label{e:reweight.second}
	\dtp(\dsi) \equiv
	\f{\dq(\dsi)}{1 - \dq(\red[\dsi] > 0)}\,.\eeq
Recalling convention~\eqref{e:dot.m.convention},
we will denote
	\[\hatm^\lambda\hat{r}(\hsi^1,\hsi^2)
	\equiv
	[\hatm(\hsi^1)
	\hatm(\hsi^2)]^\lambda
	\hat{r}(\hsi^1,\hsi^2)\,.\]
Let $\hBPu$
and $\dBPu$
be the non-normalized 
pair \textsc{bp} recursions
at parameters $\lambda,T$.
 Starting from $\dq_i\in\GAMMA(c,\kappa)$ ($i=1,2$), we denote
	{\setlength{\jot}{0pt}\begin{align*}
	\dtp_i &\equiv\dtp(\dq_i)
	\text{ (as defined by 
	\eqref{e:reweight.second}),}\\
	\htp_i &\equiv\hBPu(\dtp_i)
	\text{ and }
	\htp_{i,\infty}\equiv\hBPu_{\lambda,\infty}(\dtp_i),\\
	\utp_i &\equiv\dBPu(\htp_i)
	\text{ and }
	\tq_i
	\equiv\dBP\htp_i
		=\vBP\dq_i.\end{align*}}%
With this notation in mind, 
the proof of Proposition~\ref{p:BPt_contract} is divided into the following lemmas.

\begin{lem}[effect of reweighting]
\label{l:BPt_qtop}
Suppose $\dq_1,\dq_2\in \GAMMA(c,\kappa)$ 
for $c\in(0,1]$ and $\kappa\in[0,1]$: then
	{\setlength{\jot}{0pt}\begin{align*}
	\lone{\Delta \dtp} &\equiv
	O(2^{2(1-\kappa)k}) \lone{\Delta\dq},\\
	\lone{\dtp_i - \flip\dtp_i} &\equiv
		O(2^{(1-\kappa)k}) \lone{\dq_i - \flip\dq_i}.
	 \end{align*}}%
\end{lem}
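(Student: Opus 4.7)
The plan is to mirror the proof of Lemma~\ref{l:BP_qtop} by applying the elementary identity $a_1/b_1 - a_2/b_2 = (a_1-a_2)/b_1 + (b_2-b_1)a_2/(b_1 b_2)$ to the ratios defining $\dtp$, with all the work then reducing to an \textit{a~priori} lower bound on the denominator. Writing $A \equiv \set{\dsi'\in(\dOm_T)^2 : \red[\dsi'] > 0}$, both reweightings in \eqref{e:reweight.second} share the single normalizer $1-\dq_i(A)$, and the first task is to bound this away from zero. Summing the individual estimates in conditions \textsc{(a)} and \textsc{(b)} of \eqref{e:contract.second} bounds each component of $\dq(A)$: $\dq(\red\red) = O(2^{(1-\kappa)k})\dq(\blu\blu)$ dominates, while $\dq(\set{\red\free,\free\red}) = O(2^{-\kappa k})\dq(\blu\blu)$ and $\dq(\set{\red\blu,\blu\red}) = O(1)\dq(\blu\blu)$ are smaller. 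Thus $\dq(A) \le C\,2^{(1-\kappa)k}\,\dq(\blu\blu)$, and since $\dq(\blu\blu)+\dq(A) \le 1$ this rearranges to $1-\dq_i(A) \ge \dq_i(\blu\blu) \gtrsim 2^{-(1-\kappa)k}$.

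With this in hand, I apply the identity with $a_i = \dq_i(\dsi)$ and $b_i = 1-\dq_i(A)$, and sum over $\dsi \in (\dOm_T)^2$, obtaining
\[
\lone{\Delta\dtp}
\le \f{\lone{\Delta\dq}}{1-\dq_1(A)} + \f{|\dq_1(A)-\dq_2(A)|}{(1-\dq_1(A))(1-\dq_2(A))}.
\]
Using $|\dq_1(A)-\dq_2(A)| \le \lone{\Delta\dq}$ and the denominator bound from the previous paragraph yields $\lone{\Delta\dtp} \le O(2^{(1-\kappa)k})\lone{\Delta\dq} + O(2^{2(1-\kappa)k})\lone{\Delta\dq}$, which gives the first claim.

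For the second bound I exploit a symmetry observation: the set $A$ is invariant under $\flip$, because flipping the second coordinate's $\zro/\one$ label sends $\set{\red_\zro,\red_\one}$ to itself and therefore does not change $\red[\dsi]$. Consequently $\flip\dq_i(A) = \dq_i(A)$, so $\dtp_i$ and $\flip\dtp_i$ share the same normalizer and
\[
\dtp_i(\dsi) - \flip\dtp_i(\dsi) = \f{\dq_i(\dsi) - \flip\dq_i(\dsi)}{1-\dq_i(A)}.
\]
Summing and invoking the same denominator bound gives $\lone{\dtp_i - \flip\dtp_i} = \lone{\dq_i - \flip\dq_i}/(1-\dq_i(A)) = O(2^{(1-\kappa)k})\lone{\dq_i - \flip\dq_i}$, as claimed. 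The only nontrivial step is the lower bound on $1-\dq_i(A)$, which is where the parameter $\kappa$ enters through the $\dq(\red\red)$ term of \eqref{e:contract.second}\textsc{(b)}; everything else is an application of \eqref{e:BP_id} and the triangle inequality.
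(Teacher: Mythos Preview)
Your proof is correct and follows essentially the same route as the paper: both apply the identity \eqref{e:BP_id} with denominator $1-\dq_i(A)=\dq_i(\grn\grn)$, establish the lower bound $\dq_i(\grn\grn)\gtrsim 2^{-(1-\kappa)k}$ from conditions \textsc{(a)} and \textsc{(b)} of \eqref{e:contract.second}, and for the second claim exploit that $\flip$ preserves $\red[\dsi]$ so that the two normalizers coincide. The only cosmetic difference is that the paper bounds $\lone{\Delta\dtp}$ directly by $2\lone{\Delta\dq}/(\dq_1(\grn\grn)\dq_2(\grn\grn))$ rather than separating into two terms, but the content is identical.
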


\begin{lem}[clause \textsc{bp} contraction]
\label{l:BPt_ptohp_PART_A}
Suppose $\dq_1,\dq_2\in \GAMMA(c,\kappa)$ 
for $c\in(0,1]$ and $\kappa\in[0,1]$: then
	{\setlength{\jot}{0pt}\begin{align}\nonumber
	\Delta\hatm^\lambda\htp(\ylw\ylw) 
	&=O(k^3/2^{k}) \Delta \dtp(\grn\grn)
	=O(k^3/2^{(1+c)k}),\\ \nonumber
	\Delta\hatm^\lambda\htp
		(\set{\blu\red,\blu\lgf})
	&= 
	 O(k^2/2^k) [\Delta\dtp(\grn\grn)
		+2^{-k}\Delta\dtp(
		\dCOLS^2\setminus\set{\red\red})]
	=O(k^3/2^{(1+c)k}),\\
	\lone{\Delta\hatm^\lambda\htp} 
	&= O(k^3/2^{k})\lone{\Delta\dtp}
	=O(k^3 2^{(1-2\kappa)k}),
	\label{e:BPt_hatdif}\end{align}}%
and the same estimates hold with $\flip\htp$ in place of $\htp$. For both $i=1,2$,
	\beq\label{e:BPt_hatoverlap} 
	\lone{\hatm^\lambda\htp_i
	-\hatm^\lambda\flip\htp_i}
	=O(k^3/2^{(1+\kappa)k})
	\lone{\dtp_i-\flip\dtp_i}
	=O(k^3/2^{2\kappa k})
	\lone{\dq_i-\flip\dq_i}\,.\eeq
\end{lem}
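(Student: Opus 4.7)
The proof will adapt the strategy of the single-copy clause contraction Lemma~\ref{l:BP_ptohp} to the pair setting, case-splitting on the type of $(\hsi^1,\hsi^2) \in (\hOm_T)^2$. For each such type I first write the explicit representation
\[
\hm^\lambda\htp(\hsi^1,\hsi^2)
= C(\hsi^1,\hsi^2)
\sum_{\vec{\dsi}^1 \sim \hsi^1,\,\vec{\dsi}^2 \sim \hsi^2}
\prod_{j=2}^{k}\dtp(\dsi^1_j,\dsi^2_j),
\]
where $C(\hsi^1,\hsi^2)$ collects the local factors $[\hm^\lambda\hz^\lambda](\hsi^j)$, with convention \eqref{e:dot.m.convention} handling the $\set{\blu,\red}$ cases. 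The differences $\Delta\hm^\lambda\htp(\hsi^1,\hsi^2)$ are then controlled slot by slot via the product--difference inequality~\eqref{e:BP_ineq_generalized}, producing estimates of the form
\[
\Delta\hm^\lambda\htp(\hsi^1,\hsi^2)
\lesssim (k-1)\lone{\Delta\dtp|_{S}}
\,[\max_{i=1,2}\dtp_i(S')]^{k-2},
\]
with $S=S(\hsi^1,\hsi^2)\subseteq\dOm^2$ the compatible pair-spins and $S'\supseteq S$ an appropriate majorant. Because $\dtp_i \in \GAMMA(c,\kappa)$ forces $\dtp_i(\blu\blu)=1-O(2^{-k})$ via condition~\textsc{(a)} of~\eqref{e:contract.second}, the base factor $[\dtp_i(S')]^{k-2}$ is $\asymp 1$ in every case.

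\textbf{Case-by-case analysis.} I would partition $(\hOm_T)^2$ into three regions. First, for $(\ylw,\ylw)$, compatibility forces each $\vec{\dsi}^j$ to carry at least one non-$\blu$ spin, so the restricted difference $\lone{\Delta\dtp|_S}$ is concentrated on pair-spins outside $\blu\blu$; condition~\textsc{(a)} of~\eqref{e:contract.second} extracts one $O(2^{-k})$ factor, while its refinement on $|p(\blu_\zro\blu_\zro)-p(\blu_\zro\blu_\one)|$ supplies the extra $2^{-ck}$ needed for the $(1+c)k$ exponent. Second, for $(\blu,\red)$ and $(\blu,\lgf)$, the $\blu$ side forces $\vec{\dsi}^1$ entirely into some $\grn_x$ by convention~\eqref{e:dot.m.convention}; the restriction on the other side contributes an additional $2^{-k}$, and a cancellation identity analogous to~\eqref{e:cancel.red.bf} combines free and blue contributions, producing the two-term split $\Delta\dtp(\grn\grn)+2^{-k}\Delta\dtp(\dOm^2\setminus\set{\red\red})$ stated in the lemma. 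The remaining regions (in particular the dominant $(\blu,\blu)$ and all $(\cl,\cdot)$, $(\cdot,\cl)$ types) are handled by the same product--difference estimates and, after summation, yield the $\ell^1$ bound on the third line of~\eqref{e:BPt_hatdif}. The corresponding bounds for $\flip\htp$ follow by the same argument after replacing $\hsi^2$ with $\hsi^2\oplus\one$, using the symmetry of $\hBPu$ under coordinate-wise flip.

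\textbf{Overlap estimate \eqref{e:BPt_hatoverlap}.} Since $\flip$ acts only on the second coordinate and $\hBPu$ acts coordinate-wise on $\dtp$, $\hm^\lambda\flip\htp_i$ admits the same representation as $\hm^\lambda\htp_i$ but with $\flip\dtp_i$ in place of $\dtp_i$. Thus $\hm^\lambda\htp_i-\hm^\lambda\flip\htp_i$ has the same structural form as $\Delta\hm^\lambda\htp$, with input difference $\dtp_i-\flip\dtp_i$ instead of $\dtp_1-\dtp_2$. Rerunning the case analysis with this input, I use that the mass of $\dtp_i-\flip\dtp_i$ is supported where $\flip$ acts nontrivially, and invoke condition~\textsc{(b)} of~\eqref{e:contract.second} (which bounds $\dtp_i(\set{\red\free,\free\red})$ by $O(2^{-\kappa k})$ and $\dtp_i(\red\red)$ by $O(2^{(1-\kappa)k})$ relative to $\dtp_i(\blu\blu)$) to extract the additional $2^{-\kappa k}$ improvement, yielding the $(1+\kappa)k$ exponent.

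\textbf{Main obstacle.} The mechanics of the argument are a direct extension of Lemma~\ref{l:BP_ptohp}, but the main technical difficulty is the precise exponent bookkeeping in each of the many subcases, so that the sharp exponents $(1+c)k$ and $(1+\kappa)k$ emerge rather than the generic $k$. This requires coordinated use of all three conditions \textsc{(a)}--\textsc{(c)} of~\eqref{e:contract.second} together with the combinatorial structure of the compatibility relations $\vec{\dsi}^j\sim\hsi^j$, and particular care in the $(\blu,\red)$ and $(\blu,\lgf)$ cases, where the two distinct terms $\Delta\dtp(\grn\grn)$ and $2^{-k}\Delta\dtp(\dOm^2\setminus\set{\red\red})$ must be separated, estimated independently, and reassembled into the bound in~\eqref{e:BPt_hatdif}.
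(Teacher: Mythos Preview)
Your overall plan for \eqref{e:BPt_hatdif} matches the paper: a case analysis on the type of $(\hsi^1,\hsi^2)$, combined with the product--difference inequality~\eqref{e:BP_ineq_generalized} and the preliminary bound $\Delta\dtp(\grn\grn)=O(2^{-ck})$ coming from the second half of condition~\textsc{(a)}. One inaccuracy: your claim that in the $(\ylw,\ylw)$ case ``compatibility forces each $\vec{\dsi}^j$ to carry at least one non-$\blu$ spin'' is false when $\hsi^j=\cl$. The paper instead argues that on $\{\ylw\ylw\}\setminus\{\cl\cl\}$ one coordinate lies in $\hatOm_x$, forcing $\vec{\dsi}$ into $(\grn_x\times\grn)^{k-1}$; the mass of $\grn_x\times\grn$ is $\tfrac12+O(2^{-ck})$, and this half (raised to the $(k-2)$-th power) is what produces the $2^{-k}$. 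The $\cl\cl$ case is handled separately by inclusion--exclusion over the sets $A_x=\{\blu_x\grn,\free\grn\}$, $B_x=\{\grn\blu_x,\grn\free\}$. This is a repair rather than a new idea, but you should be aware that the mechanism you describe is not quite right.

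The genuine gap is in your argument for \eqref{e:BPt_hatoverlap}. You propose to rerun the case analysis with $\dtp_2=\flip\dtp_1$ and then ``invoke condition~\textsc{(b)} to extract the additional $2^{-\kappa k}$.'' But rerunning the generic bound (third line of \eqref{e:BPt_hatdif}) only gives $O(k^3/2^k)\lone{\dtp_i-\flip\dtp_i}$, and nothing about the \emph{support} of $\dtp_i-\flip\dtp_i$ on the input side supplies a further factor of $2^{-\kappa k}$. The paper's improvement is an \emph{output-side} cancellation: whenever $\hsi^1=\cl$ or $\hsi^2=\cl$, one has $\flip\hsi=\hsi$ (or $(\flip\hsi)\oplus\one=\hsi$, handled by the $\textup{avg}$ symmetry), so $\htp_i(\hsi)=(\flip\htp_i)(\hsi)$ exactly and these terms vanish from the difference. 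This kills the previously dominant contributions (the $\hatOm_x\times\{\ylw\}$ and single-$\red$ terms in~\eqref{e:BPt_DQyy_infty} and~\eqref{e:BPt_Dhp_bluylw_infty}), after which the surviving dominant term is~\eqref{e:subleading.contribution}, which carries a factor $\lone{\dtp_i}=O(2^{(1-\kappa)k})$ by~\eqref{e:BPt_DQh_ub}; this is where condition~\textsc{(b)} actually enters and how the exponent $(1+\kappa)k$ emerges. Without identifying this $\cl$-invariance cancellation, your argument does not reach the stated bound.
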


\begin{lem}[clause \textsc{bp}
	output values]
\label{l:BPt_ptohp_PART_B}
Suppose $\dq_1,\dq_2\in \GAMMA(c,\kappa)$ 
for $c\in(0,1]$
and $\kappa\in[0,1]$.
For $s,t\subseteq\hCOLS$ let
$st\equiv s\times t$.
Then it holds for all
$s,t \in \set{\redo, \bluo, \fcl, \spc}$ that
	\beq\label{e:BPt_hatvalue}
	\f{\hatm^\lambda\htp_i(s, t)}
	{(2/2^k)^{\red[s]+\red[t]} }
	= \begin{cases} 
		1+O(k^2/2^{k}) & \textup{if $\red[s] + \red[t] \le 1$,}\\
		1+O(k^2/2^{ck}) & \textup{if $\red[s] + \red[t] = 2$.} \\
	\end{cases}\eeq
Furthermore we have the bounds
	{\setlength{\jot}{0pt}\begin{align}\nonumber
	\hatm^\lambda\htp_i(\lgf t) +
	 \hatm^\lambda\htp_i(t\lgf)
		&\le O(k/4^k)
	 \text{ for all }
	 t\in\set{\redo,\bluo,\fcl,\spc},\\
	\hatm^\lambda\htp_i
	(\set{\fcl}\times\hCOLS)
	-\hatm^\lambda\htp_i(\set{\bluo}
		\times\hCOLS) 
		&\le O(k/4^{k}).
	\label{e:BPt_hatlgf} \end{align}}%
The same estimates hold with $\flip\htp_i$ in place of $\htp_i$.
\end{lem}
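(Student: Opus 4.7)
The plan is to prove Lemma~\ref{l:BPt_ptohp_PART_B} by adapting the single-copy computation of Lemma~\ref{l:BP_ptohp} to the pair setting, extracting the dominant $\blu\blu$ contribution and then controlling the error using the three clauses of~\eqref{e:contract.second}.

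First I would unify the factor structure. For any $s,t$ from the list, the clause BP recursion gives
$\hm^\lambda\htp_i(s,t) = \sum_{\vec{\dsi}\sim(s,t)}
\Phi^1(s,\vec{\dsi}^1)^\lambda \Phi^2(t,\vec{\dsi}^2)^\lambda \prod_{j=2}^k \dtp_i(\dsi_j^1,\dsi_j^2)$,
where for $\hsi\in\hOm_\free$ the product $[\hm(\hsi)\hz(\hsi)]^\lambda = (1-\prod_j \dm(\dsi_j))^\lambda$ and for $\hsi \in \set{\red_\one,\blu_\one}$ the corresponding per-coordinate factor is the one written in the definition of $\hBP$. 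In every case, the per-coordinate factor is in $[0,1]$ and vanishes exactly when the clause would be monochromatic in that copy.

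Second, I would isolate the dominant contribution from $\vec{\dsi}\in\set{\blu\blu}^{k-1}$ and carry out the elementary count on $\set{\blu_\zro,\blu_\one}^{2(k-1)}$. For $\red[s]+\red[t]\le 1$, using $\dtp_i(\blu\blu)=1-O(2^{-k})$ and the near-uniform split of $\dtp_i$ among the four elements of $\blu\blu$ (guaranteed by condition~\textsc{(a)} of~\eqref{e:contract.second}), inclusion-exclusion gives the leading $(2/2^k)^{\red[s]+\red[t]}$. The error comes from (i) clauses containing at least one $\dsi_j\notin\blu\blu$, which by~\textsc{(a)} contributes $O(2^{-k})$, and (ii) the asymmetry $\dtp_i(\blu_\zro\blu_\zro)-\dtp_i(\blu_\zro\blu_\one)$; for $\red[s]+\red[t]\le 1$ this asymmetry only shows up at subdominant order and is controlled by $O(k^2/2^k)$.

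Third I would handle the $\red[s]+\red[t]=2$ case, where $\hm^\lambda\htp_i(s,t)\propto \dtp_i(\blu_\zro\blu_\zro)^{k-1}$-type sums. Here the leading value $(2/2^k)^2$ is unaffected, but the relative error is dominated by the $\dtp_i(\blu_\zro\blu_\zro)$ versus $\dtp_i(\blu_\zro\blu_\one)$ imbalance, multiplied by $k-1$. Condition~\textsc{(a)} bounds this imbalance by $(k^9/2^{ck})\dtp_i(\blu\blu)$, giving the stated $O(k^2/2^{ck})$. This case, where condition~\textsc{(b)} gives no enhancement (both coordinates are already $\red$, so the conditional distribution on the remaining $k-1$ coordinates is the same as in case~(i)), is the main obstacle: one must verify that every competing error term is indeed absorbed into $O(k^2/2^{ck})$ and that the $2^{ck}$ gain is not accidentally lost when combining the contributions from~\textsc{(a)}'s two parts.

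Fourth I would prove the $\lgf$ bounds in~\eqref{e:BPt_hatlgf}. The argument mirrors~\eqref{e:first.bound.lgf}: membership of $\hsi^c$ in $\lgf$ forces the vector $\vec{\dsi}^c$ to contain at least one $\free$-valued coordinate, and the resulting geometric series in the small ratio $\dtp_i(\set\free\times\hOm)/\dtp_i(\blu_\zro\blu_\zro)$ produces the claimed $O(k/4^k)$. Finally, the free-vs-$\blu_\one$ cancellation $\hm^\lambda\htp_i(\set\free\times\hOm)-\hm^\lambda\htp_i(\set{\blu_\one}\times\hOm)\le O(k/4^k)$ would be obtained by the identity used in~\eqref{e:cancel.red.bf}: the difference telescopes into $(k-1)\bigl[\dtp_i(\grn_\zro\,\cdot\,)-\dtp_i(\red_\zro\,\cdot\,)\bigr]\dtp_i(\blu_\one\,\cdot\,)^{k-2}$ summed over the second coordinate, which by condition~\textsc{(c)} is $O(k/4^k)$. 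The analogous statements for $\flip\htp_i$ follow since every condition in~\eqref{e:contract.second} is $\flip$-invariant, so the computation is unchanged after replacing $\dtp_i$ by $\flip\dtp_i$.
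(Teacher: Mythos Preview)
Your proposal is correct and follows essentially the same approach as the paper's proof: isolate the dominant $\set{\blu\blu}^{k-1}$ contribution, control the remaining terms using the three parts of~\eqref{e:contract.second}, and then adapt the single-copy arguments from~\eqref{e:first.bound.lgf} and~\eqref{e:cancel.red.bf}/\eqref{e:BP_Qhfb_dif} for the $\lgf$ bounds and the $\set{\free}\times\hOm$ versus $\set{\blu_\one}\times\hOm$ cancellation. The paper's execution is slightly more explicit in two places---it writes out the finitely many off-$\blu\blu$ correction terms for the upper bound on $\hm^\lambda\htp_i(st)$, and for $\hm^\lambda\htp_i(\blu_\one\lgf)$ it separates the no-$\red$ contribution (bounded via $\hm^\lambda\htp_{i,\infty}(\free\lgf)$) from the forced-$\red$ contribution---but these are exactly the details your outline points to.
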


\begin{lem}[variable \textsc{bp}] \label{l:BPt_hptoq} Suppose $\dq_1,\dq_2\in \GAMMA(c,\kappa)$ for $c\in(0,1]$ and $\kappa\in[0,1]$. Then $\vBP\dq_1,\vBP\dq_2 \in \GAMMA(c',1)$ for $c' = \max\set{0,2\kappa-1}$, and
	\[\loneB{\vBP\dq_1 - \vBP\dq_2} \lesssim k
		 \loneB{\Delta\hatm^\lambda\htp
		 +\Delta\hatm^\lambda\flip\htp} \big)
		+ k2^k\sum_{i=1,2} \loneB{\hatm^\lambda\htp_i - \hatm^\lambda\flip\htp_i}\,.\]
\end{lem}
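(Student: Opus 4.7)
The plan is to mirror the two-step template of Lemmas~\ref{l:BP_hptoup}~and~\ref{l:BP_uptoq}, now operating on joint pair measures on $(\dOm_T)^2$. Set $\utp_i\equiv\dBPu\htp_i$, so $\tq_i\equiv\vBP\dq_i=\utp_i/\lone{\utp_i}$. Because $\dz(\dsi^1,\dsi^2)=\dz(\dsi^1)\dz(\dsi^2)$ factorises, the sandwich inequality~\eqref{e:sandwich.dot.z} applies coordinate-wise, squeezing $\utp_i$ on the free region between $(d{-}1)$-fold products of $\htp_i$ restricted to $(\hOm_\free)^2$; on the $\set{\red,\blu}^2$ region the recursion becomes a two-dimensional inclusion--exclusion, e.g.\
\[
\utp_i(\blu_x,\blu_y)=\htp_i(\ppl_x,\ppl_y)^{d-1}-\htp_i(\blu_x,\ppl_y)^{d-1}-\htp_i(\ppl_x,\blu_y)^{d-1}+\htp_i(\blu_x,\blu_y)^{d-1}.
\]

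First I would partition $\dOm^2$ by type (red/blue/free in each coordinate) and use Lemmas~\ref{l:BPt_ptohp_PART_A}--\ref{l:BPt_ptohp_PART_B} to bound $\utp_i$ and $\Delta\utp$ on each piece via the multi-index inequality~\eqref{e:BP_ineq_generalized}. The leading mass sits on $\blu\blu$, with $\utp_i(\blu\blu)\asymp\htp_i(\ppl\ppl)^{d-1}$ and, on that region, the non-normalised difference obeys $\Delta\utp\lesssim d\lone{\Delta\hm^\lambda\htp}\max_{i=1,2}\utp_i(\blu\blu)$; each $\red$ or non-$\cl$ free coordinate entry costs a further factor of $2^{-k}$ or $2^{-\kappa k}$, inherited from~\eqref{e:BPt_hatvalue}--\eqref{e:BPt_hatlgf}.

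Next I would verify $\tq_i\in\GAMMA(c',1)$. Condition~\textsc{(c)} of~\eqref{e:contract.second} is immediate from the recursion together with the clause-side dominance estimates from Lemma~\ref{l:BPt_ptohp_PART_A}. Condition~\textsc{(b)} with $\kappa=1$ follows directly from the magnitude bounds of the previous paragraph: the $\red\free$ and $\free\red$ ratios against $\blu\blu$ get a $2^{-k}$ saving from $\hm^\lambda\htp_i(\lgf\,\cdot)=O(k/4^k)$, while $\tq_i(\red\red)/\tq_i(\blu\blu)=O(1)$. For condition~\textsc{(a)}, one rewrites $\tq_i(\blu_\zro\blu_\zro)-\tq_i(\blu_\zro\blu_\one)$ via the change of variables $\hsi_j^2\mapsto\hsi_j^2\oplus\one$ on the second copy as a sum over $(d-1)$-fold incoming configurations in $(\ppl_\zro\times\ppl_\zro)^{d-1}$ of the product-difference $\prod_j\htp_i(\hsi_j)-\prod_j\flip\htp_i(\hsi_j)$. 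Expanding via~\eqref{e:BP_ineq_generalized} shows that one needs $\htp_i$-asymmetry on at least one incoming edge (cost $\lesssim 2^{-\kappa k}$ via~\eqref{e:BPt_hatoverlap}) \emph{and} at least one edge to lie outside the $\blu\blu$ column so the asymmetry does not cancel against the symmetrised bulk (a further $2^{-\kappa k}$); multiplying these gives precisely $c'=\max\set{0,2\kappa-1}$.

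Finally the contraction estimate follows by writing $\tq_i=\utp_i/\dot{Z}_i$ with $\dot{Z}_i\equiv\lone{\utp_i}$ and applying~\eqref{e:BP_id}; the $\ell^1$ norm splits into a $\flip$-symmetric part bounded by $O(k)\lone{\Delta\hm^\lambda\htp+\Delta\hm^\lambda\flip\htp}$, plus a residual $\flip$-asymmetric contribution on the red-containing cells of the output in which the normalisation inflates the coefficient by the characteristic $\blu\blu$-to-red ratio $2^k$ of $\dot{Z}_i$, producing the claimed $O(k2^k)\lone{\hm^\lambda\htp_i-\hm^\lambda\flip\htp_i}$ term. The main obstacle is the $c'$-improvement in the previous step: it requires tracking two independent sources of asymmetry through the $(d-1)$-fold product without double-counting, combining the coordinate-wise sandwich bound with the clause-side symmetry estimate~\eqref{e:BPt_hatoverlap}.
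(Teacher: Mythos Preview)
Your high-level architecture is right and matches the paper: partition $\dOm^2$ by type, control $\utp_i$ and $\Delta\utp$ on each piece via \eqref{e:BP_ineq_generalized} together with Lemmas~\ref{l:BPt_ptohp_PART_A}--\ref{l:BPt_ptohp_PART_B}, verify the $\GAMMA(c',1)$ conditions, then normalise using \eqref{e:BP_id}. However, two of your mechanistic explanations are incorrect, and these are precisely the places you flag as ``the main obstacle''.

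\textbf{The $c'$ improvement.} Your claim that one needs \emph{two} independent small factors at the variable step --- one from $\htp_i$-asymmetry and a second $2^{-\kappa k}$ from ``an edge outside the $\blu\blu$ column so the asymmetry does not cancel'' --- is wrong. There is no such cancellation: even with all $d-1$ incoming edges in $\blu\blu$, the term $\htp_i(\blu_\zro\blu_\zro)^{d-1}-\htp_i(\blu_\zro\blu_\one)^{d-1}$ is generically nonzero. The correct accounting is that the variable step contributes only the \emph{loss} $O(d)=O(k2^k)$ (choice of which edge carries the difference), and the entire gain $O(k^3/2^{2\kappa k})$ comes from the clause-side estimate \eqref{e:BPt_hatoverlap}, which is already proved in Lemma~\ref{l:BPt_ptohp_PART_A}. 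The paper makes this transparent by observing that $\utp_i(\blu_\zro\blu_\zro)-\utp_i(\blu_\zro\blu_\one)$ is exactly $\Delta\utp(\blu_\zro\blu_\zro)$ in the special case $\htp_2=\flip\htp_1$, then invoking the general bound \eqref{e:BPt_DQubb}; this yields the ratio $O(d)\lone{\hm^\lambda\htp_i-\hm^\lambda\flip\htp_i}=O(k^4 2^{(1-2\kappa)k})$, i.e.\ $c'=2\kappa-1$ when positive (and $c'=0$ is trivial).

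\textbf{The $O(k2^k)$ term.} This does not arise from ``red-containing cells'' or from ``normalisation inflating by a $\blu\blu$-to-red ratio $2^k$''; in fact $\dot Z_i\asymp\utp_i(\blu\blu)$, so normalisation is harmless. The $O(k2^k)$ term is simply the $e_4$ piece $\sum_i|\tq_i(\blu_\zro\blu_\zro)-\tq_i(\blu_\zro\blu_\one)|$, bounded via the same special-case estimate \eqref{e:BPt_Quoverlap}: the coefficient $O(k2^k)$ is nothing but $O(d)$. The paper's decomposition $\lone{\Delta\tq}\lesssim e_1+e_2e_3+e_4$ separates this from the remaining contribution on the set $\texttt{A}$, where an extra $2^{-k}$ (from \eqref{e:BPt_Qubfrf}, \eqref{e:BPt_DQurrbb_dif}, \eqref{e:BPt_Qubfbb_ratio}) cancels the $d$ to give $O(k)$.
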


\begin{proof}[Proof of Proposition~\ref{p:BPt_contract}] Follows by combining the preceding lemmas~\ref{l:BPt_qtop}--\ref{l:BPt_hptoq}.\end{proof}

\begin{proof}[Proof of Proposition~\ref{p:contract}\ref{l:improve.kappa}] If $\dq\in\GAMMA(c,0)$ is a fixed point of $\vBP$, then it follows from Lemmas~\ref{l:BPt_ptohp_PART_A}--\ref{l:BPt_hptoq} that we have $\dq \in \GAMMA(c,0)\cap\GAMMA(0,1) = \GAMMA(c,1)$.\end{proof}

We now prove the three lemmas leading to Proposition~\ref{p:BPt_contract}.

\begin{proof}[Proof of Lemma~\ref{l:BPt_qtop}]
Applying \eqref{e:BP_id} we have
	\[|\dtp_1(\dsi)-\dtp_2(\dsi)| \le\f{|\dq_1(\dsi)-\dq_2(\dsi)|}
		{ \dq_1(\grn\grn) }
	+ \f{|\dq_1(\grn\grn)
		-\dq_2(\grn\grn)|}
		{\dq_1(\grn\grn)\dq_2(\grn\grn)}
		\dq_2(\dsi),\]
and summing over $\dsi\in\dCOLS^2$ gives
	\[\lone{\Delta\dtp}
	\le\f{\lone{\dq_1-\dq_2}}
		{ \dq_1(\grn\grn) }
	+ \f{|\dq_1(\grn\grn)
		-\dq_2(\grn\grn)|}
		{\dq_1(\grn\grn)\dq_2(\grn\grn)}
	\le\f{2\lone{\dq_1-\dq_2}}
	{\dq_1(\grn\grn)\dq_2(\grn\grn)}\,.\]
Since $\dq_i\in\GAMMA$, we have,
using \eqref{e:contract.second.a}~and~\eqref{e:contract.second.b},
	\beq\label{e:BPt_DQh_ub}
	\dtp_i(\dCOLS^2\setminus\set{\red\red})= O(1)\,,\quad
	\dtp_i(\red\red)=O(2^{(1-\kappa)k})\,.\eeq
Consequently $\dq_i(\grn\grn)^{-1}\le O(1) 2^{(1-\kappa)k}$, and the claimed bound on $\lone{\Delta\dtp}$ follows. The bound on $\lone{\dtp_i-\flip\dtp_i}$ follows by noting that if $\dq_2=\flip\dq_1$, then $\dq_1(\grn\grn)=\dq_2(\grn\grn)$.\end{proof}

\begin{proof}[Proof of Lemma~\ref{l:BPt_ptohp_PART_A}] We will prove \eqref{e:BPt_hatdif} for $\htp_i$; the proof for $\flip\htp_i$ is entirely similar. It follows from the symmetry $\dtp_i=(\dtp_i)^\textup{av}$ that for any $\bx,\by\in\set{\zro,\one}$,
	\[\Big|\dtp_i(\blu\blu)
	-4\dtp_i(\blu_\bx\blu_\by)\Big|
	=2\Big| \dtp_i(\blu_\bx\blu_{\by\oplus\one})
	-\dtp_i(\blu_\bx\blu_\by)\Big|
	=2\Big| \dtp_i(\bluz\bluz)
	-\dtp_i(\bluz\bluo)\Big|,\]
from which we obtain that
	\[\Delta\dtp(\blu\blu)
	\lesssim\max_{i=1,2}
	\Big| \dtp_i(\bluz\bluz)
	-\dtp_i(\bluz\bluo)\Big|\,.\]
Recall $\grn=\set{\blu,\fcl}$ and $\dtp_i(\grn\grn)=1$. Combining the above with the definition of $\GAMMA(c,\kappa)$ gives
	\begin{align}\nonumber
	\Delta\dtp(\grn\grn) 
	&\le\Delta\dtp(\blu\blu)+
	\Delta\dtp(\grn\fcl)
	+\Delta\dtp(\fcl\grn)\\
	&\le\sum_{i=1,2}\bigg\{
	\Big| \dtp_i(\bluz\bluz)
	-\dtp_i(\bluz\bluo)\Big|
	+\dtp_i(\grn\fcl)
	+\dtp(\fcl\grn) 
	\bigg\} = O(2^{-ck}).
	\label{e:delta.gg}
	\end{align}

\noindent\textit{Step I.} We first control $\Delta\hatm^\lambda\htp(\hsi)$. By symmetry it suffices to analyze the \textsc{bp} recursion at a clause with all literals $\lit_j=\zro$. We distinguish the following cases of $\hsi\in\hCOLS^2$:
\begin{enumerate}[1.]
\item Recall $\ylw\equiv\red \cup \fcl$, and note  $\set{\ylw}\setminus\set{\spc} \subseteq\hatOm_\zro\cup\hatOm_\one$ (as defined by \eqref{e:hat.Omega.zero.or.one}). Thus
	\beq\label{e:BPt_DQyy}
	\Delta\hatm^\lambda\htp(
	\set{\ylw\ylw}\setminus\set{\spc\spc})
	\le\sum_{\bx\in\set{\zro,\one}} 
	\bigg\{
		\Delta\hatm^\lambda\htp(\hatOm_\bx
		\times\set{\ylw})
		+ \Delta\hatm^\lambda\htp(
		\set{\ylw}\times\hatOm_\bx)
	\bigg\}. \eeq
For $\bx\in\set{\zro,\one}$, consider  $\hsi\in \hatOm_\bx\times\set{\ylw}$: in order for $\vec{\dsi}\in(\dCOLS^2)^{k-1}$ to be compatible with $\hsi$, it is necessary that $\dsi_j\in A \equiv \set{\grn_\bx}\times\set{\grn}$ for all $2\le j\le k$. Combining with \eqref{e:BP_ineq} gives
	\[\Delta\hatm^\lambda\htp(\hatOm_\bx
		\times\set{\ylw})
	\le\sum_{\vec{\dsi}\in A^{k-1}}
	\bigg|
	\prod_{j=2}^k\dtp_1(\dsi_j)
	-\prod_{j=2}^k\dtp_2(\dsi_j)
	\bigg|
	\le k\Delta\dtp(\grn\grn)
	\Big(
	\dtp_1(A)+\Delta\dtp(\grn\grn)
	\Big)^{k-2}\,.\]
It follows from  the definition of $\GAMMA(c,\kappa)$ that  $\dtp_1(A)+\Delta\dtp(\grn\grn) = \tfrac12+O(2^{-ck})$, so we conclude
	\beq\label{e:BPTwo_zroff}
	\Delta\hatm^\lambda\htp(
	\set{\ylw\ylw}\setminus\set{\spc\spc})
	= O(k/2^k)
	\Delta\dtp(\grn\grn)\,.\eeq

\item Now take $\hsi=\spc\spc$: for $\vec{\dsi}\in(\dCOLS^2)^{k-1}$ to be compatible with $\hsi$, it is necessary that $\vec{\dsi}\in\set{\ylw\ylw}^{k-1}$. On the other hand, it is sufficient that $\vec{\dsi}\in\set{\grn\grn}^{k-1}$ does not belong to any of the sets $(A_\zro)^{k-1},(A_\one)^{k-1},(B_\zro)^{k-1},(B_\one)^{k-1}$, where for $\bx\in\set{\zro,\one}$ we define $A_\bx\equiv\set{\blu_\bx\grn}\cup\set{\fcl\grn}$ and $B_\bx\equiv\set{\grn\blu_\bx}\cup
	\set{\grn\fcl}$. Therefore
	\[\Delta\hatm^\lambda\htp(\spc\spc)
	\le\sum_{\bx\in\set{\zro,\one}}
	\sum_{\vec{\dsi} \in (A_\bx)^{k-1}
		\cup (B_\bx)^{k-1}}
	\bigg|
	\prod_{j=2}^k\dtp_1(\dsi_j)
	-\prod_{j=2}^k\dtp_2(\dsi_j)
	\bigg|
	= O(k/2^k) \Delta\dtp(\grn\grn),\]
where the last estimate follows by the same argument that led to \eqref{e:BPTwo_zroff}. This concludes the proof of the first line of \eqref{e:BPt_hatdif}.

\item Now consider $\hsi$ with exactly one coordinate in $\set{\blu}$, meaning the other must be in $\set{\ylw}$. Recalling convention~\eqref{e:dot.m.convention}, we assume without loss that $\hsi\in\set{\bluo\ylw}$ and proceed to bound $\Delta\hatm^\lambda\htp(\hsi)$. Let $\vec{\dsi}\in(\dCOLS^2)^{k-1}$ be  compatible with $\hsi$. There are two cases:
\begin{enumerate}[a.]
\item If $\vec{\dsi}$  has no entry in $\set{\red}$, it must also be compatible with some $\hsi'\in\set{\ylw\ylw}$, as long as we permit the possibility that $|(\hsi')^1|>T$. Such $\vec{\dsi}$ gives the same contribution to $\hatm^\lambda\htp(\hsi)$ as to  $\hatm^\lambda\htp_\infty(\ylw\ylw)$. It follows from the preceding estimates that the contribution to $\Delta\hatm^\lambda\htp(\bluo\ylw)$ from all such $\vec{\dsi}$ is upper bounded by
	\beq\label{e:BPt_DQyy_infty}
	\Delta\hatm^\lambda\htp_\infty(\ylw\ylw)
	=O(k/2^k) \Delta\dtp(\grn\grn)\eeq

\item The only remaining possibility is that  some permutation of $\vec{\dsi}$ belongs to $A\times B^{k-2}$ for $A=\set{\redz\grn}$ and $B=\set{\bluo\grn}$: the contribution to $\Delta\hatm^\lambda\htp(\bluo\ylw)$ from all such $\vec{\dsi}$ is
	\beq\label{e:BPt_DQby_forced}
	\le (k-1)\sum_{\vec{\dsi}\in
	A\times B^{k-2}}\bigg|\prod_{j=2}^k \dtp_1(\dsi_j)
	-\prod_{j=2}^k \dtp_2(\dsi_j)\bigg|
	= O(k^2/2^k)\lone{\Delta\dtp},\eeq
where the last estimate follows using \eqref{e:BP_ineq_generalized}~and~\eqref{e:BPt_DQh_ub}.
\end{enumerate}
Combining the above estimates (and using the symmetry between $\bluo\ylw$ and $\ylw\bluo$) gives
	\beq\label{e:BPt_Dhp_bluylw}
	\Delta\hatm^\lambda\htp(\bluo\ylw)
	+\Delta\hatm^\lambda\htp(\ylw\bluo)
	= O(k^2/2^k)\lone{\Delta\dtp}\,.\eeq
If we further have $\hsi\in\set{\bluo}\times\set{\red,\lgf}$, then, arguing as above, $\vec{\dsi}$ either contributes to $\Delta\hatm^\lambda \htp_\infty(\ylw\times\set{\red,\lgf})$, or else belongs to $A_\bx \times (B_\bx)^{k-2}$ for $A_\bx=\set{\redz\grn_\bx}$, $B_\bx=\set{\blu_1\grn_\bx}$ and $\bx\in\set{\zro,\one}$. The contribution from first case is bounded by \eqref{e:BPTwo_zroff}. The contribution from the second case, using \eqref{e:BP_ineq_generalized}~and~\eqref{e:BPt_DQh_ub}, is
	\[	\lesssim k \Delta\dtp(\dCOLS^2\setminus\set{\red\red}) 
		\Big(\max_{\bx\in\set{\zro,\one}} \dtp_1(B_\bx) 
		+ \Delta\dtp(\grn\grn) 
		\Big)^{k-2}
		=O(k^2/4^k) \Delta\dtp(\dCOLS^2\setminus\set{\red\red})\,.\]
The second claim of \eqref{e:BPt_hatdif} follows by combining these estimates and recalling \eqref{e:delta.gg}.

\item Lastly, consider $\hsi\in\set{\blu\blu}$. Without loss of generality, we take $\hsi=\bluo\bluo$ and proceed to bound $\Delta\hatm^\lambda\htp(\bluo\bluo)$. Let $\vec{\dsi}\in(\dCOLS^2)^{k-1}$ be compatible with $\hsi$. We distinguish three cases: \begin{enumerate}[a.] \item For at least one $i\in\set{1,2}$, $\vec{\dsi}^i$ contains no entry in $\set{\red}$. In this case $\vec{\dsi}$ is also compatible with some $\hsi'\in\set{\bluo\ylw} \cup\set{\ylw\bluo}$, as long as we permit the possibility that $|(\hsi')^i|>T$. The contribution of all such $\vec{\dsi}$ to $\Delta\hatm^\lambda\htp(\bluo\bluo)$ is therefore upper bounded by
	\beq\label{e:BPt_Dhp_bluylw_infty}
	\Delta\hatm^\lambda
	\htp_\infty(\bluo\ylw)
	+\Delta\hatm^\lambda
	\htp_\infty(\ylw\bluo)
	= O(k^2/2^k)\lone{\Delta\dtp},\eeq
where the last step is by the same argument as for \eqref{e:BPt_Dhp_bluylw}.

\item The next case is that  $\vec{\dsi}$  is a permutation of  $(\redz\redz,  (\bluo\bluo)^{k-2})$. The contribution to $\Delta\hatm^\lambda\htp(\bluo\bluo)$ from this case is at most
	\[(k-1)\bigg|
	\dtp_1(\redz\redz)
	\dtp_1(\bluo\bluo)^{k-2}
	-\dtp_2(\redz\redz)
	\dtp_2(\bluo\bluo)^{k-2}
	\bigg|\,.\]
Using \eqref{e:BP_ineq_generalized} and the definition of $\GAMMA(c,\kappa)$, this is at most
	\begin{align}\nonumber
	&O(k^2/4^k)
	\Big(
	\Delta\dtp(\redz\redz)
	+ \dtp(\redz\redz)
	\cdot\Delta\dtp(\bluo\bluo) 
	\Big)\\
	&=O(k^2/4^k)\lone{\dtp}\lone{\Delta\dtp}
	= O(k^2/2^{(1+\kappa)k})\lone{\Delta\dtp}.
	\label{e:subleading.contribution}
	\end{align}

\item The last case is that $\vec{\dsi}$ is a permutation of $(\redz\bluo, \bluo\redz, (\bluo\bluo)^{k-3})$. The contribution to $\Delta\hatm^\lambda\htp(\bluo\bluo)$ from this case is at most
	\[k^2
	\bigg|
	\dtp_1(\redz\bluo)
	\dtp_1(\bluo\redz)
	\dtp_1(\bluo\bluo)^{k-3}
	-\dtp_2(\redz\bluo)
	\dtp_2(\bluo\redz)
	\dtp_2(\bluo\bluo)^{k-3}
	\bigg|\,.\]
This is at most $O(k^2/4^k) \lone{\Delta\dtp}$ by another application of \eqref{e:BP_ineq_generalized} and the definition of $\GAMMA(c,\kappa)$.
\end{enumerate}
The above estimates together give
	\beq\label{e:delta.hm.lambda.blues.one}
	\Delta\hatm^\lambda\htp(\bluo\bluo)
	= O(k^2/2^k)\lone{\Delta\dtp},\eeq
where the main contribution comes from \eqref{e:BPt_Dhp_bluylw_infty}. Combining with the previous bound \eqref{e:BPt_Dhp_bluylw} yields the   last  part of \eqref{e:BPt_hatdif}.
\end{enumerate}
\smallskip\noindent\textit{Step II.} Next we prove \eqref{e:BPt_hatoverlap} by improving the preceding bounds in the special case that $\dtp_1=\dtp$ and $\dtp_2 \equiv \flip\dtp$. Recall $\htp_i \equiv \hBPu(\dtp_i)$; it follows that $\htp_2=\flip\htp_1$. Thus, for any $\hsi\in\hCOLS^2$ with $\hsi^2=\spc$, we have $\hsi=\flip\hsi$, consequently $\htp_2(\hsi) =\htp_1(\flip\hsi)=\htp_1(\hsi)$. For $\hsi\in\hCOLS^2$ with $\hsi^1=\spc$, we have $\hsi=(\flip\hsi)\oplus\one$, so $\htp_2(\hsi)=\htp_1(\flip\hsi)=\htp_1(\hsi)$, where the last step uses that $\htp_1=(\htp_1)^\textup{av}$. It follows that instead of \eqref{e:BPt_DQyy} and \eqref{e:BPt_DQyy_infty} we have the improved bound
	\begin{align*}
	\Delta\hatm^\lambda\htp_\infty(\ylw\ylw)
	&=\Delta\hatm^\lambda\htp_\infty(\set{\ylw\ylw}
		\setminus(\set{\spc\ylw}\cup\set{\ylw\spc}))
	\le\sum_{\bx,\by\in\set{\zro,\one}} 
		\Delta\hatm^\lambda\htp_\infty(\hatOm_\bx
		\times\hatOm_\by)\\
	&= O(k) \lone{\Delta\dtp}
	\sum_{\bx,\by\in\set{\zro,\one}} 
	\Big(
	\dtp_1(\grn_\bx,\grn_\by)
	+\Delta\dtp(\grn\grn)
	\Big)^{k-2}
	= O(k/4^k)\lone{\dtp-\flip\dtp}.
	\end{align*}
Similarly, instead of \eqref{e:BPt_DQby_forced} we would only have a contribution from $\vec{\dsi}$ belonging to either $A_\zro \times(B_\zro)^{k-2}$ or $A_\one \times(B_\one)^{k-2}$, where $A_\bx=\set{\redz\grn_\bx}$ and $B_\bx=\set{\bluo\grn_\bx}$. It follows that instead of \eqref{e:BPt_Dhp_bluylw} and \eqref{e:BPt_Dhp_bluylw_infty} we have the improved bound
	\[\Delta\hatm^\lambda\htp_\infty
		(\bluo\ylw)
	+\Delta\hatm^\lambda\htp_\infty
		(\ylw\bluo)
	=O(k^4/4^k)\lone{\Delta\dtp}\,.\]
Previously the main contribution in \eqref{e:delta.hm.lambda.blues.one} came from \eqref{e:BPt_Dhp_bluylw_infty}, but now it comes instead from \eqref{e:subleading.contribution}. This gives the improved bound $\Delta\hatm^\lambda\htp(\bluo\bluo) =O(k^2/2^{(1+\kappa)k})$, which proves the first part of \eqref{e:BPt_hatoverlap}. The second part follows by applying Lemma~\ref{l:BPt_qtop}.\end{proof}

\begin{proof}[Proof of Lemma~\ref{l:BPt_ptohp_PART_B}] We first prove \eqref{e:BPt_hatvalue}. Assume $s,t\in\set{\bluo,\fcl,\spc}$, and write $st\equiv s\times t \subseteq \hCOLS^2$. Then for a lower bound we have
	\[\hatm^\lambda\htp_i(st)
	\ge[1-O(k/2^k)]
	\dtp_i(\blu\blu)^{k-1}
	=1-O(k/2^k)\,.\]
for an upper bound we have
	\begin{align*}
	\hatm^\lambda\htp_i(st)
	&\le\dtp_i(\grn\grn)^{k-1}
	+ k\dtp_i(\redz\grn)
	\dtp_i(\bluo\grn)^{k-2}
	+ k\dtp_i(\grn\redz)
	\dtp_i(\grn\bluo)^{k-2}\\
	&\qquad + k\dtp_i(\redz\redz)
		\dtp_i(\bluo\bluo)^{k-2}
	+ k^2
	\dtp_i(\redz\bluo)
	\dtp_i(\bluo\redz)
	\dtp_i(\bluo\bluo)^{k-3}
	= 1+O(k^2/2^k).
	\end{align*}
Writing
$\redo t\equiv \redo\times t$
for $t\in\set{\bluo,\fcl,\spc}$,
a similar argument gives
	{\setlength{\jot}{0pt}\begin{align}\nonumber
	\hatm^\lambda\htp_i(\redo t)
	&\ge[1-O(k/2^k)] \dtp_i(\bluz\blu)^{k-1}
	= [1-O(k/2^k)] \cdot (2/2^k)\,,\\
	\hatm^\lambda\htp_i(\redo t)
	&\le\dtp_i(\bluz\grn)^{k-1}
	+ k
	\dtp_i(\bluz\redz)
	\dtp_i(\bluz\bluo)^{k-2}
	= [1-O(k/2^k)] \cdot (2/2^k)\,.\end{align}}%
Lastly, it is easily seen that
	\[\hatm\htp_i(\redo\redo)
	= \dtp_i(\bluz\bluz)^{k-1}
	= [1-O(k/2^{ck})] \cdot (2/2^k)^2\,.\]
This concludes the proof of \eqref{e:BPt_hatvalue}, and we turn next to the proof of \eqref{e:BPt_hatlgf}.
Arguing similarly as for \eqref{e:first.bound.lgf} gives
\[\hatm^\lambda\htp_i(\set{\fcl\fcl}
	\setminus\set{\spc\spc})
	\le \hatm^\lambda\htp_i(\lgf\fcl)
	+\hatm^\lambda\htp_i(\fcl\lgf)
	= O(k/4^{k})\,.\]
Next, suppose $\vec{\dsi}$
is compatible with $\hsi\in\bluo\lgf$:
if $\vec{\dsi}$ has no entry in $\set{\red}$,
then it is also compatible with some
$\hsi'\in\fcl\lgf$, provided we allow
$|(\hsi')^1|>T$. Therefore
	\begin{align*}
	&\hatm^\lambda\htp_i(\bluo\lgf)
	-
	\hatm^\lambda\htp_{i,\infty}
		(\fcl\lgf) \\
	&\qquad \le \sum_{\by\in\set{\zro,\one}}
	\bigg[
	k\dtp_i(\redz\fcl)
	\dtp_i(\bluo\grn_\by)^{k-2}
	+k^2
	\dtp_i(\redz\blu_\by)
	\dtp_i(\bluo\fcl)
	\dtp_i(\bluo\grn_\by)^{k-3}
	\bigg],
	\end{align*}
and by definition of $\GAMMA(c,\kappa)$
this is $O(k/4^k)$. Finally,
 	\[\hatm^\lambda\htp_i(\redo\lgf)
	\le\sum_{\by\in\set{\zro,\one}}
	k\dtp_i(\bluz\fcl)
	\dtp_i(\bluz\grn_\by)^{k-2} =O(k/8^k),\]
which proves the first part of~\eqref{e:BPt_hatlgf}. 
For the second part, arguing as for \eqref{e:BP_Qhfb_dif}, we have for any $\heta\in\hCOLS$ that 
	\[\hatm^\lambda\htp_i(\fcl\heta) 
	-\hatm^\lambda\htp_i(\bluo\heta)
	\le (k-1)
	\sum_{\vec{\dsi}\sim\heta}
	[
	\dtp_i(\grnz\dsi_2)
	-\dtp_i(\redz\dsi_2)
	]
	\prod_{j=3}^k
	\dtp_i(\bluo\dsi_j)\,.\]
Note that $\vec{\dsi}$ has at most one 
entry in $\set{\red}$. If $\dsi_2=\redz$, then $\dsi_j=\bluo$ for all $j\ge3$.
Since $\dq_i\in\GAMMA(c,\kappa)$
(which means also that $\dq_i=(\dq_i)^\textup{av}$), we have
	\[\sum_{\vec{\dsi}\sim\heta}
	\Ind{\dsi_2=\zeta}
	\prod_{j=3}^k
	\dtp_i(\bluo\dsi_j)
	\le \begin{cases}
	\dtp_i(\bluo\bluo)^{k-2}
	\le O(4^{-k})
	& \textup{if $\zeta=\redz$,}\\
	\dtp_i(\bluo\grn)^{k-3} \le O(2^{-k})
	& \textup{if $\zeta\in\dCOLS
	\setminus\set{\redz}$.}
	\end{cases}\]
On the other hand, $\dq_i\in\GAMMA(c,\kappa)$ also implies
	\[\dtp_i(\grnz\zeta)
	-\dtp_i(\redz\zeta)
	\le O(2^{-k})
	\dtp_i(\bluz\zeta)
	+\dtp_i(\fcl\zeta)
	\le \begin{cases} 
	O(1) &\textup{if $\zeta=\redz$,}\\
	O(2^{-k}) &\textup{if $\zeta\in\dCOLS
	\setminus\set{\redz}$.}\end{cases}\]
Combining these estimates and summing over $\heta\in\hCOLS$ proves the second part of \eqref{e:BPt_hatlgf}.\end{proof}

An immediate application of \eqref{e:BPt_hatvalue}, which will be useful in the next proof, is that
	\beq\label{e:BPt_Qhrbratio}
	\f{\hatm^\lambda\htp_i(\red_\bx \heta)}
		{\hatm^\lambda\htp_i(\blu_\bx \heta)}
		\ge [1+O(k^2/2^{k})]
	\cdot (2/2^k)\,.\eeq
for all $\heta\in\set{\bluz,\bluo,\fcl,\spc}$ and all $\bx\in\set{\zro,\one}$.

\begin{proof}[Proof of Lemma~\ref{l:BPt_hptoq}]
We divide the proof in two parts.

\smallskip
\noindent\textit{Step I. Non-normalized messages.}

\begin{enumerate}[1.]
\item First consider $\dsi\in\set{\fcl\fcl}$.
Recalling $(a+b)^\lambda
\le a^\lambda + b^\lambda$
for $a,b\ge0$ and $\lambda\in[0,1]$,
	\[\Delta\utp(\fcl\fcl)
	\le 2
	\sum_{\hat{r}
		\in\set{\htp,\flip\htp} }
	\sum_{\vec{\hsi}
		\in \set{\fcl\fcl}^{k-1}}
	\bigg|\prod_{j=2}^d
	\hatm^\lambda\hat{r}_1(\hsi_j)
	-\prod_{j=2}^d\hatm^\lambda\hat{r}_2(\hsi_j)
	\bigg|\]
where the $\hat{r}=\flip\htp$
term arises from the fact that
	\[\hatm(\hsi^1)^\lambda
	[1-\hatm(\hsi^2)]^\lambda
	\htp(\hsi)
	=\hatm(\hsi^1)^\lambda
	\hatm(\hsi^2\oplus\one)^\lambda
	(\flip\htp)(\flip\hsi)
	=\hatm^\lambda\flip\htp(\flip\hsi)\,.\]
Applying \eqref{e:BP_ineq} gives
	\[\Delta\utp(\fcl\fcl)
	=O(d)
	\sum_{\hat{r}\in\set{\htp,\flip\htp}}
	\Delta\hatm^\lambda\hat{r}(\fcl\fcl)
	\Big(\hatm^\lambda\hat{r}_1(\fcl\fcl) 
		+ \Delta\hatm^\lambda\hat{r}(\fcl\fcl)
	\Big)^{d-2}\,.\]
We have from 
\eqref{e:BPt_hatdif}
and \eqref{e:BPt_hatvalue}
that 
$\hatm^\lambda\htp_1(\fcl\fcl)
\asymp1$
and
$\Delta\hatm^\lambda\htp(\fcl\fcl)
= O(k^3/2^{(1+c)k})$, so 
	\beq\label{e:BPt_DQuff}
	\Delta\utp(\fcl\fcl)
	= O(d)\lone{\Delta\hatm^\lambda\htp 
		+ \Delta\hatm^\lambda\flip\htp}
	\cdot \utp_1(\fcl\fcl)\,.\eeq

\item Next consider $\dsi\in\set{\ppl_\one\fcl}$.
Let $\hat{r}_{\max}(\hsi) \equiv \max_{i=1,2}\hat{r}_i(\hsi)$ --- in this notation,
	\[\hat{r}_{\max}(\hCOLS)
	= \sum_{\hsi\in\hCOLS}
		\max_{i=1,2}
		\hat{r}_i(\hsi)
	\ge \max_{i=1,2}
	\sum_{\hsi\in\hCOLS}
		\hat{r}_i(\hsi)
	= \max_{i=1,2}\hat{r}_i(\hCOLS)\]
where the inequality may be strict. Then
	\[\Delta\utp(\ppl_\one\fcl)
	= O(d)
	\sum_{\hat{r}\in\set{\htp,\flip\htp}}
	\Delta\hatm^\lambda\hat{r}(\ppl_\one\fcl)
	[\hatm^\lambda\hat{r}_{\max}
		(\ppl_\one\fcl)
	]^{d-2}\,.\]
Let $a\in\argmax_i \hat{r}_i(\bluo\spc)$, so that
	\[0\le\hatm^\lambda\hat{r}_{\max}(\ppl_\one\fcl)
	-\hatm^\lambda\hat{r}_a(\ppl_\one\fcl)
	\le\Delta\hatm^\lambda\hat{r}(\redo\fcl)
	+\Delta\hatm^\lambda\hat{r}
	(\bluo\lgf)
	= O(2^{ -(1+ c)k}),\]
where the last estimate is by \eqref{e:BPt_hatdif}
and \eqref{e:BPt_hatlgf}. 
We also have from
\eqref{e:BPt_hatvalue}
that
$\hatm^\lambda\htp(\ppl_\one\fcl)
\ge
\hatm^\lambda\htp(\bluo\fcl)
\asymp1$, and it follows that
\beq\label{e:BPt_kpone}
	[\hatm^\lambda \hat{r}_{\max} 
	(\ppl_\one\fcl)]^{d-2}
	\asymp [\hatm^\lambda\hat{r}_a
	(\ppl_\one\fcl)]^{d-1}.
\eeq
Applying \eqref{e:BPt_hatvalue}
and \eqref{e:BPt_hatlgf} again, we have
(for $i=1,2$)
	\[[\hatm^\lambda\hat{r}_i
	(\ppl_\one\fcl)]^{d-1}
	\asymp [
	\hatm^\lambda\hat{r}_i
	(\ppl_\one\spc)]^{d-1}\,.\]
On the other hand, 
assuming $T\ge1$, we have
	\[\utp_i(\redo\fcl)
	\ge 
	[
	\hatm^\lambda\hat{r}_i(\ppl_\one\spc)
	]^{d-1}
	-[
	\hatm^\lambda\hat{r}_i(\bluo\spc)
	]^{d-1}
	\asymp [
	\hatm^\lambda\hat{r}_i(\ppl_\one\spc)
	]^{d-1}\]
where the last step follows by \eqref{e:BPt_Qhrbratio}. Similarly,
	\begin{align}
	\utp_i(\redo\fcl) - \utp_i(\bluo\fcl)
	&=O(1) \sum_{\hat{r}\in\set{\htp,\flip\htp}}
	\hatm^\lambda\hat{r}_i(\bluo\fcl)^{d-1} 
	= O(2^{-k}) \sum_{\hat{r}\in\set{\htp,\flip\htp}}
	\hatm^\lambda\hat{r}_i(\ppl_\one\fcl)^{d-1}
	\nonumber\\ \label{e:BPt_Qubfrf}
	&= O(2^{-k}) \utp_i(\redo\fcl)
	= O(2^{-k}) \utp_i(\bluo\fcl),
	\end{align}
where the last step follows by rearranging the terms. Combining the above gives
	\beq\label{e:BPt_DQupplf}
	\Delta\utp(\ppl_\one\fcl) 
	\le O(d) 
	\lone{\Delta\hatm^\lambda\htp 
	+ \Delta\hatm^\lambda\flip\htp} 
	\max_{i=1,2} \utp_i(\bluo\fcl).
\eeq
Clearly, similar bounds hold if we replace
$\ppl_\one\fcl$ with
any of $\ppl_\zro\fcl$,
$\fcl\ppl_\one$, or 
$\fcl\ppl_\zro$. 

\item Lastly we bound $\Delta\utp(\ppl_\bx\ppl_\bx )$
for $\bx,\by\in\set{\zro,\one}$. As in the single-copy recursion, we denote
	{\setlength{\jot}{0pt}\begin{align*}
	\dot{r}(\RMB_\bx\dsi)
	&\equiv
		\dot{r}(\red_\bx\dsi)
		-\dot{r}(\blu_\bx\dsi),\\
	\dot{r}(\dsi\RMB_\bx) 
	&\equiv
		\dot{r}(\dsi\red_\bx)
		-\dot{r}(\dsi\blu_\bx),\\
	\dot{r}(\RMB_\bx\RMB_\by)
	&\equiv\dot{r}(\red_\bx\red_\by) 
	-\dot{r}(\red_\bx\blu_\by) 
		- \dot{r}(\blu_\bx\red_\by) 
		+ \dot{r}(\blu_\bx\blu_\by).\end{align*}}%
Applying \eqref{e:BPt_Qhrbratio} gives
	{\setlength{\jot}{0pt}\begin{align*}
	\utp_i(\RMB_\bx\red_\by)
	&=[\htp_i(\blu_\bx\ppl_\by)]^{d-1}
	= O(2^{-k})
	[\htp_i(\ppl_\bx\ppl_\by)]^{d-1}
	= O(2^{-k})\utp_i(\red_\bx\red_\by),\\
	\utp_i(\RMB_\bx\RMB_\by)
	&=[ \htp_i(\blu_\bx\blu_\by)]^{d-1}
	=O(2^{-k})
	\utp_i(\red_\bx\red_\by).\end{align*}}%
Combining the above estimates gives
	\[\utp_i(\red_\bx\red_\by)
	-\utp_i(\blu_\bx\blu_\by)
	=\utp_i (\RMB_\bx\red_\by)
	+ \utp_i (\red_\bx\RMB_\by)
	- \utp_i (\RMB_\bx\RMB_\by)
	=O(2^{-k})
	\utp_i(\red_\bx\red_\by)\,.\]
Further, it follows from the \textsc{bp} equations that
	{\setlength{\jot}{0pt}\begin{align}\nonumber
	\max\set{
	\utp_i(\red_\bx\RMB_\by),
	\utp_i(\blu_\bx\RMB_\by),
	\utp_i(\RMB_\bx\red_\by),
	\utp_i(\RMB_\bx\blu_\by)
	}
	\le\utp_i(\red_\bx\red_\by)
	-\utp_i(\blu_\bx\blu_\by), \\
	\text{so }
	\utp_i(st)
	=[1+O(2^{-k})]\utp_i(\blu_\bx\blu_\by)
	\text{ for all 
	$s\in\set{\red_\bx,\blu_\bx}$, 
	$t\in\set{\red_\by,\blu_\by}$.}
	\label{e:ppl.blu}\end{align}}%
Similarly, we can upper bound
	\begin{align}\nonumber
	\Delta\utp(\ppl_\bx\ppl_\by)
	&\le4[
	\Delta \utp (\red_\bx\red_\by)
	+ \Delta \utp (\RMB_\bx\red_\by)
	+ \Delta \utp (\red_\bx\RMB_\by)
	+ \Delta \utp (\RMB_\bx\RMB_\by)
	].\\
	&\le O(d)
	\sum_{\hat{r}\in\set{
		\htp,\flip\htp}}
	\sum_{\substack{
		s\in\set{\ppl_\bx,\blu_\bx} \\ 
		t\in\set{\ppl_\by,\blu_\by}}}
	\lone{
	\Delta\hatm^\lambda\hat{r}
	}
	[\hatm^\lambda\hat{r}_{\max}(st)]^{d-2}.
	\label{e:delta.utp.pplx.pply}
	\end{align}
For $\hat{r} \in\set{\htp,\flip\htp}$,
let $a=\argmax_{i=1,2}
\hatm^\lambda\hat{r}_i(\bluo\bluo)$: then,
for any 
$s\in\set{\ppl_\bx,\blu_\bx}$,
$t\in\set{\ppl_\by,\blu_\by}$,
	\begin{align*}
	0&\le {\hatm^\lambda\hat{r}_{\max}(st)}-
	{\max_{i=1,2}\hatm^\lambda\hat{r}_i(st)}
	\le 
	{\hatm^\lambda\hat{r}_{\max}(st)}
	-\hatm^\lambda\hat{r}_a(st)\\
	&\le O(1)
	\Delta\hatm^\lambda \hat{r}(\set{\ppl\ppl}\setminus\set{\blu\blu})
	\le O(1/2^{(1+c)k}),\end{align*}
where the last estimate is by
\eqref{e:BPt_hatdif}. Combining with 
\eqref{e:BP_hatvalue}
and \eqref{e:ppl.blu} gives
	\[\sum_{\substack{
		s\in\set{\ppl_\bx,\blu_\bx} \\ 
		t\in\set{\ppl_\by,\blu_\by}}}
	[\hatm^\lambda
	\hat{r}_{\max}(st)]^{d-2}
	= O(1)
	\Big[
	\max_{i=1,2}
	\hat{r}_i(\ppl_\bx\ppl_\by)\Big]^{d-1}
	=O(1)
	\max_{i=1,2}
	\utp_i(\blu\blu)\,.\]
Substituting into \eqref{e:delta.utp.pplx.pply} gives 
	\beq\label{e:BPt_DQubb}
	\Delta\utp(\ppl_\bx\ppl_\by)
	\le 
	O(d)\lone{\Delta\hatm^\lambda\htp + \Delta\hatm^\lambda\flip\htp} 
	\max_{i=1,2}\utp_i
	(\blu\blu)\,.\eeq
Further,
for any $st\in\set{\red_\bx\RMB_\by,
\RMB_\bx\red_\by,\RMB_\bx\RMB_\by}$, we have
	\beq\label{e:BPt_DQurrbb_dif}
	\Delta\utp(st)
	\le 
	O(k)\lone{\Delta\hatm^\lambda\htp + \Delta\hatm^\lambda\flip\htp} 
	\max_{i=1,2}\utp_i
	(\blu\blu)\,.\eeq
Lastly, in the special case $\htp_2 = \flip\htp_1$, \eqref{e:BPt_DQubb} reduces to
	{\setlength{\jot}{0pt}\begin{align}\nonumber
	|\utp_1(\bluz\bluz)
	-\utp_1(\bluz\bluo)|
	&\le O(d) \lone{\hatm^\lambda\htp_1
	-\hatm^\lambda\flip\htp_1} \utp_1(\blu\blu)\\
	&\le k^5 2^{(1-2\kappa)k}
	\lone{\dtp_i-\flip\dtp_i}\,.
	\label{e:BPt_Quoverlap}\end{align}}%
where the last estimate is by \eqref{e:BPt_hatoverlap}.

\end{enumerate}

\smallskip
\noindent\textit{Step II. Normalized messages.}
Recall $\tq_i\equiv\vBP\dq_i$. It remains to verify that
$\tq_i\in\GAMMA(c',1)$
with $c'=\max\set{0,2\kappa-1}$:
recalling
the definition of $\GAMMA$, this means
	\begin{align}
	\label{e:contract.second.PRIME.a}
	\tag{1$\GAMMA'$} 
	&\textup{$|p(\bluz\bluz)
	-p(\bluz\bluo)| 
	\le (k^9/2^{c'k})p(\blu\blu)$ and
	$p(\fcl\fcl)+p(\set{\fcl\red,\red\fcl})/2^k
		+ p(\red\red)/4^k
		= O(2^{-k}) p(\blu\blu)$;}\\
	\label{e:contract.second.PRIME.b}
	\tag{2$\GAMMA'$} 
	& \textup{$p( \fcl\red )
	= O(2^{-k})p(\blu\blu)$ and
	$p(\red\red) = O(1) p(\blu\blu)$;} \\
	\label{e:contract.second.PRIME.c}
	\tag{3$\GAMMA'$} 
	&
	p(\red_\bx\dsi) \ge [1-O(2^{-k})]
		p(\blu_\bx\dsi)
	\text{ for all }
	\bx\in\set{\zro,\one}\text{ and }
	\dsi\in\dCOLS.
	\end{align}
Condition~\eqref{e:contract.second.PRIME.c} is automatically satisfied due to the \textsc{bp} equations.
The second part of~\eqref{e:contract.second.PRIME.b}
follows from \eqref{e:ppl.blu}. 
The first part of~\eqref{e:contract.second.PRIME.a}
holds trivially if $c'=0$,
and otherwise follows from \eqref{e:BPt_Quoverlap}. We claim that
	\beq\label{e:BPt_Qubfbb_ratio}
	\tq_i(\set{\red\fcl,\fcl\red,\fcl\fcl})
	= O(2^{-k})\tq_i(\blu\blu)\,.\eeq
This immediately gives the first part of~\eqref{e:contract.second.PRIME.b}. Further, the \textsc{bp} equations give $\tq_i(\blu\fcl)\le\tq_i(\red\fcl)$ and $\tq_i(\fcl\blu)\le\tq_i(\fcl\red)$, so the second part of~\eqref{e:contract.second.PRIME.a} also follows. To see that \eqref{e:BPt_Qubfbb_ratio} holds, note that the second part of \eqref{e:BPt_hatlgf} gives
	\begin{align*}
	\utp_i(\fcl\fcl)
	&\le O(1)
	\sum_{\hat{r}\in
	\set{\htp,\flip\htp}}
	[\hatm^\lambda\hat{r}_i(\fcl\fcl)]^{d-1}
	\le O(1)
	\sum_{\hat{r}\in
	\set{\htp,\flip\htp}}
	[\hatm^\lambda\hat{r}_i(\bluo\bluo)]^{d-1},\\
	\utp_i(\redo\fcl)
	&\le O(1)
	\sum_{\hat{r}\in
	\set{\htp,\flip\htp}}
	[\hatm^\lambda\hat{r}_i(\ppl_\one\fcl)]^{d-1}
	\le 
	O(1)
	\sum_{\hat{r}\in
	\set{\htp,\flip\htp}}
	[\hatm^\lambda\hat{r}_i
	(\ppl_\one\bluo)]^{d-1}.
	\end{align*}
Combining with \eqref{e:BPt_Qhrbratio} gives
$\utp_i(\set{\redo\fcl,\fcl\fcl})
= O(2^{-k})\utp_i(\redo\redo)$.
Recalling \eqref{e:ppl.blu} 
(and making use of symmetry)
gives \eqref{e:BPt_Qubfbb_ratio}. 
Finally, we conclude the proof of the lemma by bounding the difference $\Delta\tq \equiv |\tq_1-\tq_2|$. Recalling the definition of $\RMB_\bx$, we have
	{\setlength{\jot}{0pt}\begin{align*}
	\Delta\tq(\ppl\ppl)
	&\le O(1)
	\Delta\tq(\set{\blu\blu,
		\red\RMB,\RMB\red,\RMB\RMB}),\\
	\Delta\tq(\dCOLS^2\setminus\set{\ppl\ppl})
	&\le O(1)\Delta\tq(
	\set{\blu\fcl,\fcl\blu,\fcl\fcl,
	\fcl\RMB,\RMB\fcl}).\end{align*}}%
We next bound
$\Delta\tq(\blu\blu)$, which is the sum of
$\Delta\tq(\blu_\bx\blu_\by)$
over $\bx,\by\in\set{\zro,\one}$. By symmetry let us take $\bx=\by=\zro$. Since $\tq_i=(\tq_i)^\textup{av}$,
	$\tq_i(\bluz\bluz)
	=\tfrac14\tq_i(\blu\blu)
	+\tfrac12[
	\tq_i(\bluz\bluz)
	-\tq_i(\bluz\bluo)
	]$, so
	\[\Delta\tq(\bluz\bluz)
	\le \tfrac14
	|\tq_1(\blu\blu)-\tq_2(\blu\blu)|
	+\tfrac12
	\sum_{i=1,2}|\tq_i(\bluz\bluz)
	-\tq_i(\bluz\bluo)|\,.\]
Since the $\tq_i$ are normalized to be probability measures,
	\[1-\tq_i(\dCOLS^2\setminus\set{\ppl\ppl})
	=\tq_i(\ppl\ppl)
	=2\tq_i(\red\RMB)
	+2\tq_i(\RMB\red)
	-3 \tq_i(\RMB\RMB)
	+4\tq_i(\blu\blu),\]
from which it follows that
	\[|\tq_1(\blu\blu)-\tq_2(\blu\blu)|
	\lesssim
	|\tq_1(\dCOLS^2\setminus\set{\ppl\ppl})
		-\tq_2(\dCOLS^2\setminus\set{\ppl\ppl})|
	+ \Delta\tq(\set{
	\red\RMB,\RMB\red,\RMB\RMB
	})\,.\]
Combining the above estimates gives
	\[\lone{\Delta\tq}
	\lesssim
	\Delta\tq(\texttt{A}
	)
	+
	\sum_{i=1,2}
	|\tq_i(\bluz\bluz)
	-\tq_i(\bluz\bluo)|,\quad
	\texttt{A}
	\equiv\set{
	\blu\fcl,\fcl\blu,\fcl\fcl,
	\fcl\RMB,\RMB\fcl,
	\red\RMB,\RMB\red,\RMB\RMB}\,.\]
Write $\dZ_i\equiv\lone{\utp_i}$. 
Taking $a\in\set{1,2}$ and $b=2-a$, 
we find $\lone{\Delta\tq}\le e_1+e_2e_3+e_4$ with
	\[e_1\equiv
	\f{\Delta\utp(\texttt{A})}{\dot{Z}_a}\,,\quad
	e_2\equiv
	\f{|\dot{Z}_1-\dot{Z}_2|}{\dot{Z}_a}
	\le \f{\lone{\Delta\utp}}{\dot{Z}_a}\,, \quad
	e_3\equiv
	\f{\utp_b(\texttt{A})}{\dot{Z}_b}\,, \quad
	e_4\equiv
	\sum_{i=1,2}
	\f{|\utp_i(\bluz\bluz)
	-\utp_i(\bluz\bluo)|}{\dot{Z}_i}\,.\]
It follows from 
\eqref{e:BPt_DQuff}, 
\eqref{e:BPt_DQupplf}, 
\eqref{e:BPt_DQurrbb_dif} and \eqref{e:BPt_Qubfbb_ratio},
and taking
$a=\argmax_i\utp_i(\blu\blu)$, that
	\[e_1
	\lesssim
	\lone{\Delta\hatm^\lambda\htp+\Delta\hatm^\lambda\flip\htp}
	(d/2^k)
	\max_{i=1,2}\utp_i(\blu\blu)/
	\dot{Z}_a
	\lesssim k
	\lone{\Delta\hatm^\lambda\htp
	+\Delta\hatm^\lambda\flip\htp}\,.\]
Further, recalling \eqref{e:BPt_DQubb} gives
	\[e_2
	\lesssim k2^k
	\lone{\Delta\hatm^\lambda\htp+\Delta\hatm^\lambda\flip\htp}\,.\]
Combining \eqref{e:BPt_Qubfrf}, \eqref{e:ppl.blu}, 
and \eqref{e:BPt_Qubfbb_ratio} gives
$e_3=O(2^{-k})$.
Finally, \eqref{e:BPt_Quoverlap} gives
	\[e_4 \lesssim
	k2^{k}
	\lone{\hatm^\lambda\htp_i
	-\hatm^\lambda\flip\htp_i}\,.\]
Combining the 
pieces together finishes the proof.\end{proof}

\section{The 1RSB free energy}\label{appx:onersb}

\subsection{Equivalence of recursions}\label{s:drec}

In this section, we relate the coloring recursion \eqref{e:defn.vBP} to the distributional recursion \eqref{e:dist.recur}, and prove the following:

\begin{ppn}\label{p:drec_equiv}
Let $\dq_\lambda$ be the fixed point given by Proposition~\ref{p:contract}\ref{p:contract.first} 
for parameters $\lambda\in[0,1]$ and $T=\infty$. Let $H_\lambda\equiv(\dH_\lambda,\hH_\lambda,\eH_\lambda)\in\simplex$ be the associated 
triple of measures defined by Proposition~\ref{p:first.mmt}. 
We then have the identity
$(\size(H_\lambda),\SIGMA(H_\lambda),\bF(H_\lambda)) = (s_\lambda,
		\Sigma(s_\lambda),\mathfrak{F}(\lambda))$.
\end{ppn}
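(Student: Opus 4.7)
The plan is to show that the coloring recursion $\vBP_{\lambda,\infty}$ and the distributional recursion $\drec_\lambda$ are related by a pushforward, so that the fixed point $\dq_\lambda$ of Proposition~\ref{p:contract} descends to the fixed point $\dmu_\lambda$ of Proposition~\ref{p:drec_fixpoint}, and the Bethe functional for $H_\lambda$ reduces to the expression \eqref{e:drec_Sigma}.

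First I would define the pushforward maps $\Pi:\mathscr{P}(\dOm)\to\mathscr{P}([0,1])$ and $\hat\Pi:\mathscr{P}(\hOm)\to\mathscr{P}([0,1])$ by $\Pi\dq \equiv \dq\circ \dm^{-1}$ and $\hat\Pi\hq \equiv \hq\circ\hm^{-1}$, using the recursive definitions \eqref{e:def.dm.dta}--\eqref{e:def.hm.hta} of $\dm,\hm$ (and the convention $\dm(\star)=\hm(\star)=\tfrac12$). Inspecting the explicit form of the one-sided coloring BP given in Section~\ref{s:contract}, the clause recursion $\hBP_{\lambda,\infty}$ updates the non-normalized weight of $\hta\in\freehOm$ by $\hz(\hta)^\lambda\prod_{i=2}^k\dq(\dta_i)$, and $\hm(\hta)$ is a function solely of $\bigl(\dm(\dta_2),\ldots,\dm(\dta_k)\bigr)$ (via Definition~\ref{d:msg.config} combined with \eqref{e:def.dm.dta}--\eqref{e:def.hm.hta}); the weight $\hz(\hta)^\lambda = \bigl(2-\prod\dm(\dta_i)-\prod(1-\dm(\dta_i))\bigr)^\lambda$ depends only on those same values. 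This is exactly the integrand of $\drecH_\lambda$ in \eqref{e:dist.recur}. A parallel statement holds for the variable side. Thus $\hat\Pi\circ\hBP_{\lambda,\infty} = \drecH_\lambda\circ\Pi$ and $\Pi\circ\dBP_{\lambda,\infty} = \drecD_\lambda\circ\hat\Pi$, so $\Pi$ intertwines $\vBP_{\lambda,\infty}$ with $\drec_\lambda$.

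It follows that $\dmu \equiv \Pi\dq_\lambda$ is a fixed point of $\drec_\lambda$. I would then verify the side conditions of Proposition~\ref{p:drec_fixpoint}: the bound $\dmu((0,1))\le 7/2^k$ follows from the first condition in \eqref{e:contract.first} since $\dm(\dsi)\in\{0,1\}$ exactly when $\dsi\in\{\red,\blu\}$; the symmetry $\dmu(dx)=\dmu(d(1-x))$ follows from $\dq_\lambda=\dq_\lambda^\textup{avg}$ combined with the involution $\dsi\mapsto\dsi\oplus\one$ and the identity $\dm(\dsi\oplus\one)=1-\dm(\dsi)$. By the uniqueness statement in Proposition~\ref{p:drec_fixpoint} we conclude $\dmu = \dmu_\lambda$, and correspondingly $\hat\Pi\hq_\lambda = \hmu_\lambda$.

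Next I would recall the explicit form \eqref{e:H.star} of $H_\lambda=(\dH_\lambda,\hH_\lambda,\eH_\lambda)$ and verify that under the pushforward of $(\dq_\lambda,\hq_\lambda)$ to $(\dmu_\lambda,\hmu_\lambda)$, the normalizers match: $\dot{\ZH}_\star = \dot{\Zcal}_\lambda$, $\hat{\ZH}_\star = \hat{\Zcal}_\lambda$, $\bar{\ZH}_\star = \bar{\Zcal}_\lambda$. This is a direct change of variables, using the Bethe factorization \eqref{e:product.factors.z}: e.g.\ summing $\dPhi(\uzeta)^\lambda\prod\hq_\lambda(\hzeta_i)$ over $\uzeta$ equals $\int(\prod y_i+\prod(1-y_i))^\lambda\prod\hmu_\lambda(dy_i) = \dot{\Zcal}_\lambda$ (the cases $\uzeta\in\{\red,\blu\}$ require a short separate check which uses the convention $\dm(\star)=\tfrac12$). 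Plugging $H_\lambda$ into the definitions \eqref{e:SIGMA}, the logarithmic terms $\langle\log\dPhi,\dH_\lambda\rangle$ etc.\ combine with the entropy terms via the identity $-\ent(\pi)=\langle\pi,\log\pi\rangle$ applied to the product form \eqref{e:H.star}; the cross terms involving $\log\hq_\lambda(\hzeta_i)$ from $\ent(\dH_\lambda)$ telescope against identical terms in $\ent(\eH_\lambda)$ (and symmetrically for the clause side), leaving precisely $\PSI(H_\lambda)=\log\dot{\Zcal}_\lambda+\alpha\log\hat{\Zcal}_\lambda-k\alpha\log\bar{\Zcal}_\lambda = \mathfrak{F}(\lambda)$. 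The same computation applied to $\size(H_\lambda)=\langle\log\dPhi,\dH_\lambda\rangle+(d/k)\langle\log\MAXhPhi,\hH_\lambda\rangle+d\langle\log\ePhi,\eH_\lambda\rangle$, using the auxiliary measures $\dot{w}_\lambda,\hat{w}_\lambda,\bar{w}_\lambda$ of \eqref{e:drec_marginal} as pushforwards of $\dH_\lambda,\hH_\lambda,\eH_\lambda$ under $(\dm,\hm)$, gives $\size(H_\lambda)=s_\lambda$; and then $\SIGMA(H_\lambda)=\PSI(H_\lambda)-\lambda\size(H_\lambda)=\mathfrak{F}(\lambda)-\lambda s_\lambda=\Sigma(s_\lambda)$.

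The main obstacle I anticipate is not any single step but rather the careful bookkeeping of the $\star$ messages and the distinction between $\{\red,\blu\}$ atoms in $\dOm$ versus the points $\{0,1\}$ in $[0,1]$. One has to check that the $\star$-compatibility constraints in Definition~\ref{d:msg.config} do not contribute to the moments once $\dm(\star)=\tfrac12$ is chosen, and that the telescoping of entropy with $\bm{v}$ in the Bethe functional handles the literal-averaged weight $\hPhi$ correctly (this is where the identity \eqref{e:ratio.Zcal.identity} enters through $\MAXhPhi$ and Lemma~\ref{l:dont.need.literals}).
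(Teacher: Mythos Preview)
Your overall strategy matches the paper's: push the coloring fixed point forward to the distributional one, then match the Bethe functionals. But there is a concrete error in how you define the pushforward, and it propagates.

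You set $\Pi\dq=\dq\circ\dm^{-1}$, which sends both $\red_x$ and $\blu_x$ to the atom at $x\in\{0,1\}$. The paper instead defines (just before Lemma~\ref{l:drec_hatmuequiv}) the reweighted measure
\[
\dpi(\dta)=\frac{\dq(\dta)}{1-\dq(\red)}\ \ (\dta\in\dOm_\free),\qquad
\dpi(x)=\frac{\dq(\blu_x)}{1-\dq(\red)}\ \ (x\in\{\zro,\one\}),
\]
and then $\dtu=\dpi\circ\dm^{-1}$; dually, $\hpi$ discards the $\blu$ mass from $\hq$. The $\red/\blu$ distinction is an auxiliary bookkeeping of whether the reverse edge was forcing, and it has no counterpart in the distributional recursion \eqref{e:dist.recur}; it must be quotiented out, not simply pushed forward. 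With your $\Pi$, the intertwining $\hat\Pi\circ\hBP=\drecH_\lambda\circ\Pi$ fails already on the atoms at $0,1$.

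This error then makes your claim ``the normalizers match: $\dot{\ZH}_\star=\dot{\Zcal}_\lambda$'' false. What actually holds (Lemma~\ref{l:drec_PSI}) is
\[
\dot{\Zcal}_\lambda=\frac{\dot{\ZH}_\lambda}{(1-\hq_\lambda(\blu))^{d}},\quad
\hat{\Zcal}_\lambda=\frac{\hat{\ZH}_\lambda}{(1-\dq_\lambda(\red))^{k}},\quad
\bar{\Zcal}_\lambda=\frac{\bar{\ZH}_\lambda}{(1-\dq_\lambda(\red))(1-\hq_\lambda(\blu))},
\]
and it is only in the combination $\log\dot{\Zcal}_\lambda+\alpha\log\hat{\Zcal}_\lambda-d\log\bar{\Zcal}_\lambda$ (using $d=k\alpha$) that these extra factors cancel to give $\mathfrak{F}(\lambda)=\PSI(H_\lambda)$. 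The same reweighting is what makes the intertwining in Lemmas~\ref{l:drec_hatmuequiv}--\ref{l:drec_dotmuequiv} work and yields \eqref{e:drec_hatmuequiv}--\eqref{e:drec_dotmuequiv} with their own normalizing-constant identities.

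Finally, the obstacle you flag is not the real one: $\star$ plays no role here (at $T=\infty$ the fixed point has no $\star$ mass). The genuine bookkeeping issue is precisely the $\red/\blu$ collapse above. Once you fix the pushforward, the rest of your outline (telescoping the entropy terms via \eqref{e:eH.marginal.constraints}, then reading off $\size(H_\lambda)=s_\lambda$ and deducing $\SIGMA(H_\lambda)=\Sigma(s_\lambda)$) goes through as in the paper.
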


In the course of the proof, we will obtain Proposition~\ref{p:drec_fixpoint} as a corollary. Throughout the section we take $T=\infty$ unless explicitly indicated otherwise. We begin with some notations. Recall that $\mathscr{P}(\albet)$ is the space of probability measures on $\albet$.
Given
$\dq\in\mathscr{P}(\dCOLS)$, we define two associated measures
$\dotm^\lambda\dq,(1-\dotm)^\lambda\dq$ on $\dCOLS$ by
	\[(\dotm^\lambda\dq)(\dsi)
	\equiv
	\dotm(\dsi)^\lambda\dq(\dsi),\quad
	((1-\dotm)^\lambda\dq)(\dsi)
	\equiv
	(1-\dotm(\dsi))^\lambda\dq(\dsi),\]
We let $\dpi\equiv\dpi(\dq)$ be the probability measure on $\dMM\setminus\set{\star}$ given by
	\[\dpi(\dta)= \begin{cases}
	[1-\dq(\red)]^{-1}\dq(\dta)
		& \textup{if $\dta \in
			\dMM\setminus\set{\zro,\one,\star}$,} \\
	[1-\dq(\red)]^{-1}\dq(\blu_\bx)
		& \textup{if 
		$\dta=\bx\in\set{\zro,\one}$.}\end{cases}\]
Recall from \S\ref{appx:contract.single} the mappings $\dotm : \dCOLS\to[0,1]$ and $\hatm : \dCOLS\to[0,1]$.
We then denote the pushforward measure
	$\dtu
	\equiv \dtu(\dq)
	 \equiv \dpi \circ \dotm^{-1}$,
so that $\dtu$ belongs to the space
 $\mathscr{P}$
 of discrete probability measures on $[0,1]$.
Analogously, given 
 $\hq\in\mathscr{P}(\hCOLS)$, we define two associated measures
$\hatm^\lambda\hq,(1-\hatm)^\lambda\hq$ on $\hCOLS$.
We let $\hpi\equiv\hpi(\hq)$
be the probability measure on $\hMM\setminus\set{\star}$ given by
	\[\hpi(\hta) \equiv  \begin{cases}
	[1-\hq(\blu)]^{-1}
		\hq(\hta) & 
		\textup{if $\hta\in 
		\hMM\setminus\set{\zro,\one,\star}$,}\\
		[1-\hq(\blu)]^{-1}
		\hq(\red_\bx) & 
		\textup{if $\hta = \bx\in\set{\zro,\one}$,}
		\end{cases}\]
and we then denote $\htu\equiv\htu(\hq)\equiv\hpi\circ\hatm^{-1}$, so that $\htu\in\mathscr{P}$ also. The next two lemmas follow straightforwardly from the above definitions, and we omit their proofs: 

\begin{lem}\label{l:drec_hatmuequiv}
Suppose $\dq\in\mathscr{P}(\dCOLS)$
satisfies $\dq=\dq^\textup{av}$ and
	\beq\label{e:drec_dotsym}
	\dotm^\lambda\dq(\fcl)
	=\dq(\redo) - \dq(\bluo)
	=\dq(\redz) - \dq(\bluz)
	=(1-\dotm)^\lambda\dq(\fcl)\eeq
Then $\hq\equiv\hBP\dq\in\mathscr{P}(\hCOLS)$ must satisfy $\hq=\hq^\textup{av}$ and
	\beq\label{e:drec_hatsym}
	\hatm^\lambda\hq(\fcl)
	= \hq(\bluo)
	= \hq(\bluz) 
	= (1-\hatm)^\lambda\hq(\fcl), \eeq
Let $\hbz\equiv (\hBPu\dq)/(\hBP\dq)$
be the normalizing constant.
Then $\dtu\equiv\dtu(\dq)$
and $\htu\equiv\htu(\hq)$ satisfy
	\beq \htu
	= \drecH_\lambda 
	(\dtu),
	\quad
	\hat{\mathscr{Z}}_\lambda
	(\dtu)
	= \f{\hbz(1-\hq(\blu))}
		{(1-\dq(\red))^{k-1}}.
	\label{e:drec_hatmuequiv}\eeq
\end{lem}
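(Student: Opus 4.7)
The proof breaks into three pieces: the symmetries in \eqref{e:drec_hatsym}, the distributional identity $\htu = \drecH_\lambda(\dtu)$, and the normalization identity. The bridge between the $\dq/\hq$-level recursion $\hBP$ and the distributional recursion $\drecH_\lambda$ is the algebraic identity $\hm(\hsi)^\lambda \hz(\hsi)^\lambda = (1 - \prod_{i=2}^k \dm(\dsi_i))^\lambda$ valid for any $\hsi \in \hOm_\free$ produced from $(\dsi_2,\ldots,\dsi_k)$ via $\hatT$, together with $\hz(\hsi) = 2 - \prod_i \dm(\dsi_i) - \prod_i (1-\dm(\dsi_i))$; these match exactly the integrand and weight in the definition of $\drecH_\lambda$ under the correspondence $x_i \leftrightarrow \dm(\dsi_i)$.

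For the symmetries, $\hq = \hq^\textup{avg}$ is immediate from the averaging property $\hBP\dq = \hBP\dq^\textup{avg}$ established in Section~\ref{s:tree} (a consequence of averaging over clause literals), combined with the hypothesis $\dq = \dq^\textup{avg}$. The equalities $\hq(\blu_\zro) = \hq(\blu_\one)$ and $\hm^\lambda\hq(\free) = (1-\hm)^\lambda\hq(\free)$ then follow by swapping the roles of $\zro$ and $\one$ throughout the $\hBP$ sums, using the identity $\dq(\red_\one)-\dq(\blu_\one) = \dq(\red_\zro)-\dq(\blu_\zro)$ from \eqref{e:drec_dotsym}. The nontrivial identity is $\hm^\lambda\hq(\free) = \hq(\blu_\one)$. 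The plan is to write both sides as $\hbz^{-1}\sum_{\vec{\dsi}} (1-\prod_i \dm(\dsi_i))^\lambda \prod_i \dq(\dsi_i)$ with different index sets --- on the left, tuples $\vec{\dsi}$ with $\hatT(\vec{\dsi}) \in \hOm_\free$ (which precludes $\red$ entries); on the right, tuples $\vec{\dsi}$ compatible with outgoing spin $\blu_\one$ (which may contain $\red_\one$ entries). The hypothesis $\dm^\lambda\dq(\free) = \dq(\red_\one) - \dq(\blu_\one)$ is exactly the cancellation needed so that the extra contributions from $\red_\one$-entries in the $\blu_\one$ sum balance against the free entries on the other side (observing $\dm(\red_\one) = \dm(\blu_\one) = 1$ by convention), matching the two sums term-by-term.

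For the pushforward identity $\htu = \drecH_\lambda(\dtu)$, compute the distribution of $\hm(\hta)$ with $\hta \sim \hpi$ directly. Unpacking $\hpi$ through the $\hBP$ recursion produces a sum over $(k-1)$-tuples $(\dsi_2,\ldots,\dsi_k)$ weighted by $\prod_i \dq(\dsi_i)$; rewriting in terms of $\dpi$ (equivalently $\dtu$) introduces the factor $(1-\dq(\red))^{-(k-1)}$ via the definition of $\dpi$. The map $\vec{\dta} \mapsto \hm(\hatT(\vec{\dta}))$ is precisely the map $\vec{x} \mapsto (1-\prod x_i)/(2-\prod x_i - \prod(1-x_i))$ from the integrand of $\drecH_\lambda$ under $x_i = \dm(\dta_i)$. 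The degenerate cases $\vec{\dta} \equiv \one$ (yielding $\hta = \zro$, hence $\hm=0$, which contributes to $\hpi(\zro)$) and $\vec{\dta} \equiv \zro$ correspond to the atoms at $x=0$ and $x=1$ with integrand value $1$, matching the boundary terms on the $\hBP$ side. The normalization identity then follows by summing unnormalized weights: the $(k-1)$-fold $\dtu$-integral equals $(1-\dq(\red))^{-(k-1)}$ times the analogous sum with respect to $\dq$, while the total unnormalized $\hBPu\dq$-mass on $\hOm \setminus \set{\blu}$ (the support of $\hpi$) equals $\hbz \cdot (1-\hq(\blu))$, giving the claimed formula.

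The main obstacle is the combinatorial accounting in the second paragraph: verifying that the $\vec{\dsi}$-sums for $\hq(\blu_\one)$ and for $\hq(\hOm_\free)$ (weighted by $\hm^\lambda$) match after applying \eqref{e:drec_dotsym}. Once this set-level matching is in hand, the algebraic weight identity propagates directly to the distributional and normalization identities.
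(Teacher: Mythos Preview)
The paper explicitly omits the proof of this lemma, remarking only that it ``follow[s] straightforwardly from the above definitions.'' Your proposal correctly supplies what the paper leaves out: the key step is indeed the identity $\hm^\lambda\hq(\free)=\hq(\blu_\one)$, and your plan to compare index sets is the right one. Concretely, using the set-level accounting already carried out in the proof of Lemma~\ref{l:BP_ptohp} (around \eqref{e:cancel.red.bf}), one finds at $T=\infty$ that
\[
\hm^\lambda(\hBPu\dq)(\free)-(\hBPu\dq)(\blu_\one)
=(k-1)\,\dq(\blu_\one)^{k-2}\Big[\dq(\blu_\zro)+(1-\dm)^\lambda\dq(\free)-\dq(\red_\zro)\Big],
\]
which vanishes exactly under the hypothesis \eqref{e:drec_dotsym}. (Note the relevant piece of the hypothesis here is $\dq(\red_\zro)-\dq(\blu_\zro)=(1-\dm)^\lambda\dq(\free)$; your sketch cites the $\red_\one,\blu_\one$ version, but both are part of the same chain of equalities.) The remaining pushforward and normalization identities then follow by the bookkeeping you describe.
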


\begin{lem}\label{l:drec_dotmuequiv}
Suppose $\hq\in\mathscr{P}(\hCOLS)$
satisfies $\hq=\hq^\textup{av}$ and \eqref{e:drec_hatsym}.
Then $\dq\equiv\hBP\hq\in\mathscr{P}(\dCOLS)$ must satisfy
$\dq=\dq^\textup{av}$ and \eqref{e:drec_dotsym}.
Let $\dbz
\equiv (\dBPu\hq)/(\dBP\hq)$ be the normalizing constant: then 
\beq\label{e:drec_dotmuequiv}
	\dtu
	= \drecD_\lambda(\htu),
	\quad
	\dot{\mathscr{Z}}_\lambda(\htu)
	= \f{\dbz(1-\dq(\red))}{(1-\hq(\blu))^{d-1}}.
\eeq
\end{lem}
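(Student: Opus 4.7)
The plan is to prove Lemma~\ref{l:drec_dotmuequiv} by the same strategy used for its companion Lemma~\ref{l:drec_hatmuequiv}: I would directly unwind the definition of the variable-side BP recursion and identify each piece with a corresponding term of $\drecD_\lambda$. First I would verify that $\dq$ inherits $\dq=\dq^\textup{avg}$. The recursion assigns the same value to $\red_\zro$ and $\red_\one$ (both proportional to $\hq(\ppl_\one)^{d-1}=\hq(\ppl_\zro)^{d-1}$, using $\hq=\hq^\textup{avg}$ and \eqref{e:drec_hatsym}) and similarly for $\blu_\zro,\blu_\one$; the free part is manifestly invariant under the global flip $\vec{\hsi}\mapsto\vec{\hsi}\oplus\vec{\one}$, so the output is flip-symmetric.

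Next I would verify \eqref{e:drec_dotsym}. From the recursion one reads off $\dq(\red_\one)-\dq(\blu_\one)\propto \hq(\blu_\one)^{d-1}$, so the middle equality is immediate. For the equality with $\dm^\lambda\dq(\free)$ the key identity is
\[
\dm(\dsi)^\lambda\dz(\dsi)^\lambda=\prod_{i=2}^{d}\hm(\hsi_i)^\lambda\qquad\text{whenever }\dsi=\dotT(\hsi_2,\ldots,\hsi_d),
\]
which converts
\[
\dm^\lambda\dq(\free)\propto \sum_{\vec{\hsi}}\Ind{\dotT(\vec{\hsi})\in\freedOm}\prod_{i=2}^d \hm(\hsi_i)^\lambda\hq(\hsi_i);
\]
after restricting to $\hsi_i\in\freehOm$ and invoking $\hm^\lambda\hq(\free)=\hq(\blu_\one)$ from \eqref{e:drec_hatsym}, this collapses to $\hq(\blu_\one)^{d-1}$ against the same normalizer as above, giving $\dm^\lambda\dq(\free)=\dq(\red_\one)-\dq(\blu_\one)$. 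The last equality of \eqref{e:drec_dotsym} follows from the flip symmetry already established.

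The heart of the proof is the pushforward identity $\dtu=\drecD_\lambda(\htu)$. For measurable $B\subseteq[0,1]$ I would expand $\dtu(B)=\dpi(\dm^{-1}(B))$ as a sum over $\dta\in\freedOm$ with $\dm(\dta)\in B$ plus the atomic contributions $\dpi(\{x\})=\dq(\blu_x)/(1-\dq(\red))$ at $y\in\{0,1\}$. For $\dta=\dotT(\vec{\hta})\in\freedOm$ the expressions
\[
\dm(\dta)=\frac{\prod_i\hm(\hta_i)}{\prod_i\hm(\hta_i)+\prod_i(1-\hm(\hta_i))},\qquad \dz(\dta)^\lambda=\Bigl(\prod_i\hm(\hta_i)+\prod_i(1-\hm(\hta_i))\Bigr)^\lambda
\]
are exactly the integrand of $\drecD_\lambda$ with $y_i=\hm(\hta_i)$, so that pushing forward $\hpi$ along $\hm$ turns the sum over $\vec{\hta}$ into an integral against $\prod_i\htu(dy_i)$, reproducing $\drecD_\lambda(\htu)$ coordinate-wise. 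The normalizer identity $\dot{\mathscr{Z}}_\lambda(\htu)=\dbz(1-\dq(\red))/(1-\hq(\blu))^{d-1}$ follows by comparing the total mass of $\dBPu\hq$ with $\dot{\mathscr{Z}}_\lambda(\htu)$, tracking that the $(1-\dq(\red))$ factor comes from the definition of $\dpi$ and the $(1-\hq(\blu))^{d-1}$ factor from rewriting each $\hq$ in the recursion as $(1-\hq(\blu))\hpi$.

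The step I expect to be trickiest is the bookkeeping at the boundary $y\in\{0,1\}$. The coloring formalism uses the distinct symbols $\red_x,\blu_x$ for what both collapse to $y\in\{0,1\}$ in the distributional setting (via the convention $\dm(\red_x)\in\{0,1\}$ and the interpretation of $\dpi(\{x\})$), and moreover any free $\hta$ with $\hm(\hta)\in\{0,1\}$ feeds into these boundary atoms through the pushforward. One must therefore carefully verify that every such contribution is accounted for exactly once: the forced-output atoms of $\dpi$ on the one side must match the degenerate $\drecD_\lambda$ terms where at least one $y_i\in\{0,1\}$ forces the denominator behavior on the other side. Once this matching is checked the identity of the two recursions is essentially mechanical.
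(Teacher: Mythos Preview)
The paper omits the proof of this lemma (and of its companion Lemma~\ref{l:drec_hatmuequiv}), stating only that both ``follow straightforwardly from the above definitions.'' Your proposal correctly supplies that direct verification: the key identity $\dm(\dsi)^\lambda\dz(\dsi)^\lambda=\prod_i\hm(\hsi_i)^\lambda$, the use of \eqref{e:drec_hatsym} to collapse $\hm^\lambda\hq(\free)$ to $\hq(\blu_\one)$, and the pushforward along $\hm$ are exactly the ingredients the paper has in mind. Your flagged concern about the boundary atoms is the only genuinely nontrivial step; once you observe that \eqref{e:drec_hatsym} gives $\hq(\ppl_\one)/(1-\hq(\blu))=\int y^\lambda\,\htu(dy)$ and $\hq(\blu_\one)/(1-\hq(\blu))=\int_{(0,1)} y^\lambda\,\htu(dy)$, the $\blu$ contribution $2[\hq(\ppl_\one)^{d-1}-\hq(\blu_\one)^{d-1}]$ on the coloring side matches exactly the $\drecD_\lambda$ terms where at least one $y_i\in\{0,1\}$, and the normalizer identity drops out.
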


\begin{proof}[Proof of Proposition~\ref{p:drec_fixpoint}] This is simply a rephrasing of the proof of Proposition~\ref{p:contract}\ref{p:contract.first}, using Lemma~\ref{l:drec_hatmuequiv} and Lemma~\ref{l:drec_dotmuequiv}.\end{proof}

We next prove Proposition~\ref{p:drec_equiv}. In the remainder of this section, fix $\lambda\in[0,1]$ and $T=\infty$. Let $\dq\equiv\dq_\lambda$ be the fixed point of $\vBP\equiv\vBP_{\lambda,\infty}$ given by Proposition~\ref{p:contract}\ref{p:contract.first}. Let $\hq\equiv\hq_\lambda$ denote the image of $\dq$ under the mapping
$\hBP\equiv\hBP_{\lambda,\infty}$.
Denote the associated normalizing constants 
\[\hbz\equiv\hbz_\lambda\equiv (\hBPu\dq)/(\hBP\dq),\quad
\dbz\equiv\dbz_\lambda\equiv (\dBPu\hq)/(\dBP\hq)\,.\]
Let $H_\lambda\equiv(\dH_\lambda,\hH_\lambda,\eH_\lambda)$
be the triple of associated measures
defined
as in Proposition~\ref{p:first.mmt},
with normalizing constants 
$(\dot{\ZH}_\lambda,\hat{\ZH}_\lambda,\bar{\ZH}_\lambda)$.
Recall from~\eqref{e:drec_Sigma} that $\mathfrak{F}(\lambda)=\log\dot{\Zcal}_\lambda+\alpha\log\hat{\Zcal}_\lambda- d\log\bar{\Zcal}_\lambda$. We now show that it coincides with $\bF(H_\lambda)$:

\begin{lem}\label{l:drec_PSI}
Under the above notations,
$\bF(H_\lambda) = \log \dot{\ZH}_\lambda
		+ \alpha \log\hat{\ZH}_\lambda 
		- d \log \bar{\ZH}_\lambda$, and
\beq\label{e:drec_dZdZhZhZ}
	\bar{\Zcal}_\lambda
	= \f{\bar{\ZH}_\lambda} {(1-\dq_\lambda(\red))(1-\hq_\lambda(\blu))}
	,\quad
	\dot{\Zcal}_\lambda
	= \f{\dot{\ZH}_\lambda} {(1-\hq_\lambda(\blu))^{d}}
	,\quad
	\hat{\Zcal}_\lambda
	= \f{\hat{\ZH}_\lambda} {(1-\dq_\lambda(\red))^{k}}
	.
\eeq
Consequently $\mathfrak{F}(\lambda) = \bF(H_\lambda)$.

\begin{proof} It follows from 
the definition
\eqref{e:SIGMA}
(and recalling 
from Corollary~\ref{c:dont.need.literals}
that 
$\hPhi(\usi)^\lambda = \hF(\usi)^\lambda \hat{v}(\usi)$) that
	\[\bF(H_\lambda) 
	=	\langle \log (\dPhi^\lambda/\dH)
			, \dH_\lambda \rangle
		+ \alpha \langle 
		\log( \hPhi^\lambda/\hH_\lambda)
			,\hH_\lambda \rangle 
		+ d \langle \log( \ePhi^\lambda\eH_\lambda)
			, \eH_\lambda
			\rangle\,.\]
Substituting in Definition~\ref{d:Hstar} and rearranging gives
	\begin{align*}&
	\bF(H_\lambda) 
	-\Big(\log \dot{\ZH}_\lambda
		+ \alpha \log\hat{\ZH}_\lambda 
		- d \log \bar{\ZH}_\lambda\Big) \\
	&\qquad =- \Big\langle \sum_{i=1}^d\log\hq_\lambda(\hsi_i),
	\dH_\lambda \Big\rangle
	-\alpha\Big\langle
		\sum_{i=1}^k\log\dq_\lambda(\dsi_i),\hH_\lambda
		\Big\rangle
	+ d \langle \log[\dq_\lambda(\dsi)\hq_\lambda(\hsi)],
	\eH_\lambda \rangle.
	\end{align*}
This equals zero since $H_\lambda\in\simplex$. The proof of \eqref{e:drec_dZdZhZhZ} is straightforward from the preceding definitions, and is omitted.
	\end{proof}
\end{lem}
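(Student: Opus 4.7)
The plan is to verify the claimed identity $\PSI(H_\lambda)=\log\dot{\ZH}_\lambda+\alpha\log\hat{\ZH}_\lambda-d\log\bar{\ZH}_\lambda$ by direct expansion and then read off \eqref{e:drec_dZdZhZhZ} from Lemmas~\ref{l:drec_hatmuequiv}~and~\ref{l:drec_dotmuequiv}. First I would unpack $\PSI=\SIGMA+\lambda\size$ using the formulas in \eqref{e:SIGMA} and substitute the explicit form \eqref{e:H.star} of $H_\lambda$. Writing, for instance,
\[
-\log\dH_\lambda(\uzeta)
=\log\dot{\ZH}_\lambda
-\lambda\log\dPhi(\uzeta)
-\sum_{i=1}^d\log\hq_\lambda(\dzeta_i),
\]
and the analogous identities for $\hH_\lambda$ and $\eH_\lambda$, the entropy pieces reorganize into $\log\dot{\ZH}_\lambda+\alpha\log\hat{\ZH}_\lambda-d\log\bar{\ZH}_\lambda$ plus the terms $-\lambda\langle\log\dPhi,\dH_\lambda\rangle$, $-(\alpha\lambda)\langle\log\hPhi,\hH_\lambda\rangle$ and $(d\lambda)\langle\log\ePhi,\eH_\lambda\rangle$ (which combine with $\lambda\size(H_\lambda)$, recalling $\hPhi^\lambda=\MAXhPhi^\lambda\hat v$ and $\bm v(H_\lambda)=(d/k)\langle\log\hat v,\hH_\lambda\rangle$), and leftover ``multiplier'' terms that I must show vanish.

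Those leftover terms are exactly
\[
-\Big\langle\sum_{i=1}^d\log\hq_\lambda(\hsi_i),\dH_\lambda\Big\rangle
-\alpha\Big\langle\sum_{i=1}^k\log\dq_\lambda(\dsi_i),\hH_\lambda\Big\rangle
+d\big\langle\log[\dq_\lambda(\dsi)\hq_\lambda(\hsi)],\eH_\lambda\big\rangle.
\]
This is a linear functional of $(\log\dq_\lambda,\log\hq_\lambda)$ against the ``moment vector'' obtained by taking single-edge marginals. The edge-marginal constraints \eqref{e:eH.marginal.constraints}, which assert that $nd\,\eH_\lambda(\sigma)$ equals both $\sum_{\uzeta}n\dH_\lambda(\uzeta)\dot M(\sigma,\uzeta)$ and $\sum_{\uxi}m\hH_\lambda(\uxi)\hat M(\sigma,\uxi)$, let me rewrite the first and second sums as $d\langle\log\hq_\lambda,\eH_\lambda\rangle$ and $d\langle\log\dq_\lambda,\eH_\lambda\rangle$ respectively (using $m/n=\alpha=d/k$ to absorb the $\alpha$), so the three pieces cancel exactly. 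This yields the first assertion.

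For \eqref{e:drec_dZdZhZhZ} the strategy is to apply the fixed-point statements of Lemmas~\ref{l:drec_hatmuequiv}~and~\ref{l:drec_dotmuequiv} to the coloring fixed point $(\dq_\lambda,\hq_\lambda)$. The hypotheses \eqref{e:drec_dotsym}-\eqref{e:drec_hatsym} hold at the fixed point because $\dq_\lambda=\dq_\lambda^\textup{avg}$ and $\hq_\lambda=\hq_\lambda^\textup{avg}$ and by the explicit form of $\dBP$, $\hBP$; then \eqref{e:drec_hatmuequiv}-\eqref{e:drec_dotmuequiv} identify $\hat{\mathscr{Z}}_\lambda(\dtu_\lambda)$ and $\dot{\mathscr{Z}}_\lambda(\htu_\lambda)$ with explicit ratios of $\hbz_\lambda$, $\dbz_\lambda$ and the mass factors $1-\dq_\lambda(\red)$, $1-\hq_\lambda(\blu)$. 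A parallel computation at an edge (summing $\ePhi^\lambda\dbm\hbm+\ePhi^\lambda(1-\dbm)(1-\hbm)$ weighted by $\dpi\otimes\hpi$) identifies $\bar{\Zcal}_\lambda$. Comparing these with the normalizing constants $\dot{\ZH}_\lambda,\hat{\ZH}_\lambda,\bar{\ZH}_\lambda$ computed directly from \eqref{e:H.star} yields \eqref{e:drec_dZdZhZhZ}. Substituting into the expression from the first paragraph converts $\log\dot{\ZH}_\lambda+\alpha\log\hat{\ZH}_\lambda-d\log\bar{\ZH}_\lambda$ into $\log\dot{\Zcal}_\lambda+\alpha\log\hat{\Zcal}_\lambda-d\log\bar{\Zcal}_\lambda=\mathfrak{F}(\lambda)$ (with the powers $d-1,k-1,k,d$ in Lemmas~\ref{l:drec_hatmuequiv}-\ref{l:drec_dotmuequiv} and $\alpha=d/k$ combining cleanly).

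The main obstacle is purely bookkeeping: tracking the exponents of $(1-\dq_\lambda(\red))$ and $(1-\hq_\lambda(\blu))$ so that the $\hbz_\lambda,\dbz_\lambda$ factors (each of which already carries built-in $(1-\dq_\lambda(\red))^{k-1}$ and $(1-\hq_\lambda(\blu))^{d-1}$ respectively, per Lemmas~\ref{l:drec_hatmuequiv}-\ref{l:drec_dotmuequiv}) match exactly what \eqref{e:drec_dZdZhZhZ} predicts; I expect no genuine analytic difficulty, only careful exponent counting.
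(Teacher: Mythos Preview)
Your proposal is correct and follows essentially the same approach as the paper: for the first identity you expand $\PSI(H_\lambda)$ via \eqref{e:SIGMA}, substitute the explicit form \eqref{e:H.star}, and cancel the leftover multiplier terms using the edge-marginal constraints \eqref{e:eH.marginal.constraints}, exactly as the paper does. For \eqref{e:drec_dZdZhZhZ} the paper simply says the proof is ``straightforward from the preceding definitions, and is omitted''; your outline via Lemmas~\ref{l:drec_hatmuequiv}--\ref{l:drec_dotmuequiv} is a reasonable way to fill in that omission, and indeed the bookkeeping of powers of $(1-\dq_\lambda(\red))$ and $(1-\hq_\lambda(\blu))$ is the only content.
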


\begin{proof}[Proof of Proposition~\ref{p:drec_equiv}]
By similar calculations as above,
it is straightforward to verify that
$s_\lambda=\size(H_\lambda)$.
Since by definition
$\mathfrak{F}(\lambda) = \lambda s_\lambda + \Sigma(s_\lambda)$ and 
$\bF(H_\lambda) = \lambda \size(H_\lambda) + \SIGMA(H_\lambda)$, it follows that 
$\Sigma(s_\lambda)=
\SIGMA(H_\lambda)$, concluding the proof.\end{proof}

\begin{proof}[Proof of Proposition~\ref{p:drec_equiv.outline}] Immediate consequence of Proposition~\ref{p:drec_equiv} together with Proposition~\ref{p:contract}\ref{c:BP_limT}.\end{proof}

\subsection{Large-$k$ asymptotics}

We now evaluate the large-$k$ asymptotics of
the free energy, beginning with 
\eqref{e:drec_Sigma}. Let $\dmu_\lambda$ be the probability measure on $[0,1]$ given by Proposition~\ref{p:drec_fixpoint}, and write $\hmu_\lambda\equiv \hat{\mathscr{R}}_\lambda(\dmu_\lambda)$. In what follows it will be useful to denote 
$\dmu_\lambda(\fre)
\equiv\dmu_\lambda((0,1))$, as well as
	\[\psi_\lambda
	\equiv
	\int x^\lambda 
	\Ind{x\in(0,1)} \dmu_\lambda(dx),
	\quad
	\rho_\lambda
	\equiv
	\int y^\lambda
	\Ind{ y\in(0,1)
	\setminus\set{\tfrac12} }
	\hmu_\lambda(dy)\,.\]

\begin{ppn}\label{p:Zcal.estimate}
For $k\ge k_0$, $\albd \le\alpha= (2^{k-1}-c)\log2  \le \aubd$, and $\lambda\in[0,1]$,
	\begin{align}
	\label{e:dot.Zcal.estimate}
	\log\dot{\Zcal}_{\lambda}
	&=\log 2 -(1-2^{\lambda-1})/2^k
	+ d\log \Big( 
	2^{-\lambda}\hmu_\lambda(\tfrac12)
	+\hmu_\lambda(1)
	+\rho_\lambda \Big)
	+ \err,\\
	\label{e:bar.Zcal.estimate}
	-d\log\bar{\Zcal}
	&=- d \log\Big( 2^{-\lambda} 
	\hmu_\lambda(\tfrac12)
	+\hmu(1) + \rho_\lambda\Big)
	-
	(k\log 2)
	[- \dmu_\lambda(\fre) + 2 \psi_\lambda]
	 + \err,\\
	 \label{e:hat.Zcal.estimate}
	\alpha
	\log\hat{\Zcal}
	&=\alpha\log(1-2/2^k)
	+ (k\log2)
	( - \dmu_\lambda(\fre)
		+ 2 \psi_\lambda ) + \err,
	\end{align}
where $\err$ denotes any error bounded by
$k^{O(1)}/4^k$. Altogether this yields
	\[\mathfrak{F}(\lambda)
	=\annFE(\alpha)
	- (1-2^{\lambda-1})/2^k
	+ \err
	= [(2c-1)\log2
		-(1-2^{\lambda-1})]/2^k
	+\err\,.\]
On the other hand
$\lambda s_\lambda = \lambda (\log 2) 2^{\lambda-1} / 2^k + \err$.\end{ppn}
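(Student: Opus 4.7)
The strategy exploits the sharp concentration from Proposition~\ref{p:drec_fixpoint}: $\dmu_\lambda$ places mass at most $7/2^k$ in $(0,1)$ and is symmetric, so writing $p_f \equiv \dmu_\lambda(\free)$ one has $\dmu_\lambda(\{0\}) = \dmu_\lambda(\{1\}) = (1-p_f)/2$. The first step is to compute $\hmu_\lambda = \drecH_\lambda\dmu_\lambda$. In $k-1$ i.i.d.\ samples from $\dmu_\lambda$, the event that all lie in $\{0,1\}$ has probability $(1-p_f)^{k-1} = 1 - O(k/2^k)$; on this event the recursion returns $1$ (all zeros), $0$ (all ones), or exactly $1/2$ (mixed), with respective $\lambda$-weights $1$, $1$, and $2^\lambda$. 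A direct calculation then identifies $\hmu_\lambda(\{1/2\})$, $\hmu_\lambda(\{1\}) = \hmu_\lambda(\{0\})$, and the normalizer $\hat{\mathscr{Z}}(\dmu_\lambda)$ up to $O(k/4^k)$, with the weighted tail $\rho_\lambda$ absorbed into $\err$.

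For $\dot{\Zcal}_\lambda$, view the integral as the expectation of $(\prod y_i + \prod(1-y_i))^\lambda$ under $d$ i.i.d.\ samples $y_i \sim \hmu_\lambda$. The bracket vanishes unless the $y_i$'s avoid $\{0\}$ or avoid $\{1\}$, leaving two symmetric contributions. Partitioning by which of $\{0\},\{1\},\{1/2\},(0,1)\setminus\{1/2\}$ each $y_i$ lies in --- the typical endpoint count is $d\hmu_\lambda(\{1\}) = O(k)$ while the remainder sit at $1/2$ --- one obtains $\dot{\Zcal}_\lambda = 2\bigl(2^{-\lambda}\hmu_\lambda(\tfrac12) + \hmu_\lambda(1) + \rho_\lambda\bigr)^d \bigl(1 + O(k^2/4^k)\bigr)$: the factor $2$ produces the $\log 2$ term, and the $-(1-2^{\lambda-1})/2^k$ correction arises from the $O(1/2^k)$ gap between the ``$\{0\}$ absent'' event and the ``$\{0\}$ and $\{1\}$ both absent'' event. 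Analogous expansions (now in $\dmu_\lambda$, concentrated on $\{0,1\}$) yield $\bar{\Zcal}_\lambda$ and $\hat{\Zcal}_\lambda$; here the linear term $(k\log 2)(-\dmu_\lambda(\free) + 2\psi_\lambda)$ captures the first-order effect of the $(0,1)$-tail of $\dmu_\lambda$ in each factor of the $d$-fold products.

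Summing the three estimates, the $d\log(\cdot)$ terms from \eqref{e:dot.Zcal.estimate} and \eqref{e:bar.Zcal.estimate} cancel, as do the $(k\log 2)(\cdot)$ terms from \eqref{e:bar.Zcal.estimate} and \eqref{e:hat.Zcal.estimate}. Using $\alpha = (2^{k-1}-c)\log 2$ and $\alpha\log(1-2/2^k) = -\log 2 + (2c-1)(\log 2)/2^k + O(k/4^k)$, the $\log 2$ from $\log\dot{\Zcal}_\lambda$ combines with $\alpha\log\hat{\Zcal}_\lambda$ to reproduce $\log\annFE(\alpha)$, and the surviving $-(1-2^{\lambda-1})/2^k$ gives the asserted form of $\mathfrak{F}(\lambda)$. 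For $\lambda s_\lambda$, one applies exactly the same concentration-based expansion to the $\dot w_\lambda, \hat w_\lambda, \bar w_\lambda$ integrals in \eqref{e:drec_marginal}--\eqref{e:drec_size_lambda} (or equivalently differentiates $\mathfrak{F}(\lambda)$ in $\lambda$ via the envelope identity $s_\lambda = \partial_\lambda \mathfrak{F}$), producing $\lambda(\log 2)2^{\lambda-1}/2^k + \err$.

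The main obstacle throughout is bookkeeping: each subleading contribution must be shown to fit inside $\err = O(k^{O(1)}/4^k)$. The mechanism is that any ``rare'' event --- a sample falling in the tail $(0,1)\setminus\{1/2\}$, or an extra endpoint sample beyond the typical $O(k)$ count --- costs a factor of $O(1/2^k)$ from Proposition~\ref{p:drec_fixpoint}, so cross terms requiring two rare samples contribute at most $O(1/4^k)$. Particular care is needed when Taylor-expanding $\log(2^{-\lambda}\hmu_\lambda(\tfrac12) + \hmu_\lambda(1) + \rho_\lambda)^d$ to capture the $\dmu_\lambda(\free)$ correction inside the $\err$ budget, since $d\hmu_\lambda(\{1\}) = O(k)$ is not itself negligible and the binomial-type sum must be carried to second order.
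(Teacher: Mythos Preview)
Your overall strategy---exploiting the concentration of $\dmu_\lambda,\hmu_\lambda$ near their atoms to expand each integral, then observing the cancellations in the sum---is the same as the paper's. But your treatment of $\dot{\Zcal}_\lambda$ is internally inconsistent and misidentifies the source of the key correction $-(1-2^{\lambda-1})/2^k$.

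You assert $\dot{\Zcal}_\lambda = 2A^d\bigl(1+O(k^2/4^k)\bigr)$ with $A = 2^{-\lambda}\hmu_\lambda(\tfrac12)+\hmu_\lambda(1)+\rho_\lambda$; taking logarithms would then give only $\log 2 + d\log A + O(k^2/4^k)$, with no room for an $O(1/2^k)$ correction. Your accompanying explanation---an event-probability ``gap'' between ``$\{0\}$ absent'' and ``$\{0,1\}$ both absent''---cannot produce a $\lambda$-dependent coefficient. The actual mechanism, as in the paper, is to split $\dot{\Zcal}_\lambda = 2\dot{\Zcal}_\lambda(\one) + \dot{\Zcal}_\lambda(\free)$ according to whether the variable is frozen or free. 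With $B = 2^{-\lambda}\hmu_\lambda(\tfrac12)+\rho_\lambda$ one has $2\dot{\Zcal}_\lambda(\one)=2A^d-2B^d$, while the free piece satisfies $\dot{\Zcal}_\lambda(\free)\approx 2^\lambda\bigl(2^{-\lambda}\hmu_\lambda(\tfrac12)\bigr)^d \approx 2^\lambda B^d$: on the free region (all $y_i\in(0,1)$, hence all $\approx \tfrac12$) the integrand $\bigl(\prod y_i+\prod(1-y_i)\bigr)^\lambda$ equals $2^\lambda$ times---not twice---the one-sided weight $(\prod y_i)^\lambda$. Therefore $\dot{\Zcal}_\lambda = 2A^d + (2^\lambda-2)B^d + \text{(lower order)}$, and since $(B/A)^d = \bigl(1-\hmu_\lambda(1)/A\bigr)^d \approx 2^{-k}$ this yields the multiplicative factor $1-(1-2^{\lambda-1})/2^k$. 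The $\lambda$-dependence enters through the nonlinearity $(a+b)^\lambda \ne a^\lambda+b^\lambda$ on the doubly-nonvanishing region, not through inclusion--exclusion of events.
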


\begin{proof}[Proof of Proposition~\ref{p:onersb.fe.analysis}\ref{p:large.k}]
Apply Proposition~\ref{p:Zcal.estimate}:
setting $\mathfrak{F}(\lambda) = \lambda s_\lambda$ gives
	\[\alpha_\lambda=(2^{k-1}-c_\lambda)\log2+\err,\quad
	c_\lambda = \f{1}{2}
		+ \f{ 
		1 - 2^{\lambda-1}(1-\lambda\log2)
		 }{2\log 2}\,.\]
Substituting the special values $\lambda=1$ and $\lambda=0$ gives 
	\[c_\textup{cond}
	=c_1=1,\quad
	c_\textup{sat}
	=c_0
	=\f12 + \f{1}{4\log2},\]
verifying \eqref{e:alpha.sat.asymptotics} and \eqref{e:alpha.cond.asymptotics}.\end{proof}

\begin{proof}[Proof of Proposition~\ref{p:Zcal.estimate}]
Throughout the proof we abbreviate $\epsilon_k$ for a small error term which may change from one occurrence to the next, but is bounded throughout by $k^C/2^k$ for a sufficiently large absolute constant $C$. Note that
	\[\hmu_\lambda(\tfrac12)
	= 1 -2 \cdot \f{2^{1-\lambda}}{2^k}
	+ \epsilon_k, \quad
	\hmu_\lambda(1)
	= \hmu_\lambda(0)
	= \f{2^{1-\lambda}}{2^k} + \epsilon_k,
	\quad
	\hmu_\lambda((0,1)\setminus\set{\tfrac12})
	= \epsilon_k,\]
from which it follows that $\rho_\lambda =\epsilon_k$. Meanwhile, $\psi_\lambda\le\dmu_\lambda(\fre)$, and we will show below that
 	\beq\label{e:dmu.free.estimate}
	\dmu_\lambda(\fre)= \f{2^{\lambda-1}}{2^k}
	+ \epsilon_k\,.\eeq

\smallskip\noindent
\textit{Estimate of $\dot{\Zcal}_{\lambda}$.}
Recall from the definition
\eqref{e:drec_marginal} that
	\[\dot{\Zcal}_{\lambda}
	= \int
	\bigg(\prod_{i=1}^d y_i
	+\prod_{i=1}^d (1-y_i)\bigg)^\lambda 
	\prod_{i=1}^d \hmu_\lambda(dy_i)\,.\]
Let $\dot{\Zcal}_{\lambda}(\fre)$ denote the contribution to $\dot{\Zcal}_{\lambda}$ from free variables, meaning $y_i\in(0,1)$ for all $i$. This can be decomposed further into the contribution $\dot{\Zcal}_{\lambda}(\fre_1)$
from isolated free variables
(meaning $y_i=1/2$ for all $i$)
and the remainder
$\dot{\Zcal}_{\lambda}(\fre_{\ge2})$. We then calculate
	\[\dot{\Zcal}_\lambda(\fre_1)
	= 2^\lambda 
	\Big( 2^{-\lambda} \hmu_\lambda(\tfrac12)\Big)^d\,.\]
This dominates the contribution from non-isolated free variables:
	\begin{align*}
	\dot{\Zcal}_\lambda(\fre_{\ge2})
	&= \sum_{j=1}^d\binom{d}{j}
	\bigg(\int y^\lambda
	\Ind{y\in(0,1)\setminus\set{\tfrac12}}
	\hmu_\lambda(dy)\bigg)^j
	\Big(
	2^{-\lambda} \hmu_\lambda(\tfrac12)
	\Big)^{d-j} \\
	&\le O(1) d
	\hmu_\lambda( 
	(0,1)\setminus\set{\tfrac12})
	\Big(
	2^{-\lambda} \hmu_\lambda(\tfrac12)
	\Big)^d 
	\le \dot{\Zcal}_\lambda(\fre_1)
	k^{O(1)}/2^k. 
	\end{align*}
Next let $\dot{\Zcal}_{\lambda}(\one)$
denote the contribution from variables frozen to $\one$:
	\begin{align*}
	\dot{\Zcal}_{\lambda}(\one)
	&=\Big(
	\int y^\lambda \hmu_\lambda(dy)
	\Big)^d
	-\Big(
	\int y^\lambda
	\Ind{ y\in(0,1)}
	\hmu_\lambda(dy)
	\Big)^d\\
	&= \Big(
	2^{-\lambda}
	\hmu_\lambda(\tfrac12)
	+\hmu_\lambda(1)+
	\rho_\lambda
	\Big)^d
	- 2^{-\lambda}
	\dot{\Zcal}_{\lambda}(\fre_1)
	+ \epsilon_k.
	\end{align*}
The ratio of free to frozen variables is given by
	\[\f{\dot{\Zcal}_{\lambda}(\fre)}
	{2\dot{\Zcal}_{\lambda}(\one)}
	= \f{2^\lambda}{2}
	\bigg(\f{\hmu_\lambda(\tfrac12)}
	{\hmu_\lambda(\tfrac12)
		+ 2^\lambda \hmu_\lambda(1)} \bigg)^d
	+ \epsilon_k
	= \f{2^{\lambda-1}}{2^k} + \epsilon_k. \]
Combining these yields \eqref{e:dot.Zcal.estimate}.
The proof of \eqref{e:dmu.free.estimate} is very similar.

\smallskip\noindent
\textit{Estimate of $\bar{\Zcal}_\lambda$.}
Recall from the definition
\eqref{e:drec_marginal} that
	\[\bar{\Zcal}_\lambda
	= \int \Big(
		xy+(1-x)(1-y)\Big)^\lambda
	\dmu_\lambda(dx)
	\hmu_\lambda(dy)\,.\]
The contribution to $\bar{\Zcal}$ from $x=0$ or $x=1$ is given by
	\[\bar{\Zcal}_\lambda(x=1)
	= \dmu_\lambda(1)
	\Big(
	2^{-\lambda}
	\hmu_\lambda(\tfrac12)
	+\hmu_\lambda(1)+
	\rho_\lambda
	\Big)=\bar{\Zcal}_\lambda(x=0)\,.\]
The contribution from $x\in(0,1)$ and $y=1/2$ is given by
	\[\bar{\Zcal}_\lambda(x\in(0,1),y=1/2)
	=\dmu_\lambda(\fre)
	2^{-\lambda} 
	\hmu_\lambda(\tfrac12)\,.\]
Lastly, the contribution from
$x\in(0,1)$ and $y=1$ is given by
	\[\bar{\Zcal}_\lambda(x\in(0,1),y=1)
	=\hmu_\lambda(1)
	\psi_\lambda,\]
and there is an equal contribution from the case
$x\in(0,1)$ and $y=0$.
The contribution from the case that both $x,y\in(0,1)$ is $\le k^{O(1)}/8^k$.
Combining these estimates gives
	\begin{align*}
	d\log\bar{\Zcal}_\lambda
	&= d \log\Big( 2^{-\lambda} 
	\hmu_\lambda(\tfrac12)
	+ 2 \dmu_\lambda(1) \hmu(1)
	+ 2 \dmu_\lambda(1) \rho_\lambda
	+ 2 \hmu(1)\psi_\lambda \Big)
	+ \epsilon_k \\
	&=d \log\Big( 2^{-\lambda} 
	\hmu_\lambda(\tfrac12)
	+\hmu(1) + \rho_\lambda\Big)
	+d\log\Big(1 +
	\f{\hmu(1)[-\dmu_\lambda(\fre)
	+ 2 \psi_\lambda]
	}{ 2^{-\lambda} 
	\hmu_\lambda(\tfrac12) }
	 \Big)
	+ \epsilon_k.
	\end{align*}
Recalling
$\hmu_\lambda
=\hat{\mathscr{R}}\dmu_\lambda$ gives
	\[d\log\Big(1+\f{
	\hmu(1)[ -\dmu_\lambda(\fre)
	+ 2 \psi_\lambda]
	}{ 2^{-\lambda} 
	\hmu_\lambda(\tfrac12) }\Big)
	=d\dmu_{\lambda}(0)^{k-1}
	(-\dmu_\lambda(\fre) + 2 \psi_\lambda) + \epsilon_k,\]
and \eqref{e:bar.Zcal.estimate} follows.

\smallskip\noindent
\textit{Estimate of $\hat{\Zcal}_\lambda$.}
Recall from the definition
\eqref{e:drec_marginal} that
	\[\hat{\Zcal}_\lambda
	= \int 
	\bigg(1
	-\prod_{i=1}^k x_i
	-\prod_{i=1}^k (1-x_i)\bigg)
	\prod_{i=1}^k\dmu_\lambda(x_i)\,.\]
Writing $\dmu_\lambda(\zro,\fre)\equiv\dmu_\lambda([0,1))$,
the contribution to
$\hat{\Zcal}$
from separating clauses is
	\[1 - 2 \dmu_\lambda(\zro,\fre)^k+\dmu_\lambda(\fre)^k
	= 1 - (2/2^k) (1+k\dmu_\lambda(\fre))
	+ k^{O(1)}/8^k\,.\]
The contribution from clauses which are forcing to some variable that is not forced by any other clause is
$2k \dmu_\lambda(0)^{k-1} \psi_\lambda$.
The contribution from all other clause types is
$\le k^{O(1)}/8^k$, and 
\eqref{e:hat.Zcal.estimate} follows.

\smallskip\noindent
\textit{Estimate of $s_\lambda$.} Recall from \eqref{e:drec_size_lambda}
 the definition 
of $s_\lambda$.
By similar considerations as above, it is straightforward to check that the total contribution from frozen variables,
edges incident to frozen variables,
and separating or forcing clauses is zero. The dominant 
term is the contribution of isolated free variables, and the estimate follows.\end{proof}

\subsection{Properties of the complexity function}\label{s:SIGMA}

We conclude with a few basic properties of the complexity function $\Sigma(s)$, including a proof of Proposition~\ref{p:onersb.fe.analysis}\ref{p:strict.decrease}.

\begin{lem}\label{l:BP_dlambda} For fixed $1\le T < \infty$, the fixed point $\dq_{\lambda,T}$ of Proposition~\ref{p:contract}\ref{p:contract.first} is continuously differentiable as a function of $\lambda \in [0,1]$.

\begin{proof} 	Fix $T<\infty$ and define $f_T[\dq,\lambda] \equiv \vBP_{\lambda,T}[\dq]-\dq$ as the mapping from $\mathscr{P}(\dtcols)\times [0,1]$ to the set of signed measures on $\tcols$. Since function $\dz(\dsi)$ ($\hz(\hsi)$, respectively) can take only finitely many values on $\dtcols$ ($\htcols$, respectively) and therefore must be uniformly bounded away from 0. It is straightforward to check that for any $\lambda \in [0,1]$, 
	\[	f_T[\dq_\star(\lambda,T),\lambda](\dsi) = 0
		,\quad \forall \dsi\in\tcols,\] 
and is uniformly differentiable in a neighborhood of $\set{(\dq_\star(\lambda,T),\lambda):\lambda\in[0,1]}$. 
	
	 For any other $\dq$ in the contraction region \eqref{e:contract.first}, Proposition~\ref{p:BP_contract} guarantees that
	\begin{align*}
	\lone{f_T[\dq,\lambda] - f_T[\dq_\star(\lambda,T),\lambda]}
	&\ge \lone{\dq-\dq_\star(\lambda)} 
	- \lone{\vBP_{\lambda,T}[\dq] 
	- \vBP_{\lambda,T}[\dq_\star(\lambda,T)]}\\
	&\ge (1-O(k^22^{-k})) \lone{\dq-\dq_\star(\lambda,T)}.
	\end{align*}
Therefore the Jacobian matrix 
	\[\Big(\f{\partial f_T(\dsi_i)}
		{\partial\dq(\dsi_j)} \Big)_{\dCOLS\times\dCOLS}
	\]
is invertible at each $(\dq_\star(\lambda,T),\lambda)$. By implicit function theorem, $\dq_\star(\lambda,T)$, as the solution of $f_T[\dq,\lambda] = 0 $, is uniformly differentiable in $\lambda$.\end{proof}\end{lem}

Let us first fix $T<\infty$ and consider
the clusters encoded by $T$-colorings.
We have explicitly defined $\SIGMA(H)$ and $\size(H)$. Let
	$\mathcal{S}(s)
	\equiv
	\sup\set{ \SIGMA(H) : 
	\size(H)=s}$,
with the convention that a supremum over an empty set is $-\infty$. Thus $\mathcal{S}(s)$ is a well-defined function which captures the spirit of the function $\Sigma(s)$ discussed in the introduction. (Note $\mathcal{S}$ implicitly depends on $T$ since the maximum is taken over empirical measures $H$ which are supported on $T$-colorings.) Recall that the physics approach (\cite{MR2317690} and refs.\ therein) takes $\mathcal{S}(s)$ as a conceptual starting point. However, for purposes of explicit calculation the actual starting point is the Legendre dual
	\[\mathfrak{F}(\lambda)
	\equiv (-\mathcal{S})^\star(\lambda)
	= \sup_{s\in\R}
	\Big\{\lambda s + \mathcal{S}(s)\Big\}
	= \sup_H \bF_\lambda(H),\]
where $\bF_\lambda(H)\equiv \lambda\size(H)+\SIGMA(H)$. The replica symmetry breaking heuristic gives an explicit conjecture for $\mathfrak{F}$. One then makes the assumption that $\mathcal{S}(s)$ is \bemph{concave} in $s$: this means it is the same as
	\[ \mathcal{R}(s)
	\equiv
	- \mathfrak{F}^\star(s)
	= -(-\mathcal{S})^{\star\star}(s),\]
so if $\mathcal{S}$ is concave then it can be recovered from $\mathfrak{F}$.

We do not have a proof that $\mathcal{S}(s)$ is concave for all $s$, but we will argue  that this holds on the interval of $s$ corresponding to $\lambda\in[0,1]$. Formally, for $\lambda\in[0,1]$, we proved that $\bF_\lambda(H)$ has a unique maximizer  $H_\star\equiv H_\lambda$. This implies that there is a unique $s_\lambda$ which maximizes $\lambda s +\mathcal{S}(s)$,  given by
	\[s_\lambda=\size(H_\lambda)\,.\]
Recall that $H_\lambda$ and $s_\lambda$ both depend implicitly on $T$. We also have from Lemma~\ref{l:BP_dlambda}  that for any fixed $T<\infty$, $s_\lambda$ is continuous in $\lambda$, so it maps $\lambda\in[0,1]$ onto some compact interval $\mathcal{I} \equiv [s_-,s_+]$.  Define the modified function
	\[\overline{\mathcal{S}}(s)
	\equiv \begin{cases}
	\mathcal{S}(s) & \textup{if $s\in\mathcal{I}$,}\\
	-\infty & \textup{otherwise.}\end{cases}\]
	
\begin{lem}\label{l:cvx} For all $s\in\R$,
	$\overline{\mathcal{S}}(s)
	=-(-\overline{\mathcal{S}})^{\star\star}(s)$. Consequently the function $\overline{\mathcal{S}}$ is concave,
	and $s_\lambda$ is nondecreasing in $\lambda$.
	
\begin{proof} The function $-\mathcal{S}(s)$ has Legendre dual
	\[\overline{\mathfrak{F}}(\lambda)
	= \sup_{s\in\R}\Big\{
		\lambda s
		+ \overline{\mathcal{S}}(s)\Big\}
	= \sup_{s\in \mathcal{I} }\Big\{
		\lambda s
		+ \mathcal{S}(s)\Big\}
	\le \mathfrak{F}(\lambda)\,.\]
For $\lambda\in[0,1]$ it is clear that
$\overline{\mathfrak{F}}(\lambda)
=\mathfrak{F}(\lambda)$. It is straightforward to check that if $\lambda<0$ then
	\[\overline{\mathfrak{F}}(\lambda)
	\le\max_{s\in\mathcal{I}}\lambda s
	+\max_{s\in\mathcal{I}}\mathcal{S}(s)
	= \lambda s_{\min}
	+ \mathcal{S}(s_0),\]
so if $s<s_{\min}$ then
	\[(-\overline{\mathcal{S}})^{\star\star}(s)=(\overline{\mathfrak{F}})^\star(s)
	\ge 
	\sup_{\lambda<0} 
	\Big\{\lambda s
	-\overline{\mathfrak{F}}(\lambda)
	\Big\}
	\ge 
	\sup_{\lambda<0} 
	\Big\{\lambda (s-s_{\min})
	- \mathcal{S}(s_0)
	\Big\}=+\infty\,.\]
A symmetric argument shows that 
$(-\overline{\mathcal{S}})^{\star\star}(s)=+\infty$
also for $s>s_{\max}$.
If $s\in\mathcal{I}$,
we must have $s=s_{\lambda_\circ}$ for some $\lambda_\circ\in[0,1]$, and so
	\[(-\overline{\mathcal{S}})^{\star\star}(s)
	\ge\lambda_\circ s -
	\mathfrak{F}(\lambda_\circ)
	= -\mathcal{S}(s)\,.\]
This proves  $(-\overline{\mathcal{S}})^{\star\star}(s) \ge -\overline{\mathcal{S}}(s)$ for all $s\in\R$. On the other hand, it holds for any function $f$ that  $f^{\star\star} \le f$, so we conclude $(-\overline{\mathcal{S}})^{\star\star}(s) =-\overline{\mathcal{S}}(s)$ for all $s\in\R$. This implies that $\overline{\mathcal{S}}$ is  concave, concluding the proof.\end{proof} \end{lem}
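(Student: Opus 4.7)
My plan is to verify the identity $\overline{\mathcal{S}} = -(-\overline{\mathcal{S}})^{\star\star}$ directly by computing both the single and double Legendre transform of $-\overline{\mathcal{S}}$ and matching them pointwise; concavity of $\overline{\mathcal{S}}$ is then automatic, since the biconjugate of any function is concave (equivalently, $(-\overline{\mathcal{S}})^{\star\star}$ is convex). I would begin by writing $\overline{\mathfrak{F}}(\lambda) \equiv (-\overline{\mathcal{S}})^\star(\lambda) = \sup_{s\in \mathcal{I}}\{\lambda s + \mathcal{S}(s)\}$. For $\lambda\in[0,1]$ the unconstrained maximizer $s_\lambda$ of $\lambda s + \mathcal{S}(s)$ already lies in $\mathcal{I}$, so $\overline{\mathfrak{F}}(\lambda) = \mathfrak{F}(\lambda)$ and the supremum is attained at $s_\lambda$. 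For $\lambda\notin [0,1]$ the sup is over a compact interval, so $\overline{\mathfrak{F}}$ is finite everywhere and in fact is bounded above by an affine function of $\lambda$ on each half-line $\lambda<0$ and $\lambda>1$ (namely $\lambda s_- + \max_{\mathcal{I}}\mathcal{S}$ and $\lambda s_+ + \max_{\mathcal{I}}\mathcal{S}$ respectively).

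Next I would compute $(\overline{\mathfrak{F}})^\star(s) = \sup_\lambda\{\lambda s - \overline{\mathfrak{F}}(\lambda)\}$. If $s<s_-$, pushing $\lambda\to -\infty$ yields $\lambda(s-s_-) - \max_{\mathcal{I}}\mathcal{S} \to +\infty$, and symmetrically for $s>s_+$ via $\lambda\to+\infty$; hence $(\overline{\mathfrak{F}})^\star(s) = +\infty = -\overline{\mathcal{S}}(s)$ outside $\mathcal{I}$. For $s\in \mathcal{I}$, continuity of $\lambda\mapsto s_\lambda$ on $[0,1]$ (Lemma~\ref{l:BP_dlambda} plus the intermediate value theorem) gives some $\lambda_\circ\in[0,1]$ with $s = s_{\lambda_\circ}$. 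Plugging this in,
\[
(\overline{\mathfrak{F}})^\star(s) \;\ge\; \lambda_\circ s - \overline{\mathfrak{F}}(\lambda_\circ) \;=\; \lambda_\circ s_{\lambda_\circ} - \mathfrak{F}(\lambda_\circ) \;=\; -\mathcal{S}(s_{\lambda_\circ}) \;=\; -\overline{\mathcal{S}}(s),
\]
while the opposite inequality $f^{\star\star}\le f$ holds for any function. This completes the pointwise equality $(-\overline{\mathcal{S}})^{\star\star} = -\overline{\mathcal{S}}$, and concavity of $\overline{\mathcal{S}}$ follows.

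Finally, for monotonicity of $\lambda\mapsto s_\lambda$, I would invoke the standard two-line sub-gradient inequality: if $\lambda_1<\lambda_2$, then by optimality at each parameter
\[
\lambda_1 s_{\lambda_1} + \mathcal{S}(s_{\lambda_1}) \ge \lambda_1 s_{\lambda_2} + \mathcal{S}(s_{\lambda_2}),\qquad \lambda_2 s_{\lambda_2} + \mathcal{S}(s_{\lambda_2}) \ge \lambda_2 s_{\lambda_1} + \mathcal{S}(s_{\lambda_1}).
\]
Adding gives $(\lambda_2-\lambda_1)(s_{\lambda_2}-s_{\lambda_1})\ge 0$, so $s_{\lambda_2}\ge s_{\lambda_1}$.

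The main obstacle is really the bookkeeping at the endpoints of $\mathcal{I}$, namely verifying that $(-\overline{\mathcal{S}})^{\star\star}$ blows up fast enough outside $\mathcal{I}$ to agree with the convention $\overline{\mathcal{S}}\equiv -\infty$ there; everything else reduces to the fact that on $[0,1]$ we have already produced the unique maximizer $H_\lambda$ certifying $\mathfrak{F}(\lambda) = \lambda s_\lambda + \mathcal{S}(s_\lambda)$. The construction of $\mathcal{I}$ as a compact interval, which depends on continuity of $\lambda\mapsto s_\lambda$ from Lemma~\ref{l:BP_dlambda}, is exactly what rules out any pathology in this biconjugate calculation.
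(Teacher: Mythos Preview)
Your proposal is correct and follows essentially the same route as the paper: compute $\overline{\mathfrak{F}}=(-\overline{\mathcal{S}})^\star$, show it agrees with $\mathfrak{F}$ on $[0,1]$ and is affinely bounded on each outer half-line, then evaluate the biconjugate case by case (blowing up outside $\mathcal{I}$ via $\lambda\to\pm\infty$, and matching $-\mathcal{S}(s)$ on $\mathcal{I}$ via the certificate $\lambda_\circ$ with $s=s_{\lambda_\circ}$), finishing with $f^{\star\star}\le f$. The one small addition you make is the direct two-inequality argument for monotonicity of $\lambda\mapsto s_\lambda$; the paper instead leaves this implicit as a consequence of concavity of $\overline{\mathcal{S}}$, but your argument is self-contained and in fact does not even require concavity, so it is a slightly cleaner way to handle that clause of the statement.
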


\begin{proof}[Proof of Proposition~\ref{p:onersb.fe.analysis}\ref{p:strict.decrease}] We can obtain $\Sigma(s)$ as the limit of $\overline{\mathcal{S}}(s)$ in the limit $T\to\infty$. It follows from Lemma~\ref{l:cvx} together with Proposition~\ref{p:contract}\ref{c:BP_limT} that it is strictly decreasing in $s$.\end{proof}

\section{Constrained entropy maximization}\label{appx:entropy.max} In this section we review basic calculations for entropy maximization problems under affine constraints.

\subsection{Constraints and continuity} We will optimize a functional over nonnegative measures $\nu$ on a finite space $X$ (with $|X|=s$), subject to some affine constraints $M\nu=b$. We begin by discussing basic continuity properties.  Denote
	\[\Hb(b)\equiv
	\set{\nu\ge0} \cap \set{M\nu=b}
	\subseteq\R^s\,.\]
Let $\Delta \equiv\set{\nu\ge0} \cap \set{\langle\mathbf{1},\nu\rangle=1}$, and let  $\bm{B}$ denote the space of $b\in\R^r$ for which
	\[\emptyset\ne\Hb(b) \subseteq \Delta\,.\]
Then $\bm{B}$ is contained in the image of $\Delta$ under $M$, so $\bm{B}$ is a compact subset of $\R^r$.

\begin{ppn}\label{p:unif.cty} If $\bF$ is any continuous function on $\Delta$ and
	\beq\label{e:general.opt}
	F(b)= \max\set{ \bF(\nu) 
	: \nu\in\Hb(b) },\eeq
then $F$ is (uniformly) continuous over $b\in\bm{B}$. \end{ppn}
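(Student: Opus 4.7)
The plan is to establish uniform continuity by showing that the set-valued map $b \mapsto \HH(b)$ varies Lipschitz-continuously in the Hausdorff metric, and then combining this with uniform continuity of $\bm{F}$ on the compact polytope $\Delta$.

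The first and main step is the following Lipschitz-type bound: there exists a constant $L = L(M)$ such that for every $b, b' \in \bm{B}$ and every $\nu \in \HH(b)$, there exists $\nu' \in \HH(b')$ with $\|\nu - \nu'\| \le L\|b - b'\|$. This is an instance of Hoffman's lemma for polyhedra, applied to the linear system defining $\HH(b) = \{\nu \in \R^s : M\nu = b,\ \nu \ge 0\}$: since the constraint matrix is fixed and only the right-hand side varies, there is a uniform linear-in-$\|b - b'\|$ bound on the distance between the two polyhedra (in either direction), depending only on $M$. I would invoke this classical result directly rather than reproving it.

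Given this, the rest is routine. Since $\bm{F}$ is continuous on the compact polytope $\Delta$, it is uniformly continuous with some modulus $\omega$, i.e.\ $|\bm{F}(\nu) - \bm{F}(\nu')| \le \omega(\|\nu - \nu'\|)$ and $\omega(t) \to 0$ as $t \to 0$. Now fix $b, b' \in \bm{B}$ and let $\nu \in \HH(b)$ achieve $F(b) = \bm{F}(\nu)$ (the maximum is attained because $\HH(b)$ is a compact subset of $\Delta$ and $\bm{F}$ is continuous). By the Lipschitz property above, there exists $\nu' \in \HH(b')$ with $\|\nu - \nu'\| \le L\|b - b'\|$, so
\[
F(b) - F(b') \le \bm{F}(\nu) - \bm{F}(\nu') \le \omega(L\|b - b'\|).
\]
Swapping the roles of $b, b'$ gives the matching lower bound, hence $|F(b) - F(b')| \le \omega(L\|b - b'\|)$ for all $b, b' \in \bm{B}$, which is the desired uniform continuity.

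The hard part (such as it is) is the Lipschitz bound on $b \mapsto \HH(b)$; without it one would have to argue via upper and lower hemicontinuity plus Berge's maximum theorem to get continuity on compact $\bm{B}$ and hence uniform continuity. Either route works, but Hoffman's lemma gives the cleanest quantitative statement and is well-suited to later use, since the hypothesis $\HH(b) \ne \emptyset$ required for $b \in \bm{B}$ ensures the lemma applies without degeneracy.
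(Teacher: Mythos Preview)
Your proof is correct, but the route differs from the paper's. The paper does not invoke Hoffman's lemma; instead it proves from scratch (Lemmas~\ref{l:polytope} and~\ref{l:hausdorff}) that $b\mapsto\HH(b)$ is Hausdorff-continuous, via a convexity argument: given $\nu\in\HH(b)$ and $b'=b+tu$, it writes $b'$ as a convex combination of $b$ and a point on $\pd\bm{B}$, and takes the corresponding convex combination of $\nu$ with some $\nu(b,u)$ at the boundary. This yields $|\nu'-\nu|\le (\diam\Delta)\,|b-b'|/\delta(b)$, where $\delta(b)$ is a positive lower bound (from the polytope structure of $\bm{B}$) on the distance to any face of $\bm{B}$ not containing $b$. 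The resulting modulus depends on $b$, so the paper only obtains \emph{pointwise} continuity of $F$ directly, and then relies on compactness of $\bm{B}$ to upgrade to uniform continuity.

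Your use of Hoffman's lemma is cleaner and more quantitative: it gives a single Lipschitz constant $L=L(M)$ uniform over all $b,b'\in\bm{B}$, so you get uniform continuity of $F$ in one shot with explicit modulus $\omega(L\|b-b'\|)$. The paper's approach is more self-contained (no external citation) but proves a weaker, $b$-dependent estimate. Your mention of Berge's theorem is also apt; that is essentially the abstract version of what the paper carries out by hand.
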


Proposition~\ref{p:unif.cty} is a straightforward consequence of the following two lemmas.

\begin{lem}\label{l:polytope} For $b\in\bm{B}$ and any vector $u$ in the unit sphere $\mathbb{S}^{r-1}$, let
	\[d(b,u)\equiv 
	\inf\set{t\ge0: b+tu\notin\bm{B}}\,.\]
There exists $\delta=\delta(b)>0$ such that
	\[d(b,u) \in\set{0} \cup [\delta,\infty)
	\quad\text{for all }b\in\bm{B}\,.\]

\begin{proof} $\bm{B}$ is a polytope, so it can be expressed as the intersection of finitely many closed half-spaces $H_1,\ldots,H_k$, where $H_i = \set{ x\in\R^r : \langle a_i,x\rangle \le c_i }$. Consequently there is at least one index $1\le i\le k$ such that
	\[d(b,u) 
	= \inf\set{t\ge0:b+tu\notin H_i}\,.\]
It follows that $\langle a_i,u\rangle>0$ and
	\[d(b,u)=\f{c_i-\langle a_i,b\rangle}
		{\langle a_i,u\rangle}
	\ge \f{c_i-\langle a_i,b\rangle}{|a_i|}
	= d( b,\pd H_i )\]
where $d( b,\pd H_i )$ is the distance between $b$ and the boundary of $H_i$. In particular, $d(b,u)>0$ if and only if $\langle a_i,b\rangle<c_i$, which in turn holds if and only if $d( b,\pd H_i )>0$. It follows that for all $u\in\mathbb{S}^{r-1}$ we have $d(b,u)\in\set{0} \cup[\delta,\infty)$ with
	\[\delta=\delta(b)=	\min\set{ d( b,\pd H_i ):
			d( b,\pd H_i )>0 };\]
$\delta$ is a minimum over finitely many positive numbers so it is also positive.\end{proof} \end{lem}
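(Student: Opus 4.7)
My plan rests on exploiting the polytopal structure of $\bm{B}$. Since $\bm{B}$ is the image of the compact polytope $\Delta$ under the linear map $M$, it is itself a compact polytope, and in particular admits a description as a finite intersection of closed half-spaces $H_1,\ldots,H_k$ with $H_i = \{x : \langle a_i, x\rangle \le c_i\}$. I would begin by invoking this structural fact (a standard consequence of Minkowski--Weyl, or just by observing that linear images of polytopes are polytopes).

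Next I would analyze the exit time along a direction $u$ through each individual half-space. Fix $b\in\bm{B}$ and let $\epsilon_i \equiv c_i - \langle a_i,b\rangle \ge 0$ denote the slack of the $i$-th constraint at $b$. A direct calculation shows that the ray $\{b+tu : t\ge 0\}$ exits $H_i$ either never (if $\langle a_i,u\rangle \le 0$) or at time $\epsilon_i/\langle a_i,u\rangle$; and since $|u|=1$, Cauchy--Schwarz gives $\langle a_i,u\rangle \le |a_i|$, so in the latter case the exit time is at least $\epsilon_i/|a_i|$. The overall exit time $d(b,u)$ from $\bm{B}$ is the minimum over $i$ of these per-constraint exit times.

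Finally I would perform a simple dichotomy based on which constraints are tight at $b$. Let $I_0 \equiv \{i : \epsilon_i = 0\}$. If some $i\in I_0$ has $\langle a_i,u\rangle>0$, then $b+tu$ violates $H_i$ for every $t>0$, giving $d(b,u)=0$. Otherwise the minimum is effectively taken over the strict constraints, yielding
\[
d(b,u) \ge \min_{i\notin I_0} \epsilon_i/|a_i| \equiv \delta(b),
\]
a minimum over finitely many strictly positive numbers, hence positive. Setting $\delta(b)$ this way delivers the desired dichotomy.

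There is essentially no hard step here; this is a purely structural argument about polytopes. The only mild subtlety is to ensure that the per-constraint lower bound $\epsilon_i/|a_i|$ is uniform in $u\in\mathbb{S}^{r-1}$, which is immediate from Cauchy--Schwarz. One should also be careful that $\bm{B}$ is taken to be closed (which follows from its description as the image of the compact set $\Delta$ under the continuous linear map $M$), so that boundary points $b$ of the ``tight'' half-spaces genuinely belong to $\bm{B}$ and the exit in those outward directions is instantaneous rather than undefined.
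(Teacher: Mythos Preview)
Your proposal is correct and follows essentially the same approach as the paper: both exploit the half-space description of the polytope $\bm{B}$, compute the per-constraint exit time, bound $\langle a_i,u\rangle\le |a_i|$ via Cauchy--Schwarz, and take $\delta(b)$ to be the minimum of $\epsilon_i/|a_i|=d(b,\partial H_i)$ over the non-tight constraints. Your dichotomy via the tight set $I_0$ is a slightly cleaner packaging of the same argument, but there is no substantive difference.
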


\begin{lem}\label{l:hausdorff}
The set-valued function $\Hb$ is continuous on $\bm{B}$ with respect to the Hausdorff metric $\haus$, that is to say, if $b_n\in\bm{B}$ with $\lim_{n\to\infty} b_n=b$ then
	\[\lim_{n\to\infty}\haus(\Hb(b_n),\Hb(b)) = 0\,.\]

\begin{proof} Recall that the Hausdorff distance between two subsets $X$ and $Y$ of a metric space is
	\[\haus(X,Y) = \inf\set{\epsilon\ge0 
	: X \subseteq Y^\epsilon
	\text{ and } Y \subseteq X^\epsilon},\]
where $X^\epsilon,Y^\epsilon$
are the $\epsilon$-thickenings of $X$ and $Y$.
Any sequence $\nu_n\in\Hb(b_n)$ converges along subsequences to limits $\nu\in\Hb(b)$, so
for all $\epsilon>0$ there exists $n_0(\epsilon)$ large enough that
	\[\Hb(b_n)
	\subseteq (\Hb(b))^\epsilon,
	\quad n\ge n_0(\epsilon)\,.\]
In the other direction, we now argue that if $\nu\in \Hb(b)$ and $b'=b+tu$ for $u\in\mathbb{S}^{r-1}$ and $t$ a small positive number,
 then we can find $\nu'\in\Hb(b')$ which is close to $\nu$. For $u\in\mathbb{S}^{r-1}$ let $d(b,u)$ be as in Lemma~\ref{l:polytope}, and take $\nu(b,u)$ to be any fixed element of $\Hb(b+d(b,u)u)$ (which by definition is nonempty). Since we consider $b'=b+tu$ for $t>0$, we can assume that $d(b,u)$ is positive, hence $\ge\delta(b)$ by Lemma~\ref{l:polytope}. We can express $b'=b+tu$ as the convex combination
 	\[b' = (1-\epsilon)b + \epsilon
		[ b+d(b,u)u ],\quad
		\epsilon = \f{t}{d(b,u)}
		= \f{|b'-b|}{d(b,u)}
		\le \f{|b'-b|}{\delta}\,.\]
Then $\nu' = (1-\epsilon)\nu+ \epsilon \nu(b,u)\in \Hb(b')$, so
	\[|\nu'-\nu|
	= \epsilon | \nu(b,u)-\nu |
	\le \f{(\diam\Delta)|b-b'|}{\delta}\]
This implies $H(b) \subseteq (H(b_n))^\epsilon$
for large enough $n$, and the result follows.\end{proof}
\end{lem}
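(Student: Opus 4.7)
The plan is to verify both inclusions defining the Hausdorff distance, namely $\HH(b_n)\subseteq (\HH(b))^{\epsilon_n}$ and $\HH(b)\subseteq (\HH(b_n))^{\epsilon_n}$ for a sequence $\epsilon_n\downarrow 0$, where $X^{\epsilon}$ denotes the $\epsilon$-thickening. Since $\HH(b)\subseteq\Delta$ and $\Delta$ is compact, the first inclusion is soft: if some subsequence $\nu_{n_k}\in\HH(b_{n_k})$ had $\mathrm{dist}(\nu_{n_k},\HH(b))\ge\epsilon>0$, compactness of $\Delta$ extracts a further subsequence with $\nu_{n_k}\to\nu^\star\in\Delta$, and then continuity of $M$ together with closedness of the non-negative orthant force $M\nu^\star=b$ and $\nu^\star\ge 0$, so $\nu^\star\in\HH(b)$, a contradiction.

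The second inclusion is the main obstacle, and this is where Lemma~\ref{l:polytope} does the real work. Given $\nu\in\HH(b)$, write $b_n-b=t_n u_n$ with $u_n\in\mathbb{S}^{r-1}$ and $t_n=|b_n-b|\to 0^+$. The core difficulty is that $u_n$ need not converge and could approach directions along which $\bm{B}$ is ``thin,'' so any bound on the approximation error must be uniform in $u_n$. I would first note that $\bm{B}$ is convex, as the image of the convex simplex $\Delta$ under the linear map $M$, so the segment from $b$ to $b_n$ lies in $\bm{B}$; hence $d(b,u_n)\ge t_n>0$, and Lemma~\ref{l:polytope} then upgrades this to the uniform lower bound $d(b,u_n)\ge\delta(b)$.

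With this uniform bound in hand, pick any $\nu(b,u_n)\in\HH(b+d(b,u_n)u_n)$, which is nonempty since $\bm{B}$ is closed and $b+d(b,u_n)u_n$ lies on its boundary, and form the convex combination
\[
\nu'_n\equiv\Big(1-\tfrac{t_n}{d(b,u_n)}\Big)\nu+\tfrac{t_n}{d(b,u_n)}\,\nu(b,u_n).
\]
Linearity of $M$ together with the identity $b_n=(1-t_n/d(b,u_n))\,b+(t_n/d(b,u_n))\,(b+d(b,u_n)u_n)$ gives $M\nu'_n=b_n$, while $\nu'_n\ge 0$ as a convex combination of non-negative vectors, so $\nu'_n\in\HH(b_n)$. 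The estimate
\[
|\nu'_n-\nu|\le\tfrac{t_n}{d(b,u_n)}\,\diam(\Delta)\le\tfrac{t_n}{\delta(b)}\,\diam(\Delta)
\]
is uniform in $u_n$ and tends to zero, yielding $\HH(b)\subseteq(\HH(b_n))^{\epsilon_n}$ for $\epsilon_n\to 0$. I expect the only genuine subtlety to be the convexity argument underlying $d(b,u_n)\ge t_n$, since without this step Lemma~\ref{l:polytope} could not be invoked and the second inclusion would collapse in directions along which $b$ sits in the closure of faces it does not meet.
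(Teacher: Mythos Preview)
Your proposal is correct and follows essentially the same route as the paper: a soft compactness argument for $\HH(b_n)\subseteq(\HH(b))^\epsilon$, and for the reverse inclusion the convex-combination trick with an endpoint $\nu(b,u_n)\in\HH(b+d(b,u_n)u_n)$, invoking Lemma~\ref{l:polytope} to get the uniform lower bound $d(b,u_n)\ge\delta(b)$. You have in fact been slightly more careful than the paper at the one delicate point: the paper asserts ``since $t>0$ we can assume $d(b,u)$ is positive'' without justification, whereas you correctly supply the missing step that $\bm B$ is convex (being the linear image of the simplex), so $b_n\in\bm B$ forces $d(b,u_n)\ge t_n>0$.
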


\begin{proof}[Proof of Proposition~\ref{p:unif.cty}]
Take $\nu\in\Hb(b)$ so that $F(b)=\bF(\nu)$. If $b'=b+tu\in\bm{B}$ 
for some $u\in\mathbb{S}^{r-1}$, then Lemma~\ref{l:hausdorff} implies that we can find
$\nu'\in\Hb(b')$ with $|\nu'-\nu| = o_t(1)$,
where $o_t(1)$ indicates a function tending to zero in the limit $t\downarrow 0$, uniformly over $u\in\mathbb{S}^{r-1}$. 
It follows that
$\bF(\nu)
= \bF(\nu')+o_t(1)$,
since $\bF$ is uniformly continuous on $\Delta$ by the Heine--Cantor theorem. Therefore
	\[F(b)
	= \bF(\nu)
	= \bF(\nu') + o_t(1)
	\le F(b') + o_t(1)\,.\]
By the same argument $F(b') \le F(b) + o_t(1)$, concluding the proof.\end{proof}

When solving \eqref{e:general.opt} for a \bemph{fixed} value of $b\in\bm{B}$, it will be convenient to make the following reduction:

\begin{rmk}\label{r:rank.positivity} Suppose $M$ is an $r\times s$ matrix where $s=|X|$. We can assume without loss that $M$ has full rank $r$, since otherwise we can eliminate redundant constraints. We consider only $b\in\bm{B}$, meaning $\emptyset \ne\Hb(b)\subseteq\Delta$. The affine space $\set{M\nu=b}$ has dimension $s-r$; we assume this is positive since otherwise $\Hb(b)$ would be a single point. Then, if $\Hb(b)$ does not contain an interior point of $\set{\nu\ge0}$, it must be that
	\[X_\circ \equiv \set{x\in X
	: \exists \nu\in\set{\nu\ge0}\cap\set{M\nu=b}
	\text{ so that }\nu(x)>0}\]
is a nonempty subset of $X$.  In this case, it is equivalent to solve the optimization problem over measures $\nu_\circ$ on the reduced alphabet $X_\circ$, subject to constraints $M' \nu_\circ=b$ where $M'$ is the submatrix of $M$ formed by the columns indexed by $X_\circ$. Then, by construction, the space
	\[\Hb_\circ(b)
	=\set{\nu_\circ\ge0}
	\cap \set{M' \nu_\circ=b}\]
contains an interior point of  $\set{\nu_\circ\ge0}$. The matrix $M'$ is $r\times s_\circ$ where $s_\circ=|X_\circ|$; and if $M'$ is not of rank $r$ then we can again remove redundant constraints, replacing $M'$ with an $r_\circ \times s_\circ$ submatrix $M_\circ$ which has full rank $r_\circ$. We emphasize that the final matrix $M_\circ$ depends on $b$. In conclusion, when solving \eqref{e:general.opt} for a fixed $b\in\bm{B}$, we may assume with no essential loss of generality that the original matrix $M$ is $r\times s$ with full rank $r$, and that $\Hb(b)=\set{\nu\ge0}\cap\set{M\nu=b}$ contains an interior point of $\set{\nu\ge0}$. It follows that this space has dimension $s-r>0$, and its boundary is contained in the boundary of $\set{\nu\ge0}$. \end{rmk}

\subsection{Entropy maximization}
\label{ss:entropy.max}

We now restrict \eqref{e:general.opt}
to the case of functionals $\bF$ 
which are \bemph{concave} on the domain $\set{\nu\ge0}$. It is straightforward to verify from definitions that the optimal value $F(b)$
is (weakly) concave in $b$. Recall that the convex conjugate of a function $f$ on 
domain $C$ is the function $f^\star$ defined by
	\[ f^\star(x^\star)
	= \sup\set{\langle x^\star,x\rangle
	-f(x) : x\in C}\,.\]
Denote $G(\gamma) \equiv (-\bF)^\star(M^t\gamma)$, and consider the Lagrangian functional
	\[\mathcal{L}(\gamma;b)
	= \sup\set{ \bF(\nu)
		+ \langle \gamma,
		M\nu-b \rangle: \nu\ge0 }
	= -\langle \gamma,b\rangle
	+ G(\gamma)\,.\]
It holds for any $\gamma\in\R^r$ that
$\mathcal{L}(\gamma;b) \ge F(b)$, so
	\beq\label{e:duality.gap}
	F(b)
	\le \inf
	\set{\mathcal{L}(\gamma;b)
	: \gamma\in\R^r}
	= -G^\star(b)\,.\eeq
Now assume $\psi$ is a positive function on $X$, and consider \eqref{e:general.opt} for the special case
	\beq\label{e:entropy.F}
	\bF(\nu)
	= \ent(\nu)
	+ \langle\nu,\log\psi\rangle
	= \sum_{x\in X}
	\nu(x)\log \f{\psi(x)}{\nu(x)}\,.\eeq
We remark that the supremum in
	$(-\ent)^\star(\nu^\star)
	=\sup\set{ \langle\nu^\star,\nu
		\rangle+\ent(\nu): \nu\ge0}$
is uniquely attained by the measure
$\nu^\textup{op}(x)=\exp\{ -1 + \nu^\star(x)\}$, yielding
	\[(-\ent)^\star(\nu^\star)
	=\langle\nu^\textup{op}(\nu^\star)
		,1\rangle
	= \sum_x \exp\{ -1+\nu^\star(x)\}\,.\]
This gives the explicit expression
	\beq\label{e:ent.G}
	G(\gamma)
	=(-\bF)^\star(M^t\gamma)
	=(-\ent)^\star(\log\psi+M^t\gamma)
	=\sum_x \psi(x) \exp\{
		-1 + (M^t\gamma)(x)\}\,.\eeq

\begin{lem}\label{l:g.convex}
Assume $\psi$ is a strictly positive function on a set $X$ of size $s$ and that $M$ is $r\times s$ with rank $r$. Then the function
$G(\gamma)$ of \eqref{e:ent.G} is strictly convex in $\gamma$.

\begin{proof} Let $\nu\equiv\nu(\gamma)$ denote the measure on $X$ defined by
	\[\nu(x)
	= \psi(x)
	\exp\{ -1+(M^t\gamma)(x) \},\]
and write $\langle f(x) \rangle_\nu \equiv \langle f,\nu \rangle$. The Hessian matrix $H \equiv \Hess G(\gamma)$ has entries
	\[H_{i,j}
	= \f{\pd^2\mathcal{L}(\gamma;b)}
	{ \pd \gamma_i\pd \gamma_j }
	=\sum_{x\in X} \nu(x) M_{i,x} M_{j,x}
	=\langle M_{i,x} M_{j,x}\rangle_\nu\,.\]
Let $M_x$ denote the vector-valued function
$(M_{i,x})_{i\le r}$, so
	\[\alpha^t H \alpha
	= \langle (\alpha^t M_x)^2 \rangle_\nu\,.\]
This is zero if and only if $\nu(\set{x\in X:\alpha^t M_x=0})=1$. Since $\nu$ is a positive measure, this can only happen if $\alpha^t M_x=0$ for all $x\in X$, but this contradicts the assumption that $M$ has rank $r$. This proves that $H$ is positive-definite, so $G$ is strictly convex in $\gamma$.\end{proof}
\end{lem}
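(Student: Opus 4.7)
The plan is to compute the Hessian of $G$ explicitly and show it is strictly positive definite, which gives strict convexity. The function $G(\gamma) = \sum_{x\in X} \psi(x) \exp\{-1+(M^t\gamma)(x)\}$ is a positive-weighted sum of exponentials of linear functionals of $\gamma$, so two differentiations will yield a positive-weighted sum of rank-one symmetric matrices. The hypotheses on $\psi$ and $M$ are exactly what is needed to convert non-negativity into strict positive definiteness.

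Concretely, write $a_x \in \R^r$ for the $x$-th column of $M$, so $(M^t\gamma)(x) = \langle a_x,\gamma\rangle$. Setting $c_x(\gamma) \equiv \psi(x)\exp\{-1+\langle a_x,\gamma\rangle\}$, one differentiation gives $\nabla G(\gamma) = \sum_x c_x(\gamma)\, a_x$, and a second differentiation gives
\[
\Hess G(\gamma) = \sum_{x\in X} c_x(\gamma)\, a_x a_x^t.
\]
For any $\alpha\in\R^r$ this yields $\alpha^t (\Hess G(\gamma))\alpha = \sum_x c_x(\gamma) \langle a_x,\alpha\rangle^2 \ge 0$, so at a minimum $G$ is convex.

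To upgrade to strict convexity, observe that since $\psi(x)>0$ for all $x\in X$, the weights $c_x(\gamma)$ are strictly positive at every $\gamma$. Hence $\alpha^t (\Hess G(\gamma))\alpha = 0$ forces $\langle a_x,\alpha\rangle = 0$ for every $x\in X$, which is precisely the condition $M^t\alpha = 0$. The rank assumption on $M$ means $M^t$ has trivial kernel, so $\alpha = 0$. Therefore $\Hess G(\gamma)$ is positive definite at every $\gamma$, and $G$ is strictly convex.

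There is no real obstacle: the argument is a routine verification once the explicit form of $G$ is unpacked. The only point to be careful about is to use that $\psi$ is strictly (not merely weakly) positive, since that is what ensures every $c_x(\gamma)>0$ and thus makes the implication ``$\alpha^t H \alpha = 0 \Rightarrow M^t\alpha = 0$'' valid for \emph{all} columns of $M$ simultaneously — which is then killed by the rank hypothesis.
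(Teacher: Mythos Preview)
Your proof is correct and essentially identical to the paper's: both compute the Hessian as $\sum_x \nu(x)\, a_x a_x^t$ (the paper writes $\nu(x)$ for your $c_x(\gamma)$ and $M_x$ for your $a_x$), observe that the quadratic form $\alpha^t H\alpha = \sum_x \nu(x)\langle a_x,\alpha\rangle^2$ is nonnegative, and use strict positivity of the weights together with the rank hypothesis to force $\alpha=0$. There is no substantive difference in approach.
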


\begin{ppn}\label{p:entropy.max}
Let $b\in\bm{B}$ such that $\Hb(b)=\set{\nu\ge0}\cap\set{M\nu=b}$ contains an interior point of $\set{\nu\ge0}$, and consider the optimization problem \eqref{e:general.opt}
for $\bF$ as in \eqref{e:entropy.F}. For this problem, the inequality \eqref{e:duality.gap} becomes an equality,
	\[F(b) =\inf\set{
	 \mathcal{L}(\gamma;b):
	 \gamma\in\R^r}=-G^\star(b)\,.\]
Further, $\mathcal{L}(\gamma;b)$ is strictly convex in $\gamma$, and its infimum
is achieved by a unique $\gamma=\gamma(b)$.
The optimum value of \eqref{e:general.opt}
is uniquely attained by the measure
$\nu=\nu^\textup{op}(b)$ defined by
	\beq\label{e:general.nu.opt}
	\nu(x)= \psi(x) 
	\exp\{-1 + (M^t\gamma)(x)\}\,.\eeq
For any $\mu\in\Hb(b)$,
	$\bF(\nu)-\bF(\mu)
	=\dkl(\mu|\nu)
	\gtrsim \|\nu-\mu\|^2$.
Finally, in a neighborhood of $b$ in $\bm{B}$, 
$\gamma'(b)$ is defined and $F(b)$ is strictly concave in $b$.

\begin{proof} Under the assumptions, the boundary of the set $\Hb(b)$ is contained in the boundary of $\set{\nu\ge0}$. The entropy $\ent$ has unbounded gradient at this boundary, so for $\bF$ as in \eqref{e:entropy.F}, the optimization problem \eqref{e:general.opt} must be solved by a strictly positive measure $\nu>0$.
Since $\nu>0$, we can differentiate in the direction of any vector $\delta$ with $M\delta=0$ to find
	\[0=\f{d}{dt}
	\bigg[
	\ent(\nu+t\delta)
	+\langle \log\psi,\nu+t\delta\rangle
	\bigg]\bigg|_{t=0}
	= \langle\delta,-1-\log\nu+\log\psi\rangle\,.\]
Recalling Remark~\ref{r:rank.positivity},
we assume without loss that
 $M$ is $r\times s$ with rank $r$, since otherwise 
 we can eliminate redundant constraints. Then, since $M\delta=0$, for any $\gamma\in\R^r$ we have
	\[0 = 
	\langle\delta,\epsilon\rangle
	\quad\text{where }\epsilon=	-1-\log\nu+\log\psi
		+ M^t\gamma\,.\]
We can then solve for $\gamma$ so that $M\epsilon=0$:\footnote{The matrix $MM^t$ is invertible: if $MM^tx=0$ then $M^t x \in\ker M = (\image M^t)^\perp$. On the other hand clearly $M^t x \in \image M^t$, so
$M^t x \in (\image M^t) \cap (\image M^t)^\perp=\set{0}$. Therefore $x\in\ker M^t$, but $M^t$ is injective by assumption.}
	\[\gamma
	= (M M^t)^{-1} M(\log\nu-\log\psi+1)\,.\]
Setting $\delta=\epsilon$ in the above gives $0=\|\epsilon\|^2$, therefore we must have $\epsilon=0$. This proves the existence of $\gamma =\gamma(b) \in\R^r$ such that \eqref{e:general.opt} is optimized by
$\nu=\nu^\textup{op}(b)$, as given by \eqref{e:general.nu.opt}.
The optimal value of \eqref{e:general.opt} is then
	\begin{align*}
	F(b)
	&= \langle 1,\nu^\textup{op}(b)\rangle
	 - \langle M^t\gamma(b),
	 \nu^\textup{op}(b)\rangle\\
	&= \sum_x \psi(x)
		\exp\{ -1+ (M^t\gamma)(x) \}
		-\langle \gamma,b\rangle
		\bigg|_{\gamma=\gamma(b)}
	= \mathcal{L}(\gamma(b),b).
	\end{align*}
In view of \eqref{e:duality.gap}, this proves
that in fact
	\[-G^\star(b)=\inf\set{
	\mathcal{L}(\gamma,b)
	:\gamma\in\R^r}
	= \min
	\set{ \mathcal{L}(\gamma,b)
	 : \gamma\in\R^r}
	= \mathcal{L}(\gamma(b),b)
	= F(b)\]
as claimed. Now recall from Lemma~\ref{l:g.convex} that $G(\gamma)$ is strictly convex, which implies that $\mathcal{L}(\gamma;b)$ is strictly convex in $\gamma$.
Thus $\gamma=\gamma(b)$ is the unique stationary point of $\mathcal{L}(\gamma;b)$.

These conclusions are valid under the assumption that $\Hb(b)$ contains an interior point of $\set{\nu\ge0}$, which is valid in a neighborhood of $b$ in $\bm{B}$.
Throughout this neighborhood, $\gamma(b)$ is defined by the 
stationarity condition 
$b = G'(\gamma)$. Differentiating again with respect to $\gamma$ gives
	\beq\label{e:gamma.prime}
	b'(\gamma)
	= \Hess G(\gamma),\quad
	\gamma'(b) = [\Hess G(\gamma(b))]^{-1}\,.\eeq
We also find (in this neighborhood) that
	\[F'(b)
	= -\gamma(b),\quad
	F''(b)
	= - \gamma'(b)
	= -[\Hess G(\gamma(b))]^{-1},\]
so $F$ is strictly concave.
It remains to prove
that $\bF(\nu)-\bF(\mu)
=\dkl(\mu|\nu)$.
(The estimate $\dkl(\mu|\nu)\gtrsim
\|\mu-\nu\|^2$
is well known and straightforward to verify.)
For any measure $\mu$,
	\[-\dkl(\mu|\nu)
	= \ent(\mu)
	+ \langle \mu,\log(\psi\exp\{-1+M^t\gamma\}) \rangle\,.\]
Applying this with $\mu=\nu$ gives
	\[0=-\dkl(\nu|\nu)
	= \ent(\nu)
	+ \langle \nu,\log(\psi\exp\{-1+M^t\gamma\}) \rangle\,.\]
Subtracting these two equations gives
	\[-\dkl(\mu|\nu)
	= \ent(\mu)-\ent(\nu)
	+\langle \mu-\nu,\log\psi\rangle
	+ \langle \mu-\nu,\log(\exp\{-1+M^t\gamma\}) \rangle\,.\]
If $M\nu=M\nu=b$ then the last term vanishes, giving $-\dkl(\mu|\nu)
= \bF(\mu)-\bF(\nu)$.\end{proof}
\end{ppn}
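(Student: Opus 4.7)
The plan is to exploit the fact that the entropy $\ent$ has gradient blowing up at the boundary of $\set{\nu\ge0}$. Since we assume $\HH(b)$ contains an interior point of $\set{\nu\ge0}$, every boundary point of $\HH(b)$ lies on the boundary of $\set{\nu\ge0}$, so the maximizer $\nu$ of $\bm{F}$ on $\HH(b)$ must be strictly positive. With $\nu>0$, I can differentiate $\bm{F}$ freely in any direction $\delta$ satisfying the linear constraint $M\delta=0$, yielding the first-order condition $\langle \delta, -1-\log\nu+\log\psi \rangle = 0$ for all such $\delta$. Before carrying this out, I would invoke Remark~\ref{r:rank.positivity} to reduce to the case where $M$ has full rank $r$ and $\HH(b)$ has nonempty interior in $\set{\nu\ge0}$, so that the linear algebra below is nondegenerate.

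The stationarity condition above says $-1-\log\nu+\log\psi\in(\ker M)^\perp=\image M^t$, so there is some $\gamma\in\R^r$ with $\log\nu=-1+\log\psi+M^t\gamma$, giving exactly the claimed form $\nu(x)=\psi(x)\exp\set{-1+(M^t\gamma)(x)}$. Plugging this back in and using $M\nu=b$, a brief calculation reduces $\bm{F}(\nu)$ to $-\langle\gamma,b\rangle+G(\gamma)=\mathcal{L}(\gamma;b)$. Combined with the general weak-duality inequality~\eqref{e:duality.gap}, this forces $F(b)=\mathcal{L}(\gamma;b)=\inf_{\gamma'}\mathcal{L}(\gamma';b)=-G^\star(b)$, i.e.\ strong duality. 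Strict convexity of $\mathcal{L}(\cdot;b)=-\langle\cdot,b\rangle+G$ is immediate from Lemma~\ref{l:g.convex}, guaranteeing uniqueness of the minimizing $\gamma=\gamma(b)$ and hence of $\nu^\textup{opt}(b)$.

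For the relative-entropy identity, I would write $-\ent(\mu|\nu)=\ent(\mu)+\langle\mu,\log\nu\rangle$ and substitute $\log\nu=-1+\log\psi+M^t\gamma$, obtaining $-\ent(\mu|\nu)=\ent(\mu)+\langle\mu,\log\psi\rangle+\langle M(\mu),\gamma\rangle-\langle\mu,\mathbf{1}\rangle$. Applying the same formula with $\mu$ replaced by $\nu$ gives $0=\ent(\nu)+\langle\nu,\log\psi\rangle+\langle M\nu,\gamma\rangle-\langle\nu,\mathbf{1}\rangle$. Subtracting, the $\gamma$-terms cancel because $M\mu=M\nu=b$, and the normalization terms cancel because $\HH(b)\subseteq\Delta$ forces $\langle\mu,\mathbf{1}\rangle=\langle\nu,\mathbf{1}\rangle=1$; what remains is $\bm{F}(\nu)-\bm{F}(\mu)=\ent(\mu|\nu)$. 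The quadratic lower bound $\ent(\mu|\nu)\gtrsim\|\nu-\mu\|^2$ then follows from Pinsker's inequality combined with the uniform lower bound $\inf_x\nu^\textup{opt}(b)(x)>0$ available in a fixed neighborhood of the interior optimizer.

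Finally, for local regularity of $\gamma(b)$ and strict concavity of $F$, the stationarity condition implicitly defines $\gamma(b)$ via $b=\nabla G(\gamma)$. By Lemma~\ref{l:g.convex}, $\Hess G$ is positive definite, so the implicit function theorem yields $\gamma'(b)=[\Hess G(\gamma(b))]^{-1}$. Differentiating $F(b)=-\langle\gamma(b),b\rangle+G(\gamma(b))$ and applying the envelope identity $\nabla G(\gamma(b))=b$ kills the terms containing $\gamma'(b)$, leaving $\nabla F(b)=-\gamma(b)$ and therefore $\Hess F(b)=-[\Hess G(\gamma(b))]^{-1}\prec0$. The one genuinely delicate issue I anticipate is the passage from the raw optimization problem to the nondegenerate reduced problem of Remark~\ref{r:rank.positivity}: ensuring that the exponential form, the dual characterization, and the stationary multiplier $\gamma$ all transfer back consistently to the original coordinates requires the careful bookkeeping already set up by Lemma~\ref{l:polytope} and the continuity result Proposition~\ref{p:unif.cty}, which I would use to localize everything to a neighborhood where the active sign constraints do not change.
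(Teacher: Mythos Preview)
Your proposal is correct and follows essentially the same route as the paper's proof: interior optimizer via the entropy gradient blowup, first-order stationarity giving $-1-\log\nu+\log\psi\in(\ker M)^\perp=\image M^t$, hence the exponential form; evaluation of $\bm{F}(\nu)$ as $\mathcal{L}(\gamma;b)$ combined with weak duality \eqref{e:duality.gap} to close the gap; strict convexity of $G$ from Lemma~\ref{l:g.convex} for uniqueness; the relative-entropy identity by substituting $\log\nu$ and subtracting; and the implicit function theorem applied to $b=\nabla G(\gamma)$ for the local regularity and strict concavity of $F$. Your final paragraph worrying about transferring everything back from the reduced problem of Remark~\ref{r:rank.positivity} is more cautious than the paper, which simply works in the reduced coordinates throughout without further comment; that extra care is not needed here since the statement is already phrased for a fixed $b$ with the interior-point hypothesis.
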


\begin{rmk}\label{r:match.notation}
Our main application of Proposition~\ref{p:entropy.max} is for the depth-one tree 
$\onetree$ of Figure~\ref{f:onetree}.
In the notation of the current section,
$X$ is the space of valid $T$-colorings $\usi$ of $\onetree$, and $\psi : X \to (0,\infty)$ is defined by
	\[\psi(\usi)
	= \avwt_{\onetree}(\usi)^\lambda
	=\bigg\{ \dPhi(\usi_{\delta v})
	\prod_{a\in\pd v}
	[\ePhi(\sigma_{av})
	\hPhi(\usi_{\delta a})]
	\bigg\}^\lambda\,.\]
We then wish to solve the optimization problem \eqref{e:general.opt} for
$\bF(\nu)$ as in \eqref{e:entropy.F},
under the constraint that $\nu$ has marginals $\dhtree(\dsi)$ on the boundary edges $\delta\onetree$. This can be expressed as $M\nu=\dot{h}$ where $M$ has rows indexed by the spins $\dsi\in\dCOLS$, columns indexed by valid $T$-colorings 
$\ueta\equiv\ueta_\onetree$ of $\onetree$:
the $(\dsi,\ueta)$ entry of $M$ is given by
	\[M(\dsi,\ueta)
	=|\delta\onetree|^{-1}
	\sum_{e\in\delta\onetree}
	\Ind{\deta_e=\dsi}\,.\]
Recall Remark~\ref{r:rank.positivity}, let $\dCOLS_+=\set{\dsi\in\dCOLS : \dhtree(\dsi)>0}$, and $X_\circ = \set{\ueta\in X:M(\dsi,\ueta)=0\ \forall \dsi\notin\dCOLS}$. Let $M_+$ be the  $\dCOLS_+ \times X_\circ$ submatrix of $M$, and set $\dq(\dsi)=0$ for all $\dsi\notin\dCOLS_+$. Next, in the matrix $M_+$, if the $\deta$ row is a linear combination of other rows, then set $\dq(\deta)=1$ and remove this row. Repeat until we arrive at an $\dCOLS_\circ\times X_\circ$ matrix $M_\circ$ of full rank $r_\circ=|\dCOLS_\circ|$. The original problem reduces to an optimization over $\set{\nu_\circ\ge0}\cap\set{M_\circ\nu_\circ=b_\circ}$ where $b_\circ$ denotes the entries of $b$ indexed by $\dCOLS_\circ$. It follows from Proposition~\ref{p:entropy.max} that the unique maximizer of \eqref{e:general.opt} is the measure $\nu=\nu^\textup{op}(b)$ given by
	\[\nu(\usi)
	= \f{1}{Z} \avwt_{\onetree}(\usi)^\lambda
	= \f{1}{Z}\bigg\{
	\dPhi(\usi_{\delta v})
	\prod_{a\in\pd v}
	[\ePhi(\sigma_{av})
	\hPhi(\usi_{\delta a})]
	\bigg\}^\lambda
	\prod_{e\in\delta\onetree}
	\dq(\sigma_e)\,.\]
Note however that if $M_+$ is not of full rank then $\dq$ need not be unique. 
\end{rmk}

\section{Pairs of intermediate or large overlap}\label{appx:sep} In this section we prove Proposition~\ref{p:sep}, which states that the first moment of $\ZZ=\ZZ_{\lambda,T}$ is dominated by separable colorings provided $0\le\lambda\le1$.

\subsection{Intermediate overlap}We first show that configurations with ``intermediate'' overlap are negligible. This can be done with quite crude estimates, working with \textsc{nae-sat} solutions rather than colorings.

\begin{lem}\label{l:nae.intermediate} Consider random regular \textsc{nae-sat} at clause density $\alpha\ge2^{k-1}\log2 - O(1)$. On $\glit=(V,F,E,\ulit)$, let $Z^2[\rho]$ count the number of pairs  $\ux,\vec{\acute{x}}\in\set{\zro,\one}^V$ of valid \textsc{nae-sat} solutions which agree on $\rho$ fraction of variables. Then
	\[\E Z^2[\rho]
	\le (\E Z)
	\exp\Big\{ n \Big[ H(\rho)
		-(\log2)\pi(\rho)
		+ O(1/2^k) 
		\Big] \Big\},\]
for $\pi(\rho)\equiv1-\rho^k-(1-\rho)^k$.

\begin{proof} For $\vec{u}\in\set{\zro,\one}^V$, let $I^\textsc{nae}(\vec{u};\glit)$ be the indicator that $\vec{u}$ is a valid \textsc{nae-sat} solution on $\glit$. Fix any pair of vectors $\ux,\vec{\acute{x}}\in\set{\zro,\one}^V$ which agree on $\rho$ fraction of variables:
	\[\E Z^2[\rho]
	= 2^n \binom{n}{n\rho}
	\E[ I^\textsc{nae}(\ux;\glit)
	I^\textsc{nae}(\vec{\acute{x}};\glit)]
	= (\E Z)
	\binom{n}{n\rho}
	\E[I^\textsc{nae}(\vec{\acute{x}};\glit)
	\,|\, I^\textsc{nae}(\ux;\glit)=1]\,.\]
Given $\ux,\vec{\acute{x}}$, let  $M\equiv M(\ux,\vec{\acute{x}})$ count the number of clauses $a\in F$ where
	\[|\set{e\in\delta a :
	x_{v(e)}=\acute{x}_{v(e)} }|
	\notin\set{0,k}\,.\]
In each of these clauses, there are $2^k-2$ literal assignments $\ulit_{\delta a}$ which are valid for $\ux$. Out of these, exactly $2^k-4$ are valid also for $\vec{\acute{x}}$. If we define i.i.d.\ binomial random variables $D_a\sim\mathrm{Bin}(k,\rho)$, indexed by $a\in F$, then
	\[\P( M=m\gamma )
	= \P\bigg( \sum_{a\in F}
		\Ind{D_a\notin\set{0,k}}
		\,\bigg|\,
		\sum_{a\in F} D_a = mk\rho \bigg)\,.\]
The $(D_a)_{a\in F}$ sum to $mk\rho$ with probability which is polynomial in $n$, so
	\[\P( M=m\gamma )
	\le n^{O(1)}
	\P(\mathrm{Bin}(m,\pi)=m\gamma)\]
with $\pi=\pi(\rho)$ as in the statement of the lemma. Therefore
	\[\E[I^\textsc{nae}(\vec{\acute{x}};\glit)
	\,|\, I^\textsc{nae}(\ux;\glit)=1]
	\le n^{O(1)} \E\bigg[
	\bigg( \f{2^k-4}{2^k-2} \bigg)^X
	\bigg]\]
for $X\sim\mathrm{Bin}(m,\rho)$. It is easily seen that the above is $\le \exp\{ -m\pi/2^{k-1} \}$, and the claimed bound follows, using the lower bound on $\alpha=m/n$.\end{proof} \end{lem}

\begin{cor}\label{c:int.small} Let $\psi(\rho) = H(\rho) - (\log 2)\pi(\rho)$. Then $\psi(\rho) \le -2k/2^k$ for all $\rho$ in
	\[[\exp\{-k/(\log k)\},
	\tfrac12(1 - k/2^{k/2})] \cup 
	[\tfrac12(1 + k/2^{k/2}),
	1-\exp\{-k/(\log k)\}]\,.\]
Assuming $\alpha=m/n\ge 2^{k-1}\log2-O(1)$, $\E Z^2[\rho] \le \exp\{ -nk/2^k \}$ for all such $\rho$.

\begin{proof} Note that $H( \tfrac{1+\epsilon}{2})
 \le \log2-\epsilon^2/2$.
If $(k\log k)/2^k \le \epsilon
	\le 1/k$, then
	\[\psi( \tfrac{1+\epsilon}{2} )
	\le -\epsilon^2/2 + 
		O(k\epsilon/2^k)
	\le -\epsilon^2/3\,.\]
Both $H( \tfrac{1+\epsilon}{2})$ and $\pi( \tfrac{1+\epsilon}{2})$ are
symmetric about $\epsilon=0$, and 
decreasing on the interval $0\le \epsilon\le 1$. 
It follows that for any $0\le a \le b\le 1$,
	\[\max_{a\le \epsilon\le b}
	\psi( \tfrac{1+\epsilon}{2} )
	\le H(\tfrac{1+a}{2})
		-(\log 2)\pi(\tfrac{1+b}{2})\,.\]
With this in mind, if $1/k \le \epsilon \le 1-5(\log k)/k$,
	\[\psi( \tfrac{1+\epsilon}{2} )
	\le -(2k^2)^{-1} + O(k^{-5/2})
	\le -(4k^2)^{-1}\,.\]
If $1-5(\log k)/k \le \epsilon \le 1-(\log k)^3/k^2$,
	\[\psi( \tfrac{1+\epsilon}{2} )
	\le O(1) (\log k)^2/k
	-\COLS(1) 
	(\log k)^3/k
	\le -\COLS(1) (\log k)^3/k\,.\]
Finally, if $1-(\log k)^3/k^2 \le 
	\epsilon
	\le 1-\exp\{-2k/(\log k)\}$, then
	\[\psi( \tfrac{1+\epsilon}{2} )
	\le O(1) \epsilon k/(\log k) - \COLS(1) \epsilon k
	\le - \COLS(1) \epsilon k\,.\]
Combining these estimates proves the claimed bound on $\psi(\rho)$.
The assertion for $\E[Z^2(\rho)]$ then follows by substituting into Lemma~\ref{l:nae.intermediate}, and noting that $\E Z \le \exp\{O(n/2^k)\}$.\end{proof}
\end{cor}

\subsection{Large overlap} In what follows, we restrict consideration to a small neighborhood $\nbd$ of $H_\star$.   We abbreviate $\usi\in H$ if $H(\graph,\usi)=H$, and $\usi\in\nbd$ if $H(\graph,\usi)\in\nbd$. Recall that we write $\usi'\succcurlyeq\usi$ if the number of free variables in $\ux(\usi')$ upper bounds the number in $\ux(\usi)$. We also write $H' \succcurlyeq H$ if $\usi'\succcurlyeq\usi$ for any (all) $\usi\in H$ and $\usi'\in H'$.  Let $\XX(H,H')$ count the colorings $\usi\in H$ such that
	\[\Big|\Big\{
	\usi'\in H' :
	\SEP(\usi,\usi')
	\le\exp\{-k/(\log k) \}
	\Big\}\Big|
	\ge \omega(n),\]
for $\omega(n) = \exp\{ (\log n)^4 \}$. (Although we will not write it explicitly, it should be understood that  $\XX(H,H')$ depends on $\glit$, since both $\usi,\usi'$ are required to be valid colorings of $\glit$.) Let $\XX(\nbd)$ denote the sum of $\XX(H;H')$ over all pairs $H,H'\in\nbd$ with $H'\succcurlyeq H$. Let $\ZZ(\nbd)$ denote the sum of $\ZZ(H)$ over all $H\in\nbd$. 

\begin{ppn}\label{p:large.overlap}
There exists a small enough positive constant $\epsilon_{\max}(k)$ such that,
if $\nbd$ is the $\epsilon$-neighborhood of $H_\star$ for any $\epsilon \le\epsilon_{\max}$, then
	\[\E\XX(\nbd)\le\E\ZZ(\nbd)
	\exp\{-(\log n)^2\}\,.\]

\begin{proof} By definition, 
	\[\XX(\nbd) = 
	\sum_{H\in\nbd} \XXge(H),\quad
	\XXge(H)\equiv \sum_{H'\in\nbd} \Ind{ H' \succcurlyeq H } 
		\XX(H,H')\,.\]
It suffices to show that for every $H\in\nbd$, $\E\XXge(H)\le \E\ZZ(H)\exp\{ -2 (\log n)^2 \}$. Note that the total number of empirical measures $H'$ is at most $n^c$ for some constant $c(k,T)$. Let $\bm{E}$ denote the set of pairs $(\glit,\usi)$ for which
	\[\Big|\Big\{ \usi'\in \nbd:
		\usi'\succcurlyeq\usi 
		\text{ and }
		\SEP(\usi,\usi')
		\le \exp\{-k/(\log k)\}
		\Big\}\Big|\ge \omega(n)\,.\]
(Again, it is understood that both
$\usi,\usi'$ must be valid colorings of $\glit$.) Then
	\[\XXge(H)
	\le n^{c} \sum_{\usi\in H}
	\Ind{(\glit,\usi)\in \bm{E}}\,.\]
Consequently, in order to show
the required bound on $\E\XXge(H)$,
it suffices to show 
\beq\label{e:sep.planted.nts}
\P^H(\bm{E})\le n^{-c}
\exp\{-2(\log n)^2\},\eeq
where
$\P^H$ is a ``planted'' measure on pairs $(\glit,\usi)$: to sample from $\P^H$,
we start with a set $V$ of $n$ isolated variables each with $d$ incident half-edges,
and a set $F$ of $m$ isolated clauses
each with $k$ incident half-edges.
Assign colorings
of the half-edges,
	\[\usi_\delta\equiv
	(\usi_{\delta V},\usi_{\delta F})
	\quad\text{where }
	\usi_{\delta V}
	\equiv(\usi_{\delta v})_{v\in V}, \
	\usi_{\delta F}
	\equiv(\usi_{\delta a})_{a\in F},\]
which are uniformly random subject to the empirical measure $H$. Then $\usi_\delta$ is the ``planted'' coloring: conditioned on it,
we sample uniformly at random a graph $\glit$
such that $\usi_\delta$ becomes a valid coloring $\usi$ on $\glit$. The resulting pair $(\glit,\usi)$ is a sample from $\P^H$.

Suppose $(\glit,\usi)\in\bm{E}$. The total number of configurations $\usi'$ with $\SEP(\usi,\usi') \le \delta$ is at most $(cn)^{n\delta}$, which is $\ll \omega(n)$ if $\delta \le n^{-1} (\log n)^2$. This implies that there must exist $\usi'\in\nbd$ such that
$\usi'\succcurlyeq\usi$ and
\[n^{-1} (\log n)^2\le \SEP(\usi,\usi') \le \exp\{-k/(\log k)\}\,.\] 
It follows that
	\[S\equiv\set{v\in V:
	x_v(\usi)\in\set{\zro,\one}
	\text{ and }
	x_v(\usi') \ne x_v(\usi)}\]
has size $|S| \equiv ns$ for
$s \in[(2n)^{-1}(\log n)^2,\exp\{-k/(\log k)\}]$.
The set $S$ is \bemph{internally forced} in $\usi$: for every $v\in S$, any clause forcing to $v$ must have another edge connecting to $S$. Formally, let $\texttt{R}_U$
(resp.\ $\texttt{B}_U$) count the number of 
$\set{\red}$-colored
(resp.\ $\set{\blu}$-colored) edges incident to a subset of vertices $U$.
Let $I_S$ be the indicator that all variables in $S$ are forced.
For any fixed $S\subseteq V$,
	\[\P^H(S\text{ internally forced})
	\le \E_{\P^H}\bigg[
		I_S k^{\texttt{R}_S}
		\f{(\texttt{B}_S)_{\texttt{R}_S}}
		{(\texttt{B}_F)_{\texttt{R}_S}}
		\bigg]
	\le \E_{\P^H}[
		I_S
		(4ks)^{\texttt{R}_S}]\,.\]
In the first inequality, the factor $k^{\texttt{R}_S}$ accounts for the choice, for each $S$-incident $\set{\red}$-colored  edge $e$, of another edge $e'$ sharing the same clause. The factor  $(\texttt{B}_S)_{\texttt{R}_S}/(\texttt{B}_F)_{\texttt{R}_S}$ then accounts for the chance that the chosen edge $e'$ (which must have color in $\set{\blu}$) will also be $S$-incident. The second inequality follows by noting that we certainly have  $\texttt{B}_S \le nsd$, and for $H$ near $H_\star$ we also clearly have $\texttt{B}_F \ge nd/4$.

To bound the above, we can work with a slightly different measure $\Q^H$: instead of sampling $\usi_\delta$ subject to $H$, we can simply sample  variable-incident colorings $\usi_{\delta v}$ i.i.d.\ from $\dH$, and clause-incident colorings $\usi_{\delta a}$ i.i.d.\ from $\hH$. On the event $\marg$ that the resulting $\usi_\delta$ has empirical measure $H$, we sample the graph $\glit$ according to $\P^H(\glit|\usi_\delta)$, and otherwise we set $\glit=\emptyset$. Then, since $\Q^H(\marg) \ge n^{-c}$ (adjusting $c$ as needed), we have
	\[\P^H((\glit,\usi))=\Q^H((\glit,\usi) \,|\,\marg)
	\le n^c \, \Q^H((\glit,\usi) ; \marg)\,.\]
Let us abbreviate $\dH(\ell)$ for the probability under $\dH$ that $\usi$ has $\ell$ entries in $\set{\red}$:  then
	\beq\label{e:sep.bound}
	\E_{\P^H}[ I_S (4ks)^{\texttt{R}_S}]
	\le n^c \,
	\E_{\Q^H}[ I_S (4ks)^{\texttt{R}_S};
	\marg]
	\le n^c \,
	\, \bigg(\sum_{\ell\ge1}
	\dH(\ell)
	(4ks)^{\ell}\bigg)^{ns}\,.\eeq
For $H$ sufficiently close to $H_\star$, we will have
	\[\dH(\ell)
	\le 2\dH_\star(\ell)
	\le 2 \binom{d}{\ell}
	\f{\hq_\star(\redo)^\ell
	\hq_\star(\bluo)^{d-\ell}}
	{ [\hq_\star(\redo)
	+\hq_\star(\bluo)]^d
	-\hq_\star(\bluo)^d }\,.\]
It follows that the right-hand side of \eqref{e:sep.bound} is (for some absolute constant $\delta$)
	\[\le n^c \, 2^{ns}
	\bigg(\f{[\hq_\star(\redo) \cdot 4ks
	+\hq_\star(\bluo)]^d
	-\hq_\star(\bluo)^d}
	{[\hq_\star(\redo)
	+\hq_\star(\bluo)]^d
	-\hq_\star(\bluo)^d }\bigg)^{ns}
	\le n^c s^{ns} 
	2^{-\delta k n s} ,\]
where the last inequality uses that $s\le \exp\{ -k/(\log k)\}$. Summing over $S$ gives
	\[\P^H(\bm{E})
	\le\max_{s \ge (2n)^{-1}(\log n)^2}
	n^c 2^{-\delta k n s/2}
	\le \exp\{ -\COLS(1) k(\log n)^2\}\,.\]
This implies \eqref{e:sep.planted.nts}; and the claimed result follows as previously explained.\end{proof}\end{ppn}

\begin{proof}[Proof of Proposition~\ref{p:sep}]
Follows by combining Corollary~\ref{c:int.small} and Proposition~\ref{p:large.overlap}.\end{proof}

\section{Free energy upper bound}\label{appx:ubd}

For a family of spin systems that includes \textsc{nae-sat}, an interpolative calculation gives an upper bound for the free energy on Erd\H{o}s-R\'{e}nyi graphs (\cite{MR1972121,MR2095932}, cf.~\cite{MR1957729}). These bounds build on earlier work \cite{MR2037569} concerning the subadditivity of the free energy in the Sherrington--Kirkpatrick model, which was later generalized to a broad class of models \cite{MR3161470, abbe2010concentration, MR3256814, MR3252921}. (Although these results are closely related, we remark that interpolation gives quantitative bounds whereas subadditivity does not.) To prove the upper bound in Theorem~\ref{t:main}, we establish the analogue of \cite{MR1972121,MR2095932} for random \bemph{regular} graphs. Although the main concern of this paper is the \textsc{nae-sat} model, we give the bound for a more general class of models, which may be of independent interest.

\subsection{Basic interpolation bound}\label{s:ubd_itp} 

Recall $\graph=(V,F,E)$ denotes a $(d,k)$-regular bipartite graph (without edge literals). We consider measures defined on vectors $\ux\in\albet^V$ where $\albet$ is some fixed alphabet of finite size.  Fix also a finite index set $S$. Suppose we have  (random) vectors $b\in\R^S$ and $f\in\mathcal{F}(\albet)^S$, where $\mathcal{F}(\albet)$ denotes the space of functions $\albet\to\R_{\ge0}$. Independently of $b$, let $f_1,\ldots,f_k$ be i.i.d.\ copies of $f$, and define the random function
	\begin{equation}
	\label{e:theta.product}
	\theta(\ux)
	\equiv \sum_{s\in S} b_s
		\prod_{j=1}^k
		f_{s,j}(x_j).
	\end{equation}
Let $h$ be another (random)  element of $\mathcal{F}(\albet)$. Assume there is a constant $\epsilon>0$ so that
	\begin{equation}
	\label{e:bounded.potentials}
	\epsilon \le \set{h,1-\theta}
	 \le \f1\epsilon
	\quad\text{almost surely.}
	\end{equation}
Note we do not require the $b_s$ to be nonnegative; however, we assume that
	\begin{equation}
	\label{e:UB.cond2}
	b^p(\vec{s})
	\equiv
	\E\Big[
	\prod_{\ell=1}^p b_{s_\ell}\Big]\ge0
	\quad
	\text{for any $p\ge1$,
	$\vec{s}
	\equiv(s_1,\ldots,s_p)\in S^p$.}
	\end{equation}
Let $\glit$ denote the graph $\graph$ labelled by a vector $((h_v)_{v\in V},(\theta_a)_{a\in F})$ of independent functions, where the $h_v$ are i.i.d.\ copies of $h$ and the $\theta_a$ are i.i.d.\ copies of $\theta$. For $a\in F$ we abbreviate $\ux_{\delta a}\equiv( x_{v(e)} )_{e\in\delta a}\in\albet^k$, and we consider the (random) Gibbs measure
	\begin{equation}\label{e:gibbs.ubd}
	\mu_{\glit}(\ux)
	\equiv\f{1}{Z(\glit)}
	\prod_{v\in V} h_v(x_v)
	\prod_{a\in F}
	[1-\theta_a( \ux_{\delta a})]\end{equation}
where $Z(\glit)$ is the normalizing constant. Now let $\glit$ be the random $(d,k)$-regular graph on $n$ variables, together with the random function labels. We write $\E_n$ for expectation over the law of $\glit$, and define the (logarithmic) free energy of the model to be
	\[F_n \equiv 
	\f1n\E_n\log Z(\glit)\,.\]

\begin{emp}[positive temperature \textsc{nae-sat}]\label{ex:naesat} Let $\albet=\set{\zro,\one}$, and let $\vec \lit \equiv (\lit_i)_{i\le k}$ be a sequence of i.i.d.\ $\textup{Bernoulli}(1/2)$ random variables. The positive-temperature \textsc{nae-sat} model corresponds to
taking $h\equiv1$ and
	\[\theta(\ux)
	\equiv (1-e^{-\beta})
	\bigg(	 \prod_{i=1}^k \f{\lit_{1}\oplus x_{i}}{2}
		+\prod_{i=1}^k \f{\one \oplus \lit_{i}\oplus x_{i}}{2}
		\bigg)\]
where $\beta\in(0,\infty)$ is the inverse temperature. In this model, each violated clause incurs a multiplicative penalty $e^{-\beta}$.
\end{emp}

\begin{emp}[positive-temperature coloring] Let $\albet=[q]$. The positive-temperature coloring model (i.e., anti-ferromagnetic Potts model) on a $k$-uniform hypergraph  corresponds to $h\equiv1$ and
	\[\theta(\ux) \equiv 
	(1-e^{-\beta})
	\sum_{s=1}^q
	\Ind{x_1=\cdots=x_k=s}\]
where $\beta\in(0,\infty)$ is the inverse temperature. In this model, each  monochromatic (hyper)edge incurs a multiplicative penalty $e^{-\beta}$. \end{emp}

The following theorem is a random regular graph analog of \cite[Thm.~3]{MR2095932}. (We stated our result for a slightly more general class of models than considered in \cite{MR2095932}; however the main result of \cite{MR2095932} extends to these models with only minor modifications.)

\begin{thm}\label{t:free.energy}  Consider a (random) Gibbs measure \eqref{e:gibbs.ubd} satisfying assumptions \eqref{e:theta.product}-\eqref{e:UB.cond2}, and consider the (nonasymptotic) free energy  $F_n \equiv n^{-1}\E_n\log Z(\glit)$. Let
	{\setlength{\jot}{0pt}\begin{align*}
	\mathcal{M}_0
	&\equiv
	\text{space of probability measures over
	$\albet$},\\
	\mathcal{M}_1
	&\equiv
	\text{space of probability measures over
	$\mathcal{M}_0$},\\
	\mathcal{M}_2
	&\equiv
	\text{space of probability measures over
	$\mathcal{M}_1$}.\end{align*}}%
For $\smash{\zeta\in\mathcal{M}_2}$, let $\vec{\eta}\equiv(\eta_{a,j})_{a\ge0,j\ge0}$ be an array of i.i.d.\ samples from $\zeta$.  For each index $(a,j)$ let $\rho_{a,j}$ be a conditionally independent sample from $\eta_{a,j}$, and denote $\vec{\rho}\equiv(\rho_{a,j})_{a\ge0,j\ge0}$. Let $(h\rho)_{a,j}(x) \equiv h_{a,j}(x)\rho_{a,j}(x)$, define random variables
	\begin{align*}
	\bm{u}_a(x)
	&\equiv
	\sum_{\ux\in\albet^k}
		\Ind{x_1=x}
		[1-\theta_a(\ux)]
		\prod_{j=2}^k
		(h\rho)_{a,j}(x_j) ,\\
	\bm{u}_a
	&\equiv \sum_{\ux \in\albet^k}
		[1-\theta_a(\ux)]
		\prod_{j=1}^k
		(h\rho)_{a,j}(x_j).
	\end{align*}
For any $\lambda\in(0,1)$ and any $\smash{\zeta\in\mathcal{M}_2}$,
	\[F_n \le \lambda^{-1} \E\log
		\E'\bigg[
		\Big(
		\sum_{x\in\albet}
		h(x)
		\prod_{a=1}^d \bm{u}_a(x)
		\Big)^\lambda
		\bigg]
	-(k-1)\alpha \lambda^{-1}
		\E\log\E'[(\bm{u}_0)^\lambda]
 +O_{\epsilon}(n^{-1/3}) \]
where $\E'$ denotes the expectation over $\vec\rho$ conditioned on all else,
and $\E$ denotes the overall expectation.
\end{thm}

\begin{rmk}\label{r:dirac} In the statistical physics framework, elements $\rho\in\mathcal{M}_0$ correspond to belief propagation messages for the underlying model, which has state space $\albet$. Elements $\eta\in\mathcal{M}_1$ correspond to belief propagation messages for the 1\textsc{rsb} model (termed ``auxiliary model'' in \cite[Ch.~19]{MR2518205}), which has state space $\mathcal{M}_0$. The informal picture is that the $\eta$ associated to variable $x$ is determined by the geometry of the local neighborhood of $x$ --- that is to say, the randomness of $\zeta$ reflects the randomness in the geometry of the $R$-neighborhood of a uniformly randomly variable in the graph. In random regular graphs this randomness is degenerate --- the $R$-neighborhood of (almost) every vertex is simply a regular tree. It is therefore expected that  the best upper bound in Theorem~\ref{t:free.energy} can be achieved with $\zeta$ a point mass. \end{rmk}

\subsection{Replica symmetric bound} \label{s:ubd_rs}

Along the lines of \cite{MR2095932}, we first prove a weaker ``replica symmetric'' version of Theorem~\ref{t:free.energy}. Afterwards we will apply it to obtain the full result.

\begin{thm}\label{t:free_rs}In the setting of Theorem~\ref{t:free.energy}, define
	\[\Phi_V\equiv \E\log \Big(
		\sum_{x\in\albet}
		h(x)\prod_{a=1}^d \bm{u}_a (x) \Big),\quad
	\Phi_F \equiv
	(k-1)\alpha\E\log(\bm{u}_0)\,.\]Then $F_n \le \Phi_V-\Phi_F-O_{\epsilon}(n^{-1/3})$.\end{thm}

Inspired by the proof of \cite{MR3161470}, we prove Theorem~\ref{t:free_rs} by a combinatorial interpolation between two graphs, $\glit_{-1}$ and $\glit_{nd+1}$. The initial graph $\glit_{-1}$ will have free energy $\Phi_V$, and the final graph $\glit_{nd+1}$  will have free energy $F_n + \Phi_F$. We will show that, up to $O_\epsilon (n^{1/3})$ error, the free energy of $\glit_{-1}$ will be larger than that of $\glit_{nd+1}$, from which the bound of Theorem~\ref{t:free_rs} follows.

To begin, we take $\glit_{-1}$ to be a factor graph consisting of $n$ disjoint trees (Figure~\ref{f:intp_start}). Each tree is rooted at a variable $v$ which joins to $d$ clauses. Each of these clauses then joins to $k-1$ more variables, which form the leaves of the tree. We write $V$ for the root variables, $A$ for the clauses, and $U$ for the leaf variables. Note $|V|=n$, $|A|=nd$, and $|U|=nd(k-1)$.

Independently of all else, take a vector of i.i.d.\ samples $(\eta_u,\rho_u)_{u\in U}$ where $\eta_u$ is a sample from $\zeta$, and $\rho_u$ is a sample from $\eta_u$.\footnote{For the proof of Theorem~\ref{t:free_rs} it is equivalent to sample $\rho$ from $\eta^\textup{av} \equiv \int\eta\,d\zeta$.} As before, the variables and clauses in $\glit_{-1}$ are labelled independently with functions $h_v$ and $\theta_a$.  We now additionally assign to each $u\in U$ the label $(\eta_u,\rho_u)$. Let $(h\rho)_u(x)\equiv h_u(x)\rho_u(x)$. We consider the factor model on $\glit_{-1}$ defined by
	\[\mu_{\glit_{-1}}(\ux)
	= \f{1}{Z(\glit_{-1})}
	\prod_{v\in V} h_v(x_v)
	\prod_{a\in A} 
	[1-\theta_a(\ux_{\delta a})]
	\prod_{u\in U} (h\rho)_u(x_u)\,.\]
We now define the interpolating sequence of graphs $\glit_{-1},\glit_0,\ldots,\glit_{nd+1}$. Fix $m'\equiv 2n^{2/3}$. The construction proceeds by adding and removing clauses. Whenever we remove a clause $a$, the edges $\delta a$ are left behind as $k$ unmatched edges in the remaining graph. Whenever we add a new clause $b$, we label it with a fresh sample $\theta_b$ of $\theta$. The graph $\glit_r$ has clauses $F_r$ which can be partitioned into  $A_{U,r}$ (clauses involving $U$ only), $A_{V,r}$ (clauses involving $V$ only), and $A_r$ (clauses involving both $U$ and $V$). We will define below a certain sequence of events $\COUP_r$. Let $\COUP_{\le r}$ be the event that $\COUP_s$ occurs for all $0\le s \le r$. The event $\COUP_{\le -1}$ occurs vacuously, so $\P(\COUP_{\le -1})=1$. With this notation in mind, the construction goes as follows:
\begin{enumerate}[1.]
\item Starting from $\glit_{-1}$,
choose a uniformly random subset of $m'$ clauses from $F_{-1}=A_{-1}=A$, and remove them to form the new graph $\glit_0$.
\item For $0\le r\le nd-m'-1$, we start from $\glit_r$ and form $\glit_{r+1}$ as follows.
\begin{enumerate}[a.]
\item If $\COUP_{\le r-1}$ succeeds, choose a uniformly random clause $a$ from $A_r$, and remove it to form the new graph $\glit_{r,\circ}$. Let $\delta'U_{r,\circ}$ and $\delta'V_{r,\circ}$ denote the unmatched half-edges incident to $U$ and $V$ respectively in $\glit_{r,\circ}$, and define the event
	\[\COUP_r\equiv\set{
	\min\set{\mcur,\mcvr}\ge k}\,.\]
If instead $\COUP_{\le r-1}$ fails, then $\COUP_{\le r}$ fails by definition.

\item If $\COUP_{\le r}$ fails, let $\glit_{r+1}=\glit_r$. If $\COUP_{\le r}$ succeeds, then with probability $1/k$ take $k$ half-edges from $\delta' V_{r,\circ}$ and join them into a new clause $c$. With the remaining probability $(k-1)/k$ take $k$ half-edges from  $\delta' U_{r,\circ}$ and join them into a new clause $c$. \end{enumerate}
\item For $nd-m' \le r\le nd-1$ let $\glit_{r+1}=\glit_r$. Starting from $\glit_{nd}$, remove all the clauses in $A_{nd}$. Then connect (uniformly at random) all remaining unmatched $V$-incident edges into clauses. Likewise, connect all remaining unmatched $U$-incident edges into clauses.  Denote the resulting graph $\glit_{nd+1}$.
\end{enumerate}
By construction, $\glit_{nd+1}$ consists of two disjoint subgraphs, which are the induced subgraphs $\glit_U,\glit_V$ of $U,V$ respectively. Note that $\glit_V$ is distributed as the random graph $\glit$ of interest, while $\glit_U$ consists of a collection of $nd(k-1)/k = n\alpha(k-1)$ disjoint trees.

\begin{figure}[h!]
\centering
\begin{subfigure}[h!]{.95\textwidth}\centering
	\includegraphics[page=1]{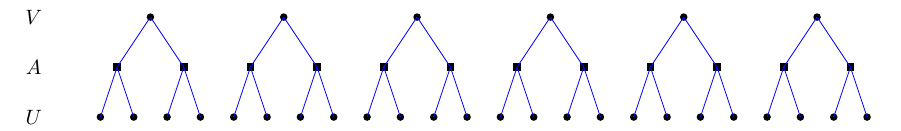}
	\caption{$\glit_{-1}$}
	\label{f:intp_start}
\end{subfigure}\\
\begin{subfigure}[h!]{.95\textwidth}\centering
	\includegraphics[page=2]{interpolation}
	\caption{$\glit_r$ with $m'=1,r=3$}
\end{subfigure}
\begin{subfigure}[h!]{.95\textwidth}\centering
	\includegraphics[page = 3]{interpolation}
	\caption{$\glit_{nd+1}$}
	\label{f:intp_end}
\end{subfigure}
\caption{Interpolation with $d=2,k=3$, $n = 6$.}
\label{f:intp}
\end{figure}

\begin{lem}\label{l:UB.intp}
	 Under the construction above,
\begin{equation}\label{e:UB.intp}
	\E\log Z(\glit_0)\ge\E \log Z(\glit_{nd})-O_\epsilon (n^{1/3})\,,\end{equation}
where the expectation $\E$ is over
the sequence of random graphs
$(\glit_r)_{-1\le r\le nd+1}$.

\begin{proof} Let $\mathscr{F}_{r,\circ}$ be the $\sigma$-field generated by $\glit_{r,\circ}$, and write $\E_{r,\circ}$ for expectation conditioned on $\mathscr{F}_{r,\circ}$. One can rewrite \eqref{e:UB.intp} as 
	\[	\E\log \f{Z(\glit_0)}{Z(\glit_{nd})}
		= \sum_{r=0}^{nd-1} \E \Delta_r,\quad
		\Delta_r\equiv 
		 \E_{r,\circ}
			\log \f{Z(\glit_r)}
				{Z(\glit_{r,\circ})}
			-\E_{r,\circ}\log \f{Z(\glit_{r+1})}
				{Z(\glit_{r,\circ})}\,.\]
In particular, $\Delta_r=0$ if the coupling fails. Therefore it suffices  to show that $\Delta_r$ is positive conditioned on $\COUP_{\le r}$.\footnote{The event $\COUP_{\le r}$ is measurable with respect to $\mathscr{F}_{r,\circ}$, since $\mcvr,\mcur$ would remain less than $k$ if the coupling fails at an earlier iteration.} First we compare $\glit_r$ and $\glit_{r,\circ}$. Conditioned on $\mathscr{F}_{r,\circ}$,  we know $\glit_{r,\circ}$. From $\glit_{r,\circ}$ we can obtain $\glit_r$ by adding a single clause $a\equiv a_r$, together with a random label $\theta_a$ which is a fresh copy of $\theta$. To choose the unmatched edges  $\delta a=(e_1,\ldots,e_k)$  which are combined into the clause $a$, we take $e_1$ uniformly at random from $\delta'V_{r,\circ}$,  then take  $\set{e_2,\ldots,e_k}$  a uniformly random subset of $\delta'U_{r,\circ}$. Let $\mu_{r,\circ}$ be the Gibbs measure on $\glit_{r,\circ}$ (ignoring unmatched half-edges). Let $\uxx\equiv(\ux,\ux^1,\ux^2,\ldots)$ be an infinite sequence of i.i.d.\ samples from $\mu_{r,\circ}$, and write $\langle\cdot\rangle_{r,\circ}$ for the expectation with respect to their joint law. Then
	\[\E_{r,\circ}
	\log \f{Z(\glit_r)}{Z(\glit_{r,\circ})}
	= \E_{r,\circ} \log
		(1- \langle
	\theta(\ux_{\delta a})
	 \rangle_{r,\circ})
	= \sum_{p\ge 1} \f1p
	\mathscr{A}_p,\quad
	 \mathscr{A}_p
	 \equiv
	 \E_{r,\circ}\bigg[\Big \langle
	\prod_{\ell=1}^p \theta(\ux^\ell_{\delta a})
	\Big\rangle_{r,\circ}
	\bigg]\,.\]
We have $\E_{r,\circ}=\E_a\E_\theta$ where $\E_a$ is expectation over the choice of $\delta a$, and $\E_\theta$ is expectation over the choice of $\theta$. Under $\E_a$, the edges $(e_2,\ldots,e_k)$ are weakly dependent, since they are required to be distinct elements of $\delta'U_{r,\circ}$. We can consider instead sampling $e_2,\ldots,e_k$ uniformly \bemph{with replacement} from $\delta'U_{r,\circ}$, so that $e_1,\ldots,e_k$ are independent conditional on $\mathscr{F}_{r,\circ}$; let $\E_{a,\textup{ind}}$ denote expectation with respect to this choice of $\delta a$. Under $\E_{a,\textup{ind}}$ the chance of a collision $e_i=e_j$ ($i\le j$) is  $O(k^2/|\delta'U_{r,\circ}|)$. Recalling $1-\theta\ge\epsilon$ almost surely, we have
	\[\mathscr{A}_{p,\textup{ind}}
	\equiv
	\E_{a,\textup{ind}}
	\E_\theta
	\bigg[
 \Big \langle
	\prod_{\ell=1}^p \theta(\ux^\ell_{\delta a})
	\Big\rangle_{r,\circ}
	\bigg]
	= \mathscr{A}_p
	+ O(1) (1-\epsilon)^p 
	\min\bigg\{ \f{k^2}{|\delta'U_{r,\circ}|},
	1 \bigg\}\,.\]
Recall from \eqref{e:theta.product} the product form of $\theta$, and let $\E_f$ denote expectation over the law of $f\equiv (f_s)_{s\in S}$. Then,
with $b^p(\vec{s})$ as defined in \eqref{e:UB.cond2}, we have
	\begin{align*} \mathscr{A}_{p,\textup{ind}}
	&=\sum_{\vec{s}\in S^p}
	b^p(\vec{s})
	\bigg\langle
	\E_{a,\textup{ind}}\bigg\{
	\prod_{j=1}^k
	\E_f\bigg[
	\prod_{\ell=1}^p
	f_{s_\ell}(x^\ell_{e_j})\bigg]
	\bigg\}
	\bigg\rangle_{r,\circ}\\
	&= \sum_{\vec{s}\in S^p}
	 b^p(\vec{s})
	 \langle
	 I_{V,\vec{s}}(\uxx)
	 I_{U,\vec{s}}(\uxx)^{k-1}
	 \rangle_{r,\circ},
	\end{align*}
where, for $W=U$ or $W=V$, we define
	\[I_{W,\vec{s}}(\uxx)
	\equiv
	\f1{|\delta'W_{r,\circ}|}
	\sum_{e\in \delta'W_{r,\circ}}
	\E_f\bigg[
	\prod_{\ell=1}^p
	f_{s_\ell}(x^\ell_e)
	\bigg]\,.\]
Summing over $p\ge1$ gives that, on the event $\COUP_{\le r}$,
	\begin{align*}
	&\E_{r,\circ} \log\f{Z(\glit_r)}
		{Z(\glit_{r,\circ})}
	= \sum_{p\ge1} \f1p
	\sum_{\vec{s}\in S^p}
	b^p(\vec{s})
	\E_{r,\circ}
	\langle
	I_{V,\vec{s}}(\uxx)
	I_{U,\vec{s}}(\uxx)^{k-1}
	\rangle_{r,\circ}
	+ \textsf{err}_{r,1},\\
	&\qquad\text{where }
	|\textsf{err}_{r,1}|
	\le O_\epsilon(1) 
	\min\bigg\{
	 \f{k^2}{|\delta'U_{r,\circ}|},
	 1 
	 \bigg\}.
	\end{align*}
A similar comparison between $\glit_{r+1}$ and $\glit_{r,\circ}$ gives
	\begin{align*}
	\E_{r,\circ} 
	\log\f{Z(\glit_r)}{Z(\glit_{r,\circ})}
	&= \sum_{p\ge 1} \f1p
	\E_{r,\circ} \bigg[
		 \sum_{\vec s\in S^p}
		b^p(\vec{s})
		\bigg\langle
		\f{k-1}{k} I_{U,\vec s}(\uxx)^{k}
			+ \f{1}{k} I_{V,\vec s}(\uxx)^{k}
		\bigg\rangle_{r,\circ}
	\bigg]
	+\textsf{err}_{r,2},\\
	&\qquad |\textsf{err}_{r,2}|
	\le O_\epsilon(1)
	\min\bigg\{
	 \f{k^2}{
	 \min\set{|\delta'U_{r,\circ}|,
	 	|\delta'V_{r,\circ}|}},
	 1 
	 \bigg\}.
	\end{align*}
We now argue that the sum of the error terms $\textsf{err}_{r,1},\textsf{err}_{r,2}$, over $0\le r\le nd-1$, is small in expectation. First note that for a constant $C=C(k,\epsilon)$,
	\[\sum_{r=0}^{nd-1}
	\E[\textsf{err}_{r,1}
	+\textsf{err}_{r,2}]
	\le C n
	\bigg[ n^{-2/3}
	+ \P\Big( \min
	\set{|\delta'V_{r,\circ}|,
	|\delta'V_{r,\circ}|}
	\le n^{2/3}
	\text{ for some }
	r \le nd \Big) \bigg]\,.\]
The process $(|\delta'V_{r,\circ}|)_{r\ge0}$ is an unbiased random walk started from  $m'+1 = 2n^{2/3}+1$. In each step it goes up by $1$ with chance $(k-1)/k$, and down by $k-1$ with chance $1/k$; it is absorbed if it hits $k$ before time $nd-m'$. Similarly, $(|\delta'U|_{r,\circ})_{r\ge0}$ is an unbiased random walk started from  $(m'+1)(k-1)$ with an absorbing barrier at $k$. By the Azuma--Hoeffding bound, there is a constant $c=c(k)$ such that
	\[\P(|\delta'V_{r,\circ}| 
	\le |\delta'V_{0,\circ}|-n^{2/3})
	+	\P(|\delta'U_{r,\circ}| 
	\le |\delta'U_{0,\circ}|-n^{2/3})
	\le \exp\{ -c n^{1/3} \}\]
Taking a union bound over $r$ shows that  with very high probability, neither of the walks $|\delta'V_{r,\circ}|,|\delta'U_{r,\circ}|$ is absorbed before time $nd-m'$, and (adjusting the constant $C$ as needed)
	\[\sum_{r=0}^{nd-1}
	\E[\textsf{err}_{r,1}
	+\textsf{err}_{r,2}]
	\le C n^{1/3}\,.\]
Altogether this gives
	\begin{align*}
	&\E \log\f{Z(\glit_0)}{Z(\glit_{nd})}
	- O_\epsilon(n^{1/3})\\
	&=\sum_{r=0}^{nd-1}
	\sum_{p\ge1} \f1p
	\sum_{\vec{s}}
	b^p(\vec{s})
	\E_{r,\circ}
	\bigg\langle
	I_{V,\vec{s}}(\uxx)
	I_{U,\vec{s}}(\uxx)^{k-1}
	-\f{k-1}{k} I_{U,\vec{s}}(\uxx)^{k-1}
	-\f1k I_{V,\vec{s}}(\uxx)^{k-1}
	\bigg\rangle_{r,\circ}.
	\end{align*}
Using the fact that $x^k-kxy^{k-1}+(k-1)y^k\ge 0$ for all $x,y\in \R$ and even $k\ge 2$, or $x,y\ge 0$ and odd $k\ge 3$ finishes the proof.\end{proof}
\end{lem}

\begin{cor}\label{c:UB.intp2} In the setting of Lemma~\ref{l:UB.intp}, 
	\[\E\log Z(\glit_{-1})\ge\E \log Z(\glit_{nd+1})
	-O_{\epsilon}(n^{2/3}),\]
where the expectation $\E$ is over the sequence of random graphs $(\glit_r)_{-1\le r\le nd+1}$.

\begin{proof} Adding or removing a clause can change the partition function by at most a multiplicative constant (depending on $\epsilon$). On the event that the coupling succeeds for all $r$,
	\[\bigg|
	\log \f{Z(\glit_0)}{Z(\glit_{-1})}\bigg|
	+\bigg|
	\log \f{Z(\glit_{nd+1})}
		{Z(\glit_{nd})}\bigg|
	= O_\epsilon(m')
	= O_\epsilon(n^{2/3})\,.\]
On the event that the coupling fails,
the difference is crudely $O_\epsilon(n)$.
We saw in the proof of Lemma~\ref{l:UB.intp} that the coupling fails with probability exponentially small in $n$, so altogether we conclude
	\[\E\bigg|
	\log \f{Z(\glit_0)}{Z(\glit_{-1})}\bigg|
	+\E\bigg|
	\log \f{Z(\glit_{nd+1})}
		{Z(\glit_{nd})}\bigg|
	= O_\epsilon(n^{2/3})\,.\]
Combining with the result of Lemma~\ref{l:UB.intp} proves the claim.\end{proof}
\end{cor}

\begin{proof}[Proof of Theorem~\ref{t:free_rs}]
In the interpolation, the initial graph $\glit_{-1}$ consists of $n$ disjoint trees 
$T_v$, each rooted at a variable $v\in V$. Thus
	\[n^{-1}\E \log Z(\glit_{-1})
	=\E\log Z(T_v)
	=\E\log
	\bigg( \sum_{x\in\albet} 
	h_v(x)
	\prod_{a=1}^d \bm{u}_a(x) \bigg)\,.\]
The final graph $\glit_{nd+1}$ is comprised of two disjoint subgraphs --- one subgraph $\glit_V$  has the same law as the graph $\glit$ of interest, while the other subgraph $\glit_U=(U,F_U,E_U)$ consists of $n\alpha(k-1)$ disjoint trees $S_c$, each rooted at a  clause $c\in A_U$. Thus
	\[n^{-1}\E \log Z(\glit_{nd+1})
	= \alpha(k-1)\E\log Z(S_c)
	 + n^{-1}\E\log Z(\glit) 
	= \alpha(k-1)\E \log \bm{u}_0 + F_n\,.\]
The theorem follows by substituting these into the bound of Corollary~\ref{c:UB.intp2}.\end{proof}

\subsection{Proof of 1RSB bound}\label{s:ubd_rsb} For the proof of Theorem~\ref{t:free.energy},  we take $\glit_{-1}$ as before and modify it as follows. Where previously each $u\in U$ had spin value $x_u\in\albet$, it now has the augmented spin $(x_u,\gamma_u)$ where  $\gamma$ goes over the positive integers. Let $\vec{\gamma}\equiv(\gamma_u)_u$. Next, instead of labeling $u$ with $(h_u,\eta_u,\rho_u)$ as before, we now label it with $(h_u,\eta_u,(\rho^\gamma_u)_{\gamma\ge1})$ where $(\rho^\gamma_u)_{\gamma\ge1}$ is an infinite sequence of i.i.d.\ samples from $\eta_u$. Lastly, we join all variables in $U$ to a new clause $a_*$ (Figure~\ref{f:intp1RSB}), which is labelled with the function
	\[\varphi_{a_*}(\vec{\gamma})
	=\sum_{\gamma\ge1}
	z_\gamma
	\prod_{u\in U}\Ind{ \gamma_u=\gamma }\]
for some sequence of (random) weights $(z_\gamma)_{\gamma\ge1}$. Let $\mathscr{H}_{-1}$ denote the resulting graph.

\begin{figure}[h!]
	\centering
	\includegraphics[width=.9\textwidth,page = 4]{interpolation}
	\caption{$\mathscr{H}_{-1}$}
	\label{f:intp1RSB}
\end{figure}

Given $\mathscr{H}_{-1}$, let $\mu_{\mathscr{H}_{-1}}$ be the associated Gibbs measure on configurations $(\vec{\gamma},\ux)$. Due to the definition of $\varphi_{a_*}$, the support of $\mu_{\mathscr{H}_{-1}}$ contains only those configurations where all the $\gamma_u$ share a common value $\gamma$, in which case we denote $(\vec{\gamma},\ux)\equiv(\gamma,\ux)$. Explicitly,
	\[\mu_{\mathscr{H}_{-1}}(\gamma,\ux)
	=\f{1}{Z(\mathscr{H}_{-1})}
	z_\gamma
	\prod_{v\in V}
	h_v(x_v)
	\prod_{a\in A}
	[1-\theta_a(\ux_{\delta a})]
	\prod_{u\in U} (\rho^\gamma h)_u(x_u)\,.\]
We can then define an interpolating sequence $\mathscr{H}_{-1},\ldots, \mathscr{H}_{nd+1}$  precisely as in the proof of Theorem~\ref{t:free_rs}, leaving $a_*$ untouched.  Let $\glit_r$ denote the graph $\mathscr{H}_r$ without the clause $a_*$, and let $Z_\gamma(\glit_r)$ denote the partition function on $\glit_r$ restricted to configurations where $\gamma_u=\gamma$ for all $u$. Then, for each $0\le r\le nd+1$,
	\[Z(\mathscr{H}_r)
	=\sum_\gamma 
	z_\gamma
	Z_\gamma(\glit_r)\,.\]
The proofs of Lemma~\ref{l:UB.intp} and Corollary~\ref{c:UB.intp2} carry over to this setting with essentially no changes, giving

\begin{cor}\label{c:UB.rsbintp} Under the assumptions above, 
	\[\E\log Z(\mathscr{H}_{-1})
	\ge\E \log Z(\mathscr{H}_{nd+1})
	-O_{\epsilon}(n^{2/3}),\]
where the expectation $\E$ is  over the sequence of random graphs $(\mathscr{H}_r)_{-1 \le r\le nd+1}$. \end{cor}

The result of Corollary~\ref{c:UB.rsbintp} applies for any $(z_\gamma)_{\gamma\ge1}$. Now take $(z_\gamma)_{\gamma\ge1}$ to be a Poisson--Dirichlet process with parameter $\lambda\in(0,1)$.\footnote{That is to say, let $(w_\gamma)_{\gamma\ge1}$ be a Poisson point process on $\R_{>0}$ with intensity measure $w^{-(1+\lambda)}\,dw$. Let $W$ denote their sum, which is finite almost surely. Assume the points of $w_\gamma$ are arranged in decreasing order, and write $z_\gamma \equiv w_\gamma/W$. Then $(z_\gamma)_{\gamma\ge1}$ is distributed as a Poisson--Dirichlet process with parameter $\lambda$.} The process has the following invariance property (see e.g.\ \cite[Ch.~2]{MR3052333}):

\begin{ppn}\label{p:UB.PoisDirch} Let $(z_\gamma)_{\gamma\ge1}$ be a Poisson--Dirichlet process with parameter $\lambda\in(0,1)$. Independently, let $(\xi_\gamma)_{\gamma\ge 1}$ be a sequence of i.i.d.\ positive random variables with finite second moment. Then the two sequences $(z_\gamma \xi_\gamma)_{\gamma\ge 1}$ and $(z_\gamma (\E \xi_1^\lambda)^{1/\lambda})_{\gamma\ge 1}$ have the same distribution, and consequently
	\[\E\log \sum_{\gamma \ge 1} 
	z_\gamma\xi_\gamma =\f{1}{\lambda}\log \E\xi^\lambda\,.\]
\end{ppn}

\begin{proof}[Proof of Theorem~\ref{t:free.energy}] Consider 
	$\vec{Z}(\gamma)
	\equiv (Z_\gamma(\glit_r)
	)_{-1 \le r \le nd+1}$.
If we condition on everything else except for the $\rho$'s, then $(\vec{Z}(\gamma))_{\gamma\ge1}$ is an i.i.d.\ sequence indexed by $\gamma$.  Let $\E_{z,\rho}$ denote expectation over  the $z$'s and $\rho$'s, conditioned on all else: then applying Proposition~\ref{p:UB.PoisDirch} gives 
	\begin{align*}
	n^{-1}\E \log Z(\mathscr{H}_{-1}) 
	&= (n\lambda)^{-1}
	\E\log
	\E_{z,\rho} [ Z(\mathscr{G}_{-1})^\lambda ]
	= \lambda^{-1}
	\E \log \E_{z,\rho}\bigg[
	\bigg(\sum_{x\in\albet} h(x) 
	\prod_{a=1}^d \bm{u}_a(x)\bigg)^\lambda\bigg],\\
	n^{-1}\E \log Z(\mathscr{H}_{nd+1}) 
	&=F_n
	+ \lambda^{-1}\E
	\log\E_{z,\rho}[ (\bm{u}_0)^\lambda ].
	\end{align*}
Combining with Corollary~\ref{c:UB.rsbintp} proves the result.\end{proof}

\subsection{Extension to higher levels of RSB}
\newcommand{\NN}{\mathbb{N}}
\newcommand{\cA}{\mathcal{A}}
\newcommand{\cF}{\mathcal{F}}
\newcommand{\sfp}{\mathsf{p}}
\newcommand{\sfT}{\mathsf{T}}
\newcommand{\sm}{\setminus}
\newcommand{\MM}{\mathcal{M}}

We finally explain that Theorem~\ref{t:free.energy} can be extended relatively easily to cover the scenario of $r$-step replica symmetry breaking. Before stating the result, we define some notations (mainly following notation of
\cite[\S2.3]{MR3052333}). Let $\NN$ be the set of positive integers and $\NN^r$ be its $r$-fold product; in particular, $\NN^0\equiv\set{\varnothing}$. We consider arrays indexed by the set
	\[
	\cA
	\equiv \bigcup_{p=0}^r \NN^p\,.
	\]
We view $\cA$ as a depth-$r$ infinitary tree rooted at $\varnothing$. For $0\le p\le r-1$, each vertex $\gamma=(\gamma_1,\ldots,\gamma_p)\in\NN^p$ has children $\gamma n \equiv (\gamma_1,\dots,\gamma_s,n)\in \NN^{s+1}$. The leaves of the tree are in the last level $\NN^r$. For $\gamma\in\NN^p$ write $|\gamma|\equiv p$, and let $\sfp(\gamma)$ be the path between the root and $\gamma$
(not inclusive):
	\[
    \sfp( \gamma) \equiv \bigg\{
    \gamma_1,(\gamma_1,\gamma_2),
    \dots,(\gamma_1,\dots,\gamma_{p-1})
    \bigg\}\,.
	\]
Fix a sequence of parameters $ \vec m = (m_1,\dots,m_r)$ satisfying
\begin{equation}
    0< m_0< \dots < m_{r-1}< 1. 
    \label{e:fop}
\end{equation}
For each $\gamma \in \cA\setminus\NN^r$, let $\Pi_\gamma$ be
 (independently of all else)
  a Poisson--Dirichlet point process with parameter $m_{|\gamma|}$. Let $(u_{\gamma n})_{n\in\NN}$ be the points of $\Pi_\gamma$ arranged in decreasing order. As $\gamma$ goes over all of $\cA\sm\NN^r$, we obtain an array $(u_\beta)_{\beta\in\cA\sm\NN^0}$. Let
  	\[w_\gamma \equiv \prod_{\beta\in\sfp(\gamma)}u_\beta\,.\]
The \bemph{Ruelle probability cascade of parameter $\vec m$}
(hereafter $\textup{RPC}(\vec{m})$) is defined as the $\NN^r$-indexed array 
	\[\nu_\gamma
	\equiv \frac{w_\gamma}{\sum_{\beta\in\NN^r}w_\beta}\,.\]
For the validity of the definition, see for instance
\cite[Lem.~2.4]{MR3052333}.
As in the 1\textsc{rsb} setting, we plan to apply Theorem~\ref{t:free_rs} to the modified graph $\mathscr{H}_{-1}$, where we ``glue'' multiple weighted copies of $\glit_{-1}$'s together via the extra clause $a_\star$.  The only difference is that now the copies of $\glit_{-1}$ are indexed by $\gamma\in\NN^r$ instead of $\NN$. More precisely, the extra spin at each vertex $u\in U$ will take a value $\gamma\in\NN^r$; the label at each vertex $u\in U$ will be $(h_u,\eta_u,(\rho^\gamma_u)_{\gamma\in\cA})$; and the function at $a_\star$ will be
\begin{equation}\label{e:phiastar}
    \phi_{a_\star} (\vec\gamma) 
    = \sum_{\gamma\in\NN^r} z_\gamma
    \prod_{u\in U} \Ind{\gamma_u = \gamma}
    ,
\end{equation}
where $(z_\gamma)_{\gamma\in\NN^r}$ is a $\NN^r$-indexed random array representing the weight of copy $\gamma\in\NN^r$. In the proof, we will choose $(z_\gamma)_{\gamma\in\NN^r}$ according to the $\textup{RPC}(\vec{m})$ law.

We now specify the labels $(\rho^\gamma)_{\gamma\in\cA}$ that will be used in the proof. Recall that $\MM_0$ is the space of probability measures on the alphabet $\mathcal{X}$. We recursively define $\MM_r$, for $1\le r\le p$, to be the space of probability measures on $\MM_{r-1}$. Now fix an element $\rho^\varnothing = \zeta \in\MM_r$. For each $0\le p \le r-1$ and $\gamma\in \NN^p$, suppose inductively that we have constructed $\rho^\gamma\in\MM_{r-p}$. We then take
 $\rho^{\gamma n} \in\MM_{r-p-1}$ for $n\in\NN$
 as  i.i.d.\ samples $\rho^{\gamma}\in\MM_{r-p}$ in $\MM_{r-p-1}$. The process terminates with the construction of $\rho^\gamma\in\mathcal{X}$ for each $\gamma\in\NN^r$. Define the $\sigma$-field
 	\[
	\cF_p
	\equiv\sigma\bigg(
	\Big(
	(\rho^\gamma)_{\gamma\in\NN^s}
	\Big)_{s\le p}
	\bigg)\,,
	\]
and write $\E_p$ for expectation conditional on $\cF_p$.
For any deterministic function $V(u,\rho)$, any random variable $U$ independent of $(\rho^\gamma)_{\gamma\in\NN^r}$, and any sequence of parameters $\vec m$ satisfying \eqref{e:fop}, consider the random array
 $(V^\gamma)_{\gamma\in\NN^r} \equiv (V(U,\rho^\gamma))_{\gamma\in\NN^r}$.
Let $\sfT_r(V) = V(U,\rho^{\vec 1})$, and for $0\le p\le r-1$ let
	\[
    \sfT_p(V) = \bigg\{
    	\E_p\Big( \sfT_{p+1}(V) \Big)^{m_p}
	\bigg\}^{1/m_p}
	\]
The resulting operator $\sfT_0$ depends implicitly on the distribution of $U$,  measure $\rho^\varnothing\in\MM_r$ and parameter $\vec m$. The following lemma is a well-known property of the RPC.

 \begin{lem}[{\cite[Prop.~2]{MR2095932}}]\label{l:RPC}
Let $(z_\gamma)_{\gamma\in\NN^r}$ be the RPC with parameter $\vec m$. Under the notations above,
    \[
        \E \log \sfT_0(V) = \E\log\sum_{\gamma\in\NN^r} z_\gamma V^\gamma.
    \]
\end{lem}

\noindent The next result generalizes Theorem~\ref{t:free.energy}.

\begin{thm}\label{t:rRSBbound}  Consider a (random) Gibbs measure \eqref{e:gibbs.ubd} satisfying assumptions \eqref{e:theta.product}--\eqref{e:UB.cond2}. Write $(h\rho)_{a,j}(x) \equiv h_{a,j}(x)\rho_{a,j}(x)$. For each $a\in F$ we define
	\begin{align*}
	\bm{u}_a(x)
	&\equiv
	\sum_{\ux\in\albet^k}
		\Ind{x_1=x}
		[1-\theta_a(\ux)]
		\prod_{j=2}^k
		(h\rho)_{a,j}(x_j)
    ,\\
	\bm{u}_a
	&\equiv \sum_{\ux \in\albet^k}
		[1-\theta_a(\ux)]
		\prod_{j=1}^k
		(h\rho)_{a,j}(x_j)\,.
	\end{align*}
Note that $\bm{u}_a(x)$ and $\bm{u}_a$
are deterministic functions of the variables
 $\big(\theta_a, (h_{a,j})_{j\in[k]}, (\rho_{a,j})_{j\in[k]}\big)$. Let
	\[
    \bm{v}\equiv
    \sum_{x\in\albet}
    h(x)
    \prod_{a=1}^d \bm{u}_a(x)
    .
\]
For any $\smash{\zeta\in\mathcal{M}_r}$ and sequence $\vec m$ satisfying \eqref{e:fop}, let 
$(\rho^\gamma)_{\gamma\in\NN^r}$ be constructed as above,
and let
$(\rho^\gamma_{a,j})_{\gamma\in\NN^r}$ be i.i.d.\ copies indexed by $(a,j)$. Define $\sfT_0$ similarly as above, using the $\sigma$-fields
	\[
    \cF_p = \sigma\bigg(
    (\rho^\gamma_{a,j})_{\gamma\in\NN^s}:
    a\in F, j\in [k], s\le p
    \bigg)\,.
	\] 
Then the nonasymptotic free energy 
$F_n \equiv n^{-1}\E_n\log Z(\glit)$
 satisfies the bound
	\[
        F_n \le 
        \E\log
        \sfT_0
        (\bm{v})
	-(k-1)\alpha 
    \E\log\sfT_0(\bm{u}_0)
 +O_{\epsilon}\bigg(\f1{n^{1/3}}\bigg) 
 \]
where $\E$ denotes the expectation over $(\theta_a)_{a\in F}$ and $(h_{a,j})_{a\in F,j\in[k]}$. 
\end{thm}

\begin{proof} As outlined above, we consider the modified graph $\mathscr{H}_{-1}$ where each vertex $u\in U$ is independently labeled with $(h_u,\eta_u,(\rho^\gamma_u)_{\gamma\in\cA})$ and the extra clause $a_\star$ is labeled with the function defined in \eqref{e:phiastar}. In this setting, each $u\in U$ has spin value $(\gamma,x)\in \NN^r\times \albet$. Since we are interested only in configurations $(\vec\gamma,\vec x)$ such $\gamma_u\equiv \gamma$ for all $u\in U$, we write $(\gamma,\vec x)$ instead of $(\vec\gamma,\vec x)$ and define the Gibbs measure as
	\[
        \mu_{\mathscr{H}_{-1}}(\gamma,\vec x)
        =\f{1}{Z(\mathscr{H}_{-1})}
        z_\gamma
        \prod_{v\in V}
        h_v(x_v)
        \prod_{a\in A}
        [1-\theta_a(\ux_{\delta a})]
        \prod_{u\in U} (\rho^\gamma h)_u(x_u)
        \,.
    \]
Sample the weights $(z_\gamma)_{\gamma\in\NN^r}$ according the law 
$\textup{RPC}(\vec m)$. The result then follows by the proof of
Theorem~\ref{t:free.energy}, with Lemma~\ref{l:RPC} replacing the role of Proposition~\ref{p:UB.PoisDirch}.
\end{proof}

\pagebreak

\newcommand{\uL}{\underline{\smash{\textup{\texttt{L}}}}}
\newcommand{\mm}{\mathfrak{m}}
\newcommand{\free}{\textup{\texttt{f}}}
\newcommand{\Bin}{\textup{\textsf{Bin}}}

\newcommand{\bN}{\mathbf{N}}
\newcommand{\bZ}{\bm{Z}}
\newcommand{\bXi}{\bm{\Xi}}

\newcommand{\bDelta}{\bm{\Delta}}
\newcommand{\se}{\textup{se}}
\newcommand{\sep}{\textup{sep}}
\newcommand{\sy}{\textup{sy}}
\newcommand{\bdy}{\textup{bdy}}
\newcommand{\norm}[1]{\|#1\|}
\newcommand{\ff}{\printcol{f}}
\newcommand{\rr}{\printcol{r}}

\noindent \textbf{CORRECTION (2023).}
In the next two sections, we give corrections for two propositions in the above. In Section~\ref{sec:apriori} we obtain \textit{a~priori} estimates on the occurrence of free variables and forcing clauses, in both the first and second moment. In Section~\ref{sec:correct} we use these estimates to prove Propositions~\ref{corrected:prop:1} and \ref{corrected:prop}, which are the corrected versions of 
Propositions \ref{p:first.mmt}
and \ref{p:second.mmt}
respectively. We gratefully acknowledge an anonymous referee who pointed out the gap in the proof, as well as Youngtak Sohn who helped us write down the argument for closing the gap.

\section{A priori estimates}
\label{sec:apriori}

In this section, we obtain \textit{a~priori} estimates that are necessary to prove suitable modifications of Propositions \ref{p:first.mmt}
and \ref{p:second.mmt}. In \S\ref{subsec:apriori:first}, we present a simpler proof of \cite[Prop.~2.2]{MR3440193}, which gives a first moment bound on free variables; we also prove a first moment bound on forcing clauses. In \S\ref{subsec:apriori:second}, we extend the methods from \S\ref{subsec:apriori:first} to prove second moment bounds on free variables and forcing clauses, which we will use
in Section~\ref{sec:correct} to rule out the possibility of certain boundary maximizers of the function $\bF_2\equiv \bF_{2,\lambda,T}$.

Recall that $G=(V,F,E)$ denotes a random $(d,k)$-regular NAE-SAT instance, with $|V|=n$, $|F|=m$, and $|E|=nd=mk$. We let $\SOL(G)\subseteq\{\zro,\one\}^n$ denote the set of NAE-SAT solutions of $G$, and $Z\equiv |\SOL(G)|$. We first review the law of $G$ conditioned on $\ux$ being a solution, where $\ux\in\{\zro,\one\}^n$ is any given assignment.
To this end, note that we can decompose $G=(\mm,\uL)$ where $\mm$ denotes the random set of edges (i.e., the random matching between variable-incident and clause-incident half-edges), and $\uL$ denotes the random assignment of edge literals. Then the law of $G=(\mm,\uL)$ conditional on $\ux\equiv\zro$ being a solution can be written as
	\[ \P\Big((\mm,\uL)\,\Big|\,\ux\in\SOL(\mm,\uL)\Big)
	=\frac{\Ind{\ux\in\SOL(\mm,\uL)}}{
	\sum_{\mm',\uL'} \Ind{\ux\in\SOL(\mm',\uL')}}
	=\frac{\Ind{\ux\in\SOL(\mm,\uL)}}{(nd)! (2^k-2)^m}\,.
	\]
Summing over $\uL$ gives
	\beq\label{e:random.matching}
	\P\Big(\mm\,\Big|\,\ux\in\SOL(\mm,\uL)\Big)
	= \sum_{\uL}
	\frac{\Ind{\ux\in\SOL(\mm,\uL)}}{(nd)! (2^k-2)^m}
	= \frac{1}{(nd)!}\,,
	\eeq
that is, if we forget the edge literals $\uL$, the conditional law of $\mm$ given $\ux$ is simply uniformly random among all matchings. Taking the ratio of the above two relations gives
	\[
	\P\Big(\uL\,\Big|\,\mm,\ux\in\SOL(\mm,\uL)\Big)
	=\frac{\Ind{\ux\in\SOL(\mm,\uL)}}{(2^k-2)^m}\,,
	\]
that is, conditional on $\ux$ and the matching $\mm$, the edge literals $\uL$ are uniformly random among all $\uL'$ such that $\ux$ is a solution of $(\mm,\uL')$.

\subsection{First moment bound on free variables and forcing clauses}
\label{subsec:apriori:first}
We first present a simpler proof of a result from \cite{MR3440193}. 
Recalling that $Z=|\SOL(G)|$, we let $Z_{\ge nt}$ denote the contribution to $Z$ from solutions whose coarsening $\ueta$ has more than $nt$ free variables. We then have the following:

\begin{ppn}[{\cite[Prop.~2.2]{MR3440193}}] 
\label{p:first.mmt.bound.frees}
We have the first moment bound
	\[
	\E Z_{\ge n7/2^k}
	\le \frac{\E Z}{\exp(6n/2^k)}
	\]
as long as $\alpha \ge 2^{k-1}\log 2(1-O(1/k^2))$.

\begin{proof}
By symmetry, we have $\E Z_{\ge nt}= (\E Z) \mathbf{f}(nt)$,
where $\mathbf{f}(nt)$ denotes the probability, conditional on $\ux\equiv\zro$ being a solution, that the coarsening $\ueta$ of $\ux$ has more than $nt$ free variables. Let $\ueta(nt)\in\{\zro,\one,\free\}^n$ be the configuration that results after $nt$ steps of the coarsening procedure, started from $\ux\equiv\zro$. For any subset $A\subseteq V$ with $|A|=nt$, we will consider the quantity
	\[
	\mathbf{f}(A)
	\equiv
	\P\bigg(
	\textup{$\ueta(nt)$ has free variables $A$}
	\,\bigg|\,\textup{$\ux\equiv\zro$ is a solution}
	\bigg)\,.
	\]
Then note that $\mathbf{f}(nt)$ can be bounded by simply summing $\mathbf{f}(A)$
over the choices of $A$. For any fixed $A\subseteq V$, let
$F_\bullet \equiv F_\bullet(A,\mm)$ denote the set of clauses $a\in F$ that have exactly one edge going to $A$ (while the remaining $k-1$ edges go to $V\setminus A$). Conditional on $\ux\equiv\zro$ being a valid solution, each $a\in F$ has $2^k-2$ valid literal assignments (the all-$\zro$ and all-$\one$ assignments are forbidden). In order for $A$ to be the set of free variables in the coarsened configuration, each $a\in F_\bullet$ has two additional forbidden literal assignments (namely, the ones that would be forcing to the variable in $A$). It follows that
	\[\mathbf{f}(A)
	\le
	\E\bigg[ \bigg(1-\frac{2}{2^k-2}\bigg)^{|F_\bullet|}\bigg]\,.
	\]
Heuristically, we expect $|F_\bullet|$ to be roughly $mkt(1-t)^{k-1}$; we can make this precise as follows. Let $K_a$ be the number of edges between clause $a$ and variables $A$, and let $\bar{K}_a$ be i.i.d.\ $\Bin(k,t)$ random variables. Recall from \eqref{e:random.matching} that, conditional on $\ux$ being an NAE-SAT solution, the law of the edge matching $\mm$ remains uniformly random. Therefore the joint distribution of the $K_a$ under $\P(\mm\,|\,\ux)$ is the same as the joint law of the $\bar{K}_a$ conditioned to sum to $|A|d=ndt=mkt$:
	\beq\label{e:binom.conditioning}
	(K_a)_{a\in F}
	\stackrel{d}{=} 
	\bigg( (\bar{K}_a)_{a\in F}
	\,\bigg|\, \sum_{a\le m} \bar{K}_a= mkt
	\bigg)\,.\eeq
By the local central limit theorem, the $\bar{K}_a$ sum to $mkt$ with probability polynomial in $n$, so
	\begin{align*}
	\mathbf{f}(A)
	&\le n^{O(1)}
	\E \bigg[ 
	\prod_{a\le m}
	\bigg(1-\frac{2}{2^k-2}\bigg)^{\Ind{\bar{K}_a=1}}\bigg]
	=  n^{O(1)}
	\E \bigg[ 
	\bigg(1-\frac{2}{2^k-2}\bigg)^{\Bin(m,kt(1-t)^{k-1})}\bigg] \\
	&=  n^{O(1)}
	\bigg(1 - \frac{2 kt(1-t)^{k-1}}{2^k-2}\bigg)^m
	\le n^{O(1)}
	\exp\bigg\{
	-\frac{m \cdot 2kt(1-t)^{k-1}}{2^k-2}
	\bigg\}
	\,.
	\end{align*}
For $\alpha \ge 2^{k-1}\log 2(1-O(1/k^2))$ and $t=C/2^k$, we have
	\[
	\mathbf{f}(nt)
	\le \sum_{A\subseteq V,|A|=nt} \mathbf{f}(A)
	\le n^{O(1)}
	\exp\bigg\{n\bigg[ H(t)
	-t k\log 2  \bigg(1-\frac{O(1)}{k^2}\bigg)\bigg]\bigg\}\,,
	\]
where $H(t)$ denotes the binary entropy function. Applying the standard bound $H(t)\le t\log (e/t)$ gives
	\[
	\mathbf{f}(nt)
	\le n^{O(1)}
	\exp\bigg\{nt\bigg[ \Big( k\log 2 + 1 - \log C\Big)
	- k\log 2 \bigg(1-\frac{O(1)}{k^2}\bigg)\bigg]\bigg\}
	\le \exp\bigg\{ -\frac{6n}{2^k}\bigg\}\,,
	\]
where the last bound holds by taking $C=7$.
\end{proof}
\end{ppn}

The next result is almost identical to
Lemma~\ref{l:restrict.H}, but we present it here for completeness. 
Let $Z_{\red\ge n t}$  denote the contribution to $Z$ from solutions whose coarsening $\ueta$ has more than $ndt$ red (forcing) edges. We then have the following:

\begin{ppn} \label{prop:red:1}
For $t=7/2^k$ we have
	\[
	\E Z_{\red \ge ndt}
	\le \frac{\E Z}{\exp(\Omega(nk))}
	\]
as long as $\alpha \ge 2^{k-1}\log 2(1-O(1/k^2))$.

\begin{proof} By symmetry, we have $\E Z_{\red \ge ndt}=\E Z \mathbf{g}(ndt)$, where $\mathbf{g}(ndt)$ denotes the probability, conditional on $\ux\equiv\zro$ being a solution, that the coarsening $\ueta$ of $\ux$ has more than $ndt$ (red) forcing edges. Note that the forcing edges for the coarsened configuration $\ueta$ are a subset of the forcing edges for the initial configuration $\ux$, so it suffices to bound the latter.  Conditioned on $\ux\equiv \zro$ being a solution, each clause contains a forcing edge independently with chance $\theta= 2k/(2^k-2)$. Each clause contains at most one forcing edge, so the number of forcing edges is stochastically dominated by a $\Bin(m,\theta)$ random variable. Therefore
	\[
	\mathbf{g}(ndt)
	\le \P\Big(\Bin(m,\theta) \ge ndt = mkt\Big)
	\le \frac{1}{ \exp\{ m H( kt |\theta)\}}
	\le \frac{1}{\exp(\Omega(nk))}\,,
	\]
as claimed.
\end{proof}
\end{ppn}

\subsection{Second moment bound on free variables and forcing clauses}
\label{subsec:apriori:second}
In this subsection we bound the contribution to the second moment from pairs of solutions where the coarsening has too many free variables (Proposition~\ref{p:second.mmt.bound.frees}) or too many forcing clauses (Proposition~\ref{prop:red}).

Recall the calculation \eqref{e:random.matching}, which told us that conditional on $\ux$ being a solution, the law of $\mm$ is still uniform among all matchings. A similar calculation tells us that if we condition on $\ux^1$ \emph{and} $\ux^2$ \emph{both} being solutions, then $\mm$ is no longer uniform among all matchings, but should be weighted proportionally to the number of literal assignments $\uL$ such that $(\ux^1,\ux^2)\in\SOL(\mm,\uL)$. It follows that
	\beq\label{e:cond.matching.second.mmt}
	\P\Big(\mm\,\Big|\,\ux^1,\ux^2 \in\SOL(\mm,\uL)\Big)
	\cong \prod_{a\in F}
	\bigg( \frac{2^k-2}{2^k-4}
	\bigg)^{I_a(\ux^1,\ux^2)}\,,
	\eeq
where $I_a(\ux^1,\ux^2)$ is the indicator that $\ux^1$ and $\ux^2$ either fully agree or fully disagree on $\partial a$, and $\cong$ indicates proportionality up to a normalizing constant. This observation will be used in the proof below. Recall that $Z=|\SOL(G)|$,
and $Z^2[n\rho]$ denotes the contribution
to $Z$ from solution pairs
$(\ux^1,\ux^2)$ that agree on $n\rho$ variables. Let $Z^2[n\rho]_{\ge nt}$ denote the contribution to $Z^2[n\rho]$ from pairs where either of the coarsenings $\ueta^1,\ueta^2$ has more than $nt$ free variables. We then have the following:

\begin{ppn}[second moment version of
Proposition~\ref{p:first.mmt.bound.frees}]
\label{p:second.mmt.bound.frees} For $n$ large enough, we have 
	\[
	\E[Z^2[n\rho]_{\ge n 7/2^k}]
	\le \frac{\E[Z^2[n\rho]]}
	{\exp( n 6 / 2^k)}
	\]
as long as
$\alpha\ge 2^{k-1}\log 2[1-O(1/k^2)]$ and 
$\rho=1/2(1+O(1/k^2))$.

\begin{proof}
Similarly to the proof of Proposition~\ref{p:first.mmt.bound.frees},
fix $\ux^1\equiv\zro$, and let $\ux^2=\zro^{n\rho}\one^{n(1-\rho)}$ --- i.e., $\ux^2$ has its first $n\rho$ coordinates equal to $\zro$, and its last $n(1-\rho)$ coordinates equal to one. Let $\mathbf{f}(nt;n\rho)$ denote the probability, conditional on 
$\ux^1$ and $\ux^2$ being solutions, that the coarsening $\ueta^1$ of $\ux^1$ has more than $nt$ free variables.
By symmetry,
	\beq\label{e:second.mmt.symm.free.bound}
	\frac{\E[Z^2[n\rho]_{\ge n t}]}
		{\E[Z^2[n\rho]]}
	= 
	\mathbf{f}(nt;n\rho)
	\,.\eeq
Let $\ueta^1(nt)\in\{\zro,\one,\free\}^n$ be the configuration that results after $nt$ steps of the coarsening procedure, started from $\ux^1\equiv\zro$. 
For any two subsets $A_{\zro}\subseteq[n\rho]$ and $A_{\one}\subseteq[n]\setminus[n\rho]$ with $|A_{\zro}|=nt_{\zro}$ and $|A_{\one}|=n(t-t_\zro)=nt_{\one}$, we let
	\[\mathbf{f}(A_{\zro},A_{\one};n\rho)
	=\P\bigg(
	\textup{$\ueta^1(nt)$ has free variables 
	$A=A_{\zro}\sqcup A_{\one}$}\,\bigg|\,
	\ux^1,\ux^2\in\SOL(G)
	\bigg)\,.\]
Then note that $\mathbf{f}(nt;n\rho)$ can be bounded by summing $\mathbf{f}(A_{\zro},A_{\one};n\rho)$ over the choices of $A_{\zro}$ and $A_{\one}$.
Let $F_\bullet(A_\zro,\mm)$
denote the set of clauses $a\in F$ that have exactly one edge going to $A_\zro$, and the remaining $k-1$ edges going to $V\setminus A$. For $a\in F_\bullet(A_\zro,\mm)$ there are three main cases, depending on the configuration of the remaining $k-1$ edges:
	\begin{enumerate}[(i)]
	\item The remaining $k-1$ edges all go to $[n\rho]\setminus A_\zro$. In this case $\ux^1$ and $\ux^2$ agree on all of $\partial a$, so we are in the same situation as in the proof of Proposition~\ref{p:first.mmt.bound.frees}: the clause $a$ has two literal assignments (all-$\zro$ and all-$\one$) that are inconsistent with both $\ux^1,\ux^2$ being valid solutions. For $A$ to be free, there are two additional forbidden literal assignments.
	\item The remaining $k-1$ edges all go to $([n]\setminus[n\rho])\setminus A_\one$. In this case, the clause $a$ has two literal assignments that are inconsistent with $\ux^1$ being a valid solution,
and two different literal assignments that are inconsistent with $\ux^2$ being a valid solution. However, the latter two assignments are exactly the ones that would be forcing to the variable in $A_\zro$ in $\ux^1$.
	\item In all remaining cases, the clause $a$ has four literal assignments that are inconsistent with $\ux^1,\ux^2$ being valid solutions. Outside of these four, it has two additional literal assignments that would be forcing to the variable in $A_\zro$ in $\ux^1$.
	\end{enumerate}
Denote $F_\bullet\equiv F_\bullet(A_\zro,\mm)
\sqcup F_\bullet(A_\one,\mm)$;
 this is the set of clauses with exactly one edge going to $A$, and the remaining $k-1$ going to $V\setminus A$.
The only problematic case above is (ii), so we let $F_\zro\subseteq F_\bullet(A_\zro,\mm)$ denote the set of clauses with one edge going to $A_\zro$, and the remaining $k-1$ edges going to $([n]\setminus[n\rho])\setminus A_\one$. Similarly we let $F_\one\subseteq F_\bullet(A_\one,\mm)$ denote the set of clauses with one edge going to $A_\one$, and the remaining $k-1$ edges going to $[n\rho]\setminus A_\zro$. Then, recalling $t_\zro+t_\one=t$, we have
	\begin{align}\nonumber
	&\mathbf{f}(A_{\zro},A_{\one};n\rho)
	\le
	\E\bigg[ \max\bigg\{
	1-\frac{2}{2^k-2},
	1-\frac{2}{2^k-4}
	\bigg\}^{|F_\bullet|-|F_\zro|-|F_\one|}\bigg]
	= 
	\E\bigg[ \bigg(
	1-\frac{2}{2^k-2}
	\bigg)^{|F_\bullet|-|F_\zro|-|F_\one|}\bigg] \\
	&\qquad\le
	\E\bigg[\bigg(
	1-\frac{2}{2^k-2}
	\bigg)^{|F_\bullet|-2 mk^3 t/2^k}
	\bigg]
	+ \P\bigg(|F_\zro| \ge 
		\frac{mk^3 t}{2^k} \bigg)
	+ \P\bigg(|F_\one| \ge 
		\frac{mk^3 t}{2^k}\bigg)
	\,. \label{e:f.Azro.Aone}
	\end{align}
The rest of the proof is a series of straightforward bounds on the quantities appearing above. Let $F_*$ denote the set of clauses $a\in F$ where the number of edges between $a$ and $[n\rho]$ is in the set $\{0,1,k-1,k\}$.

\medskip

\noindent\textbf{Bound on $F_*$.}
Let $Z_a$ be the number of edges between clause $a$ and variables $[n\rho]$. For comparison, let $\theta\in(0,1)$, and let $\bar{Z}_a$ be i.i.d.\ random variables with law
	\[
	\P(\bar{Z}_a=\ell)
	= \frac{
	\binom{k}{\ell} \theta^\ell(1-\theta)^{k-\ell} w^{\Ind{\ell\in\{0,k\}}}
	}{ 1 + [\theta^k + (1-\theta)^k ](w-1)}\,,\quad
	w \equiv \frac{2^k-2}{2^k-4}\,.
	\]
We abbreviate this as $\bar{Z}_a\sim\textup{B}_w(k,\theta)$; this is a slight reweighting of the binomial distribution. It follows from
\eqref{e:cond.matching.second.mmt} that for any fixed $\theta\in(0,1)$, we have
	\[
	(Z_a)_{a\in F}
	\stackrel{d}{=}
	\bigg(
	(\bar{Z}_a)_{a\in F} \,\bigg|\, \sum_{a=1}^m \bar{Z}_a= nd\rho=mk\rho\bigg)\,.
	\]
We can choose $\theta=\rho[1+O(1/4^k)]$ such that
	\[
	\E B_w(k,\theta)
	= \frac{k\theta + (w-1)k\theta^k}{1 + (w-1)[\theta^k + (1-\theta)^k]}
	=k\rho\,,
	\]
in which case it follows from the local central limit theorem that
the event being conditioned on has probability polynomial in $n$. Moreover, for this choice of $\theta$, we have
	\[
	\P\Big(\bar{Z}_a\in\{0,1,k-1,k\}\Big)
	= \frac{k[\theta(1-\theta)^{k-1}
		+ (1-\theta)\theta^{k-1}]
		+[\theta^k+(1-\theta)^k]w}
	{1 + [\theta^k+(1-\theta)^k](w-1)}
	= \frac{k[1+O(1/k)]}{2^{k-1}}\,.
	\]
Write $F(\ell)=\{a\in F: Z_a=\ell\}$, so that $F_*\equiv F(0)\sqcup F(1) \sqcup F(k-1) \sqcup F(k)$. Then it follows from the above discussion that we have the bound
	\beq\label{e:Fstar.bound}
	\P\bigg(|F_*| \ge  \frac{mk^2}{2^k} \bigg)
	\le n^{O(1)}
	\P\bigg( \Big| \Big\{a\in F: \bar{Z}_a\in\{0,1,k-1,k\}\Big\}\Big|
		\ge  \frac{mk^2}{2^k}
		\bigg)
	\le 
	\frac{1}{\exp(\Omega(nk^2\log k))}\,.\eeq
In the bounds below we will use this fact that $F_*$ is small with good probability.
\medskip

\noindent\textbf{Bound on $F_\zro$.}
Recall from above that $F_\zro$ is the set of clauses with exactly one edge going to $A_\zro$, and the remaining $k-1$ edges going to $[n]\setminus[n\rho]$. Recall also that $F(\ell)$ is the set of clauses with exactly $\ell$ edges going to the variables $[n\rho]$. Conditional on $F(1)$, the edges between clauses $F(1)$ and variables $[n\rho]$ form a uniformly random subset (of size $|F(1)|$) of the set of $nd\rho$ edges incident to the variables $[n\rho]$. Let $D_v$ denote the number of edges between variable $v$ and $F(1)$, and let $\bar{D}_v$ be i.i.d.\ $\Bin(d,\psi)$ random variables. Then
	\[
	(D_v)_{v\in[n\rho]}
	\stackrel{d}{=}
	\bigg((\bar{D}_v)_{v\in[n\rho]}
	\,\bigg|\,\sum_{v\in [n\rho]} \bar{D}_v
	=|F(1)|
	\bigg)\,.
	\]
For the event being conditioned on to have probability polynomial in $n$, we choose
	\[
	\psi
	= \frac{|F(1)|}{nd\rho} \stackrel{*}{=} O\bigg( \frac{k}{2^k}\bigg)\,,
	\]
where the bound marked $*$ holds if $|F_*|$ satisfies the bound in \eqref{e:Fstar.bound}. It follows that
	\begin{align*}
	&\P\bigg(|F_\zro| 
		\ge \frac{mk^3 t}{2^k}
		\bigg)
	\le\P\bigg(|F_*| \ge 
	\frac{mk^2}{2^k} \bigg)
	+n^{O(1)} \P\bigg(
	\sum_{v\in A_\zro} \bar{D}_v
		\ge \frac{mk^3 t}{2^k}
	\bigg)\\
	&\qquad\le 
	\frac{1}{\exp(\Omega(nk^2\log k))}
	+\frac{1}{\exp(\Omega(nt k^3\log k))}
	\le
	\frac{1}{\exp(\Omega(nk^3(\log k)/2^k))}\,.\end{align*}
The same bound holds for $F_\one$. \medskip

\noindent\textbf{Bound on $F_\bullet$.} Recall that $F_\bullet$ is the set of clauses with exactly one edge going to $A$, and the remaining $k-1$ edges going to $V\setminus A$. Let $L_a$ denote the number of edges between clause $a$ and variables $A_\zro$, and let $R_a$ denote the number of edges between clause $a$ and variables $A_\one$. Consider the $mk$ clause-adjacent half-edges $\partial F$, and suppose we condition on the bipartition $\partial F = (\partial F)_\zro \sqcup (\partial F)_\one$ where $(\partial F)_\zro$ is the set of half-edges going to $[n\rho]$. It follows from \eqref{e:cond.matching.second.mmt} that, \emph{conditional} on $(\partial F)_\zro$ and $(\partial F)_\one$, we have a uniformly random matching between $(\partial F)_\zro$ and the half-edges incident to $[n\rho]$, and an independent uniformly random matching between $(\partial F)_\one$ and the half-edges incident to $[n]\setminus[n\rho]$. Note that the bipartition also determines the sets $F(\ell)$ for all $0\le\ell\le k$; and they must satisfy the constraint
	\beq\label{e:F.ell.constraint}
	\sum_\ell \ell |F(\ell)|
	= nd\rho\,.
	\eeq
We now bound $F_\bullet$ as follows. For each $a\in F(\ell)$, define random variables
	\[\bar{L}_a\sim\Bin\bigg(
		\ell,\frac{t_\zro}{\rho}\bigg)
		\,,\quad
	\bar{R}_a\sim\Bin\bigg(
		k-\ell,\frac{t_\one}{1-\rho}\bigg)\,,
	\]
where all the $\bar{L}_a$, $\bar{R}_a$ are independent. Then
	\[
	(L_a,R_a)_{a\in F}
	\stackrel{d}{=} \bigg(
	(\bar{L}_a,\bar{R}_a)_{a\in F}
	\,\bigg|\,
	\sum_{a\in F}
	\begin{pmatrix}\bar{L}_a \\
		\bar{R}_a\end{pmatrix}
	= \begin{pmatrix} ndt_\zro \\
	nd t_\one
	\end{pmatrix}\bigg)\,.
	\]
It follows using \eqref{e:F.ell.constraint}
and the local central limit theorem
 that the event being conditioned on has probability polynomial in $n$. Moreover, we note that for $a\in F(\ell)$, we have
	\begin{align*}
	p_a&\equiv \P( \bar{L}_a + \bar{R}_a=1)
	= \ell \frac{t_\zro}{\rho}
	\bigg(1-\frac{t_\zro}{\rho}\bigg)^{\ell-1}
	\bigg(1-\frac{t_\one}{1-\rho}\bigg)^{k-\ell}
	+ (k-\ell)\frac{t_\one}{1-\rho}
	\bigg(1-\frac{t_\one}{1-\rho}\bigg)^{k-\ell-1}
	\bigg(1-\frac{t_\zro}{\rho}\bigg)^{\ell} \\
	&=\bigg(\ell \frac{t_\zro}{\rho} 
		+ (k-\ell) \frac{t_\one}{1-\rho}\bigg)
		\bigg(1 + \frac{O(k)}{2^k}\bigg)\,.
	\end{align*}
Using \eqref{e:F.ell.constraint} again, we see that the $p_a$ sum to $ndt[1+O(k/2^k)]$. It follows that
	\begin{align*}
	&\E\bigg[ \bigg(
	1-\frac{2}{2^k-2}\bigg)^{|F_\bullet|}\bigg]
	\le n^{O(1)}
	\prod_{a\in F}
	\E\bigg[ \bigg(
	1-\frac{2}{2^k-2}\bigg)^{\Ind{\bar{L}_a
		+\bar{R}_a=1}}\bigg]
	\le n^{O(1)}
	\exp\bigg\{ -\sum_{a\in F} 
	\frac{2 p_a}{2^k-2}
		\bigg\} \\
	&\qquad\le 
	\exp\bigg\{ -\frac{ndt}{2^{k-1}}
		\bigg( 1+\frac{O(k)}{2^k}\bigg)
		\bigg\}
	= 
	\exp\bigg\{ -nkt\log 2
	\bigg( 1-\frac{O(1)}{k^2} \bigg)
		\bigg\}\,,
	\end{align*}
having used the assumption that
$\alpha\ge 2^{k-1}\log 2 (1-O(1/k^2))$.\medskip

\noindent\textbf{Entropy bound and conclusion.} Substituting the estimates obtained so far into \eqref{e:f.Azro.Aone} gives	
	\[\mathbf{f}(A_{\zro},A_{\one};n\rho)
	\le
	\exp\bigg\{ -nkt\log 2
	\bigg( 1-\frac{O(1)}{k^2} \bigg)
		\bigg\}\,.
	\]
Let $t_\zro=C_\zro/2^k$ and $t_\one=C_\one/2^k$ with
 $t_\zro+t_\one=t=C/2^k$. Then, recalling the standard bound $H(t)\le t\log (e/t)$,
the number of choices for $A_\zro,A_\one$ is
	\begin{align*}
	&\binom{n\rho}{nt_\zro}
	\binom{n(1-\rho)}{nt_\one}
	\le\exp\bigg\{
	n t_\zro \log \frac{e\rho}{t_\zro}
	+n t_\one\log \frac{e(1-\rho)}{t_\one}
	\bigg\} \\
	&\qquad=\exp\bigg\{ nt\Big(
		k\log2 + \log(e/2) 
		-\frac{C_\zro}{C} \log C_\zro
		-\frac{C_\one}{C} \log C_\one
		\Big)\bigg\}
	\le \exp\bigg\{
	 n t \Big( k\log 2+1-\log C\Big)
	 \bigg\}\,,
	\end{align*}
where the last bound follows by optimizing over $C_\zro+C_\one=C$. Combining these estimates bounds $\mathbf{f}(nt;n\rho)$, and the claim follows from \eqref{e:second.mmt.symm.free.bound}, taking $C=7$.
\end{proof}
\end{ppn}

Recall that $Z^2[n\rho]$ denotes the contribution to $Z^2$ from pairs of assignments $(\ux^1,\ux^2)$ having overlap $n\rho$. Let $Z^2[n\rho]_{\red\ge n t}$  denote the contribution to $Z^2[n\rho]$ from 
pairs $(\ux^1,\ux^2)$  where the coarsening $\ueta^1$ of $\ux^1$ has more than $ndt$ red (forcing) edges.

\begin{ppn}[second moment version of Proposition \ref{prop:red:1}] \label{prop:red}
For $t=7/2^k$ we have
	\[\frac{\E[Z^2[n\rho]_{\red\ge ndt}]}
		{\E[Z^2[n\rho]]}
	\le \frac{1}{\exp(\Omega(nk))}\]
for all $\rho\in[0,1]$, as long as $\alpha \ge 2^{k-1}\log 2(1-O(1/k^2))$.

\begin{proof} This is similar to, but simpler than, the proof of Proposition~\ref{p:second.mmt.bound.frees}. We again fix
$\ux^1\equiv\zro$ and $\ux^2=\zro^{n\rho}\one^{n(1-\rho)}$. Let $\mathbf{g}(ndt;n\rho)$ denote the probability, conditioned on $\ux^1$ and $\ux^2$ being solutions, that the coarsening $\ueta^1$ of $\ux^1$ has more than $ndt$ red (forcing) edges. By symmetry (cf.\ \eqref{e:second.mmt.symm.free.bound}),
	\[
	\frac{\E[Z^2[n\rho]_{\red\ge ndt}]}
		{\E[Z^2[n\rho]]}
	= \mathbf{g}(ndt;n\rho)\,.
	\]
Note that an edge that is forcing for $\ueta^1$ must be forcing for the initial configuration $\ux^1$, so it suffices to bound the probability that there are more than $ndt$ forcing edges for $\ux^1$. Recall from the proof of Proposition~\ref{p:second.mmt.bound.frees} that $F(\ell)$ denotes the set of clauses with exactly $\ell$ edges going to the variables $[n\rho]$, and suppose we condition on $F(\ell)$ for all $0\le \ell\le k$. Note that:
\begin{itemize}
\item A clause in $F(0)\sqcup F(k)$ has two literal assignments (the all-$\zro$ and all-$\one$ assignments) that will be violated by both $\ux^1$ and $\ux^2$. Outside of these, it has $2k$ literal assignments that are forcing to both $\ux^1$ and $\ux^2$. It follows that the clause is forcing to $\ux^1$ with probability $2k/(2^k-2)$.
\item A clause in $F(1)\sqcup F(k-1)$ has four literal assignments that will be violated by either $\ux^1$ or $\ux^2$. Outside of these, it has $2(k-1)$ literal assignments that are forcing to $\ux^1$. (There is one variable incident to the clause that cannot be forced.) It follows that the clause is forcing to $\ux^1$ with probability $2(k-1)/(2^k-4)$.

\item Let $F_*=F(0)\sqcup F(1)\sqcup F(k-1)\sqcup F(k)$ as before. A clause in $F\setminus F_*$ has four literal assignments that will be violated by either $\ux^1$ or $\ux^2$. Outside of these, it has $2k$ literal assignments that are forcing to $\ux^1$. It follows that the clause is forcing to $\ux^1$ with probability $2k/(2^k-4)$.
\end{itemize}
It follows that the number of forcing clauses is stochastically dominated by a $\Bin(m,\varphi)$ random variable with $\varphi=2k/(2^k-4)$, and this holds regardless of $\rho$. Therefore
	\[
	\mathbf{g}(ndt;n\rho)
	\le \P\Big(\Bin(m,\varphi) \ge ndt = mkt\Big)
	\le \frac{1}{ \exp\{ m H( kt |\varphi)\}}
	\le \frac{1}{\exp(\Omega(nk))}\,,
	\]
as claimed.
\end{proof}
\end{ppn}

As a corollary of Propositions \ref{p:second.mmt.bound.frees} and \ref{prop:red}, we have the following: 

\begin{cor}[{second moment version of
\cite[Prop.~2.2]{MR3440193}}]
\label{cor:sec} Assume $\alpha$ lies in the interval $[\albd,\aubd]\equiv [(2^{k-1}-2)\log 2, 2^{k-1} \log 2]$.
For $t=7/2^k$, it holds for $n$ large enough that 
	\[
    \sum_{\rho\in I}
    \bigg(\E Z^2[n\rho]_{\geq nt}+\E Z^2[n\rho]_{\red\geq ndt}\bigg)
    \le \exp\bigg(-\frac{n}{2^k}\bigg)
    \,,\]
where $I\equiv [1/2-1/k^2,1/2+1/k^2]$.

\begin{proof}
Propositions \ref{p:second.mmt.bound.frees} and \ref{prop:red} imply
	\[
        \sum_{\rho \in I}\bigg(\E Z^2[n\rho]_{\geq nt}+\E Z^2[n\rho]_{\red\geq ndt}\bigg)
    \le \frac{2}{\exp(6n/2^k)}
    \sum_{\rho \in I}\E Z^2[n\rho]\,.
	\]
For $\alpha \leq 2^{k-1} \log 2$, it follows straightforwardly from the proof of \cite[Prop.~1.1]{MR3440193} that the contribution to $\E(Z^2)$ from overlaps near $1/2$ is in fact bounded by a constant times the first moment squared:
	\beq\label{eq:bound:secmo}
    \sum_{\rho\in I}
	\E Z^2[n\rho]
	\le C_k (\E Z)^2
	= C_k \exp\bigg\{ 2n 
		\bigg[
		\log 2 + \alpha\log\bigg(1-\frac{2}{2^k}\bigg) \bigg]
		\bigg\}
	\equiv C_k \exp(2n\Phi)
	\,.
     \eeq
Note that if $\alpha=(2^{k-1}-c)\log 2$, then (cf.\ \cite[eq.~(9)]{MR3440193})
	\beq\label{e:Phi}
	\Phi = \frac{(2c-1)\log 2}{2^k}
	+\frac{O(1)}{4^k}\,.
	\eeq
The claimed bound follows, using that $c\le2$ and $6\log2 < 5$.
\end{proof}
\end{cor}

\section{Corrections to second moment analysis}
\label{sec:correct}

In this section, we apply the \textit{a~priori} estimates from
Section~\ref{sec:apriori} to obtain suitable modifications of two propositions from the main text.
In \S\ref{subsec:apriori:first} we prove
Proposition~\ref{corrected:prop:1}, which is a corrected version of 
Proposition~\ref{p:first.mmt}.
In \S\ref{subsec:correct:second}
we prove
Proposition~\ref{corrected:prop},
which is a corrected version of
Proposition~\ref{p:second.mmt}.
We assume throughout that
$\alpha \in [\alpha_{\textsf{lbd}},\alpha_{\textsf{ubd}}]\equiv [(2^{k-1}-2)\log 2, 2^{k-1}\log 2]$; see 
Remark~\ref{r:rmks} which explains that this restriction on $\alpha$ is justified by direct first and second moment calculations on $Z$. 

Recall that most of this work was concerned with bounding the second moment of the tilted cluster partition function (Definition~\ref{d:T.colorings})
	\beq\label{e:Z.lambda.T}
	\bZ\equiv\bZ_{\lambda,T}
	\equiv \sum_{\usi} \bm{w}(\usi)^\lambda
	\eeq
where the sum goes over valid $T$-colorings, and 
$\bm{w}(\usi)$ is the size of the cluster corresponding to $\usi$. Recall also from
Definition~\ref{d:simplex}  that $\bDelta$ denotes the space of empirical measures $H=(\dot{H},\hat{H},\bar{H})$ in the single-copy model. In \eqref{e:N.zero} we defined
	\begin{align*}
	\bN_\circ
	&\equiv \bigg\{ H\in\bDelta : \max\Big\{ 
		\bar{H}(\red),\bar{H}(\free)\Big\} \le \frac{7}{2^k}\bigg\}\,,\\
	\bN
	&\equiv \bigg\{H\in \bN_\circ
	: \|H-H_\star\| \le \frac{1}{n^{1/3}}\bigg\}\,,
	\end{align*}
where $\|\cdot\|$ denotes the $\ell_1$ norm throughout, and $H_\star$ is the optimal empirical measure calculated from the \textit{belief propagation} fixed point of Definition~\ref{d:Hstar}.
As discussed in \S\ref{ss:outline.second.mmt},
the contribution from $H\in \bDelta$ to the first moment $\bZ\equiv \bZ_{\lambda,T}$ is (cf.\ \eqref{e:SIGMA})
\begin{equation}\label{eq:Fone}
    \E \bZ(H)
	=\frac{ \exp\{n\bF_{\lambda,T}(H)\}}{ n^{O(1)}}\,.
\end{equation}
We use $\bDelta_2$ denote the set of empirical measures in the pair model. Similarly to \eqref{eq:Fone}, the contribution from $H\in\bDelta_2$ to the second moment of $\bZ\equiv \bZ_{\lambda,T}$ is (cf.\ \eqref{e:def.PSI.two})
	\beq\label{e:Ftwo.lambda.T}
	\E \bZ^2(H)
	=\frac{ \exp\{n\bF_{2,\lambda,T}(H)\}}{ n^{O(1)}}\,.
	\eeq
We recall also that $H_\bullet\in\bDelta_2$ denotes the product measure
with single-copy marginals $H_\star$
(see the discussion leading to Proposition~\ref{p:second.mmt}).

In the pair model, recall from 
Definition~\ref{d:sep} that for two valid colorings $\usi$ and $\usi'$, we use
$\sep(\usi,\usi')$ to denote the fraction of variables where the corresponding frozen configurations differ. For $H\in\bDelta_2$, we let $\sep(H)$ denote the common value of $\sep(\usi,\usi')$ over all pairs $(\usi,\usi')$ with empirical measure $H$. Now recall
from the discussion around 
Proposition~\ref{p:sep}
that we defined
 	\begin{align}\nonumber
	I_{\se}
	&\equiv \bigg[\frac{1-k^4/2^{k/2}}{2},
		\frac{1+k^4/2^{k/2}}{2}\bigg]\,,\\
	\bN_{\se}
	&\equiv \bigg\{ H\in \bDelta_2
	:\sep(H)\in I_{\se}, H^1\in\bN, H^2\in\bN
	\bigg\}\,.\label{e:old.defn.N.se}
	\end{align}
In fact, in the definition \eqref{e:old.defn.N.se} of $\bN_{\se}$, the constraints $H^i\in\bN$ are extraneous, since the resampling procedure may not respect these constraints. We therefore replace \eqref{e:old.defn.N.se} with the corrected definition
	\beq\label{e:new.defn.N.se}
	\bN_{\se}
	\equiv \bigg\{ H\in \bDelta_2
	: \sep(H)\in I_{\se}, 
	H^1 \in \bN_\circ, H^2 \in \bN_\circ
	\bigg\}\,.\eeq
We then note that Proposition~\ref{p:minimizer} still holds
(by the same proof) if we replace the old definition \eqref{e:old.defn.N.se} of $\bN_{\se}$ by the new definition \eqref{e:new.defn.N.se}:

\begin{ppn}[{correction to Proposition~\ref{p:minimizer}}]
\label{p:correction.to.resampling}
For $0\le\lambda\le1$ and $1\le T<\infty$, 
\begin{enumerate}[a.]
\item On $\{H\in\bN_\circ : H=H^\sy\}$, the function $\bXi$ is uniquely minimized at $H=H_\star$ with $\bXi(H_\star)=0$.
\item On $\{H\in\bN_{\se} : H=H^\sy\}$ \textbf{\emph{with the corrected definition of $\bN_{\se}$ from \eqref{e:new.defn.N.se}}}, the function $\bXi_2$ is uniquely minimized at $H=H_\bullet$ with $\bXi_2(H_\bullet)=0$.
\end{enumerate}
Moreover there is a positive constant $\epsilon=\epsilon(d,k,T,\lambda)$ such that
\begin{enumerate}[1.]
\item $\bXi(H) \ge \epsilon\|H-H_\star\|^2$ for all $H\in\bDelta$ with $H=H^\sy$ and $\|H-H_\star\|\le\epsilon$, and
\item $\bXi_2(H) \ge \epsilon\|H-H_\bullet\|^2$ for all $H\in\bDelta_2$ with $H=H^\sy$ and $\|H-H_\bullet\|\le\epsilon$.
\end{enumerate}
\end{ppn}

Moreover, let us recall the main resampling result of this work, Theorem~\ref{t:mc}. The main results of this section are the following:
\begin{ppn}[{correction to 
Proposition~\ref{p:first.mmt}}]
\label{corrected:prop:1}
Fix $k\geq k_0$ and $\lambda \in [0,1]$. There exists a constant $T_0= T_0(k,\lambda)$
such that for $T\geq T_0$, the unique maximizer of $\bF\equiv \bF_{\lambda,T}$ in $\bN_{\circ}$ is $H_\star\equiv H_{\star,\lambda, T}$, which lies in the interior of $\bN_{\circ}$. Moreover, for $T\geq T_0$, there is a positive constant $\epsilon=\epsilon(k,\lambda,T)$ such that
	\[ \bF(H)
	\le \bF(H_{\star})-\epsilon \norm{H-H_{\star}}_2^2\]
for all $H\in \bN_{\circ}$ with
$\norm{H-H_{\bullet}}\leq \epsilon$.
\end{ppn}

\begin{ppn}[{correction to
Proposition~\ref{p:second.mmt}}]
\label{corrected:prop}
Fix $k\geq k_0$ and $\lambda \in [0,1]$. There exists a constant $T_0= T_0(k,\lambda)$
such that for $T\geq T_0$, the unique maximizer of $\bF_{2,\lambda,T}$ in $\bN_{\se}$ is $H_\bullet=H_{\bullet,\lambda, T}$, which lies in the interior of $\bN_{\se}$. Moreover, for $T\geq T_0$, there is a positive constant $\epsilon=\epsilon(k,\lambda,T)$ such that
	\[ \bF_2(H)
	\le \bF_2(H_{\bullet})-\epsilon \norm{H-H_{\bullet}}_2^2\]
for all $H\in\bN_{\se}$ with
$\norm{H-H_{\bullet}}\leq \epsilon$.
\end{ppn}

\subsection{Boundary for first moment analysis}
\label{subsec:correct:first}

In this subsection, we prove Proposition~\ref{corrected:prop:1}.  We first use Propositions \ref{p:first.mmt.bound.frees} and \ref{prop:red:1} to prove Lemma~\ref{lem:boundary:first}, which says that the first moment of
$\bZ_{\lambda,T}$ has a negligible contribution from configurations where the fraction of free variables or red edges is more than $7/2^k$.

\begin{lem}\label{lem:boundary:first}
    For $k\ge k_0$ and $\alpha \in [(2^{k-1}-2)\log 2, 2^{k-1}\log 2]$, the following holds.
There exists a constant $T_0=T_0(k,\lambda)$ such that
for all $T\geq T_0$, we have
	\[
    \sup \bigg\{ \bF_{\lambda,T}(H)
    :H\in \bDelta \setminus \bN_{\circ} \bigg\}
    \leq 
    \bF_{\lambda,T}(H_{\star,\lambda,T})
    -\frac{2}{2^k}\]
for all $0\le\lambda\le1$.
\end{lem}
\begin{proof} 
    For $\lambda\in[0,1]$,
it follows from
\eqref{e:Z.lambda.T} and \eqref{eq:Fone} that
\begin{align*}
	&\frac{1}{n^{O(1)}}
	\exp \bigg\{ \sup\Big\{
	\bF_{\lambda,T}(H) : H\notin \bN_{\circ}\}\bigg\}
	\stackrel{\eqref{e:Ftwo.lambda.T}}{=}
	\sum_{H\notin \bN_{\circ}} 
	\E\bZ_{\lambda,T}(H)
	\stackrel{\eqref{e:Z.lambda.T}}{\le}
	\sum_{H\notin \bN_{\circ}} 
	\E\bZ_{1,T}(H)\\
	&\quad\quad\le
	\E Z_{\ge 7n/2^k}
	+\E Z_{\red \ge 7nd/2^k}
	\stackrel{\textup{(a)}}\le
	\frac{\E Z}{\exp(n6/2^k)}
	= \frac{\exp\{ n (\log 2 + \alpha\log(1-2/2^k) )\}}{\exp(n6/2^k)}
	\equiv 
	\frac{\exp(n\Phi)}{\exp(n6/2^k)}
	\,,
	\end{align*}
    where (a) holds by Propositions \ref{p:first.mmt.bound.frees} and \ref{prop:red:1}, and $\Phi$ is as in \eqref{e:Phi}. Thus, it follows that
    \begin{equation}\label{eq:boundary:bound:firstmo}
    \sup\bigg\{
	\bF_{\lambda,T}(H) : H\notin \bN_{\circ}
	\bigg\}
	\le \Phi - \frac{6}{2^k}\,.
    \end{equation}    
It now remains only to compare the right-hand side of \eqref{eq:boundary:bound:firstmo} to $\bF_{\lambda,T}(H_{\star,\lambda,T})$.
Let $\alpha = (2^{k-1}-c)\log 2$. By 
Propositions \ref{p:drec_equiv.outline} and \ref{p:Zcal.estimate}, we have
	\begin{equation}\label{eq:F:T:infty}
       \lim_{T\to\infty} \bF_{\lambda,T}(H_{\star,\lambda,T})
        = \mathfrak{F}(\lambda)
	= \frac{(2c-1)\log 2 - (1-2^{\lambda-1})}{2^k} +
		\frac{k^{O(1)}}{4^k}
        \,.
    \end{equation}
For comparison, recalling \eqref{e:Phi}, we have
	\begin{equation}\label{eq:comparison:Phi}
    \Phi= \log 2 +\alpha \log (1-2/2^k)
	=
	\frac{(2c-1)\log 2}{2^k} +\frac{O(1)}{4^k}
	\le
	\mathfrak{F}(\lambda)
	+\frac{1}{2^k}
	\,.
    \end{equation}
Combining with \eqref{eq:boundary:bound:firstmo} gives 
the claim.
\end{proof}

\newcommand{\Hmax}{H_{\textup{m}}}
\begin{proof}[Proof of Proposition \ref{corrected:prop:1}]
Let $\Hmax \in \argmax \{\bF_{\lambda,T}(H): H\in \bN_{\circ}\}$.
Then, taking $\epsilon>0$ small enough in Theorem~\ref{t:mc}, we have
	\begin{align*}
	\bF(\Hmax) +\epsilon \bXi_2((\Hmax)^\sy)
	\le\max\Big\{ \bF(H') : H'\in \bDelta \Big\}= \bF(\Hmax)
	\,,
	\end{align*}
where the last step crucially uses Lemma~\ref{lem:boundary:first}, which guarantees that $\bF$ cannot achieve a higher value on $\bDelta \setminus \bN_{\circ}$ than on $\bN_{\circ}$. This implies $\bXi((\Hmax)^\sy)=0$. Next note that, by concavity of the entropy function, we have
	\beq\label{eq:H:sy:one}
	\bF(H)
	\le \bF(H^{\sy})-\frac{\norm{H-H^{\sy}}_2^2}{C_k}\,.
	\eeq
for all $H\in\bDelta$. For any $H\in \bN_{\circ}$, we must have $H^{\sy}\in \bN_{\circ}$ also. It then follows using \eqref{eq:H:sy:one} that
$\Hmax=(\Hmax)^{\sy}$, and consequently $\bXi(\Hmax)=0$.
By Proposition~\ref{p:correction.to.resampling} this implies $\Hmax=H_{\star}$, which proves the first assertion of this proposition. For $H\in\bDelta$ with $\|H-H_\star\|\le \epsilon$ small enough, it follows using Theorem~\ref{t:mc} that
	\[
	\bF(H)
	\stackrel{\eqref{eq:H:sy:one}}{\le} \bF(H^{\sy})
		-\frac{\norm{H-H^{\sy}}_2^2}{C_k}
	\le \bF(H_\star)-\epsilon\bXi(H^\sy)
		-\frac{\norm{H-H^{\sy}}_2^2}{C_k}\,.
	\]
Combining with Proposition~\ref{p:correction.to.resampling} gives
	\[
	\bF(H)
	\le
	\bF(H_\star)-\epsilon^2\|H^{\sy}-H_\star\|^2
		-\frac{\norm{H-H^{\sy}}_2^2}{C_k}\,,
	\]
which implies the second assertion of this proposition since $2(\|H-H^{\sy}\|^2+\|H^{\sy}-H_{\star}\|^2)\geq \|H-H_{\star}\|^2$ holds by Cauchy-Schwartz and a triangular inequality.
\end{proof}

\subsection{Boundary for the second moment analysis}
\label{subsec:correct:second}
In this subsection we supply a missing estimate, Lemma~\ref{lem:bdry} below, which says that the second moment of $\bZ_{\lambda,T}$ has a negligible contribution from a region just outside the ``boundary region'' considered in the second moment analysis in the main text of this work. The proof of Lemma~\ref{lem:bdry} is based on Propositions \ref{p:second.mmt.bound.frees} and \ref{prop:red}. The proof of Proposition~\ref{corrected:prop} is provided at the end of this section. 

We define a ``boundary region'' just outside $\bN_{\se}$ as follows. Define
	\[
	I_{\bdy}
	\equiv\bigg[
	\frac{1-2k^4 2^{-k/2}}{2},\frac{1-k^4 2^{-k/2}}{2} \bigg]
	\cup 
	\bigg[\frac{1+k^4 2^{-k/2}}{2},\frac{1+2k^4 2^{-k/2}}{2}\bigg]
	\]
Define the following three regions: 
	\begin{align*}
	\bN_{\bdy,\sep}
	&= \bigg\{H\in \bDelta_2: 
	\sep(H) \in I_{\bdy},
	H^1\in\bN_\circ, H^2\in\bN_\circ\bigg\}\,,\\
	\bN_{\bdy,\ff/\red}
	&= \bigg\{H\in \bDelta_2: 
	\sep(H) \in I_{\se}
	\cup I_{\bdy},
	\max\Big\{\bar{H}^1(\free),\bar{H}^2(\free),
	\bar{H}^1(\red),\bar{H}^2(\red)\Big\}
	\in\bigg[ \frac{7}{2^k}, \frac{10}{2^k}\bigg]
	\bigg\}\,.
	\end{align*}
Let $\bN_{\bdy}$ denote the union of
$\bN_{\bdy,\sep}$ and $\bN_{\bdy,\ff/\red}$.
Note that $\bN_{\se}\cup \bN_{\bdy}$ contains an $\epsilon$-neighborhood of $\bN_{\se}$, for $\epsilon$ small enough. The main result of this subsection is the following:

\begin{lem}\label{lem:bdry}
For $k\ge k_0$ and $\alpha \in [(2^{k-1}-2)\log 2, 2^{k-1}\log 2]$, the following holds. There exists a constant $T_0=T_0(k,\lambda)$ such that
for all $T\geq T_0$, we have
	\[
    \sup \bigg\{ \bF_{2,\lambda,T}(H)
    :H\in \bN_{\bdy} \bigg\}
    \leq 
    \bF_{2,\lambda,T}(H_{\bullet,\lambda,T})
    -\frac{2}{2^k}\]
for all $0\le\lambda\le1$.

\begin{proof}
We argue separately for the two regions of $\bN_{\bdy}$ defined above:
\begin{itemize}
\item We start with $\bN_{\bdy,\sep}$. For $\lambda\in[0,1]$,
it follows from
\eqref{e:Z.lambda.T} and \eqref{e:Ftwo.lambda.T} that
	\begin{align}\nonumber
	&\frac{1}{n^{O(1)}}
	\exp \bigg\{ \sup\Big\{
	\bF_{2,\lambda,T}(H) : H\in\bN_{\bdy,\sep}
	\Big\}\bigg\}
	\stackrel{\eqref{e:Ftwo.lambda.T}}{=}
	\sum_{H\in\bN_{\bdy,\sep}} 
	\E(\bZ_{\lambda,T})^2(H) \\
	&\qquad\stackrel{\eqref{e:Z.lambda.T}}{\le}
	\sum_{H\in\bN_{\bdy,\sep}} 
	\E(\bZ_{1,T})^2(H)
	\stackrel{\textup{(a)}}{\le}
	\sum_{\rho\in J_{\bdy}}
	\E Z^2[n\rho]
	\stackrel{\textup{(b)}}{\le}
	\frac{1}{\exp(\Omega(nk/2^k))}
	\,,\label{e:analysis.of.sep.bdy}
	\end{align}
where $J_{\bdy}$ is a slight expansion of $I_{\bdy}$, defined by
	\[
	J_{\bdy}
	\equiv\bigg[
	\frac{1-3k^4 2^{-k/2}}{2},\frac{1-k^3 2^{-k/2}}{2} \bigg]
	\cup 
	\bigg[\frac{1+k^3 2^{-k/2}}{2},
		\frac{1+3k^4 2^{-k/2}}{2}\bigg]\,.
	\]
The step marked (a) in \eqref{e:analysis.of.sep.bdy} is justified as follows: by definition, $\bN_{\bdy,\sep}$ refers to pairs $(\usi^1,\usi^2)$ of valid $T$-colorings whose corresponding frozen configurations agree on $ns$ variables with $s\in I_{\bdy}$, and have no more than $n7/2^k$ free variables each. This means that for any pair $(\ux^1,\ux^2)$, where
$\ux^i\in\{\zro,\one\}^n$ lies in the cluster defined by $\usi^i$,
the fraction $\rho$ of variables where the $\ux^i$ agree must be in $[s-O(1/2^k),s]$, which lies in $J_{\bdy}$ for all $s\in I_{\bdy}$.
The step marked (b) in \eqref{e:analysis.of.sep.bdy} follows from Corollary~\ref{c:int.small}.
\item
We next deal with the region $\bN_{\bdy,\ff/\rr}$. For $\lambda\in[0,1]$, similarly to \eqref{e:analysis.of.sep.bdy} we have
	\begin{align*}
	&\frac{1}{n^{O(1)}}
	\exp \bigg\{ \sup\Big\{
	\bF_{2,\lambda,T}(H) : H\in\bN_{\bdy,\ff/\rr}
	\Big\}\bigg\}
	\stackrel{\eqref{e:Ftwo.lambda.T}}{=}
	\sum_{H\in\bN_{\bdy,\ff/\rr}} 
	\E(\bZ_{\lambda,T})^2(H) \\
	&\qquad
	\stackrel{\eqref{e:Z.lambda.T}}{\le}
	\sum_{H\in\bN_{\bdy,\sep}} 
	\E(\bZ_{1,T})^2(H)
	\le
	\sum_{\rho\in J}\Big(
	\E Z^2[n\rho]_{\ge 7n/2^k}
	+\E Z^2[n\rho]_{\red \ge 7nd/2^k}\Big)
	\le
	\sum_{\rho\in J}
	\frac{2\E Z^2[n\rho]}{\exp(n6/2^k)}
	\,,
	\end{align*}
where $J\equiv I_{\se}\cup J_{\bdy}$, and the last bound follows from Propositions~\ref{p:second.mmt.bound.frees} and \ref{prop:red}. Now, recalling \eqref{eq:bound:secmo}, we have 
	\[\sum_{\rho\in J}
	\E Z^2[n\rho]
	\le C_k (\E Z)^2
	= C_k \exp\bigg\{ 2n \Big(\log 2 + \alpha\log(1-2/2^k) \Big)
		\bigg\}
	\equiv C_k \exp(2n\Phi)
	\,.\]
\end{itemize}
Combining the above bounds gives
	\beq\label{e:F.bound.bdy}
	\sup\bigg\{
	\bF_{2,\lambda,T}(H) : H\in 
	\bN_{\bdy}
	= \bN_{\bdy,\sep} \cup \bN_{\bdy,\free/\rr}
	\bigg\}
	\le 2\Phi - \frac{5}{2^k}\,.
	\eeq
(The bound for $\bN_{\bdy,\free/\rr}$ follows
from the estimates directly above, while the bound for $\bN_{\bdy,\sep}$ follows from our earlier bound \eqref{e:analysis.of.sep.bdy} together with the observation that $\Phi=O(1/2^k)$ in the regime being considered.)
We finally compare $2\Phi$ with $\bF_{2,\lambda,T}(H_{\star,\lambda,T})$. By Lemma~\ref{l:clause.tensorize}, we have $\bF_{2,\lambda,T}(H_{\bullet,\lambda,T})=2\bF_{\lambda,T}(H_{\star,\lambda,T})$, so by \eqref{eq:comparison:Phi}, we have
\begin{equation*}
     2\Phi-\frac{2}{2^k}\leq \lim_{T\to\infty} \bF_{2,\lambda,T}(H_{\bullet,\lambda,T})=\lim_{T\to\infty} 2\bF_{\lambda,T}(H_{\star,\lambda,T})
\end{equation*}
Combining with \eqref{e:F.bound.bdy} gives the claim. 
\end{proof}
\end{lem}

\begin{proof}[Proof of Proposition \ref{corrected:prop}] 
Having Lemma~\ref{lem:bdry} in hand, the proof is identical to that of Proposition~\ref{corrected:prop:1}, but we give it here for completeness.  Let $\Hmax \in \argmax \{\bF_{2,\lambda,T}(H): H\in \bN_{\se}\}$.
Then, taking $\epsilon>0$ small enough in Theorem~\ref{t:mc}, we have
	\begin{align*}
	&\bF_2(\Hmax) +\epsilon \bXi_2((\Hmax)^\sy)
	\le\max\Big\{ \bF_2(H') : \|H'-\Hmax\| \le \epsilon(dk)^{2T}\Big\} \\
	&\qquad\le
	\max\Big\{ \bF_2(H') : 
		H\in \bN_{\se} \cup \bN_{\bdy}\Big\}
	= \bF_2(\Hmax)
	\,,
	\end{align*}
where the last step crucially uses Lemma~\ref{lem:bdry}, which guarantees that $\bF_2$ cannot achieve a higher value on $\bN_{\bdy}$ than on $\bN_{\se}$. This implies $\bXi_2((\Hmax)^\sy)=0$. Next note that, by concavity of the entropy function, we have
	\beq\label{eq:H:sy}
	\bF_2(H)
	\le \bF_2(H^{\sy})-\frac{\norm{H-H^{\sy}}_2^2}{C_k}\,.
	\eeq
for all $H\in\bDelta_2$. For any $H\in \bN_{\se}$, we must have $H^{\sy}\in \bN_{\se}$ also. It then follows using \eqref{eq:H:sy} that
$\Hmax=(\Hmax)^{\sy}$, and consequently $\bXi_2(\Hmax)=0$.
By Proposition~\ref{p:correction.to.resampling} this implies $\Hmax=H_{\bullet}$, which proves the first assertion of this proposition. For $H\in\bDelta_2$ with $\|H-H_\bullet\|\le \epsilon$ small enough, it follows using Theorem~\ref{t:mc} that
	\[
	\bF_2(H)
	\stackrel{\eqref{eq:H:sy}}{\le} \bF_2(H^{\sy})
		-\frac{\norm{H-H^{\sy}}_2^2}{C_k}
	\le \bF_2(H_\bullet)-\epsilon\bXi_2(H^\sy)
		-\frac{\norm{H-H^{\sy}}_2^2}{C_k}\,.
	\]
Combining with Proposition~\ref{p:correction.to.resampling} gives
	\[
	\bF_2(H)
	\le
	\bF_2(H_\bullet)-\epsilon^2\|H-H_\bullet\|^2
		-\frac{\norm{H-H^{\sy}}_2^2}{C_k}\,,
	\]
which implies the second assertion of this proposition (by relabelling $\epsilon$ appropriately).
\end{proof}

\end{document}